\documentclass[12pt]{amsart}
\usepackage[margin=2cm]{geometry}

\usepackage{amsmath, amsthm, amssymb}
\input xy
\xyoption{all}
\usepackage{mathrsfs}
\usepackage{enumerate}
\usepackage{caption}
\usepackage{comment}
\usepackage{subcaption}
\usepackage{graphicx}
\usepackage{color}
\usepackage{tikz}
\usepackage{tikz-cd}

\usepackage{tkz-euclide}
\usepackage{bbm}

\usepackage{tikzsymbols}
\usepackage{scalerel}

\usepackage[driverfallback=hypertex]{hyperref}
\usepackage{nameref,zref-xr}                    %to include reference to other papers

\newtheorem{thm}{Theorem}[section]
\newtheorem{prop}[thm]{Proposition}

\newtheorem{conj}[thm]{Conjecture}
\theoremstyle{definition}
\newtheorem{definition}[thm]{Definition}
\newtheorem{nn}[thm]{Notation}

\newtheorem{question}[thm]{Question}
\theoremstyle{remark}
\newtheorem{rmk}[thm]{Remark}

\newtheorem{ex}[thm]{Example}
\newtheorem{obs}[thm]{Observation}

\newcommand{\CM}{{\mathcal{M}}}
\newcommand{\PST}{{PST}}
\newcommand{\BCT}{{BCT}}
\newcommand{\GKT}{{GKT}}
\newcommand{\TZ}{{TZ}}

\newcommand{\MU}
{\mathfrak{m}}

\newcommand{\oCM}
{{\overline{\mathcal{M}}}}

\newcommand{\CL}{{\mathbb{L}}}

\newcommand{\detach}{{\textup{detach}}}

\newcommand{\alt}{{\text{alt} }}

\newcommand{\op}[1]{\operatorname{#1}}
\newcommand{\Aut}{\op{Aut}}

\newcommand{\CS}{{\mathcal{S}}}

\renewcommand{\Aut}{\text{Aut}}

\newcommand{\cS}{\mathcal{S}}

\newcommand{\cW}{\mathcal{W}}

\newcommand{\tw}{\text{tw}}
\newcommand{\NNN}{{\nu}}

\newcommand{\vecd}{\mathbf{d}}

\newcommand{\Detach}{{{\textup{Detach}}}}

\newcommand{\rank}{\operatorname{rank}}

\newcommand{\spinqi}{{\text{spin }\neq i}}

\newcommand{\Z}{\ensuremath{\mathbb{Z}}}
\newcommand{\Q}{\ensuremath{\mathbb{Q}}}
\newcommand{\C}{\ensuremath{\mathbb{C}}}
\newcommand{\R}{\ensuremath{\mathbb{R}}}
\renewcommand{\P}{\ensuremath{\mathbb{P}}}
\newcommand{\M}{\ensuremath{\overline{\mathcal{M}}}}
\renewcommand{\O}{\ensuremath{\mathcal{O}}}

\newcommand{\ev}{\ensuremath{\textrm{ev}}}

\renewcommand{\d}{\ensuremath{\partial}}

\newcommand{\<}{\left<}
\renewcommand{\>}{\right>}

\DeclareMathOperator{\res}{res}

\newcommand{\h}{{\mathfrak{h} }}

\DeclareMathOperator{\re}{Re}

\allowdisplaybreaks[4]

\usepackage{geometry}
\numberwithin{equation}{section}

\begin{document}

\title{Open Enumerative Geometries for Landau-Ginzburg Models}

\author{Mark Gross}
\address{M. Gross: \newline Department of Pure Mathematics and Mathematical Statistics, University of Cambridge, CB4 0WB, United Kingdom}
\email{mgross@dpmms.cam.ac.uk}
\author{Tyler L. Kelly}
\address{T. L. Kelly: \newline School of Mathematical Sciences, Queen Mary University of London, United Kingdom}
\email{t.l.kelly@qmul.ac.uk}

\author{Ran J. Tessler}
\address{R. J. Tessler:\newline Department of Mathematics, Weizmann Institute of Science, Rehovot, Israel}
\email{ran.tessler@weizmann.ac.il}

\maketitle

\begin{abstract}
We survey the recent progress in defining open enumerative theories for Landau-Ginzburg models.  We illustrate the ideas required to develop these new foundations. In particular, we describe how to define the open enumerative invariants as integrals of multisections of certain vector bundles over a moduli space that is a real orbifold with corners, after prescribing boundary conditions for the multisections. We then explain the known situations where the open invariants satisfy certain forms of topological recursion relations, integrable hierarchies, or mirror symmetry. We end with a list of open questions and problems.
\end{abstract}

\setcounter{tocdepth}{1}
\tableofcontents
\section{Introduction}

The moduli space of closed Riemann surfaces $\mathcal{M}_g$ of genus $g$
is a fundamental object of study in geometry. 
Along with this space, one often studies
the closely related spaces $\mathcal{M}_{g,n}$ of genus $g$ Riemann
surfaces with $n$ marked points and its compactification
$\overline{\mathcal{M}}_{g,n}$, the moduli space of stable curves of genus
$g$ with $n$ marked points. Witten's celebrated conjecture 
\cite{Witten2DGravity} considered
intersection numbers on
$\overline{\mathcal{M}}_{g,n}$ of combinations of 
$\psi$-classes (first Chern classes of the conormal bundle at marked points).
The conjecture proposed that if one organized these intersection
numbers
into an appropriate generating function, one obtained a 
$\tau$-function for the KdV
hierarchy. Witten's conjecture was proved by Kontsevich in \cite{Kontsevich},
thereby demonstrating the immensely rich properties that these intersection
numbers possess. 

Motivated by further explorations of Witten \cite{Witten93} and building on the subsequent work on $r$-spin theory, Fan,
Jarvis and Ruan introduced in \cite{FJR} what is now known as FJRW theory.
This theory begins with a choice of finite group $G$ acting on $\C^n$ and
a $G$-invariant polynomial $W\in \C[x_1,\ldots,x_n]$. This provides a
modified kind of moduli space, whose objects are orbicurves (stable curves with
stacky structure at marked points and nodes) along with
certain line bundles with properties determined by $G$ and $W$. While the
resulting moduli spaces are closely related to $\overline{\mathcal{M}}_{g,n}$,
they are not the same, and they carry in addition a virtual fundamental class
which in many nice cases is realised as the top Chern class of a vector
bundle. See \S\ref{Background on (closed) FJRW theory}
below for details. This theory provides new invariants, which often
are related to integrable hierarchies and mirror symmetry.

Another thread in the study of moduli of Riemann surfaces has been the
study of \emph{open} Riemann surfaces. Here, one really means considering
Riemann surfaces with boundary. Such Riemann surfaces have played
a key role in sympletic geometry, and especially Floer theory, for a long
time. Typically, one might be interested in counting $J$-holomorphic
maps from Riemann surfaces with boundary into a symplectic manifold
with the boundary of the Riemann surface mapping to a given Lagrangian,
see e.g., \cite{fukaya2009lagrangian,FOOO_ii,FOOO1,katz2006enumerative,solomon2006intersection}.

More recently, with analogues of Witten's conjecture in mind,
work of Pandharipande, Solomon and Tessler \cite{PST14} began a study
of moduli of open Riemann surfaces analogous to that of the moduli of
closed surfaces discussed above. Already, many basic issues appear. The
moduli spaces in question are now real orbifolds with corners rather than
complex orbifolds, and thus defining intersection numbers becomes more
difficult. Intuitively, one defines the Euler number of a bundle $\mathcal{E}$
on an orbifold with corners $M$ with $\rank \mathcal{E}=\dim M$ by
choosing some boundary conditions for a section $s$ of $\mathcal{E}$
and then counting the number of zeros of such a section. However, to
obtain a set of intersection numbers which satisfies any reasonable
analog of Witten's conjecture requires careful inductive choices for
these boundary conditions. With such choices the generating function of the resulting intersection numbers were shown to yield the KdV wave function \cite{Bur16,BT17, Tes15}.

Work of Buryak, Clader and Tessler \cite{BCT1,BCT2,BCT3} then considered
the $r$-spin case. This is a special case of FJRW theory where
$W=x^r$ and $G$ is the group of $r$th roots of unity \cite{FJRSpin}. They were able
to build an open theory in a similar way to the work of \cite{PST14}, and
so obtained invariants enjoying many nice properties, e.g. a relation to the $r$-KdV wave function \cite{BCT2,TZ3} and mirror symmetry \cite{GKT}.

Going further, Gross, Kelly and Tessler built a theory \cite{GKT} for
rank 2 Fermat polynomials $W=x^r+y^s$, again with full symmetry group 
$G$. While the broad outlines of the construction were the same, 
there were a number of crucial differences. Most importantly, unlike the
previous two cases considered, the possible invariants could not be
uniquely defined. Instead, the set of all invariants could undergo
a wall-crossing as boundary conditions are changed; however, this wall-crossing
is completely controlled. In addition, using these invariants, the
authors of \cite{GKT} were able to prove a mirror symmetry statement
for \emph{closed} FJRW invariants, which showed that oscillatory integrals
involving a generating function for the open invariants yielded the closed
invariants. 

 Tessler and Zhao provided another construction of open $r$-spin and certain Fermat FJRW theories \cite{TZ1,TZ2}. One of them, which corresponds to the Fermat quintic $x_1^5+\cdots +x_5^5$ with minimal symmetry group $\mu_5$, is conjectured to satisfy an open analog of the Landau-Ginzburg/Calabi-Yau correspondence, predicted in \cite{Walcher,Melissa}.

This article is intended to be a survey of the above developments.
We begin with background on closed FJRW theory, and then outline the
construction of the invariants in the various cases described above.
We introduce the key ingredient, open $W$-spin surfaces, and then discuss
their moduli and the analog of the Witten bundle. We describe in some
detail the construction of the relevant moduli spaces and explain how
to define open intersection numbers. Finally, we describe the properties
these various open invariants enjoy, from topological recursion relations,
mirror symmetry, and wall-crossing to analogs of the KdV hierarchy.

\subsection*{Acknowledgements} We thank Shing-Tung Yau for inviting us to write
this survey. MG was supported by the ERC Advanced Grant MSAG. TK was supported by the UKRI Future Leaders Fellowship MR/Y033841/1 and EPSRC Small Grant EP/Y033574/1. TK also thanks the Sydney Mathematical Research Institute where portions of this survey were written.
RT was supported by the ISF grant 1729/23.

\section{Background on (closed) FJRW theory}
\label{Background on (closed) FJRW theory}

In this section, we describe Fan-Jarvis-Ruan-Witten (FJRW) theory. FJRW theory is a closed enumerative theory for Landau-Ginzburg models that has been developed in a sequence of papers \cite{FJR,FJR2,FJR3,polishchuk2011matrix}. This theory generalized Witten's proposal \cite{Witten93} of a closed $r$-spin enumerative theory that had been developed in the meantime \cite{ChiodoStable, ChiodoWitten, Jarvis, JKV, JKV2, FJRSpin}. It gives enumerative invariants for a Landau-Ginzburg model of the form $(X,G,W)$ where $X$ is a complex vector space, $G$ a finite group acting on $X$ and $W: X \to \mathbb{A}^1$ a $G$-invariant regular map. Over the past decade, there has been progress in extending FJRW theory to more general contexts, such as gauged linear sigma models \cite{clader2017landau, FJRGLSM, CFFGKS, FaveroKim}. However, these constructions at present do not have an open counterpart. To streamline our discussion, we will restrict the scope of our discussion of closed FJRW theory to the contexts where there are open FJRW theories to discuss. In (closed) FJRW theory, enumerative invariants are defined as certain characteristic classes over the moduli space of $W$-spin curves. We will start by defining closed $W$-spin Riemann surfaces, describing their moduli space, and introducing important characteristic classes of the moduli. We then define associated enumerative theories and give some results that represent their richness as invariants. We first consider the special
case of an $r$-spin surface, reviewing terminology from \cite{BCT1}.  

\subsection{$W$-spin surfaces}
A \emph{closed marked genus $g$ orbifold Riemann surface $C$}
is a proper, one-dimensional Deligne--Mumford stack whose coarse
space $|C|$ is a genus $g$ Riemann surface with at worst nodal singularities, and such that the morphism $\pi: C \to |C|$ is an
isomorphism away from the finitely many marked points and nodes.
We collectively call these marked points and nodes the \emph{special points}. Any special point may have a non-trivial stabilizer which is required to be a finite cyclic group. Any node must be \emph{balanced}, i.e.,
the local picture at any node is
\begin{equation}\label{eq:balanced}
\{xy=0\}/\mu_d,
\end{equation}
where $\mu_d$ is the group of $d^{th}$ roots of unity, and generator $\zeta$ acts by
\begin{equation}
\label{eq:balanced action}
\zeta\cdot(x,y)=(\zeta x,\zeta^{-1}y).
\end{equation}
The curve $C$ is \emph{$d$-stable} if the isotropy group for all special points is the group $\mu_d$.

\begin{definition}
\label{def:closed normal}
Let $q$ be a node of $C$. We denote by
\[
\NNN_q:{C}'_q\to C
\]
the normalization of $C$ at the node $q$ and by
\[
\NNN:\widehat{C}\to C
\]the normalization of $C$ at all nodes. 
A \emph{half-node} is a point of the normalization $\widehat{C}$
mapping to a node $q$ in $C$. We often refer to a half-node also as a node $q\in C$
along with a choice of branch of $C$ at $q$.
\end{definition}

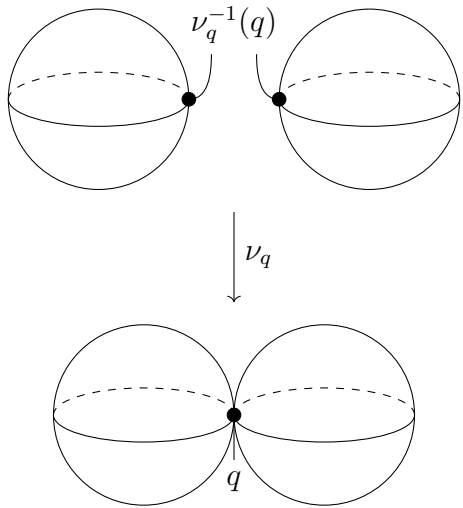
\begin{figure}
  \centering
\begin{tikzpicture}[scale=0.6]

  \draw (-1,7) circle (2cm);
  \draw (-3,7) arc (180:360:2 and 0.6);
  \draw[dashed] (1,7) arc (0:180:2 and 0.6);
    \draw [fill] (1,7) circle [radius=0.15];
    
  \draw (5,7) circle (2cm);
  \draw (3,7) arc (180:360:2 and 0.6);
  \draw[dashed] (7,7) arc (0:180:2 and 0.6);
  \draw [fill] (3,7) circle [radius=0.15];

  \node[above] at (2,8) {$\nu_q^{-1}(q)$};
  \draw (2.5,8) to [out=270,in=180] (3,7);

    \draw (1.5,8) to [out=270,in=0] (1,7);

  \draw [->] (2,4.5) -- (2,2.5);
  \node [right] at (2,3.5) {$\nu_q$};

%The nodal curve

  \draw (0,0) circle (2cm);
  \draw (-2,0) arc (180:360:2 and 0.6);
  \draw[dashed] (2,0) arc (0:180:2 and 0.6);

\draw [fill] (2,0) circle [radius=0.15];
\node [below] at (2,-1) {$q$};
    \draw (2,-1) to (2,0);

%\draw [fill] (0,-1) circle [radius=0.15];

  \draw (4,0) circle (2cm);
  \draw (2,0) arc (180:360:2 and 0.6);
  \draw[dashed] (6,0) arc (0:180:2 and 0.6);

\end{tikzpicture}

\caption{The partial normalization $\NNN_q: C' \rightarrow C$ of $C$ at a node $q$.}
\label{fig:norm_anchors}
\end{figure}

Suppose that $C$ is $d$-stable and $r$ divides $d$.
An \emph{$r$-spin} structure on $C$ is an orbifold line bundle $L\to C$, called the \emph{$r$-spin bundle}, together with an isomorphism
\begin{equation}\label{defn: iso tau L}
\tau:L^{\otimes r}\to\omega_{C,\log}.
\end{equation}
The local structure of $L$ at a marked point $p$ looks like the quotient stack $[\C^2/\mu_d]$ where the generator $\zeta=e^{\frac{2\pi i}{d}}\in\mu_d$ acts by
\begin{equation}\label{eqn: local action}
\zeta\cdot(X,T)=(e^{\frac{2\pi i}{d}} X,e^{m\frac{2\pi i}{d}} T),
\end{equation}
where $X$ is a local coordinate on the curve and $T$ a coordinate on the
fibres of $L$. We define
the \emph{multiplicity} (of $L$) to be the integer $\mathrm{mult}_p(L) \in \{0,\dots, d-1\}$ such that $m=\mathrm{mult}_p(L)$ modulo $d$. Moreover, since the multiplicity of $\omega_{C,\log}$ at any point is $0,$ for each marking $p$ it must hold that
\[{\text{mult}_p(L)\over d/r}\in\Z. \]

An \emph{extended twisted $r$-spin} structure on a closed marked genus $g$ orbifold Riemann surface $C$ is an orbifold bundle of the form
\begin{equation}\label{eq:S}S = L \otimes \O\left(-\sum_{i \in I_0} [z_i] \right),\end{equation}
where $L$ is an $r$-spin bundle and $I_0$ is a subset of the markings of $L$-multiplicity zero, which consists of all such markings, except, possibly, one marking per genus $0$ connected component.\footnote{Such a marking must be what we call an \emph{anchor} in \cite{GKT2}, but we will avoid discussing anchors to focus on the main thrust of the description of our objects.} If $I_0$ consists of all markings of multiplicity $0$ we omit the word ``extended''. The set $I_0$
is a choice and part of the data of the extended $r$-spin structure.
Here, the twist
$\O([z_i])$ is the orbifold line bundle associated to the degree $1$ divisor
$[z_i]$, i.e., $\O([z_i])$ is the pull-back of a line bundle on the
coarse moduli space.

The \emph{twist} of the $i^{th}$ marking (with respect to $S$) is defined to be \[l_i-1+r\mathbf{1}_{i\in I_0}\] where  $l_i$ is the unique solution of\[\text{mult}_{z_i}(L) = \left(\frac{d}{r}\right)l_i,~~l_i\in\{0,\ldots,r-1\}\]
and $\mathbf{1}_{i\in I_0}$ denotes the characteristic function of
$I_0$. 

Using~\eqref{defn: iso tau L}, note that there is an isomorphism $\tau$ so that
\begin{equation}\label{defn: iso tau S}
\tau: S^{\otimes r}\simeq\omega_{C,\log}(-\sum_{i\in I_0}r[z_i]).
\end{equation}

\begin{definition}
A \emph{closed (extended) twisted genus $g$ $r$-spin surface} is a closed genus $g$
$d$-stable orbifold Riemann surface together with a(n extended) twisted $r$-spin structure.
The notion of isomorphism is the standard one.
\end{definition}

Given the normalization $\nu:\widehat C\rightarrow C$,
the orbifold line bundle $\NNN^*L\to\widehat{C}$ is automatically an $r$-spin bundle. However,  $\NNN^*S$ need not be a twisted $r$-spin bundle. Instead,
if we take
\begin{equation}
\label{eq:S Shat def}
\widehat{S}=\NNN^*S\otimes\O\left(-\sum_{q\in\mathcal{R}} [q]\right),
\end{equation}
where
$\mathcal{R}:=\{\hbox{$q$ a half-node of $\widehat{C}$ of
multiplicity $0$}\},$\footnote{The definition here slightly differs from that of \cite{GKT2,BCT2}, since we do not need here the notion of \emph{anchor}, which was used in some constructions and proofs in these papers.}
then $\widehat{S}$ is a twisted $r$-spin bundle, with the set of
marked points where the spin structure is twisted being
\[
\widehat{I}_0=I_0\cup \mathcal{R}.
\]
The \emph{twists} of the half-nodes of $\widehat{C}$ are defined to be the twists of
$\widehat{S}$ at these points. This definition of twist descends to $C$ and yields a notion for the twist of a half-node when
viewed as a nodal point in $C$ along with a choice
of branch of $C$ through this node.

\begin{definition}
A special point whose twist is $-1\mod r$ is called \emph{Ramond}, while the other markings are called \emph{Neveu-Schwarz} (with respect to $S$).
\end{definition}

Consider the coarsification map $\pi: C \rightarrow |C|$ and the pushforward $|L| = \pi_*L$. Here, we can see that locally at the marked point $p$ with $\mathrm{mult}_pL = m$, the sections of this sheaf will be given by $\mu_d$-invariant sections of $\O_{\mathbb{C}}$ with respect to the action in Equation~\eqref{eqn: local action}. The generator acts locally on a section $f$ by taking $\zeta\cdot(X, f(X)) = (\zeta X, \zeta^m f(X))$. For invariance, we need that $\zeta^m f(X) = f(\zeta X)$, which implies that $f(X) = X^m g(X^d)$ for some polynomial $g$. 
We then have the following proposition, see e.g., \cite[Proposition 1.6]{GKT2}:
\begin{prop}\label{obs:twists_on_coarse}
For any connected component $C_l$ of $|\widehat{C}|,$ with markings also denoted by $\{z_i\}_{i\in [l]}$ and half-nodes $\{p_h\}_{h\in N},$ it holds that $|\widehat{S}|$ is a line bundle and
\begin{equation}\label{eq:nodal_curve_closed}\left(|\widehat{S}|\big|_{C_l}\right)^{\otimes r} \cong \omega_{|C_{l}|} \otimes  \O\left(-\sum_{i \in [l]} a_i [z_i] - \sum_{h \in N} c_h [p_h]\right),\end{equation}
\[a_i,c_h\in \{-1,0,\ldots, r-1\},\]
where $a_i,c_h$ are the twists.
\end{prop}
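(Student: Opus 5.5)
Throughout, I work one connected component $C_l$ of $\widehat C$ at a time, writing $C_l$ also for the orbifold curve. Since $\widehat S$ is a twisted $r$-spin bundle on $C_l$, with twisted set $\widehat I_0$, we have from \eqref{defn: iso tau S} an isomorphism $\widehat S^{\otimes r}\cong \omega_{C_l,\log}(-\sum_{i\in\widehat I_0} r[z_i])$, where the half-nodes are now treated as markings. The plan is to push this isomorphism forward along the coarsification $\pi:C_l\to|C_l|$ and compute the result point by point.

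\textbf{Local model and $|\widehat S|$ is a line bundle.} Near a special point $p$ with stabilizer $\mu_d$ and multiplicity $m$, take the local description \eqref{eqn: local action}. As already observed, invariant local sections of $L$ are exactly $X^m g(X^d)$. Set $u=X^d$, the coordinate on $|C_l|$. Then $\pi_*$ of the bundle is locally free of rank one, generated by $X^m T$, where $T$ is the fibre coordinate. The twisted bundle $S=L\otimes\O(-[z_i])$ differs by the pullback of a coarse line bundle, so its pushforward is again a line bundle. Away from special points $\pi$ is an isomorphism, so $|\widehat S|$ is a line bundle.

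\textbf{Comparing $r$-th powers.} The main step is to compare $|\widehat S|^{\otimes r}$ with $\pi_*(\widehat S^{\otimes r})$. These are not equal. The natural map $|\widehat S|^{\otimes r}\to \pi_*(\widehat S^{\otimes r})$ is an isomorphism away from special points. At $p$ it sends the generator $(X^mT)^r$ to $X^{rm}T^r$.

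I work first at an untwisted point with $m=(d/r)l$. The bundle $L^{\otimes r}\cong\omega_{\log}$ has multiplicity $0$, and its pushforward is $\omega_{|C|}(p)$, since $dX/X=\frac1d\,du/u$. The invariant generator of $\pi_*(L^r)$ is $T^r$, which corresponds to $du/u$. The image $X^{rm}T^r=u^{l}T^r$ therefore corresponds to $u^{l}\,du/u=u^{l-1}du$. Hence
\[|L|^{\otimes r}\cong\omega_{|C|}\bigl(-(l-1)[p]\bigr)\]
locally, so $a=l-1$, which matches the definition of the twist.

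At a twisted point, either $i\in I_0$ or a half-node in $\mathcal R$, we have $l=0$. The extra factor $\O(-[p])$ is a coarse line bundle, so its $r$-th power contributes $-r[p]$. This gives $a=-1+r$, again matching the twist $l_i-1+r\mathbf 1_{i\in\widehat I_0}$. Half-nodes are handled identically, using the twists of $\widehat S$, which gives the $c_h$.

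\textbf{Range and the main obstacle.} For the range, untwisted points give $a=l-1\in\{-1,\dots,r-2\}$, and twisted points give $a=r-1$. So $a_i,c_h\in\{-1,\dots,r-1\}$.

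The main obstacle is making the identification of $\pi_*\omega_{C,\log}$ with $\omega_{|C|}(\sum p)$ precise, together with the behaviour of the map on $r$-th powers. This requires careful bookkeeping of which monomials are $\mu_d$-invariant. I would handle it entirely in the local model above, and then glue using the fact that all maps are canonical away from special points.
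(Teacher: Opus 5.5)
Your proposal is correct and follows the same route the paper indicates. The paper's only justification is the local observation that invariant sections of $\pi_*L$ have the form $X^m g(X^d)$, with details deferred to \cite[Proposition 1.6]{GKT2}. You complete that computation by pushing forward $\tau$, matching the generator $(X^mT)^{\otimes r}$ with $u^{l}\,du/u=u^{l-1}du$, and applying the projection formula to the coarse twists $\O(-[p])$. Two small points deserve an explicit sentence: read $T$ as a local frame rather than a fibre coordinate, and note that a nonzero map of line bundles on the smooth curve $|C_l|$ is injective, so $|\widehat S|^{\otimes r}$ is isomorphic to its image.
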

The properties of the twists are summarized in the following well-known observation:
\begin{obs}\label{obs:closed_constraints}
It follows immediately from
Proposition~\ref{obs:twists_on_coarse} that if $C$ is a smooth genus $g$ $r$-spin curve,
then \begin{equation}\label{eq:close_rank1_general}
\frac{\sum a_i +(g-1)(r-2)}{r}\in \Z,
\end{equation}
where $a_i$ are the twists. The same holds for every connected component of the normalized $|\widehat{S}|.$

If $C$ is a stable $r$-spin curve and
$p$ and $p'$ are the two branches of a node, then
\begin{equation}\label{eq:twists_at_half_nodes}c_p + c_{p'} \equiv r-2 \pmod r.\end{equation}
\end{obs}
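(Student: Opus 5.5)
Both claims follow from Proposition~\ref{obs:twists_on_coarse} by comparing degrees. For the first claim, take $C$ smooth, so $\widehat C=C$ has one component, no half-nodes, and $|\widehat S|=|S|$ is a genuine line bundle on $|C|$ of some integer degree $e$. Taking degrees in \eqref{eq:nodal_curve_closed}, with $\deg\omega_{|C|}=2g-2$, gives
\[
re = 2g-2-\sum_i a_i .
\]
Hence $\sum a_i \equiv 2g-2 \pmod r$. Since $(g-1)(r-2)=r(g-1)-(2g-2)$, this is equivalent to $\sum a_i+(g-1)(r-2)\equiv 0\pmod r$, which is \eqref{eq:close_rank1_general}. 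The same computation applied to each connected component $C_l$ of $|\widehat C|$, using its genus and counting the half-node twists $c_h$ alongside the $a_i$, gives the statement for the normalization.

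For the node relation \eqref{eq:twists_at_half_nodes}, I would argue locally rather than globally. At a balanced node $\{xy=0\}/\mu_d$, the group acts on the fibre of $L$ by some character $\zeta\mapsto\zeta^m$. On the branch with coordinate $x$ the generator acts by $\zeta$, so the multiplicity there is $m$. On the branch with coordinate $y$ the generator acts by $\zeta^{-1}$, so we must reparametrize by the inverse generator, and the multiplicity becomes $-m \bmod d$. Thus the two branch multiplicities of $L$ sum to $0\pmod d$. Writing them as $(d/r)l_p$ and $(d/r)l_{p'}$ with $l_p,l_{p'}\in\{0,\dots,r-1\}$ gives $l_p+l_{p'}\equiv 0\pmod r$.

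Now convert to twists. The twist at a half-node is $l-1+r\mathbf 1_{\mathcal R}$, and the correction $r\mathbf 1_{\mathcal R}$ vanishes mod $r$. Therefore $c_p+c_{p'}\equiv l_p+l_{p'}-2\equiv r-2\pmod r$.

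The one point needing care is the sign convention at the second branch: one has to confirm that the local multiplicity is defined using the generator acting as $\zeta$ on that branch's own coordinate, and this is exactly what produces the inversion $m\mapsto -m$. As a sanity check, when both $l$'s vanish, both half-nodes lie in $\mathcal R$ with twist $r-1$, and $2r-2\equiv r-2$ as required.
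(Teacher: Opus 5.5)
Your proof is correct. The first part is the degree count on the coarse curve that the paper means when it says the congruence follows immediately from Proposition~\ref{obs:twists_on_coarse}.

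For the node relation the paper gives no argument, and your local computation takes a genuinely different route from anything the Proposition supplies by itself. The Proposition constrains each component of $|\widehat C|$ separately. Combining those congruences yields $c_p+c_{p'}\equiv r-2 \pmod r$ only once one knows $\sum_i a_i\equiv 2g-2\pmod r$ for the nodal curve $C$ itself; given the component constraints, the two statements are in fact equivalent. Reading the node relation off the Proposition therefore needs extra input: a smoothing of $C$, together with local constancy of marking twists in families. This matches the viewpoint of Remark~\ref{rmk:degenerated_structure}, which calls the relation a smoothing condition, i.e.\ a constraint that appears when smooth spin structures degenerate.

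Your argument needs none of this. It works for every stable $r$-spin curve directly from the balanced-node condition, and it treats separating and non-separating nodes uniformly. It also gives a sharper statement: the branch multiplicities are exactly inverse modulo $d$, so the half-node twists sum to exactly $r-2$ at a Neveu--Schwarz node and to $2r-2$ at a Ramond node.

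One step you use without comment is that the half-node multiplicities are multiples of $d/r$. This holds because $\nu^*L$ is an $r$-spin structure on $\widehat C$. Equivalently, the local generator $dx/x=-dy/y$ of $\omega_{C,\log}$ at the node is $\mu_d$-invariant.
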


We now generalize to the $W$-spin case.
Let $W$ be any Fermat polynomial in $n$ variables,
\begin{equation}
\label{eq:Fermat W}
W(x_1,\ldots, x_a)=\sum_{i=1}^a x_i^{r_i},
\end{equation}
and let $d={\mathrm{lcm}}(r_1,\ldots,r_a)$.
Write $G^{\max}=\mu_{r_1}\times\cdots\times\mu_{r_a},$ and let $G^{\min}\simeq \mu_d$ be the subgroup of $G$ generated by the exponential grading operator $(1,\ldots,1)$, as defined in \cite{FJR}. We call $G^{\max}$ the maximal symmetry group of $W,$ and $G^{\min}$ the minimal admissible symmetry group (see \cite[Definition 2.3.2]{FJR} for a definition of admissible).

\begin{definition}
Let $W$ be as above, and $G^{\min}\leq G\leq G^{\max}.$
A \emph{closed genus $g$ $(W,G)$-spin Riemann surface} is a tuple
\[
(C,S_1,\ldots, S_a;\tau_1,\ldots,\tau_a)
\]
where (i) $C$ is $d$-stable marked genus $g$ orbifold Riemann surface, (ii) for all $i\in[a]:=
\{1,\ldots,a\}$, $(S_i,\tau_i)$ is a graded twisted $r_i$-spin structure on $C$, and (iii) for each marked point the multiplicity vector belongs to $G.$
\end{definition}
We similarly define the associated twisted $(W,G)$-spin structure, and extended $(W,G)$-spin structures.

The \emph{twist} of a marking or a half-node in an extended twisted $(W,G)$-spin structure is the $a$-tuple of its twists with respect to each extended twisted spin bundle. We write $\tw(q)$ for the twist of $q,$ and $\tw_i(q)$ for its $i^{th}$ component, $i\in[a]$. The default case is that the extended twisted $(W,G)$-spin structure is just the associated twisted spin structure. A special point $q$ is \emph{broad} if it is Ramond with respect to one of the bundles $S_i,$ otherwise $q$ is \emph{narrow}.
\subsection{Moduli space and intersection theories}
Fan, Jarvis, and Ruan constructed a moduli space $\M_{g,n}^{\text{FJR}, (W,G)}$ consisting of compact stable $W$-spin orbicurves, for which they provide an enumerative
theory. The following theorem is essentially carrying out the proof of Theorem 2.2.6 of \cite{FJR} in our context of
$W=\sum_{i=1}^n x_i^{r_i},~G^{\min}\leq G\leq G^{\max}$, with a minor difference which arises because of different
automorphism groups, as we explain below.
\begin{thm}\label{thm:moduli_closed}
The moduli space $\M_{g,l}^{(W,G)}$ of $(W,G)$-spin curves is a smooth Deligne-Mumford stack with projective coarse moduli. The moduli space $\M_{g,l}^{(W,G)}$ has a universal curve $\mathcal{C}^{(W,G)}_{g,l}$ and universal spin bundles $\CS_1,\ldots,\CS_a$. Moreover, if we let $\operatorname{st}: \M_{g,l}^{(W,G)}\rightarrow \M_{g,l}$ be the morphism given by forgetting the spin bundles and orbifold structure, then $\operatorname{st}$ is a flat, proper, and quasi-finite (but not representable) morphism.

\end{thm}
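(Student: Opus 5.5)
The plan is to reduce to the case of a single bundle and then take fibre products. For each $i\in[a]$, Chiodo's construction, or equivalently Abramovich--Jarvis / Jarvis--Kimura--Vaintrob, gives the moduli stack $\M_{g,l}^{1/r_i}$ of $d$-stable twisted $r_i$-spin curves. This stack is a smooth proper Deligne--Mumford stack with projective coarse space, and it carries a universal curve and a universal root $\mathcal{L}_i$ of $\omega_{\log}$.

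The first step is to realise $\M_{g,l}^{(W,G)}$ as an open and closed substack of a fibre product. Let $\M_{g,l,d}$ be the stack of $d$-stable balanced orbicurves; over it sits the universal orbicurve, and $\M_{g,l,d}\to\M_{g,l}$ is flat, proper and quasi-finite. Each spin stack $\M^{1/r_i}_{g,l}$ lives over $\M_{g,l,d}$ as the stack of $r_i$-th roots of $\omega_{\log}$ on the universal orbicurve. Take the fibre product of these stacks over $\M_{g,l,d}$. The condition that the multiplicity vector at each marking lie in $G$ is locally constant, so it cuts out a union of connected components.

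The twisting by $\O(-\sum_{I_0}[z_j])$ is canonical once the multiplicities are fixed. So the universal bundles $\CS_i$ are obtained from the pullbacks of the $\mathcal{L}_i$ by twisting along the sections of the universal curve. Taking $\mathcal{C}^{(W,G)}_{g,l}$ to be the pullback of the universal orbicurve then gives the universal objects claimed in Theorem~\ref{thm:moduli_closed}.

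Next comes smoothness, which I would prove by deformation theory. Deformations of a spin structure are unobstructed relative to deformations of the curve: an $r$-th root of a fixed line bundle deforms uniquely, because $H^1$ and $H^2$ obstruction considerations reduce to the \'etale map $\mathrm{Pic}\to\mathrm{Pic}$, $L\mapsto L^{\otimes r}$ in characteristic $0$. Hence each $\M^{1/r_i}\to\M_{g,l,d}$ is \'etale onto the open and closed part where the root exists, though not representable, because of the automorphisms $\mu_{r_i}$ acting on the fibres. It follows that the fibre product is étale over the smooth stack $\M_{g,l,d}$, and is therefore smooth and Deligne--Mumford.

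Properness of $\operatorname{st}$ follows from the valuative criterion: after a finite base change, roots extend over the special fibre once the orbifold structure is allowed at nodes. This is Chiodo's argument, applied componentwise. Quasi-finiteness holds because there are finitely many $r_i$-th roots, $r_i^{2g}$ up to choices at nodes. Projectivity of the coarse space then follows from finiteness of $\operatorname{st}$ on coarse spaces over the projective $\M_{g,l}$.

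Flatness I would deduce from étaleness over the smooth $\M_{g,l,d}$ combined with the flatness of $\M_{g,l,d}\to\M_{g,l}$. The latter is a local computation: near a node, with stabiliser $\mu_d$, it is the map $t\mapsto t^d$ between smooth one-dimensional base directions, which is flat. Non-representability comes from the generic $\mu_{r_1}\times\cdots\times\mu_{r_a}$ fibrewise automorphisms, intersected with $G$.

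I expect the main obstacle to be exactly this ``minor difference in automorphism groups''. FJR's moduli identify structures with simultaneous automorphisms constrained to $G$, whereas here the stack uses automorphisms of each $S_i$ separately. The plan is to check that the two stacks differ by a gerbe, or an étale cover, which preserves smoothness and the other listed properties. A second delicate point is the balanced-node local analysis showing that the fibre product remains smooth. I would verify it through the explicit local chart $\{xy=t\}/\mu_d$, together with the multiplicities of the roots at the node.
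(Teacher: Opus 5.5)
Your plan is correct and is essentially what the paper does. The paper transplants the proof of FJR's Theorem 2.2.6 to its own setting, and your fibre product of $r_i$-spin root stacks over the smooth stack of $d$-stable orbicurves (\'etale by characteristic-$0$ deformation theory, proper by Chiodo's extension argument, with the $G$-condition selecting connected components) carries that argument out directly. The one thing you misidentify is the ``minor difference'' from FJR: it is not about the automorphisms of the $S_i$, which fibrewise are all of $\mu_{r_1}\times\cdots\times\mu_{r_a}$ in both settings since $G$ only selects components, so nothing is intersected with $G$; rather, stabilisers at special points are always $\mu_d$ here instead of FJR's faithful $\mu_e$, which makes the paper's space a root stack over FJR's along boundary divisors, not a gerbe or \'etale cover. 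This is harmless for you, because your direct construction never passes through FJR's space.
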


We point out the differences from our moduli space and the one outlined in \cite{FJR}. First, with our definitions, stabilizer groups of special points are always $\mu_d$, while Fan, Jarvis and Ruan choose a stabilizer group $\mu_e$ at a special point $x$
so that the representation of $\mu_e$ on the direct sum of the fibres of the spin bundles at $x$ is faithful. This changes the universal curve over $\M_{g,n}^W$. In addition, it changes automorphism groups of nodal curves which lie at the boundary of the moduli space, so that our moduli space is a root stack over the moduli space of \cite{FJR}.

The moduli space $\M_{g,n}^{(W,G)}$ decomposes into open and closed substacks $$\M_{g, \{(b_{11},\ldots,b_{1a}) \ldots, (b_{l1},\ldots,b_{la})\}}^{(W,G)},
$$ where the tuples $\{(b_{11},\ldots,b_{1a}) \ldots, (b_{l1},\ldots,b_{la})\}$ specify the twists of markings in the twisted spin structures. These connected components are nonempty precisely if Equation~\eqref{eq:close_rank1_general} holds for all $j\in [a]$.
\begin{rmk}
The moduli space for a given $G$ is an open and closed substack of that for $G^{\max},$ consisting of some of the latter's moduli space connected components.

When $g=0$, the map which forgets the spin structure induces an isomorphism between the coarsification of each connected component of $\M_{g,l}^{(W,G)}$ and $\M_{0,l}.$

The moduli of closed extended $g=0$ curves has additional connected components, thanks to the additional possible twists, but Theorem~\ref{thm:moduli_closed} extends naturally to include them.
\end{rmk}
\subsubsection{Cohomological field theory and the FJRW class}
\cite{FJR3,polishchuk2011matrix} construct a cohomology class $c_{W,G}$, called the \emph{FJRW class} in $H^*(\M_{g,l}^{(G,W)})$ and prove that it satisfies the axioms of a \emph{cohomological field theory (CohFT)} in the sense of Kontsevich-Manin \cite{KontsManin}. In the special $r$-spin case, $(W,G)=(x^r,\mu_r)$, these properties were shown, using different methods and in different scenarios, in \cite{Witten93,PV,Moc06,ChiodoStable,CLL,FJR}. 
In the case of extended twisted $g=0$ theories, \cite{JKV2,GKT2,Nill,BCT_Closed_Extended} constructed an analogous class and studied its properties.

A few important properties of the FJRW class are the following (see \cite[Theorem 1.2.5]{FJR3}, restated in our language):
\begin{itemize}
    \item {\bf Degree.} The degree of the FJRW class is given by the formula
    \[\deg(c_{(W,G)})=
    \sum_{j=1}^a\frac{\sum_{i=1}^l b_{ji}+(g-1)(r_j-1)}{r_j}.\]
    \item {\bf Concavity.} If $\pi_*(\bigoplus_{i\in[a]}\CS_i)=0$ on some connected component which has only narrow insertions, then $R^1\pi_*(\bigoplus_{i\in[a]}\CS_i)$ is a vector bundle, called the \emph{Witten bundle}, and  $c_{(W,G)}$ equals the 
    top Chern class $c^{\op{top}}(R^1\pi_*(\bigoplus_{i\in[a]}\CS_i))$. 
    \item {\bf Index $\mathbf{0}$.} If $\deg(c_{W,G})=0$ on some connected component, and both $$\pi_*(\oplus_{i\in[a]}\CS_i),~R^1\pi_*(\oplus_{i\in[a]}\CS_i)$$ are vector bundles, then $c_{W,G}$ is an explicit number.
\end{itemize}
For non-extended theories there is also the \emph{Ramond vanishing property}, stating that if some twist $b_{ji}=r_j-1$ then $c_{(W,G)}=0$ (see, e.g., \cite[Axiom 4]{JKV}).

Another important characteristic class of the moduli space are the so-called \emph{$\psi$-classes}. For any marked point $z_i$, $i=1,\ldots,l$, we define 
$$ \psi_i := c_1(\CL_i)\in H^2(\M_{g,l}^{(G,W)}),$$ where $\CL_i\to\M_{g,l}^{(G,W)}$ is the relative cotangent line bundle whose fiber at $(C,S_1,\ldots,S_l)$ is $T^*_{z_i}C.$

\begin{definition}\label{def: FJRW int num closed} The \emph{FJRW intersection numbers} are defined via the formula
\[\langle\prod_{i=1}^l\tau_{d_i}^{\vec{b}_i}\rangle^{(W,G),c}_g=C_{G,g}\int_{\M^{(W,G)}_{g,\{\vec{b}_1,\ldots,\vec{b}_l\}}}c_{(W,G)}\prod_{i=1}^l\psi_i^{d_i},\]
where $\vec{b}_i=(b_{1i},\ldots,b_{1a})$ are the vectors of twists, ${\M^{(W,G)}_{g,\{\vec{b}_1,\ldots,\vec{b}_l\}}}$ is the component of $\M_{g,l}^{(W,G)}$ corresponding to orbicurves with these twists, and $C_{G,g}=\frac{|G|^g}{d},$ with $d$ the degree of the map ${\M^{(W,G)}_{g,\{\vec{b}_1,\ldots,\vec{b}_l\}}}\to{\M_{g,l}}$ that forgets the spin structure. This constant's origin is \cite[Definitions 4.2.1, 4.2.6]{FJR}.  The intersection numbers are defined to be $0$ if the cohomological rank of the integrand differs from the dimension of ${\M^{(W,G)}_{g,\{\vec{b}_1,\ldots,\vec{b}_l\}}}$.
\end{definition}

\begin{rmk}
   In Definition~\ref{def: FJRW int num closed}, the superscript ``c'' stands for ``closed'' rather than the open theories we shall consider later on. 
\end{rmk}

\begin{definition}
    The \emph{genus $g$ FJRW potential} $$F^{(W,G),c}_g= \sum_{l\geq 0}\frac{1}{l!}{\sum_{\substack{\vec{b}_1,\ldots,\vec{b}_l}\\d_1,\ldots,d_l}}\langle\prod_{i=1}^l\tau_{d_i}^{\vec{b}_i}\rangle^{(W,G),c}_g,$$where $t^{\vec{b}}_d$ are formal variables, is defined to be the generating function of the genus $g$ open FJRW intersection numbers. We write $$F^{(W,G),c}=\sum_{g}u^{2g-2}F^{(W,G),c}_g$$ for \emph{the all genus FJRW potential}.
\end{definition}

\subsection{Relations with other theories and applications}
We refer the reader to \cite{fjrw_survey} for a more detailed survey of the subject, and more references.
\subsubsection{Mirror symmetry} \label{subsubsec:mirror symmetry}
There has been a great deal of work carried out for LG/LG mirror symmetry. It originates from work of Berglund, Henningson, and H\"ubsch \cite{BerglundHubsch, BerglundHenningson} proposing certain pairs of Landau-Ginzburg models that would mirror each other. This construction was completed by Krawitz, who also provided a mirror symmetry theorem on the level of state spaces \cite{Krawitz}. After the enumerative geometry for LG models was defined by Fan, Jarvis and Ruan, work of Li, Li, Saito and Shen on the enumerative B-model made proving closed mirror symmetry accessible \cite{polyvector, LiLiSaitoShen}, which was proven for closed enumerative invariants by He, Li, Shen, and Webb \cite{HeLiShenWebb}.  

The most relevant work for us is that of
\cite{LiLiSaitoShen} and \cite{HeLiShenWebb}, and in particular
the mirror correspondence
\[
\left(W=\sum_{i=1}^a x_i^{r_i}, G=\prod_i\mu_{r_i}\right)\longleftrightarrow 
\left(W=\sum_{i=1}^a x_i^{r_i}, G=\{e\}\right).
\]
In mirror symmetry, there are two sides, usually called the ``A-model,'' 
and the ``B-model.'' The A-model concerns 
enumerative information, in this case of the Landau-Ginzburg model on
the left. The B-model, which typically
involves period integrals, in this case variants of so-called
oscillatory integrals of the form
\[
\int_{\Gamma} e^{W/\hbar} f(x_1,\ldots,x_a)dx_1\wedge\cdots\wedge dx_a
\]
and similar integrals involving perturbations of $W$. Here $\hbar$ is
an auxilliary variable, $f$ is some holomorphic function on 
$\C^a$, and $\Gamma$ is a cycle on $\C^a$ which may
be unbounded, but such that $\re(W/\hbar)\rightarrow -\infty$ in the unbounded
directions.

A precise description of the $B$-model in this situation
in terms of Saito-Givental theory was given in \cite{LiLiSaitoShen,HeLiShenWebb}. 
Their result demonstrates that one can extract closed FJRW invariants
of the Landau-Ginzburg model with maximal symmetry group by constructing
a Frobenius manifold structure on the universal unfolding of the potential
$W$. In practice, this is a quite difficult procedure to carry out, and
\cite{HeLiShenWebb} only gives the existence of an abstract isomorphism between
$A$-model and $B$-model Frobenius manifolds. This allows for the identification
of closed FJRW $A$- and $B$-model genus $0$ invariants, and
hence the identification of invariants for all genera thanks to 
the reconstruction theorem of \cite{Teleman}. We return to this picture
in \S\ref{marks mirror symmetry section}, where an open version of 
FJRW theory allows an 
effective, explicit proof of this mirror result in the case that
$W=x^r+y^s$.

\subsubsection{Integrable hierarchies}
\label{sub:int_hierarchies_closed}
FJRW theories are also closely related to \emph{integrable hierarchies}, which are certain families of commuting non-linear PDEs~\cite{Witten2DGravity,Witten93,FSZ10,fjrw_and_drinfeld_sokolov,FJR}.
To illustrate this relationship, we now restrict to the $r$-spin case, that is $W=x^r,~r\geq 2$ and $G=\mu_r.$ In this case the relevant hierarchy is the $r$-KdV hierarchy, also known as the Gelfand-Dikii hierarchy, which we now describe.

A {\it pseudo-differential operator}~$O$ is a Laurent series
\[
O=\sum_{n=-\infty}^m o_n(T_1,T_2,\ldots,u)\partial_x^n,
\]
where $m$ is an integer,  $T_i$ for $i\geq 1,$  $\partial_x,u$ are formal variables, and $o_n(T_1,T_2,\ldots,\lambda)\in\C[u^{\pm1}][[T_1,T_2,\ldots]]$.     The space of pseudo-differential operators carries a natural structure of a non-commutative associative algebra, with multiplication $\circ$  induced from the usual product on $\C[u^{\pm1}][[T_1,T_2,\ldots]]$ and from 
\[
\d_x^n\circ f:=\sum_{l=0}^\infty\frac{n(n-1)\ldots(n-l+1)}{l!}\frac{\d^lf}{\d x^l}\d_x^{n-l},
\]
where $n\in\mathbb{Z}$, $f\in\C[u^{\pm1}][[T_*]]$, and $x$ is identified with $T_1$.

We now fix an integer $r\geq 2$.
Let $O$ be a pseudo-differential operator of the form
\[
O=\partial_x^r+\sum_{n=1}^\infty o_n\partial_x^{r-n}.\]
Then $O$ has a unique \emph{$r^{\text{th}}$ root}: a pseudo-differential operator $O^{\frac{1}{r}}$ of the form
\[
O^{\frac{1}{r}}=\partial_x+\sum_{n=0}^\infty p_n\partial_x^{-n}
\]
satisfying $\left(O^{\frac{1}{r}}\right)^r=O$.

Consider an operator $L$ of the form
$$
L:=\partial_x^r+\sum_{i=0}^{r-2}l_i\partial_x^i,\quad l_i\in\C[u^{\pm1}][[T_*]].
$$
If we write $(A)_+$ for the non-negative part of a pseudo-differential operator $A$, meaning the part composed of the non-negative powers of $\partial_x$, then it is straightforward to verify, for all $n\geq 1$, that the commutator $[(L^{n/r})_+,L]$ is of the form $\sum_{i=0}^{r-2}h_i\partial_x^i$ with $h_i\in\C[u^{\pm1}][[T_*]]$. The  $r$-KdV hierarchy is the system of PDEs for $\{l_j\}_{j=0}^{r-2}$ given by:
\begin{equation}
\frac{\partial L}{\partial T_n}=u^{n-1}[(L^{n/r})_+,L],\quad n\geq 1.
\end{equation}

Let $L$ be the solution of the system specified by the initial condition
\begin{equation}
L|_{T_{\geq 2}=0}=\partial_x^r+ru^{-r}x,
\end{equation}
 Witten's $r$-spin conjecture, proven by Faber-Shadrin-Zvonkine \cite{FSZ10} reads
\begin{thm}[Faber-Shadrin-Zvonkine]
    Define a change of  variables from the variables $t_d^a$ of Witten's $r$-spin potential $F^{\frac{1}{r},c}$ to the variables $T_i$ of  the algebra of pseudo-differential operators by
    \begin{gather}
T_m=\frac{1}{(-r)^{\frac{3m}{2(r+1)}-\frac{1}{2}-d}m!_r}t^a_d,\quad 0\leq a\leq r-2,\quad d\geq 0,
\end{gather}
where $m=a+1+rd$ and  $m!_r:=\prod_{i=0}^d(a+1+ri).
$
Then \emph{as a function of $\{T_i\},~i\geq 1$} the $r$-spin potential $F^{(x^r,\mu_r),c}$ satisfies the following:
\begin{itemize}
    \item If $m\ge 1$ is not divisible by $r$, then the coefficient of $\partial_x^{-1}$ in $L^{n/r}$ is
$$
\res L^{m/r}=\lambda^{1-m}\frac{\partial^2 F^{\frac{1}{r},c}}{\partial T_1\partial T_m}.$$
\item If $r$ divides $m$, then $\frac{\partial F^{\frac{1}{r},c}}{\partial T_m}=0$.
\end{itemize}
\end{thm}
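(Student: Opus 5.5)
This is the Faber--Shadrin--Zvonkine theorem, Witten's $r$-spin conjecture. I would follow their strategy, which combines Givental--Teleman reconstruction with the Givental--Kac--Wakimoto description of the $r$-KdV hierarchy through the $A_{r-1}$ singularity.

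\textbf{Step 1: closed-form formula for the full potential.} The genus-zero $r$-spin CohFT with $(W,G)=(x^r,\mu_r)$ is generically semisimple. At the origin the only deformation directions are the narrow insertions $t^a_0$, $0\le a\le r-2$; Ramond insertions vanish by the Ramond vanishing property. One first checks this semisimplicity: the genus-zero primary potential is determined by the degree axiom and WDVV from the three-point function and the four-point function $\langle\tau_0^1\tau_0^1\tau_0^{r-2}\tau_0^{r-2}\rangle=1/r$. The latter comes from the concavity property, as a top Chern class on $\M_{0,4}$. This identifies the Frobenius manifold with the $A_{r-1}$ Saito Frobenius manifold of the miniversal unfolding of $x^r$.

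The CohFT is homogeneous, and the Euler field is fixed by the degree formula of the FJRW class. Teleman's classification then says the total descendant potential $\exp(\sum u^{2g-2}F_g^{(x^r,\mu_r),c})$ equals Givental's formula
\[
\hat S^{-1}\hat\Psi\hat R\, e^{\hat{(U/z)}}\prod \tau_{KdV}
\]
at a semisimple point, transported back to the origin. Here the $R$-matrix is fixed by homogeneity. It is essential that the CohFT extends to the whole Frobenius manifold with a flat unit; this comes from the string equation and the forgetful axiom.

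\textbf{Step 2: identify the hierarchy.} Givental and Milanov showed that the Givental formula built from the $A_{r-1}$ Saito structure gives a tau-function of the $r$-KdV (Gelfand--Dickey) hierarchy. Concretely, they construct vertex operators from period integrals of $x^r+\dots$ and verify the Hirota bilinear equations. Faber--Shadrin--Zvonkine extend this from the semisimple point to the origin; this requires showing that the conjugations by $\hat S$ and the regularity at the non-semisimple origin preserve the bilinear equations.

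Translating the Hirota form into Lax form gives the two statements. First, $\res L^{m/r}=u^{1-m}\partial^2F/\partial T_1\partial T_m$. Second, the flows with $r\mid m$ are trivial, because $L^{m/r}$ is then a differential operator and commutes with $L$. The potential has no dependence on the corresponding variables because those insertions would be Ramond.

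\textbf{Step 3: normalization.} One then matches the initial condition $L|_{T_{\ge2}=0}=\partial_x^r+ru^{-r}x$ and the explicit rescaling $T_m=(\cdots)t^a_d$. This is done by comparing genus-zero low-point correlators and the string and dilaton equations, including powers of $(-r)$ and the factor $m!_r$. I expect this to be routine bookkeeping.

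The main obstacle is Step 2's extension to the non-semisimple origin together with the uniqueness in Step 1. Teleman's theorem needs semisimplicity, so the argument must show that both sides are analytic in the Frobenius-manifold coordinates and agree on an open set. For this I would rely on the polynomiality of the $r$-spin CohFT in the $t^a_0$.
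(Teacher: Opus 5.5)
The survey gives no proof of this statement; it only cites \cite{FSZ10}. Your outline is a sound strategy, but it is not the route taken there, since that paper predates Teleman's classification \cite{Teleman}.

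The heart of \cite{FSZ10} is a direct proof that Givental's formula satisfies the tautological relations on $\overline{\mathcal{M}}_{g,n}$. This is combined with a uniqueness argument: relations expressing $\psi$-monomials of degree $\ge g$ through boundary strata, together with the homogeneity of the $r$-spin theory, show that such a potential is determined by its genus-zero part. That genus-zero part is the $A_{r-1}$ potential by \cite{JKV}, and Givental's $r$-KdV theorem then finishes the proof.

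You replace all of this by Teleman's classification applied to the shifted CohFT $\Omega^\tau$ at a semisimple point, with homogeneity fixing $R$. This is how the identification is usually made today, e.g.\ in \cite{PPZ}. Your route is shorter, makes ``Givental's formula satisfies the tautological relations'' automatic, and yields equality of cohomology classes rather than only of correlators. The price is reliance on Teleman's much deeper theorem, whose proof uses the Madsen--Weiss theorem, whereas the original argument needs only specific $\psi$-class relations.

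The polynomiality you flag as the main obstacle is elementary. The class $\pi_*\Omega_{g,n+m}(a_1,\dots,a_n,b_1,\dots,b_m)$ can be nonzero only if $2m\le (r-2)(g-1)+\sum_i a_i$. This is because each extra narrow insertion raises the degree of Witten's class by at most $(r-2)/r$, while $\pi_*$ lowers it by one per forgotten point. So $\Omega^\tau$ is polynomial in $\tau$, and the potential is polynomial in the $t^a_0$ at each fixed genus and descendant order. Hence both the identification with Givental's expansion and the resulting Lax equations pass from the semisimple locus to the origin.

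Your Step 2 misdescribes what \cite{FSZ10} contributed, but your argument does not use that description.
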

The second property in the theorem is called the \emph{Ramond vanishing property} of the $r$-spin potential, and implies that intersection numbers which contain Ramond insertions vanish. The theorem implies that $\exp(F^{(x^r,\mu_r),c})$ becomes, after the above change of variables, a \emph{tau-function} of the $r$-KdV hierarchy.
The case $r=2$ is equivalent Witten's original KdV conjecture \cite{Witten2DGravity}, proven by Kontsevich \cite{Kontsevich}. The equivalence uses the Index Zero axiom on FJRW classes, mentioned above. \cite{fjrw_and_drinfeld_sokolov} have extended this result to more general integrable hierarchies.

\subsubsection{The LG/CY correspondence}
Take $W$ as in \eqref{eq:Fermat W} and $G$ a subgroup of $G^{\max}$ that contains $G^{\min}$ (i.e., is admissible). The polynomial $W$ is quasihomogeneous of degree $d=\mathrm{lcm}(r_1,\ldots,r_a)$, and defines a hypersurface in the $(n-1)$-dimensional (Gorenstein) weighted projective space $\mathbb{P}(q_1, \dots, q_n)$, where $q_i = \tfrac{d}{r_i}$. When $\sum_i \tfrac{1}{r_i} = 1$, then by adjunction the hypersurface $X_W:= Z(W) \subseteq \mathbb{P}(q_1, \dots, q_n)$ is a Calabi-Yau (CY) orbifold. 

One can also take finite quotients of $Z(W)$ and obtain other Calabi-Yau orbifolds. There is a subgroup $SL(W) \leq G^{\max}$ consisting of elements $(k_1, \dots, k_n)$ so that $\sum_i \tfrac{k_i}{r_i} \in \Z$, which corresponds to any $n$-tuple of roots of unity whose product is $1$. Suppose $G^{\min} \leq G \leq SL(W)$ and take the quotient group $\bar G:= G / G^{\min}$. Then one sees that $[X_W / \bar G]$ is also a Calabi-Yau orbifold. We remark that one must consider $\bar G$ instead of $G$ as  $G^{\min}$ is the intersection of $G^{\max}$ with the torus $\C^*$ found when one writes the weighted-projective space as a quotient $\mathbb{P}(q_1, \dots, q_n) = (\C^{n} \setminus\{0\}) / \C^*$. 

The \emph{Landau-Ginzburg/Calabi-Yau (LG/CY)} correspondence predicts that whenever the orbifold $[X_W/\hat{G}]$ is CY, then its GW theory agrees with the FJRW theory of $(W,G)$ after analytic continuation and a symplectic transformation. This correspondence was first proven for the $g=0$ Fermat quintic threefold with minimal symmetry group in \cite{ChiodoRuan}. Various instances, special cases and generalizations have appeared since, see \cite{acosta2014asymptotic,chiodo2011lg,clader2017landau,clader2018sigma,krawitz2011landau,lee2014mirror,milanov2011gromov,milanov2016global,priddis2014proof,priddis2016landau,zhao2022landau}.

\subsubsection{$3$-spin structures and tautological relations on the moduli of curves} We remark that $r$-spin theory has also enjoyed applications to understanding the geometry of $\M_{g,n}$.
The beautiful work \cite{PPZ} applies the Teleman-Givental classification of semi-simple CohFTs \cite{givental2001semisimple,givental2001gromov,Teleman} to the $3$-spin CohFT to prove that Pixton's relations \cite{pixton2012conjectural} hold in the tautological ring of $\M_{g,n}.$ They also obtain an expression for the Witten class (pushed down to the moduli of curves).

\section{Open FJRW theory}
Open FJRW (OFJRW) theory  aims to construct and study the analogue of FJRW theory, but for moduli spaces of (orbifold) Riemann surfaces with boundary.
Developing such an open extension is challenging for several reasons:
\begin{itemize}
\item \textbf{What are the objects?} As seen above, there are many crucial objects at play to define (closed) FJRW intersection numbers. In particular, we need to define the open analogues to $W$-spin curves, the moduli space, the FJRW bundles, $\psi$-classes, and finally intersection numbers. In Definition~\ref{def:open_W_graded}, we start with the definition of a $W$-spin Riemann surface with boundary. Once such an object is defined, new obstacles regarding their moduli spaces and corresponding FJRW bundles present themselves, as seen below.  

\item \textbf{The existence of boundaries.} Unlike its closed cousin, the moduli space of $W$-spin Riemann surfaces with boundary has \emph{real codimension $1$} boundaries, and higher codimension corners. This raises a fundamental difficulty --- while in the closed setting the intersection theory involved intersecting cohomology classes, in the open setting one cannot work in the level of cohomology. Equivalently, defining integration and intersection numbers must involve imposing some boundary conditions, and the resulting theory will depend on the choice of the boundary conditions. A considerable portion of the geometry in OFJRW theory lies in finding geometrically meaningful boundary conditions, and studying \emph{wall crossings} --- the effect of changes in boundary conditions.

\item \textbf{Orientation.} (Closed) FJRW moduli spaces, and the orbifold vector bundles which induce the cohomology classes used in these theories are all \emph{complex} objects, hence possess canonical orientations. It is not the case for their open analogues, which are \emph{real} objects by nature. In order to define the relative Euler class and invariants, one thus has to overcome orientation problems. 

\item \textbf{Computability.} The above features of the open theories make computing their correlators more involved than in the closed case. The existence of wall crossings complicates computations further, and raises additional questions like which invariants are unaffected by wall crossings, and how the collections of correlators behaves under wall crossing transformations.
\item \textbf{Missing foundations.} One of the most important developments in GW and FJRW theory was the development of the virtual fundamental class (see, e.g.,
 \cite{behrend1997intrinsic} for the GW construction in algebraic geometry, and \cite{polishchuk2011matrix,FJR3,Moc06,CLL} for FJRW theory), which is responsible for major advances in the field. Unfortunately, at the moment there is no analogous construction in the open setting. This makes rigorous definitions of theories more convoluted, and, in the case of FJRW theory, generally restricted to concave theories and narrow insertions, or mild weakenings of these conditions.
\item \textbf{Scarcity of physics predictions.} Finally, unlike the closed setting, and partially because of the difficulties listed above, there are only few predictions from physics regarding the potential existence and properties of FJRW theory.
See, for example, \cite{Walcher,Melissa} which concern properties of the Fermat quintic FJRW and \cite{Hori,Horiprivate} regarding possible boundary twists for open $r$-spin theories (see \textsection\ref{subsec: point insertion}).
\end{itemize} 
Despite the above impediments, open FJRW theories have been constructed and analyzed in several important cases, as we detail below.

\subsection{Graded $W$-spin surfaces}
\begin{definition}\label{def: open RS bdry}
Fix $d\in\mathbb{N}.$ A {\it (connected)  $d$-stable marked genus $g$ orbifold Riemann surface with boundary} is a tuple
\[(C, \phi, \Sigma, \{z_i\}_{i \in [l]}, \{x_j\}_{j \in [k]})\]
in which:
\begin{enumerate}[(i)]
\item $C$ is a (possibly connected, nodal) $d$-stable genus $g$ orbifold Riemann surface.
\item $\phi: C \rightarrow C$ is an anti-holomorphic involution ({\it conjugation}), such that the induced action on the set of connected components of $C\setminus C^\phi$ is fixed-point free. We also write $\phi:|C|\rightarrow |C|$ for this induced
conjugation.
\item $\Sigma$ is a fundamental domain for the induced conjugation
on $|C|$. Here $\Sigma$ is a Riemann surface, usually, but not
always, with boundary.
\item The $z_i \in C$ are a collection of distinct points
 (the {\it internal marked points}) labeled by the set $I$, whose images in $|C|$ lie in $\Sigma \setminus \d\Sigma$, with {\it conjugate marked points} $\bar{z}_i:= \phi(z_i)$.
\item The $x_j \in \text{Fix}(\phi)$ are a collection of distinct points (the {\it boundary marked points}) labeled by the set $B$, whose images in $|C|$ lie in $\d\Sigma$.
\end{enumerate}
 We will often denote the object only by $C$ or even by $\Sigma$. When $C$ is connected and stable with genus $g=0$ we refer to $C$ as a stable marked disk.  Figure \ref{fig:smooth_orbRS_boundary} provides an example of such a disk. Here, in the genus $g=0$ and smooth case, one can think of the boundary as $\R\cup\{\infty\}$ and interior of $\Sigma$ as the upper-half plane of $\C$. 
\begin{figure}
  \centering

\begin{tikzpicture}[scale=0.8]
  \draw (0,0) circle (2cm);
  \draw (-2,0) arc (180:360:2 and 0.6);
  \draw[dashed] (2,0) arc (0:180:2 and 0.6);

 \node at (2, 2) {$C$};
 \node at (2.35,0) {$\partial\Sigma$};
\node (c) at (-1,.95) {$\bullet$};
\node[right] at (-1,.95) {$z_1$};

\node at (-1,-1.25) {$\bullet$};
\node[right] at (-1,-1.25) {$\bar z_1$};

\node at (.25, -.6) {$\bullet$};
\node[above] at (.25,-.6) {$x_1$};

\node (a) at (.5, 1.2) { $\Sigma$};
\node at (0,2) {$\bullet$};
\node[above] at (0,2) {$z_2$};
\node (c) at (0, -2) {$\bullet$};
\node[below] at (0,-2) {$\bar z_2$};
\end{tikzpicture}

  \caption{A smooth connected marked genus 0 orbifold Riemann surface with boundary with two internal markings $z_1, z_2$ and a boundary marking $x_1$.}
  \label{fig:smooth_orbRS_boundary}
\end{figure}
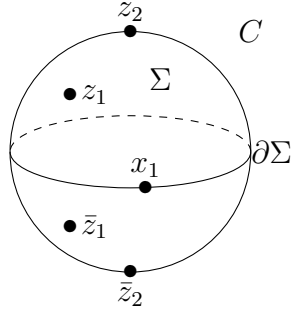
An isolated $\phi$-invariant point is called a \emph{contracted boundary node}. Those $\phi$-invariant nodes which are not a contracted boundary are called \emph{boundary nodes}. The remaining nodes whose image under the coarsification map falls in the interior of $\Sigma$ are the \emph{internal nodes}. Their conjugate points are the \emph{conjugate nodes}. We call $\partial\Sigma$ the \emph{boundary of the open surface}. It is oriented as the boundary of $\Sigma$. 
The surface is \emph{smooth} if all connected components are smooth. The stability assumption implies that each connected component of the normalization has finite automorphism group. The collection of marked points, contracted boundaries, nodes and their conjugates are collectively called \emph{special points}.

Two $d$-stable marked genus $g$ orbifold Riemann surfaces with boundary are isomorphic if there exists an isomorphism between them which preserves all the extra data.
\end{definition}

Note that one can write
\[|C| = \left(\Sigma \coprod_{\d \Sigma} \overline{\Sigma}\right)/\sim_{CB},\]
where $\bar{\Sigma} = \phi(\Sigma)$ is obtained from $\Sigma$ by reversing the complex structure, and $\sim_{CB}$ is the equivalence relation on $\Sigma \cup_{\d \Sigma} \overline{\Sigma}$ defined by $x\sim_{CB}y$ precisely if $x=\phi(y)$ and one of them is a contracted boundary node.

A node of a nodal marked surface can be internal, boundary or contracted boundary, both internal and boundary nodes can be separating or nonseparating,\footnote{A node $n$ is separating if the result of partial normalization of $C$ at $n$ is disconnected. In $g=0$ all nodes are separating.} so there are five types of nodes, as illustrated in Figure~\ref{fig:node type} by shading $\Sigma \subseteq |C|$ in each case.  Note that ${\partial}\Sigma\subset\text{Fix}(|\phi|)$ is a union of cycles, and $\text{Fix}(|\phi|)\setminus{\partial}\Sigma$ is the union of the contracted boundaries.

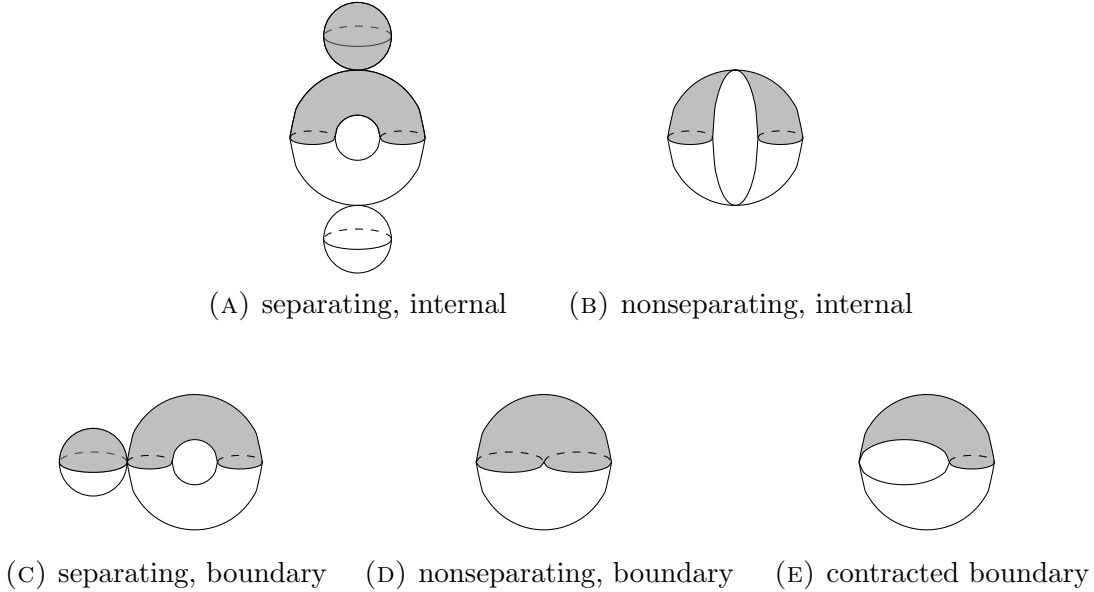
\begin{figure}[h]
\centering

  \begin{subfigure}{.3\textwidth}
  \centering

\begin{tikzpicture}[scale=0.3]
   \draw[ domain=-1:1,  smooth, variable=\x] plot ({\x}, {sqrt(1-\x*\x)});
   \draw[ domain=-3:3,   smooth, variable=\x] plot ({\x}, {sqrt(9-\x*\x)});

   \draw[ domain=-1:1, smooth, variable=\x] plot ({\x}, {-sqrt(1-\x*\x)});
   \draw[ domain=-3:3,  smooth, variable=\x] plot ({\x}, {-sqrt(9-\x*\x)});

    \draw [fill=gray,fill opacity=0.5] plot [domain=-1:1,  smooth]({\x}, {sqrt(1-\x*\x)}) --plot [domain=1:3,  smooth]({\x}, {-sqrt(1-(\x-2)*(\x-2))*0.3}) -- plot [domain=3:-3,  smooth ] ({\x}, {sqrt(9-\x*\x)})-- plot [domain=-3:-1,  smooth]({\x}, {-sqrt(1-(\x+2)*(\x+2))*0.3});

    \draw  [dashed, domain=1:3,  smooth] plot ({\x}, {sqrt(1-(\x-2)*(\x-2))*0.3});

    \draw  [dashed, domain=-3:-1,  smooth] plot ({\x}, {sqrt(1-(\x+2)*(\x+2))*0.3});

    \draw (0,4.5) circle (1.5);
    \draw (-1.5,4.5) arc (180:360:1.5 and 0.45);
    \draw[dashed] (1.5,4.5) arc (0:180:1.5 and 0.45);

     \draw (0,-4.5) circle (1.5);
    \draw (-1.5,-4.5) arc (180:360:1.5 and 0.45);
    \draw[dashed] (1.5,-4.5) arc (0:180:1.5 and 0.45);

    \draw [fill=gray,fill opacity=0.5] (0,4.5) circle (1.5);
   
\end{tikzpicture}

  \caption{separating, internal }
\end{subfigure}
  \begin{subfigure}{.3\textwidth}
  \centering

\begin{tikzpicture}[scale=0.3]

   \draw[ domain=-3:3,   smooth, variable=\x] plot ({\x}, {-sqrt(9-\x*\x)});
  \draw [fill=gray,fill opacity=0.5] plot [domain=-1:1,  smooth]({\x}, {sqrt(1-\x*\x)*3}) --plot [domain=1:3,  smooth]({\x}, {-sqrt(1-(\x-2)*(\x-2))*0.3}) -- plot [domain=3:-3,  smooth ] ({\x}, {sqrt(9-\x*\x)})-- plot [domain=-3:-1,  smooth]({\x}, {-sqrt(1-(\x+2)*(\x+2))*0.3});

  \draw  [dashed, domain=1:3,  smooth] plot ({\x}, {sqrt(1-(\x-2)*(\x-2))*0.3});

    \draw  [dashed, domain=-3:-1,  smooth] plot ({\x}, {sqrt(1-(\x+2)*(\x+2))*0.3});

    \draw[ domain=-1:1,   smooth, variable=\x] plot ({\x}, {-sqrt(1-\x*\x)*3});
\end{tikzpicture}
\vspace{0.9cm}
  \caption{nonseparating, internal }
\end{subfigure}

\begin{subfigure}{.3\textwidth}
  \centering
\vspace{0.9cm}
\begin{tikzpicture}[scale=0.3]
  \draw[ domain=-1:1, smooth, variable=\x] plot ({\x}, {-sqrt(1-\x*\x)});
   \draw[ domain=-3:3,  smooth, variable=\x] plot ({\x}, {-sqrt(9-\x*\x)});

    \draw [fill=gray,fill opacity=0.5] plot [domain=-1:1,  smooth]({\x}, {sqrt(1-\x*\x)}) --plot [domain=1:3,  smooth]({\x}, {-sqrt(1-(\x-2)*(\x-2))*0.3}) -- plot [domain=3:-3,  smooth ] ({\x}, {sqrt(9-\x*\x)})-- plot [domain=-3:-1,  smooth]({\x}, {-sqrt(1-(\x+2)*(\x+2))*0.3});

    \draw  [dashed, domain=1:3,  smooth] plot ({\x}, {sqrt(1-(\x-2)*(\x-2))*0.3});

    \draw  [dashed, domain=-3:-1,  smooth] plot ({\x}, {sqrt(1-(\x+2)*(\x+2))*0.3});

    \draw (-4.5,0) circle (1.5);
    \draw (-6,0) arc (180:360:1.5 and 0.45);
    \draw[dashed] (-3,0) arc (0:180:1.5 and 0.45);
    \draw[fill = gray, opacity = 0.5] (-6,0) arc (180:360:1.5 and 0.45) arc (0:180:1.5);
\end{tikzpicture}
\vspace{0.15cm}

  \caption{separating, boundary }
\end{subfigure}
\begin{subfigure}{.3\textwidth}
  \centering

\begin{tikzpicture}[scale=0.3]
\vspace{0.15cm}

   \draw[ domain=-3:3,  smooth, variable=\x] plot ({\x}, {-sqrt(9-\x*\x)});

    \draw [fill=gray,fill opacity=0.5] plot [domain=0:3,  smooth]({\x}, {-sqrt(2.25-(\x-1.5)*(\x-1.5))*0.3}) -- plot [domain=3:-3,  smooth ] ({\x}, {sqrt(9-\x*\x)})-- plot [domain=-3:0,  smooth]({\x}, {-sqrt(2.25-(\x+1.5)*(\x+1.5))*0.3});

    \draw  [dashed, domain=0:3,  smooth] plot ({\x}, {sqrt(2.25-(\x-1.5)*(\x-1.5))*0.3});

    \draw  [dashed, domain=-3:0,  smooth] plot ({\x}, {sqrt(2.25-(\x+1.5)*(\x+1.5))*0.3});
  
\end{tikzpicture}
\vspace{0.15cm}

  \caption{nonseparating, boundary}
\end{subfigure}
\begin{subfigure}{.3\textwidth}
  \centering

\begin{tikzpicture}[scale=0.3]
\vspace{0.15cm}
  \draw[ domain=-3:3,  smooth, variable=\x] plot ({\x}, {-sqrt(9-\x*\x)});

    \draw [fill=gray,fill opacity=0.5] plot [domain=1:3,  smooth]({\x}, {-sqrt(1-(\x-2)*(\x-2))*0.3}) -- plot [domain=3:-3,  smooth ] ({\x}, {sqrt(9-\x*\x)})-- plot [domain=-3:1,  smooth]({\x}, {sqrt(4-(\x+1)*(\x+1))*0.5});

    \draw  [dashed, domain=1:3,  smooth] plot ({\x}, {sqrt(1-(\x-2)*(\x-2))*0.3});

    \draw  [ domain=-3:1,  smooth] plot ({\x}, {-sqrt(4-(\x+1)*(\x+1))*0.5});
\end{tikzpicture}
\vspace{0.15cm}

  \caption{contracted boundary }
\end{subfigure}

\caption{The five types of nodes on a nodal marked surface.}
\label{fig:node type}
\end{figure}

\begin{definition}
Let $C$ be a $d$-stable marked genus $g$ orbifold Riemann surface with boundary, and $r\geq 2$ an integer which divides $d$. 
An \emph{$r$-spin} (resp. \emph{twisted $r$-spin}) structure on $C$ is an $r$-spin structure $L$ (resp. twisted $r$-spin structure $S$) on $C$ together with an involution  $\widetilde{\phi}$ on $L$ (resp. $S$) lifting
$\phi$ and such that $\widetilde{\phi}^{\otimes r}$ agrees under $\tau$
with the involution of $\omega_{C,\log}$ induced by $\phi$. We write $J=S^\vee\otimes\omega_C.$
\end{definition}

Twists of marked points and half-nodes relative to the twisted $r$-spin structure $S$ are defined as in the closed case. Twists at marked points and half-nodes are the same as the corresponding twists of their conjugates. As in the closed case, twists of the two branch points of a node satisfy \eqref{eq:twists_at_half_nodes}. We refer to the twist at an internal (boundary) marked point $z_i$ ($x_j$) as the \emph{internal (boundary) twist} $a_i~(b_j)$.

Degree considerations impose the following arithmetic constraint on the existence of an $r$-spin structure:
\begin{obs}
\label{obs:open_rank_general}
Let $S$ be a twisted $r$-spin structure on a connected marked genus $g$ orbifold Riemann surface with boundary, with internal markings $i\in [l]$ with twists $a_i$ and boundary markings $j \in [k]$ with twists $b_j$. Then we have a constraint on the twists given by the condition that 
\begin{equation}\label{eq:open_rank1_general}
\frac{2\sum_{i\in [l]} a_i + \sum_{j\in [k]} b_j+(g-1)(r-2)}{r}\in \Z.
\end{equation}
\end{obs}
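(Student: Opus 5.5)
The plan is to reduce to the closed constraint \eqref{eq:close_rank1_general} applied to the doubled curve $C$. By definition, an open twisted $r$-spin structure is in particular a closed twisted $r$-spin structure $S$ on the orbifold curve $C$. We only have to record its marked points and their twists, and compute the genus of $C$ in terms of the data of $\Sigma$. So we apply Observation~\ref{obs:closed_constraints} (equivalently Proposition~\ref{obs:twists_on_coarse} on $|C|$) to $C$ itself, taking the smooth case first.

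\emph{Step 1 (twists on the double).} The marked points of $C$ are the $z_i$, their conjugates $\bar z_i=\phi(z_i)$, and the boundary points $x_j$. Twists are preserved under the lift $\widetilde\phi$, so $\bar z_i$ has the same twist $a_i$ as $z_i$. The sum of twists on $C$ is therefore
\[
2\sum_{i\in[l]}a_i+\sum_{j\in[k]}b_j .
\]

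\emph{Step 2 (genus).} The genus $g$ of a surface with boundary is, by the convention implicit in the statement, the genus of the doubled surface $|C|=\Sigma\cup_{\partial\Sigma}\overline\Sigma$. With this convention, \eqref{eq:close_rank1_general} applied to the closed smooth curve $C$ gives exactly \eqref{eq:open_rank1_general}. If instead the statement meant the doubled genus $g_{\mathbb C}=2g_\Sigma+h-1$, I would recheck this normalization against \eqref{eq:close_rank1_general}.

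\emph{Step 3 (nodal case).} For nodal $C$, I would either argue by deformation invariance of degrees, since the twists are locally constant in families and a nodal open surface smooths to a smooth one with the same markings and genus, or argue directly. The direct argument applies Proposition~\ref{obs:twists_on_coarse} to each component of the normalization. Summing degrees of $|\widehat S|^{\otimes r}$ over components and using \eqref{eq:twists_at_half_nodes}, each node contributes $c_p+c_{p'}+2\equiv 0 \pmod r$. This matches the change in $2g-2$ under normalization.

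A contracted boundary node is the one genuinely new case. It is a $\phi$-invariant internal-type node, and it is still a node of $C$ whose two branches have twists summing to $r-2 \pmod r$. So the same bookkeeping applies.

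I expect the only real obstacle to be the genus convention in Step 2 and the accounting of the contracted boundary nodes in Step 3. The rest is a direct specialization of the closed observation.
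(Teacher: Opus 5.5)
Your proposal is correct and follows the degree count the paper has in mind: view $S$ as a twisted $r$-spin structure on the closed double $C$, whose markings $z_i,\bar z_i,x_j$ contribute twists summing to $2\sum a_i+\sum b_j$, and apply \eqref{eq:close_rank1_general}; your node-by-node bookkeeping via \eqref{eq:twists_at_half_nodes} correctly handles the nodal case, including contracted boundary nodes. In Step 2, the genus convention is not just implicit but fixed by item (i) of the definition of a marked orbifold Riemann surface with boundary, which makes $C$ itself the genus $g$ surface, so $g$ is the genus of $|C|$; your hedge about $2g_\Sigma+h-1$ is unnecessary, since that is the same number.
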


If $n$ is a node of $C$ then the half-nodes also inherit twists, and they sum to $r-2$ modulo $r$, as in~\eqref{eq:twists_at_half_nodes}. Combined with \eqref{eq:open_rank1_general} this fixes the twists of \emph{separating} half-nodes.

\subsubsection{Lifting and alternations}
A completely new geometric ingredient which is foundational for open FJRW theories is the notion of \emph{liftings}. This is an additional structure, coming on top of the spin structures, which can be thought of as \emph{boundary conditions} for the spin structure on the boundary of the surface. The liftings can be thought of as picking a positive direction, or orienting, the real subspin bundle on the boundary minus the special points, and they play vital roles in defining a canonical relative orientation for the Witten bundle, and for defining boundary conditions for sections of the Witten bundle. A \emph{grading} is a lifting which satisfies a few additional properties at special points. At first reading the reader may skip this section, bearing in mind the idea of a lifting and grading as picking positive directions of the real spin line on the boundary.

\begin{definition}\label{def:lifting_compatible}
Let $S$ be a twisted $r$-spin structure on a connected marked genus $g$ orbifold Riemann surface with boundary $C$.

\begin{enumerate}

\item Let $A$ be the complement in $C^\phi$ of the set of special points. A \emph{lifting of $S$} is a choice (if it exists) of an orientation for the real line bundle
\[S^{\widetilde\phi}\to A\]
 satisfying the following property.  For any vector $v \in S^{\widetilde\phi}_a$, at any $a\in A,$ such that $v$ is positive with respect to the orientation specified by the choice of lifting, its image under the map $v \mapsto \tau(v^{\otimes r})$ is positive with respect to the natural orientation of $\omega_{C}^{\phi}|_{A}$. 

A {\it lifting of $J$ over $A$} is an orientation of the real line bundle
\[  J^{\widetilde\phi}\to A,\]
 satisfying the following property. If we take any vector $w\in J^{\widetilde\phi}_a$ at any $a\in A$ such that $w$ is positive with respect to the orientation, then there exists a vector $v\in S^{\widetilde\phi}_a$ with $\tau(v^{\otimes r})$ and $\langle w,v\rangle$ positive with respect to the natural orientation of $\omega_{C}^{\phi}|_{A}$, where $\langle-,-\rangle$ is the natural pairing between fibers of $S,J$ which takes values at the corresponding fiber of $\omega_C.$

 \item Suppose that $C$ has a contracted boundary Ramond node $q$. In this case a \emph{lifting of $S$} is a choice (if it exists) of an orientation for the real line $(S_q)^{\tilde{\phi}}$
 satisfying a different set of properties as follows. 
The fiber $\omega_{C,q}$ is canonically identified with $\C$ via the residue, and the involution $\phi$ is sent, under this identification, to the involution $z\to -\bar{z},$ whose fixed points are the purely imaginary numbers.\footnote{The residue of a conjugation invariant form $\zeta$ can be calculated as $\frac{1}{2\pi i} \oint_L\zeta $ where $L\subset\Sigma$ is a small loop surrounding $q$ whose orientation is such that $q$ is to the left of $L$. The behaviour under conjugation shows that the residue of an invariant section is purely imaginary.} 
The choice of orientation must satisfy the property that for any vector $v\in S_q^{\tilde\phi}$ positive with respect to the orientation, the image of $v^{\otimes r}$ under the map
\[
S^{\otimes r}_q \rightarrow \omega_{C,q}
\]
is \emph{positive imaginary}, meaning that it lies in $i\R_+$. We then write a lifting of $S$ at $q$ as an equivalence class $[v]$ of such positive elements $v$ under the equivalence relation of multiplication by a positive real number. In the contracted boundary case,
there always exists a $\widetilde{\phi}$-invariant $w \in J|_q$ such that $\langle w , v\rangle$ is positive imaginary, and we refer to such $w$ as a {\it grading at $q$}.
 \end{enumerate}

We say that a twisted $r$-spin structure $S$ on a marked genus $g$ orbifold Riemann surface with boundary $C$ is \emph{compatible} if each connected component of $C$ has a lifting, and every contracted boundary is Ramond. A \emph{lifting on $C$} is a choice of lifting on each connected component.
\end{definition}
\begin{rmk}
The grading at a contracted boundary, defined in Definition \ref{def:lifting_compatible}, is a limit case of 
a lifting over a boundary component with no special points which contracts to a single point. It can be shown that this limit must be a Ramond point.
\end{rmk}

Note that a lifting of $S\rightarrow C$ induces a lifting on the induced $r$-spin structure on the normalization of $C.$ 
\begin{definition}[Alternating nodes or marked boundary points; grading]
\label{def:alt}
Let $S$ be a compatible twisted $r$-spin structure on a connected marked genus $0$ orbifold Riemann surface with boundary. Suppose further that $C$ has no contracted boundary nodes. Take $q$ to be a boundary marked point, or boundary half-node of the normalization $\NNN:\widehat{C}\to C,$ and take $U_q$ to be some neighborhood of $|\NNN^{-1}(q)|$ in the boundary of the normalized surface. We say that the \emph{lifting alternates at $q$} and that $q$ is \emph{alternating} with respect to the lifting if the orientation \emph{cannot be extended} to an orientation of $|\widehat{J}|^{\widetilde\phi}\big|_{U_q}$. Otherwise the lifting \emph{does not alternate} or is \emph{non-alternating} and $q$ is \emph{non-alternating}.\footnote{
Alternating and non-alternating are termed \emph{legal} and \emph{illegal} resp. in \cite{BCT1}.}
We write $\alt(q)=1$ if the lifting alternates in $q$ and $\alt(q)=0$ if it does not alternate.

A \emph{grading} of a twisted $r$-spin structure on a connected marked genus $g$ orbifold Riemann surface with boundary, is a lifting with the additional property that for every Ramond boundary node both half-nodes are non-alternating, and otherwise exactly one half-node is alternating.  
\end{definition}

We summarize the properties of $r$-spin structures with a lifting in the following proposition. It was proven in \cite[Proposition 2.5]{BCT1} for $g=0,$ but the arguments generalize to $g>0.$

\begin{prop}\label{prop:graded_r_spin_prop}
Suppose that $(C,S)$ is a smooth connected genus $g$ twisted $r$-spin Riemann surface with boundary.
\begin{enumerate}
\item\label{it:compatibility_odd} When $r$ is odd, any twisted $r$-spin structure is compatible, and there is a unique choice of a lifting.
\item\label{it:compatibility_even} When $r$ is even, the boundary twists $b_j$ in a compatible twisted $r$-spin structure must be even.  Whenever the boundary twists are even, either the $r$-spin structure is compatible or it becomes compatible after replacing $\tilde\phi$ by $\xi\circ \tilde \phi \circ \xi^{-1}$ for $\xi$ an $r^{\text{th}}$ root of $-1$, which yields an isomorphich $r$-spin structure. 
\item\label{it:lifting and parity_odd} Suppose $r$ is odd and $v$ is a lifting. Then $x_j$ is alternating if and only if its twist is odd.
\item\label{it:lifting and parity_even}
Suppose $r$ is even.  If a lifting alternates precisely at a subset $D \subset \{x_j\}_{j \in [k]}$, then
\begin{equation}
\label{eq:parity}
\frac{2\sum a_i + \sum b_j+(g-1)(r-2)}{r} \equiv |D| +g+1\mod 2.
\end{equation} If~\eqref{eq:parity} holds, then there exist exactly two liftings per connected component of $|C|^\phi$, one the negative of the other, which alternate precisely at $D \subset \{x_j\}_{j \in B}$.
\end{enumerate}
\end{prop}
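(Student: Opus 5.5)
We reduce everything to a local-to-global analysis of the real line bundle $S^{\widetilde\phi}$ along the fixed locus $C^\phi$, and then count the total twisting around each boundary circle. Work on the coarse surface, using Proposition~\ref{obs:twists_on_coarse}: $|S|^{\otimes r}\cong \omega_{|C|}\otimes\O(-\sum a_i[z_i]-\sum a_i[\bar z_i]-\sum b_j[x_j])$, compatibly with the real structures. Over $A=C^\phi\setminus\{x_j\}$, a lifting is an orientation of the real line $S^{\widetilde\phi}$ whose $r$-th power is positive in $\omega_C^\phi$.

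\textbf{Odd $r$.} The map $v\mapsto v^{\otimes r}$ on a real line is an odd function, hence a bijection onto the real line $(\omega_C^{\phi})$ that preserves sign structure. So on each arc of $A$ there is exactly one orientation whose $r$-th power is positive. This gives existence and uniqueness of a lifting, which is item (\ref{it:compatibility_odd}).

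\textbf{Even $r$.} The $r$-th power of a nonzero real vector is always positive or always negative on a component. So the only possible obstruction to compatibility is a global sign. Replacing $\widetilde\phi$ by $\xi\widetilde\phi\xi^{-1}$ with $\xi^r=-1$ replaces the real line by $\xi\cdot S^{\widetilde\phi}$, whose $r$-th powers flip sign. This gives the second half of (\ref{it:compatibility_even}). Evenness of $b_j$ comes from a local computation: near $x_j$ take a real coordinate $t$. A real section of $S$ behaves like $t^{?}$, and $v^{\otimes r}$ corresponds to a real section of $\omega$ vanishing to order $b_j$ at $x_j$ after twisting. For positivity to be possible on both sides of $x_j$, or for the real structure to be consistent at all, $b_j$ must be even.

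\textbf{Alternation.} This is a local sign question near each $x_j$. Fix a local real frame $e$ of $|S|$ near $x_j$ with $e^{\otimes r}=t^{-b_j}\,dt$ up to a positive real function (the sign convention for $b_j$ is fixed by the twist formula). Then $J=S^\vee\otimes\omega$ has real frame $e^\vee dt$. A lifting on the two sides of $x_j$ is $\epsilon_\pm e$, where $\epsilon_\pm^r\,\mathrm{sign}(t^{-b_j})>0$ on the side $\pm t>0$.

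For odd $r$ this forces $\epsilon_+/\epsilon_-=(-1)^{b_j}$. The induced orientation of $J$ extends across $x_j$ exactly when $\epsilon_+=\epsilon_-$, so $x_j$ alternates iff $b_j$ is odd. This is item (\ref{it:lifting and parity_odd}).

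For even $r$, the signs $\epsilon_\pm$ are unconstrained locally. Choosing them independently on each arc gives the freedom to prescribe the alternation set $D$, subject only to the orientation being globally consistent around each boundary circle. That consistency amounts to the condition that the total number of sign flips equals the monodromy of the real line bundle $J^{\widetilde\phi}$ (equivalently $S^{\widetilde\phi}$) around the circle, i.e. its $w_1$.

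\textbf{Main step: computing $w_1$.} The key is the parity of the degree of the real bundle. For a real line bundle $M$ over a real curve whose complex degree is $\deg M$, the sum over boundary circles of $w_1(M^{\phi})$ is congruent to $\deg M$ mod $2$. This is the standard fact that a real section has an even number of zeros off the real locus (they come in conjugate pairs), so the degree is congruent mod $2$ to the count of real zeros, i.e. the parity of sign changes.

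Apply this to $|J|$, whose degree is computed from Proposition~\ref{obs:twists_on_coarse}:
\[
\deg|S|=\frac{(2g-2)-2\sum a_i-\sum b_j}{r}, \qquad \deg |J|=2g-2-\deg|S|.
\]
Here the genus is that of the double $|C|$, which we index so that the formula becomes $(g-1)(r-2)$ after substitution.

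A real section of $J$ changes sign exactly at the non-alternating points and the points where the section vanishes, with appropriate bookkeeping relative to the $b_j$. Matching parities then gives (\ref{eq:parity}), where the term $g+1$ comes from $\deg\omega$ and the number of boundary circles. Once parity holds, the orientation on each circle is determined up to a global sign, which gives exactly two liftings per component of $|C|^\phi$.

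I expect the main obstacle to be getting the sign conventions right: the exact exponent relating twists to local orders, the meaning of alternation for $J$ versus $S$, and the genus and boundary-circle contributions. I would calibrate these on the disk, $g=0$ with $k$ boundary points, against \cite[Proposition 2.5]{BCT1}, and then argue that the general genus case only changes the degree term.
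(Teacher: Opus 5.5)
Your local analysis is sound. For odd $r$ the map $v\mapsto\tau(v^{\otimes r})$ is odd on each real fibre, so exactly one orientation works pointwise and it varies continuously; this gives (1). Near $x_j$ one has $\tau(e^{\otimes r})=u\,t^{b_j}dt$ with $u$ real and nonvanishing. The exponent is $+b_j$, not $-b_j$, but this is harmless for signs. Since $|J|\cong|S|^\vee\otimes\omega_{|C|}$, your comparison of $\epsilon_\pm$ then yields both the evenness of the $b_j$ for $r$ even and item (3).

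The degree-parity device is also the right tool, but the bookkeeping in your main step should be as follows. Relative to a continuous frame of the honest real line bundle $|J|^{\widetilde\phi}$, the $J$-lifting flips exactly at the alternating points. Hence $|D\cap c|\equiv w_1(|J|^{\widetilde\phi}|_c)\pmod 2$ for each boundary circle $c$. Summing over $c$ and using
$\deg|J|=2g-2-\deg|S|=\frac{2\sum a_i+\sum b_j+(g-1)(r-2)}{r}+g-1$
gives \eqref{eq:parity} at once. The $b_j$ enter only through $\deg|J|$, and the number of circles plays no role. Your sentence about a real section of $J$ changing sign at the non-alternating points should be dropped. With this correction you get the necessity half of (4) in every genus, for either parity of $r$.

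The genuine gap is in the two global steps: ``the only obstruction is a global sign'' in (2), and ``once parity holds, the orientation on each circle is determined up to a global sign'' in (4).

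\begin{itemize}
\item For $r$ even, the sign of $\tau(v^{\otimes r})$ is a priori constant only on arcs of $A$. Your local computation makes it constant along each boundary circle when all $b_j$ are even, but nothing relates different circles, whereas $\xi$ flips all of them simultaneously.
\item Likewise, \eqref{eq:parity} only controls $\sum_c w_1(|J|^{\widetilde\phi}|_c)$, while existence needs the congruence circle by circle.
\item On a circle carrying no boundary marking one also needs $S^{\widetilde\phi}|_c$ to be orientable, which your argument for (2) never checks.
\end{itemize}

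For $g>0$ these per-circle data are not determined by the twists. Take $C=\C^*/q^{\Z}$ with $0<q<1$ and $\phi(z)=1/\bar z$, so $\Sigma=\{1\le|z|\le q^{-1/2}\}$ is an annulus. Let $r=2$, add one internal marking of twist $0$ for stability, and take $|S|=\O$ with $\tau(1\otimes 1)=i\,dz/z$. With respect to the boundary orientation, $\tau(v^{\otimes 2})$ is negative on the outer circle and positive on the inner one. So neither $\widetilde\phi$ nor $\xi\widetilde\phi\xi^{-1}$ is compatible.

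Even in genus zero the orientability condition bites. For $g=0$, $k=0$, $r=2$, $a_1=a_2=0$ one has $|S|\cong\O(-1)$, whose real part is a M\"obius band, so no lifting exists for any real structure. This is consistent with \eqref{eq:parity} for $D=\emptyset$, but contradicts (2) as literally stated.

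Consequently your argument proves what the paper actually relies on, namely the genus-zero statement taken from \cite[Proposition 2.5]{BCT1} (with $k\ge 1$ in (2)), plus the necessity of \eqref{eq:parity} in general. For $g>0$, the assertions in (2) and the existence part of (4) need per-circle hypotheses, and no argument through the total degree can supply them.
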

\begin{rmk}\label{rmk:degenerated_structure}
Our condition that contracted boundaries are Ramond and carry gradings and the condition concerning the alternations of half-nodes of a node are conditions required for smoothing the graded structure. These are required in addition to the smoothing condition \eqref{eq:twists_at_half_nodes} which is a smoothing condition for spin structures.
In other words, these are the constraints obtained when degenerating graded $W$-structures on smooth surfaces to nodal ones.
\end{rmk}
In practice this proposition can be applied also to connected components of the normalization of $C,$ if it is not smooth, since a lifting lifts to the normalization.  

\begin{definition}\label{def:open_W_graded}
Consider a Fermat polynomial
\begin{equation}\label{eq:Fermat W}W=x_1^{r_1}+\cdots+x_a^{r_a}.\end{equation} Write $d=\mathrm{lcm}(r_1,\ldots,r_a)$.
A \emph{$W$-spin Riemann surface with boundary} is a tuple
\[(C,S_1,\ldots, S_a;\tau_1,\ldots,\tau_a;\phi_1,\ldots,\phi_a)\]
where
\begin{enumerate}
\item $C$ is a stable orbifold marked Riemann surface with boundary that is $d$-stable.
\item each $(S_i,\tau_i,\phi_i)$ are twisted $r_i$-spin structures.
\end{enumerate}
The $W$-spin structure on $C$ is \emph{compatible} if each $r_i$-spin structure is.

A \emph{lifting} for a $W$-spin Riemann surface with boundary is an $a$-tuple of liftings, one for each $J_i=\omega_C\otimes S_i^\vee$. For any boundary special marking or half-node $q$ we write $\alt(q)$ for the $a$-tuple whose $i^{th}$ component is $\alt_i(q),$ the alternation with respect to the $i^{th}$ lifting.
\end{definition}

\subsubsection{Point-insertion surfaces}\label{subsec: point insertion}
Hori \cite{Hori,Horiprivate} predicts the existence of open $r$-spin theories with $\lfloor\frac{r}{2}\rfloor$ types of boundary twists, rather than only allowing the twists to be $r-2$ as in \cite{BCT1,BCT2}. A potential application of this construction is to the conjectural open LG/CY correspondence \cite{Melissa,Walcher}. The works \cite{TZ1,TZ2,TZ3} suggest such a generalization for $r$-spin surfaces, and also for $W$-spin surfaces for special $W$.

To study this theory one needs to work with $(W,\mathfrak{h})$-surfaces, which are genus $g$ \emph{disconnected stable graded $W$-surfaces} $((C_\alpha)_{\alpha\in A},\MU)$ where $A$ is a finite set, each $C_\alpha$ is a connected stable graded $W$-spin surface with twists and alternations as above, and $\MU$ is a partial matching between boundary marked points and internal marked points of the surfaces. The matching $\MU$ satisfies:
\begin{enumerate}
    \item a boundary point of twist $(t,\ldots,t)$ may be matched only to an internal point of twist $(\frac{r-2-t}{2},\ldots,\frac{r-2-t}{2})$.
    \item Consider a matching between a boundary marking and an internal marking and take the connected components containing each of them. If we glue an arbitrary boundary point of each of them with the other, then we require that the resulting nodal surface is of genus $g$. Note that in $g=0$ it implies, in particular, all components to be of genus $0,$ and $\mathfrak{m}$ may match only points from different component.  
\end{enumerate}
A \TZ-surface is $\MU$-connected if  the resulting nodal surface above is connected. 
Two \TZ-surfaces are isomorphic if there is an isomorphism under the expected notion of isomorphism for the disconnect components, which also respects the matching $\MU$.

We also define an additional notion of equivalence, generated by the
following operation.
Let $((C_{\alpha})_{\alpha\in A},\mathfrak{m})$ be a \TZ-surface,
and suppose some $C_\alpha$ has a boundary node $n$ with the property
that if we normalize $C_{\alpha}$ at $n$, we obtain a component
$C'_{\alpha}$ with the normalized half-node $n_a$ and a smooth
disk $C^*$ containing only the other normalized half-node $n_b$ and
an internal marked point $z_{\xi}$. Suppose further that this internal
marked point is matched under $\MU$ to a boundary point $x_{\zeta}$
of some $C_{\beta}$. Then we can form a new $\MU$-connected surface
by removing $C^*$ from $C_{\alpha}$ to get $C'_{\alpha}$
and gluing $C'_{\alpha}$ and $C_{\beta}$ at the marked boundary points $n_a,
x_{\zeta}$ respectively. See Figure~\ref{fig:rh surface}. The equivalence
relation then identifies two $\MU$-connected surfaces if one can get from
the one surface to the other by performing this operation and its inverse
repeatedly.

 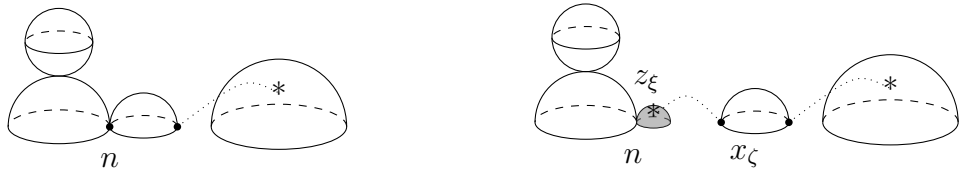
\begin{figure}[h]
         \centering

         \begin{subfigure}{.45\textwidth}
  \centering
\begin{tikzpicture}[scale=0.45]
\draw (0,-0.5) circle (1);
\draw (-1,-0.5) arc (180:360:1 and 0.333);
\draw[dashed](1,-0.5) arc (0:180:1 and 0.333);

\draw (-1.5,-3) arc (180:360:1.5 and 0.5);
\draw[dashed](1.5,-3) arc (0:180:1.5 and 0.5);
\draw(1.5,-3) arc (0:180:1.5);

\draw (1.5,-3) arc (180:360:1 and 0.333);
\draw[dashed](3.5,-3) arc (0:180:1 and 0.333);
\draw(3.5,-3) arc (0:180:1);

\node at (1.5,-3) [circle,fill,inner sep=1pt]{};

\draw (4.5,-3) arc (180:360:2 and 0.667);
\draw[dashed](8.5,-3) arc (0:180:2 and 0.667);
\draw(8.5,-3) arc (0:180:2);

\node at (3.5,-3) [circle,fill,inner sep=1pt]{};
\node at (6.5,-2){*};
\draw[dotted] (3.5,-3) .. controls (5.5,-1.5) ..(6.5,-2);

\node at (1.5,-4){$n$}
;

\end{tikzpicture} 
\end{subfigure}
\begin{subfigure}{.45\textwidth}
  \centering
\begin{tikzpicture}[scale=0.45]
\draw (0,-0.5) circle (1);
\draw (-1,-0.5) arc (180:360:1 and 0.333);
\draw[dashed](1,-0.5) arc (0:180:1 and 0.333);

\draw (-1.5,-3) arc (180:360:1.5 and 0.5);
\draw[dashed](1.5,-3) arc (0:180:1.5 and 0.5);
\draw(1.5,-3) arc (0:180:1.5);

\draw (1.5,-3) arc (180:360:0.5 and 0.167);
\draw[dashed](2.5,-3) arc (0:180:0.5 and 0.167);
\draw(2.5,-3) arc (0:180:0.5);

\fill[color = gray, opacity = 0.5] (1.5,-3) arc (180:360:0.5 and 0.167) arc (0:180:0.5);

\draw (4,-3) arc (180:360:1 and 0.333);
\draw[dashed](6,-3) arc (0:180:1 and 0.333);
\draw(6,-3) arc (0:180:1);

\draw (7,-3) arc (180:360:2 and 0.667);
\draw[dashed](11,-3) arc (0:180:2 and 0.667);
\draw(11,-3) arc (0:180:2);

\node at (6,-3) [circle,fill,inner sep=1pt]{};
\node at (9,-2){*};
\draw[dotted] (6,-3) .. controls (8,-1.5) ..(9,-2);

\node at (4,-3) [circle,fill,inner sep=1pt]{};

\node at (2,-2.8){*};

\draw[dotted] (4,-3) .. controls (3,-2) ..(2,-2.8);

\node at (1.4,-4){$ n$};
\node at (1.8,-1.8){$ z_\xi$};
\node at (4.7,-4){$ x_\zeta$};

\end{tikzpicture} 
\end{subfigure}

        \caption{An example of two equivalent \TZ~$(x^r,\mathfrak{h})$-surfaces $\MU$-connected surfaces.  Here $x_\zeta$ is alternating and can have twist $r-2m-2,$ for $0\leq m\leq\mathfrak{h},$ while $z_\xi$ has twist $m.$ Pointed arcs correspond to $\MU$-edges.}
        \label{fig:rh surface}
    \end{figure}

\subsubsection{Different theories}\label{subsub:different_theories}
At the moment, there are several constructions of open $W$-spin theories. They differ in what types of  $W$-spin Riemann surfaces with boundary they allow and the possible twists at each boundary marking. We summarize them as follows.

\begin{enumerate}[(i)]
 \item\label{it:GKT} \GKT-$W$ surfaces are genus zero $W$-spin Riemann surfaces with boundary where $W=x_1^{r_1}+ \dots+ x_a^{r_a} $ , and, for every $i=1,\ldots,a$ every boundary marked point either has $\tw_i=r_i-2,\alt_i=1$ or $\tw_i=\alt_i=0$. 
 \item \label{it:BCT}\BCT-$r$-spin surfaces are genus 0 or 1 $W$-spin Riemann surfaces with boundary where $W=x^r$ and all boundary markings alternate and have twist $r-2$.
 \item \label{it:PST} \PST-surfaces are $W$-spin Riemann surfaces with boundary of any genus where $W=x^2$, all twists are $0$, and all boundary marked points are alternating.
 \item \label{it:TZ} \TZ-$(W,\mathfrak{h})$ surfaces are genus zero $W$-spin Riemann surfaces with boundary where $W= x_1^r + \cdots + x_a^r$ where $a$ is odd, all boundary markings alternate, and the twist vector for any marking is of the form $\tw = (t,\dots, t)$, where $t \equiv r \pmod 2$ and $r-2 \ge t \ge r- 2 \mathfrak{h}-2$, for some $\mathfrak{h} \in \{0, \dots, \lfloor \tfrac{r}{2}\rfloor -1\}$.
\end{enumerate}

Each of these theories has a corresponding literature.  \GKT-$W$ surfaces are studied in  \cite{GKT2,GKT3}. \BCT-$r$-spin surfaces are studied in \cite{BCT1,BCT2,BCT3,GKT,TZ3}. \PST-surfaces are studied in \cite{PST14,Tes15,Bur15,Bur16,BT17,alexandrov2017refined,dijkgraaf2018developments}. \TZ-$W$ surfaces are studied in \cite{TZ1,TZ2}.

\begin{rmk}\label{rmk:labeling_issues}
One may enrich the data of graded spin surfaces by adding an additional labeling to marked points, e.g., by identifying the set $I$ of internal markings with the set $[|I|]=[l]$, and/or the set of boundary markings $B$ with $[|B|]=[k],$ and require isomorphisms to respect these identifications. The effect of such additional identifications is mainly combinatorial, e.g. the intersection numbers which will be defined below will just be scaled by a simple combinatorial factor. 
On the level of moduli spaces, the effect of adding this extra decoration is obtaining a (possibly orbifold) covering. The conventions for the cases \PST,\BCT,\TZ~chosen in \cite{PST14,BCT1,TZ1} respectively was to make these identifications for both sets $I,B,$ while in the case \GKT~\cite{GKT2} only $I$ was identified with $[l],$ and no such identification was made for $B.$
\end{rmk}

\subsubsection{$W$-spin dual graphs}The combinatorial data of (closed) $W$-spin surfaces, or (open) $W$-spin surfaces with a lifting can be encoded using decorated dual graphs. These are called \emph{$W$-spin graphs}, or \emph{$W$-spin graphs with a lifting}, respectively, in the literature. Their definitions are straightforward, but lengthy, so we will not provide it and instead give the following dictionary:
\medskip
\begin{center}
\begin{tabular}{c|c}
$W$-spin graphs & $W$-spin surfaces \\ \hline
closed/open vertices     & closed/open irreducible components     \\
boundary/internal edges      & boundary/internal nodes \\
boundary/internal tails & boundary/internal marked points \\
contracted boundary half-edges & contracted boundary half-nodes \\
\end{tabular}
\end{center}
\medskip
By open components we mean surfaces with boundary.
A half-edge (including a tail) has \emph{twists}
\[\tw_i(h)\in\{-1,\ldots,r_i-2, r_i -1\},~~i\in[a]\]
and \emph{alternations} \[\alt_i\in\Z_2,~~i\in[a]\]
which correspond to the twists and alternations of the corresponding markings. Thus, half-edges can be classified as broad/narrow/Ramond/Neveu-Schwarz. Vertices have notions of genus and, if open, also number of boundary components. There are constraints on twist and alternations arising from \eqref{eq:close_rank1_general}, \eqref{eq:open_rank1_general}, Proposition \ref{prop:graded_r_spin_prop}, Definition \ref{def:lifting_compatible}, and Definition~\ref{def:alt}.
We refer the reader to \cite[Section 1]{GKT2}.

In the \TZ-$W$-spin case, the graphs include additional $\MU$-edges, corresponding to the matching $\MU$ of Subsection~\ref{subsec: point insertion}, visually depicted in Figure~\ref{fig:rh surface}.
See \cite{TZ1} for more details.

We write $\Gamma(\Sigma)$ for the graded $W$-spin graph corresponding to the surface $\Sigma,$ and $\Sigma(\Gamma)$ for the \emph{topological graded $W$-spin surface} corresponding to the graph $\Gamma.$ 
\begin{ex}
In Figure~\ref{fig:graded_disk_and_graph}, we provide a graded $W$-spin graph and its corresponding dual graph $\Gamma$. Here, the open vertices are drawn as white vertices and closed as black vertices. Internal markings are denoted by $z_i$ and boundary markings by $x_i$ with their corresponding tails being solid and dashed, respectively.
\end{ex}

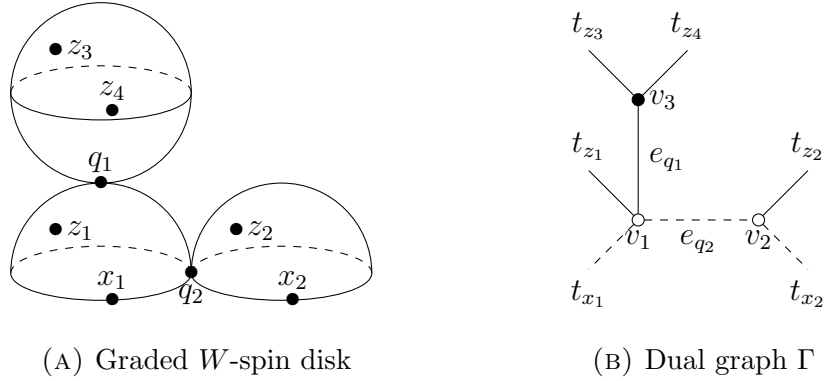
\begin{figure}
\begin{subfigure}{.4\textwidth}
  \centering

\begin{tikzpicture}[scale=0.6]
\vspace{0.15cm}

  \draw (0,4) circle (2cm);
   \draw (-2,4) arc (180:360:2 and 0.6);
  \draw[dashed] (2,4) arc (0:180:2 and 0.6);
  \draw (-2,0) arc (180:360:2 and 0.6);
  \draw[dashed] (2,0) arc (0:180:2 and 0.6);
  \draw (2,0) arc (0:180:2 and 2);
    \draw (2,0) arc (180:360:2 and 0.6);
  \draw[dashed] (6,0) arc (0:180:2 and 0.6);
  \draw (6,0) arc (0:180:2 and 2);

  \node (c) at (-1,.95) {$\bullet$};
\node[right] at (-1,.95) {$z_1$};

  \node (c) at (2,0) {$\bullet$};
\node[below] at (2,0) {$q_2$};

\node at (.25, -.6) {$\bullet$};
\node[above] at (.25,-.6) {$x_1$};

\node at (4.25, -.6) {$\bullet$};
\node[above] at (4.25,-.6) {$x_2$};

\node at (0,2) {$\bullet$};
\node[above] at (0,2) {$q_1$};
  \node (c) at (3,.95) {$\bullet$};
\node[right] at (3,.95) {$z_2$};

\node at (.25, 3.6) {$\bullet$};
\node[above] at (.25,3.6) {$z_4$};
  \node (c) at (-1,4.95) {$\bullet$};
\node[right] at (-1,4.95) {$z_3$};

\end{tikzpicture}
\vspace{0.15cm}

  \caption{Graded $W$-spin disk}
\end{subfigure}
\begin{subfigure}{.4\textwidth}
  \centering
\vspace{.24cm}
\begin{tikzpicture}[scale=0.4]

	\draw (0,0) circle (.2cm);
	\draw (0,4)[black, fill = black] circle (.2cm);
	\draw (4,0) circle (.2cm);
	
	\draw (0,0.2) -- (0,4);
	\draw[dashed] (0.2,0) -- (3.8,0);
	\node[below] at (2,0) {$e_{q_2}$};
	\node[right] at (0,2) {$e_{q_1}$};
	
	\draw (4.1414,.1414) -- (5.6414, 1.6414);
	\draw[dashed] (4.1414,-.1414) -- (5.6414, -1.6414);
	
	\draw (-.1414,.1414) -- (-1.6414, 1.6414);
	\draw[dashed] (-.1414,-.1414) -- (-1.6414, -1.6414);
	
	\node[above] at (5.6414, 1.6414) {$t_{z_2}$};
	\node[below] at (5.6414, -1.6414) {$t_{x_2}$};
	
	\node[above] at (-1.6414, 1.6414) {$t_{z_1}$};
	\node[below] at (-1.6414, -1.6414) {$t_{x_1}$};
	
	\draw (-.1414,4.1414) -- (-1.6414, 5.6414);
	\draw (.1414,4.1414) -- (1.6414, 5.6414);
	\node[above] at (-1.6414, 5.6414) {$t_{z_3}$};
	\node[above] at (1.6414, 5.6414) {$t_{z_4}$};
	
	\node[below] at (0,0) {$v_1$};
	\node[below] at (4,0) {$v_2$};
	\node[right] at (0,4) {$v_3$};
\end{tikzpicture}
\vspace{0.15cm}

  \caption{Dual graph $\Gamma$}
\end{subfigure}
\caption{A graded $W$-spin disk and its corresponding dual graph $\Gamma$}
\label{fig:graded_disk_and_graph}
\end{figure}
Geometric operations like normalizing or smoothing have their combinatorial counterpart. For example, the counterpart of partial normalization at a node is the detaching map at an edge. See Figure \ref{fig:smooth_and_detach}.

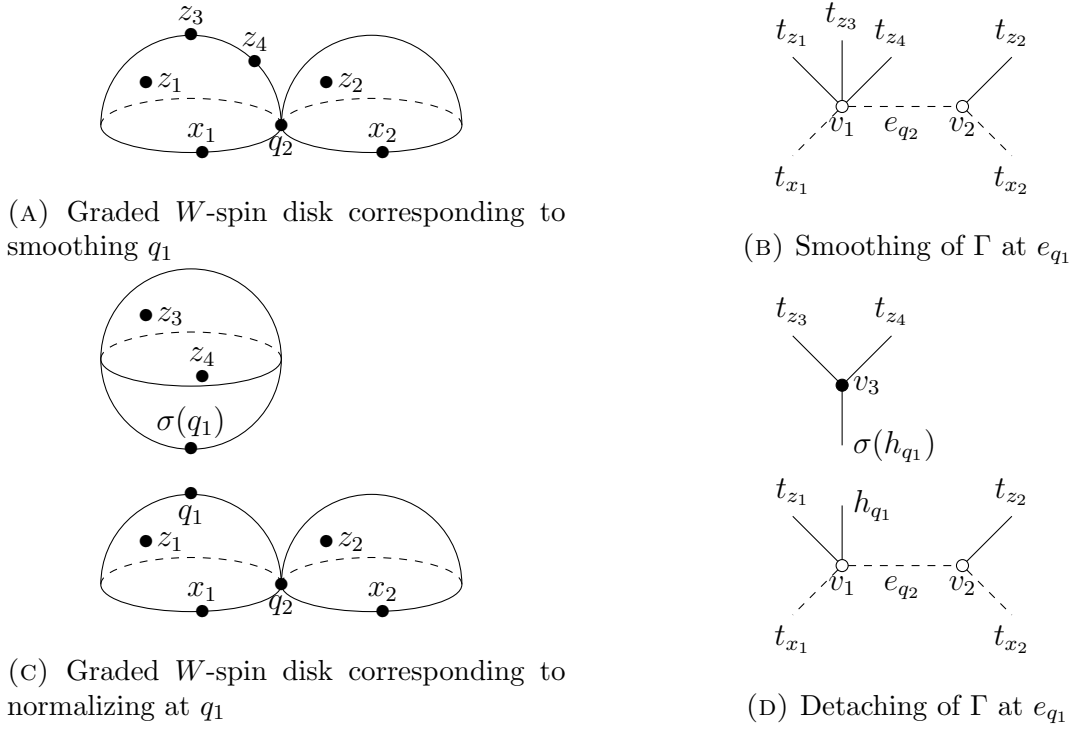
\begin{figure}
\begin{subfigure}{.45\textwidth}
  \centering

\begin{tikzpicture}[scale=0.6]
\vspace{0.15cm}

  \draw (-2,0) arc (180:360:2 and 0.6);
  \draw[dashed] (2,0) arc (0:180:2 and 0.6);
  \draw (2,0) arc (0:180:2 and 2);
    \draw (2,0) arc (180:360:2 and 0.6);
  \draw[dashed] (6,0) arc (0:180:2 and 0.6);
  \draw (6,0) arc (0:180:2 and 2);

  \node (c) at (-1,.95) {$\bullet$};
\node[right] at (-1,.95) {$z_1$};

  \node (c) at (2,0) {$\bullet$};
\node[below] at (2,0) {$q_2$};

\node at (.25, -.6) {$\bullet$};
\node[above] at (.25,-.6) {$x_1$};

\node at (4.25, -.6) {$\bullet$};
\node[above] at (4.25,-.6) {$x_2$};

  \node (c) at (3,.95) {$\bullet$};
\node[right] at (3,.95) {$z_2$};

\node at (1.414, 1.414) {$\bullet$};
\node[above] at (1.414,1.414) {$z_4$};
  \node (c) at (0,2) {$\bullet$};
\node[above] at (0,2) {$z_3$};
\end{tikzpicture}
\vspace{0.15cm}
  \caption{Graded $W$-spin disk corresponding to smoothing $q_1$}
\end{subfigure}\qquad
\begin{subfigure}{.45\textwidth}
  \centering
\vspace{.24cm}
\begin{tikzpicture}[scale=.4]

	\draw (0,0) circle (.2cm);
	\draw (4,0) circle (.2cm);
	
	\draw[dashed] (0.2,0) -- (3.8,0);
	\node[below] at (2,0) {$e_{q_2}$};
	
	\draw (4.1414,.1414) -- (5.6414, 1.6414);
	\draw[dashed] (4.1414,-.1414) -- (5.6414, -1.6414);
	
	\draw (-.1414,.1414) -- (-1.6414, 1.6414);
	\draw[dashed] (-.1414,-.1414) -- (-1.6414, -1.6414);
	
	\node[above] at (5.6414, 1.6414) {$t_{z_2}$};
	\node[below] at (5.6414, -1.6414) {$t_{x_2}$};
	
	\node[above] at (-1.6414, 1.6414) {$t_{z_1}$};
	\node[below] at (-1.6414, -1.6414) {$t_{x_1}$};
	
	\draw (0,.2) -- (0, 2.2);
	\draw (.1414,.1414) -- (1.6414, 1.6414);
	\node[above] at (0, 2.2) {$t_{z_3}$};
	\node[above] at (1.6414, 1.6414) {$t_{z_4}$};
	
	\node[below] at (0,0) {$v_1$};
	\node[below] at (4,0) {$v_2$};
\end{tikzpicture}
\vspace{0.15cm}

  \caption{Smoothing of $\Gamma$ at $e_{q_1}$}
\end{subfigure}

\begin{subfigure}{.45\textwidth}
  \centering

\begin{tikzpicture}[scale=0.6]
\vspace{0.15cm}

  \draw (0,5) circle (2cm);
   \draw (-2,5) arc (180:360:2 and 0.6);
  \draw[dashed] (2,5) arc (0:180:2 and 0.6);
  \draw (-2,0) arc (180:360:2 and 0.6);
  \draw[dashed] (2,0) arc (0:180:2 and 0.6);
  \draw (2,0) arc (0:180:2 and 2);
    \draw (2,0) arc (180:360:2 and 0.6);
  \draw[dashed] (6,0) arc (0:180:2 and 0.6);
  \draw (6,0) arc (0:180:2 and 2);

  \node (c) at (-1,.95) {$\bullet$};
\node[right] at (-1,.95) {$z_1$};

  \node (c) at (2,0) {$\bullet$};
\node[below] at (2,0) {$q_2$};

\node at (.25, -.6) {$\bullet$};
\node[above] at (.25,-.6) {$x_1$};

\node at (4.25, -.6) {$\bullet$};
\node[above] at (4.25,-.6) {$x_2$};

\node at (0,2) {$\bullet$};
\node[below] at (0,2) {$q_1$};

\node at (0,3) {$\bullet$};
\node[above] at (0,3) {$\sigma(q_1)$};
  \node (c) at (3,.95) {$\bullet$};
\node[right] at (3,.95) {$z_2$};

\node at (.25, 4.6) {$\bullet$};
\node[above] at (.25,4.6) {$z_4$};
  \node (c) at (-1,5.95) {$\bullet$};
\node[right] at (-1,5.95) {$z_3$};

\end{tikzpicture}
\vspace{0.15cm}

  \caption{Graded $W$-spin disk corresponding to normalizing at $q_1$}
\end{subfigure}\qquad
\begin{subfigure}{.45\textwidth}
  \centering
\vspace{.24cm}
\begin{tikzpicture}[scale=0.4]

	\draw (0,0) circle (.2cm);
	\draw (0,6)[black, fill = black] circle (.2cm);
	\draw (4,0) circle (.2cm);
	
	\draw (0,0.2) -- (0,2);
	\draw (0,4) -- (0,6);
	\draw[dashed] (0.2,0) -- (3.8,0);
	\node[below] at (2,0) {$e_{q_2}$};
	\node[right] at (0,2) {$h_{q_1}$};
	\node[right] at (0,4) {$\sigma(h_{q_1})$};

	\draw (4.1414,.1414) -- (5.6414, 1.6414);
	\draw[dashed] (4.1414,-.1414) -- (5.6414, -1.6414);
	
	\draw (-.1414,.1414) -- (-1.6414, 1.6414);
	\draw[dashed] (-.1414,-.1414) -- (-1.6414, -1.6414);
	
	\node[above] at (5.6414, 1.6414) {$t_{z_2}$};
	\node[below] at (5.6414, -1.6414) {$t_{x_2}$};
	
	\node[above] at (-1.6414, 1.6414) {$t_{z_1}$};
	\node[below] at (-1.6414, -1.6414) {$t_{x_1}$};
	
	\draw (-.1414,6.1414) -- (-1.6414, 7.6414);
	\draw (.1414,6.1414) -- (1.6414, 7.6414);
	\node[above] at (-1.6414, 7.6414) {$t_{z_3}$};
	\node[above] at (1.6414, 7.6414) {$t_{z_4}$};
	
	\node[below] at (0,0) {$v_1$};
	\node[below] at (4,0) {$v_2$};
	\node[right] at (0,6) {$v_3$};
\end{tikzpicture}
\vspace{0.15cm}

  \caption{Detaching of $\Gamma$ at $e_{q_1}$}
\end{subfigure}

\caption{Smoothings and detachings of the dual graph $\Gamma$ from Figure~\ref{fig:graded_disk_and_graph}}
\label{fig:smooth_and_detach}
\end{figure}
\begin{nn}
When $\Gamma=\Gamma(\Sigma),$ we write $n_h=n_h(\Sigma)$ for the half-node in the normalization of $\Sigma$ that corresponds to the half-edge corresponding to a half-node that is not a contracted boundary. We write $n_e$ for the node in $\Sigma$ that corresponds to the edge $e$.  When $h$ is an internal half-edge or a contracted boundary tail, we sometimes write $z_h$ instead of $n_h$.  When $h$ is a boundary half-edge, we sometimes write $x_h$ instead of $n_h$.\end{nn}

\subsection{The moduli spaces}
Fan, Jarvis, and Ruan constructed a moduli space $\M_{g,n}^{\text{FJRW}, W}$ consisting of compact stable $W$-spin orbicurves, for which they then provide an enumerative
theory.  

Denote by $\M_{g,k,l}^W,$ the moduli spaces of stable connected genus $g$ $W$-spin Riemann surfaces with boundary having $k$ boundary marked points and $l$ internal marked points, together with a grading. Analogously, we write $\M_{g,k,l}^{W,\mathfrak{h}}$ for the moduli spaces of stable $\MU$-connected genus $g$ \TZ-$(W,\mathfrak{h})$-surfaces with boundary having  $k$ boundary marked points and $l$ internal marked points, together with a grading.
There is a set-theoretic decomposition of these moduli spaces into compact connected components according to the different twists and topologies of the underlying surfaces.
The following results are proven in the cases $g=0$ \PST\, in \cite{PST14}, $g>0$ \PST\, in \cite{ST_unpublished}, $g=0$~\BCT\, in \cite{BCT1}, $g=0$ 
\GKT\, in \cite{GKT2}, $g=0,1$ \TZ, and  $g=1$ \BCT, in\cite{TZ1}.
\begin{prop}\label{prop:moduli_orbi_corners}The aforementioned moduli spaces are smooth real orbifolds with corners in the sense of \cite[Section 3]{Zernik}, have universal curves carrying $r_i$-spin bundle for all $i\in[a],$ and a grading. Moreover, we have that
\begin{equation}\label{real dim of open Moduli of discs}
\dim_{\R}(\M_{g,k,l}^{W})=k+2l+3g-3.
\end{equation}
The results for $\M_{g,k,l}^{W,\mathfrak{h}}$ are the same.
\end{prop}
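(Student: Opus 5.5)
The plan is to reduce everything to the underlying moduli of marked Riemann surfaces with boundary, and to show that the spin data and the grading add only finite, locally constant data.

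\textbf{Base moduli space.} First, recall (or reprove by doubling) that the moduli space $\M_{g,k,l}$ of stable marked genus $g$ surfaces with boundary is a smooth real orbifold with corners. Take the real locus of the moduli of closed curves of genus $g$ (doubled genus), with $2l+k$ markings and the anti-holomorphic involution $\phi$. Near a point with a boundary node, the local smoothing parameter of that node is real, so it contributes a half-line $[0,\epsilon)$ once we fix which side is $\Sigma$. Internal nodes come with conjugate pairs, so they contribute one complex parameter. Contracted boundary nodes contribute a real parameter whose sign distinguishes a contracted boundary from an interior point. This gives charts of the form $\R^{m}\times[0,\epsilon)^{j}$ modulo finite automorphism groups, as in \cite[Section 3]{Zernik}.

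\textbf{Dimension count.} The dimension formula follows from a real Riemann--Roch count on the double. The double $|C|$ has arithmetic genus $2g+h-1$ when $\Sigma$ has $h$ boundary components; here I am reading $g$ as the doubled genus, which is the convention that makes \eqref{real dim of open Moduli of discs} correct. The real dimension is then half the complex dimension $3g'-3+2l+k$, plus $k/2$ from the boundary points being real, which gives $k+2l+3g-3$. I would verify this in the disk case: there $g'=0$ and $\dim=k+2l-3$, which matches $\dim\mathrm{PSL}_2(\R)=3$ with internal points 2-dimensional and boundary points 1-dimensional.

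\textbf{Adding spin data and gradings.} Next, I would lift to spin structures. By Theorem~\ref{thm:moduli_closed}, $\operatorname{st}$ is flat, proper and quasi-finite, and \'etale over the smooth locus in the orbifold sense. The involutions $\widetilde\phi_i$ lifting $\phi$ are unique up to the $\mu_{r_i}$-ambiguity, which Proposition~\ref{prop:graded_r_spin_prop} controls. So the real locus of the $W$-spin moduli, with the $\widetilde\phi_i$ attached, is a finite orbifold cover of $\M_{g,k,l}$ on each chart. Gradings are orientations of real line bundles over the boundary arcs, a discrete choice, so they are locally constant on strata away from nodes. Hence spin structures, liftings and the universal $r_i$-spin bundles all pull back from the universal closed $W$-spin curve restricted to the real locus. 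The universal curve is the real part of the closed universal curve, with the fundamental domain $\Sigma$ varying continuously.

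\textbf{Main obstacle: extending gradings across nodes.} The hard step is showing that gradings extend continuously across boundary and contracted-boundary nodes. The goal is that the moduli space of graded objects is exactly the closure of the smooth graded locus, with no extra or missing branches, so that it is still an orbifold with corners rather than something with folded or doubled boundary. I would work in the local model $xy=t$ with $t\in[0,\epsilon)$ real, and write down the real sections of $S$ explicitly. The check is then that a lifting on the smooth fibre limits to a lifting on the two half-nodes with exactly the alternation rule in Definition~\ref{def:alt}: both non-alternating at a Ramond node, and exactly one alternating otherwise. For contracted boundary nodes, where $t<0$ in the local model, the limit must be Ramond with a grading in $i\R_+$, as in Remark~\ref{rmk:degenerated_structure}. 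Conversely, each graded nodal structure must smooth uniquely on each side, and this uniqueness is what gives manifold-with-corners charts. For the \TZ\ case, the same argument applies componentwise. The extra equivalence relation identifies codimension-one boundary strata of different $\MU$-configurations, and I would check that this gluing matches the two sides with opposite normal directions, so the result is still an orbifold with corners of the same dimension.
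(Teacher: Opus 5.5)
\textbf{Where the proof breaks.} The argument fails at the lift from the spinless base to the spin moduli. You assert that the real locus of the $W$-spin moduli is a finite orbifold cover of $\M_{g,k,l}$ on each chart, and you then analyse gradings in the local model $xy=t$, $t\in[0,\epsilon)$. But $\operatorname{st}$ is \'etale only over the locus of smooth curves.

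Every node of a $d$-stable curve is an orbifold node $\{xy=0\}/\mu_d$. Its smoothings form the family $xy=\tau$, where $\tau^d=t$ recovers the coarse smoothing parameter and the ghost automorphisms of the nodal orbicurve act on $\tau$ through $\mu_d$. So $\operatorname{st}$ is ramified along every boundary divisor, since locally the moduli of $d$-stable orbicurves is a $d$-th root stack along the boundary. Consequently the corner charts cannot be pulled back from $[0,\epsilon)_t$:
\begin{itemize}
\item The universal curve and the universal spin bundles are smooth in $\tau$, not in $t=\tau^d$, so $t$ does not give the right smooth structure.
\item The real points of the $\tau$-chart, modulo the automorphisms acting on it, need not form a half-line. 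A reflection $\tau\mapsto-\tau$ among these automorphisms yields $[\R/\{\pm1\}]$, whose coarse space is a half-line but which has no boundary as an orbifold.
\end{itemize}

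\textbf{The missing idea.} The corners have to be built on the spin moduli itself, which is what the paper does. It takes the fixed locus of the antiholomorphic involution on the smooth closed moduli $\overline{\mathcal{M}}^{W}_{g,k+2l}$. There, the curves with a boundary node form a real normal crossings divisor $N$ in the $\tau$-coordinates, and the real hyperplane blow-up of \cite{Zernik} cuts along $N$; this is what creates the boundary and the corners. The paper then passes to the double cover choosing $\Sigma$. Next it restricts to the components where the internal markings lie in $\Sigma$ and the spin structures are compatible. Only at the end does it take the cover given by gradings. Your analysis of how liftings and alternations degenerate (cf.\ Remark~\ref{rmk:degenerated_structure}) is the right input for that last step. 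It must, however, be carried out in the model $xy=\tau$ on the blown-up space, not in $xy=t$ on the base.

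\textbf{Smaller points.} Your dimension count does not work as written: ``half the complex dimension $3g'-3+2l+k$, plus $k/2$'' does not give $k+2l+3g-3$. No reinterpretation of the genus is needed either, since in the paper $g$ is already the genus of the doubled orbicurve $C$. The correct reason is that the fixed locus of an antiholomorphic involution on a smooth complex orbifold of complex dimension $3g-3+k+2l$ has real dimension $3g-3+k+2l$. The blow-up, the covers and the restriction to components do not change this.

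For the $(W,\mathfrak{h})$ case, the paper does not glue along the point-insertion equivalence. The moduli of $\MU$-connected surfaces is a quotient of products of the connected moduli spaces by the discrete group of combinatorial automorphisms of the $\MU$-connected graph; your ``componentwise'' treatment omits this quotient. In genus zero, each $\MU$-edge removes one boundary tail and one internal tail from the count, which is why the dimension formula is unchanged. The equivalence enters only later, as the boundary condition \eqref{eq:point_insertion_bc} on sections. Your gluing is therefore unnecessary, and it would need its own argument at the corners, where several pieces meet.
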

The proof essentially follows the procedure performed in \cite[Section 2]{Zernik} (and also sketched in Section 1 of \cite{Zernik}), which we briefly sketch.  For more details, see, for example \cite{BCT1}, \S3.2 for the case of $g=0$ \BCT~$r$-spin theory.
\begin{proof}[Sketch of proof]
We first consider the case where the sets of internal and boundary markings are identified with $[l],[k]$ respectively, see Remark \ref{rmk:labeling_issues}.
    We have the following sequence of maps, with arrows
labeled by the step in which they are defined. Starting from (B) each step preserves the structure of orbifold with corners. Each step allows pulling back the universal curve and spin structures, and the last step adds the grading:
\begin{equation}
\label{eq:OWCsequence}
\begin{tikzcd}[row sep=scriptsize, column sep = small]
\M_{g,k,l}^{W} \arrow{r}{(E)}
&	\widehat{\mathcal{M}}_{g,k,l}^{W}\arrow{r}{(D)}  & 	\widetilde{\mathcal{M}}_{g,k,l}^{W} 	\arrow{r}{(C)}  & \widetilde{\mathcal{M}}_{g,k,l}^{W;\Z_2}\arrow{r}{(B)}  &
\overline{\mathcal{M}}_{g,k+2l}^{W;\Z_2}\arrow{r}{(A)} &\overline{\mathcal{M}}_{g,k+2l}^{'W} 
\end{tikzcd}
\end{equation}
 Let us now briefly define the moduli spaces and maps appearing in \eqref{eq:OWCsequence}.

{\bf Step (A):} The morphism $\overline{\mathcal{M}}_{g,k+2l}^{W;\Z_2}\rightarrow\overline{\mathcal{M}}_{g,k+2l}^{'W}$. The space $\overline{\mathcal{M}}_{g,k+2l}^{'W}$ is the (open and closed) sub-orbifold of $\M_{g,k+2l}^{W}$ given by the conditions on twists imposed by the specific model, see Subsection \ref{subsub:different_theories}.
Inside this space, $\M_{g,k+2l}^{W, \Z_2}$ is the fixed locus of the involution defined by
\[(C;w_1, \ldots, w_{k+2l}, \{S_i\}) \mapsto (\overline{C}; w_1, \ldots, w_k, w_{k+l+1},\ldots, w_{k+2l},w_{k+1},\ldots,w_{k+l}, \{\overline{S}_i\}),\]
where $\overline{C}$, $\overline{S}_i,~i\in[a]$ are the same as $C$, $S_i,~i\in[a]$, respectively, but with the conjugate complex structure.
As a  fixed locus of the above anti-holomorphic involution, this moduli
 has the structure of a real orbifold. A point in the fixed locus comes equipped with an involution $\phi: C \rightarrow C$ given by conjugation which is covered by involutions $\tilde \phi_i: S_i \rightarrow S_i$ for all $i$.
This moduli space parameterizes isomorphism types of marked spin spheres with a real structure, involutions $\phi$, $\tilde\phi_i,~i\in[a]$, and the prescribed twists. It also inherits a universal curve via pullback.

{\bf Step (B):} The morphism $\widetilde{\mathcal{M}}_{g,k,l}^{W;\Z_2}\rightarrow\overline{\mathcal{M}}_{g,k+2l}^{W;\Z_2}$. 
Take $N\hookrightarrow \overline{\mathcal{M}}_{g,k+2l}^{W;\Z_2}$ to be the real simple normal crossing divisor consisting of
 curves with at least one boundary node.
Via the real hyperplane blowup of
\cite{Zernik}, we cut $\overline{\mathcal{M}}_{g,k+2l}^{W;\Z_2}$ along $N$, yielding an orbifold with corners $\widetilde{\mathcal{M}}_{g,k,l}^{W, \mu_2}$. The morphism in this step is then constructed by gluing the cuts described here.

{\bf Step (C):} The morphism $\widetilde{\mathcal{M}}_{g,k,l}^{W}
\rightarrow\widetilde{\mathcal{M}}_{g,k,l}^{W;\Z_2}$. From here, we define $\widetilde{\mathcal{M}}_{g,k,l}^{W}$ to be the disconnected $2$-to-$1$ cover of $\widetilde{\mathcal{M}}_{g,k,l}^{W;\Z_2}$. The generic point of the moduli space $\widetilde{\mathcal{M}}_{g,k,l}^{W}$ corresponds to a smooth marked real spin curve with a choice of a distinguished connected component of $C\setminus C^\phi$.  This induces an orientation on $C^{\phi}$.  

{\bf Step (D):} The morphism $\widehat{\mathcal{M}}_{g,k,l}^{W}\hookrightarrow\widetilde{\mathcal{M}}_{g,k,l}^{W}$. Inside $\widetilde{\mathcal{M}}_{0,k,l}$, we denote by $\widehat{\mathcal{M}}_{g,k,l}^{W}$ the union of connected components such that the marked points $w_{k+1},\ldots, w_{k+l}$ lie in the distinguished stable disk and, for even $r_i,$ the $i^{th}$ spin structure is compatible in the sense of Definition~\ref{def:lifting_compatible}. The morphism here is inclusion.

{\bf Step (E):} The morphism $\M_{g,k,l}^{W}\rightarrow\widehat{\mathcal{M}}_{g,k,l}^{W}$.  Here, $\M_{g,k,l}^{W}$ is the orbifold with corners cover of $\widehat{\mathcal{M}}_{g,k,l}^{W}$ given by a choice of grading. 

When $B$ is not identified with $[k],$ as in the convention of \cite{GKT} the mere difference is that in $\overline{\mathcal{M}}_{g,k+2l}^{'W}$ the last $k$ points are unlabeled. This space is again a smooth orbifold, which is a quotient of its labeled cousin.

The construction in the case of \TZ~theory is similar, but in order to treat moduli spaces of $\MU$-connected surfaces we need to further consider quotients of products of moduli spaces as above, by a discrete group of combinatorial automorphisms of the corresponding $\MU$-connected graphs.
\end{proof}

For any connected stable genus $g$ $W$-spin graph $\Gamma$ with a grading, there is a corresponding moduli space $\M_\Gamma^{W}$ parameterizing the $W$-spin Riemann surfaces with boundary that have $\Gamma$ as its dual graph, along with their degenerations. It is also a real orbifold with corners carrying a universal curve, universal spin lines, and universal gradings. 

There is an embedding $\iota_\Gamma:\M_\Gamma^{W}\hookrightarrow\M^{W}_{g,k,l}$ where $k$ is the number of boundary tails, and $l$ the number of internal tails. The real codimension of $\M_\Gamma^{W}$ in $\M^{W}_{g,k,l}$ is the sum of the number of boundary edges, contracted boundary edges and twice the number the internal edges.
More generally, $\Gamma$ is a partial smoothing of $\Lambda$ if a disk $\Sigma(\Gamma)$ associated with $\Gamma$ is obtained from some disk $\Sigma(\Lambda)$ associated with $\Lambda$ by smoothing some nodes. In this case we have
\[\iota_{\Lambda,\Gamma}:\M_\Lambda^{W}\hookrightarrow\M_\Gamma^{W}.\]

The above paragraph also holds for the  moduli spaces $\M_{g,k,l}^{(W,\mathfrak{h})}$ of \TZ-$(W, \mathfrak{h})$-surfaces with boundary, after redefining $k$ to be the number of boundary tails which are not connected by $\MU$-edges, and $l$ the number of internal tails which are not connected by $\MU$-edges.

\smallskip

There are forgetful maps between some of the $\oCM_{\Gamma}^W$:
We can forget a marked point if its twist is $0,$ and, in the case it is a boundary marked point it is additionally non-alternating. However, this may create unstable irreducible components. We repeatedly contract any unstable irreducible component. 
This process might create new boundary marked points which were formerly boundary half-nodes. These new boundary marked points may be non-alternating with $\tw=\vec{0}$. In this case, we repeat the process. If the process terminates with some unstable connected components, then we remove them. Graph-theoretically, this will result in a new dual graph $\Gamma'$. Using this process, we can define
\[\text{For}_{\text{non-alt}}: \M_{\Gamma}^W \rightarrow \M_{\Gamma'}^W.\]

Let $\Gamma$
be a pre-graded $W$-spin graph. The graded $r_i$-spin graph $\text{for}_{\text{spin}\neq i}\Gamma$ is an $r_i$-spin graph with a lifting obtained from $\Gamma$ by forgetting the additional $\tw_j,\alt_j$ data, for $j\neq i$ and from forgetting all tails $t$ with $(\tw_i(t),\alt_i(t))=(0,0)$.
We obtain a map
\[\text{For}_{\text{spin}\neq i}:\oCM_{\Gamma}^W\to\oCM^{1/r_i}_{\text{for}_{\spinqi}\Gamma}\]
induced by the map. 

If $\Gamma'$ is obtained from $\Gamma$ by detaching the boundary edges $E'\subseteq E^B,$ then we have a dimension preserving projection $\op{Detach}_{E'}:\M_\Gamma^{W}\to \M_{\Gamma'}^{W}$ (or $\op{Detach}_{E'}:\M_{\Gamma}^{(W,\mathfrak{h})}\to \M_{\Gamma'}^{(W,\mathfrak{h})}$), which is a diffeomorphism if the automorphism groups of $\Gamma,\Gamma'$ are the same.

If $\Gamma$ is an $\MU$-connected graph, and $\Gamma'$ is obtained from $\Gamma$ by detaching some of the $\MU$-edges then we have a dimension preserving projection $\M_\Gamma^{(W,\mathfrak{h})}\to \M_{\Gamma'}^{(W,\mathfrak{h})}$, which is a diffeomorphism if the automorphism groups of $\Gamma,\Gamma'$ are the same.

\begin{ex}\label{ex: hexagon}
Consider the case $W=x^2,$ moduli space of disks with one internal point $z_1$ of twist $0$ and three non-alternating marked boundary points $x_1, x_2, x_3$ with twist $0$. By \eqref{real dim of open Moduli of discs}, we know that the real dimension of $\oCM^{x^2}_{g=0,\{0,0,0\},\{0\}}$
is $2$. The moduli space has two connected components, depending on the
cyclic order of the boundary marked points. Each connected
component is a hexagon and has
 six codimension 1 boundary strata and six codimension 2 boundary strata. The real dimension of each stratum is again $3$ minus the number of irreducible components. We draw one of the connected
components of the moduli space in Figure~\ref{fig:moduliExample}. In this figure, each stratum is accompanied by a depiction of a representative stable marked disk $\Sigma$. 

\begin{figure}

  \centering
  \begin{tikzpicture}[scale=0.7]

\newdimen\Rad
   \Rad=5cm
   \draw[line width=0.25mm] (0:\Rad) \foreach \x in {60,120,...,360} {  -- (\x:\Rad) };
   \draw (5,0)[black, fill = black] circle (.1cm);
   \draw (-5,0)[black, fill = black] circle (.1cm);
   \draw (2.5, 4.33)[black, fill = black] circle (.1cm);
      \draw (-2.5, 4.33)[black, fill = black] circle (.1cm);
         \draw (2.5, -4.33)[black, fill = black] circle (.1cm);
            \draw (-2.5, -4.33)[black, fill = black] circle (.1cm);

 %Generic
  \draw (0,0) circle (1cm);
  \node at (0, 0) {$\bullet$};
\node[above] at (0,0) {$z_1$};
  \node at (1, 0) {$\bullet$};
\node[right] at (1,0) {$x_1$};
\node at (-.5, .866) {$\bullet$};
\node[above] at (-.5,.866) {$x_2$};
\node at (-.5, -.866) {$\bullet$};
\node[below] at (-.5,-.866) {$x_3$};

%First codim 1
  \draw (0,6) circle (1cm);
    \draw (0,8) circle (1cm);

  \node at (0, 6) {$\bullet$};
\node[above] at (0,6) {$z_1$};
  \node at (0, 5) {$\bullet$};
\node[below] at (0,5) {$x_1$};
\node at (.866, 8.5) {$\bullet$};
\node[right] at (.866,8.5) {$x_2$};
\node at (-.866, 8.5) {$\bullet$};
\node[left] at (-.866,8.5) {$x_3$};

%Second codim 1

  \draw (5.196,3) circle (1cm);
    \draw (6.928,4) circle (1cm);

  \node at (5.196,3) {$\bullet$};
\node[above] at (5.196,3) {$z_1$};
  \node at (6.428, 4.866) {$\bullet$};
\node[above] at (6.428, 4.866) {$x_3$};
\node at (7.79423, 4.5) {$\bullet$};
\node[right] at (7.79423, 4.5) {$x_2$};
\node at (7.428, 3.134) {$\bullet$};
\node[below] at (7.428, 3.134) {$x_1$};

%Third codim 1

  \draw (5.196,-3) circle (1cm);
    \draw (6.928,-4) circle (1cm);

  \node at (5.196,-3) {$\bullet$};
\node[above] at (5.196,-3) {$z_1$};
  \node at (7.794, -3.5) {$\bullet$};
\node[right] at (7.794, -3.5) {$x_2$};
  \node at (4.330, -2.5) {$\bullet$};
\node[above] at (4.330, -2.5) {$x_3$};
\node at (7, -5) {$\bullet$};
\node[below] at (7, -5) {$x_1$};

%Fourth codim 1

  \draw (0,-6) circle (1cm);
    \draw (0,-8) circle (1cm);

  \node at (0, -6) {$\bullet$};
\node[above] at (0,-6) {$z_1$};
  \node at (1, -8) {$\bullet$};
\node[right] at (1,-8) {$x_2$};
\node at (0, -9) {$\bullet$};
\node[below] at (0,-9) {$x_1$};
\node at (-1, -8) {$\bullet$};
\node[left] at (-1,-8) {$x_3$};
%Fifth codim 1

  \draw (-5.196,-3) circle (1cm);
    \draw (-6.928,-4) circle (1cm);

  \node at (-5.196,-3) {$\bullet$};
\node[above] at (-5.196,-3) {$z_1$};
  \node at (-7.794, -3.5) {$\bullet$};
\node[left] at (-7.794, -3.5) {$x_3$};
  \node at (-4.330, -2.5) {$\bullet$};
\node[above] at (-4.330, -2.5) {$x_2$};
\node at (-7, -5) {$\bullet$};
\node[below] at (-7, -5) {$x_1$};

%Sixth codim 1

  \draw (-5.196,3) circle (1cm);
    \draw (-6.928,4) circle (1cm);

  \node at (-5.196,3) {$\bullet$};
\node[above] at (-5.196,3) {$z_1$};
  \node at (-6.428, 4.866) {$\bullet$};
\node[above] at (-6.428, 4.866) {$x_2$};
\node at (-7.79423, 4.5) {$\bullet$};
\node[left] at (-7.79423, 4.5) {$x_3$};
\node at (-7.428, 3.134) {$\bullet$};
\node[below] at (-7.428, 3.134) {$x_1$};

%First Codim 2 %This will be between 1 and 2 above and continue clockwise

  \draw (3.25,5.629) circle (1cm);
    \draw (4.25,7.361) circle (1cm);
        \draw (5.25,9.093) circle (1cm);

            \node at (3.25,5.629) {$\bullet$};
\node[below] at (3.25,5.629) {$z_1$};

  \node at (5.116,6.861) {$\bullet$};
\node[right] at (5.116,6.861) {$x_1$};
\node at (6.25,9.093 ) {$\bullet$};
\node[right] at (6.25,9.093) {$x_2$};
\node at (4.75, 9.959) {$\bullet$};
\node[left] at (4.75, 9.959) {$x_3$};

%Second Codim 2

  \draw (6.5,0) circle (1cm);
    \draw (8.5,0) circle (1cm);
        \draw (10.5,0) circle (1cm);

          \node at (6.5,0) {$\bullet$};
\node[above] at (6.5,0) {$z_1$};
  \node at (8.5,1) {$\bullet$};
\node[above] at (8.5,1) {$x_3$};
\node at (11,.866 ) {$\bullet$};
\node[right] at (11,.866) {$x_2$};
\node at (11,-.866) {$\bullet$};
\node[right] at (11,-.866) {$x_1$};

%Third Codim 2

  \draw (3.25,-5.629) circle (1cm);
    \draw (4.25,-7.361) circle (1cm);
        \draw (5.25,-9.093) circle (1cm);

            \node at (3.25,-5.629) {$\bullet$};
\node[below] at (3.25,-5.629) {$z_1$};

  \node at (3.384,-7.861) {$\bullet$};
\node[left] at (3.384,-7.861) {$x_3$};
\node at (6.25,-9.093 ) {$\bullet$};
\node[right] at (6.25,-9.093) {$x_2$};
\node at (4.75, -9.959) {$\bullet$};
\node[left] at (4.75, -9.959) {$x_1$};

%Fourth Codim 2

  \draw (-3.25,-5.629) circle (1cm);
    \draw (-4.25,-7.361) circle (1cm);
        \draw (-5.25,-9.093) circle (1cm);

            \node at (-3.25,-5.629) {$\bullet$};
\node[below] at (-3.25,-5.629) {$z_1$};

  \node at (-3.384,-7.861) {$\bullet$};
\node[right] at (-3.384,-7.861) {$x_2$};
\node at (-6.25,-9.093 ) {$\bullet$};
\node[left] at (-6.25,-9.093) {$x_3$};
\node at (-4.75, -9.959) {$\bullet$};
\node[right] at (-4.75, -9.959) {$x_1$};

%Fifth Codim 2

  \draw (-6.5,0) circle (1cm);
    \draw (-8.5,0) circle (1cm);
        \draw (-10.5,0) circle (1cm);

                  \node at (-6.5,0) {$\bullet$};
\node[above] at (-6.5,0) {$z_1$};
  \node at (-8.5,1) {$\bullet$};
\node[above] at (-8.5,1) {$x_2$};
\node at (-11,.866 ) {$\bullet$};
\node[left] at (-11,.866) {$x_3$};
\node at (-11,-.866) {$\bullet$};
\node[left] at (-11,-.866) {$x_1$};

%Sixth Codim 2

  \draw ( -3.25,5.629) circle (1cm);
    \draw (-4.25,7.361) circle (1cm);
        \draw (-5.25,9.093) circle (1cm);

            \node at (-3.25,5.629) {$\bullet$};
\node[below] at (-3.25,5.629) {$z_1$};

  \node at (-5.116,6.861) {$\bullet$};
\node[left] at (-5.116,6.861) {$x_1$};
\node at (-6.25,9.093 ) {$\bullet$};
\node[left] at (-6.25,9.093) {$x_3$};
\node at (-4.75, 9.959) {$\bullet$};
\node[right] at (-4.75, 9.959) {$x_2$};

\end{tikzpicture}
 \caption{One connected component of the moduli space of stable disks with 3 boundary markings and one internal marking.}
\label{fig:moduliExample}
\end{figure}
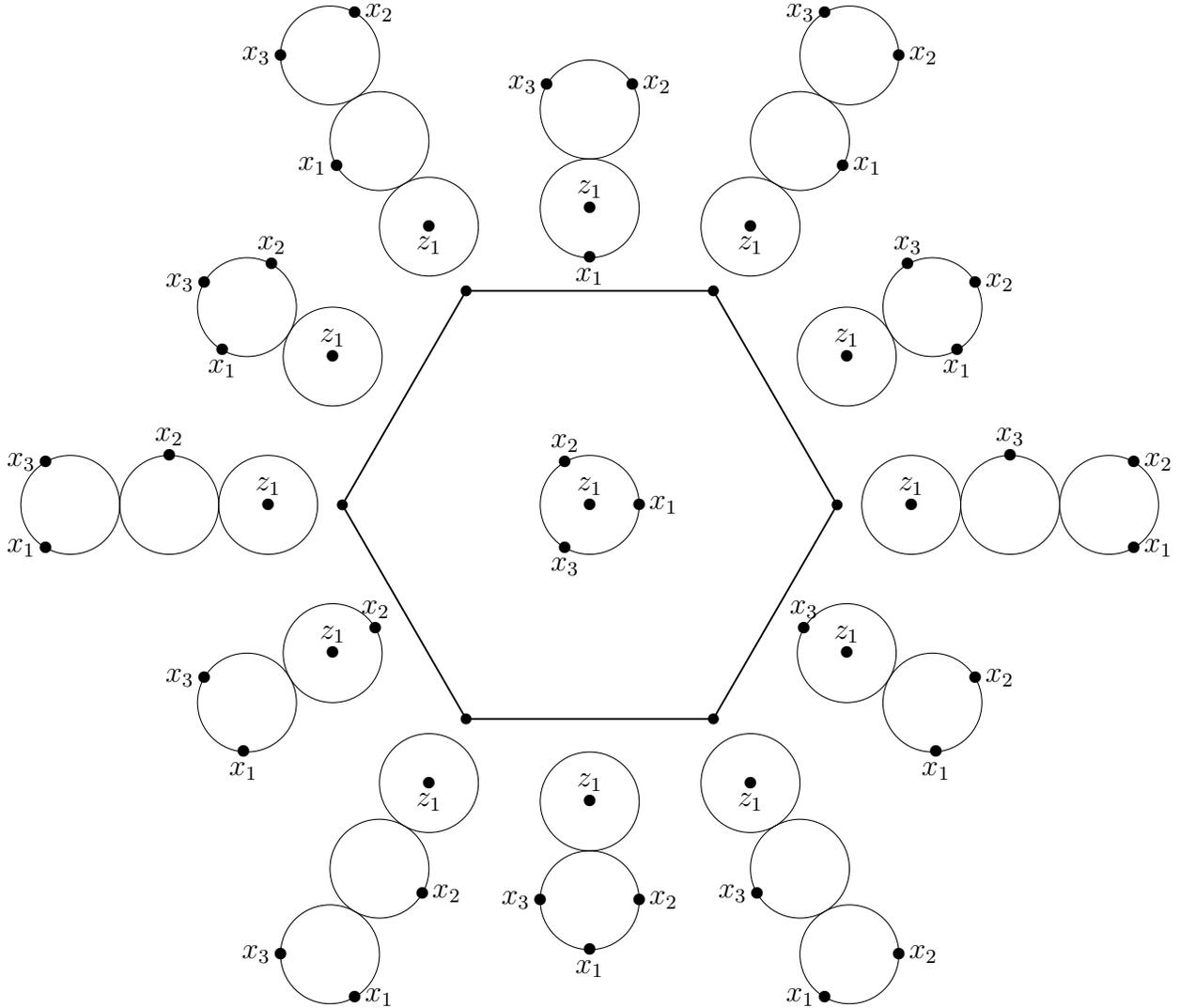
\end{ex}
\begin{ex}
    We now consider a \TZ~type example. Let $r=2,~k=3,1$ with all twist being $0$ again, and $\mathfrak{h}=0.$ Figure \ref{fig:point insertion for r=2 k=3 l=1} shows the moduli space  $\M^{(x^2,\mathfrak{h}=0)}_{g=0,{\{0,0,0\}},\{0\}}.$ For each original cyclic order of $x_1,x_2,x_3$ we obtain a copy of $\M^{x^2}_{0,\{0,0,0\},\{0\}},$ which is enlarged by adding new connected components of $\MU$-connected surfaces. The resulting space, per cyclic order, consists of two copies of $\M^{x^2}_{0,\{0,0,0\},\{0\}},$ which are combinatorially hexagons, and six copies of $\M^{x^2}_{0,\{0\},\{0,0\}},$ which are combinatorially bigons. Edges of different components can be identified via the equivalence class as in the figure. If we glue boundaries according to these identifications the resulting space is a topological sphere, per cyclic order.
    
    \begin{figure}[h]
         \centering
        \begin{tikzpicture}[scale=0.8,every node/.style={font=\fontsize{8}{10}\selectfont}]
    \draw (-1+1.05,0) -- (1+1.05,0) -- (2+1.05,1.4) -- (1+1.05,2.8) -- (-1+1.05,2.8) -- (-2+1.05,1.4) -- cycle;
    
    \draw (0.9+1.05,1.3) arc (0:360:0.9);
    \node at (0+1.05,1.3)[circle,fill,inner sep=1pt]{};
    \node at (-0.7+1.05,0.73)[circle,fill,inner sep=1pt]{};
    \node at (-0.75+1.05,0.5) {$0_{1}$};
    \node at (0.7+1.05,0.73)[circle,fill,inner sep=1pt]{};
    \node at (0.9+1.05,0.5) {$0_{2}$};
    \node at (0+1.05,2.2)[circle,fill,inner sep=1pt]{};
    \node at (0+1.05,2.45) {$0_{3}$};

    \draw (0.6+1.05,-1.2) arc (0:360:0.6);
    \node at (-0.45+1.05,-1.6)[circle,fill,inner sep=1pt]{};
    \node at (-0.6+1.05,-1.7) {$0_{1}$};
    \node at (0.45+1.05,-1.6)[circle,fill,inner sep=1pt]{};
    \node at (0.8+1.05,-1.7) {$0_{2}$};
    \node at (0+1.05,-0.6)[circle,fill,inner sep=1pt]{};
    \node at (0+1.05,-0.35) {$0_{3}$};
    
    \draw (0.5+1.05,-2.3) arc (0:360:0.5);
    \node at (0+1.05,-2.3)[circle,fill,inner sep=1pt]{};

    \draw (8,0) -- (10,0) -- (11,1.4) -- (10,2.8) -- (8,2.8) -- (7,1.4) -- cycle;
    
    \draw (9.6,1.3) arc (0:360:0.9);
    \node at (8.7,1.3){$*$};
    \node at (8,0.73)[circle,fill,inner sep=1pt]{};
    \node at (7.95,0.5) {$0_{1}$};
    \node at (9.4,0.73)[circle,fill,inner sep=1pt]{};
    \node at (9.6,0.5) {$0_{2}$};
    \node at (8.7,2.2)[circle,fill,inner sep=1pt]{};
    \node at (8.7,2.45) {$0_{3}$};

    \draw (10.75,1.4) arc (0:360:0.5);
    \node at (10.25,1.4)[circle,fill,inner sep=1pt]{};
    \node at (9.75,1.4) {$*$};

    \draw[dotted] (9.75,1.4) .. controls (9.2,1.5) .. (8.7,1.3);

    \draw (9.4,-1.2) arc (0:360:0.6);
    \node at (8.35,-1.6)[circle,fill,inner sep=1pt]{};
    \node at (8.2,-1.7) {$0_{1}$};
    \node at (9.25,-1.6)[circle,fill,inner sep=1pt]{};
    \node at (9.6,-1.7) {$0_{2}$};
    \node at (8.8,-0.6)[circle,fill,inner sep=1pt]{};
    \node at (8.8,-0.35) {$0_{3}$};
    
    \draw (9.3,-2.3) arc (0:360:0.5);
    \node at (8.8,-2.3){$*$};

    \draw (10.95,-2) arc (0:360:0.5);
    \node at (10.45,-2)[circle,fill,inner sep=1pt]{};
    \node at (9.95,-2) {$*$};
    \draw[dotted] (9.95,-2) .. controls (9.5,-2.5) .. (8.8,-2.3);

    \draw[dotted,<->] (1.5,0) .. controls (1.25,-1) and (8.25,-1) .. (8.5,0);

    \draw (9.9,3.8) arc (0:360:0.6);
    \node at (8.85,3.4)[circle,fill,inner sep=1pt]{};
    \node at (8.7,3.3) {$0_{1}$};
    \node at (9.75,3.4)[circle,fill,inner sep=1pt]{};
    \node at (10.1,3.3) {$0_{2}$};
    
    \draw (9.8,4.9) arc (0:360:0.5);
    \node at (9.3,4.9){$*$};
    \node at (9.3,5.4)[circle,fill,inner sep=1pt]{};
    \node at (9.3,5.6) {$0_{3}$};

    \draw (11.25,4.5) arc (0:360:0.5);
    \node at (10.75,4.5)[circle,fill,inner sep=1pt]{};
    \node at (10.25,4.5) {$*$};
    \draw[dotted] (10.25,4.5) .. controls (9.9,4.8) .. (9.3,4.9);

    \draw (9.9+0.11, 6.8) arc (0:360:0.6);
    \node at (8.85+0.11, 6.4)[circle,fill,inner sep=1pt]{};
    \node at (8.7+0.11, 6.3) {$0_{1}$};
    \node at (9.75+0.11, 6.4)[circle,fill,inner sep=1pt]{};
    \node at (10.1+0.11, 6.3) {$0_{2}$};
    \node at (9.3+0.11, 7.4) {$*$};
    \draw[dotted] (9.3+0.11, 7.4) .. controls (9.3+0.11, 7.8) .. (9.5+0.11, 8);
    
    \draw (10+0.11, 8) arc (0:360:0.5);
    \node at (9.5+0.11, 8){$*$};

    \draw (11.25+0.11, 6.8) arc (0:360:0.5);
    \node at (10.75+0.11, 6.8)[circle,fill,inner sep=1pt]{};
    \node at (10.25+0.11, 6.8) {$*$};
    \draw[dotted] (10.25+0.11, 6.8) .. controls (10.3+0.11, 7.55) .. (10.5+0.11, 8);

    \draw (11+0.11, 8) arc (0:360:0.5);
    \node at (10.5+0.11, 8){$*$};
    \node at (10.5+0.11, 8.5)[circle,fill,inner sep=1pt]{};
    \node at (10.5+0.11, 8.8) {$0_{3}$};

   \draw (4.4+0.2+0.59, 5.6-0.5) arc (0:360:0.6);
    \node at (3.35+0.2+0.59, 5.2-0.5)[circle,fill,inner sep=1pt]{};
    \node at (3.2+0.2+0.59, 5.1-0.5) {$0_{1}$};
    \node at (4.25+0.2+0.59, 5.2-0.5)[circle,fill,inner sep=1pt]{};
    \node at (4.6+0.2+0.59, 5.1-0.5) {$0_{2}$};

    \node at (3.8+0.2+0.59, 6.2-0.5) {$*$};
    \draw[dotted] (3.8+0.2+0.59, 6.2-0.5) .. controls (3.8+0.2+0.59, 6.6-0.5) .. (4+0.2+0.59, 6.8-0.5);
    
    \draw (4.5+0.2+0.59, 6.8-0.5) arc (0:360:0.5);
    \node at (4+0.2+0.59, 6.8-0.5){$*$};
    \node at (4+0.2+0.59, 7.3-0.5)[circle,fill,inner sep=1pt]{};
    \node at (4+0.2+0.59, 7.5-0.5) {$0_{3}$};

    \draw (5.75+0.2+0.59, 5.6-0.5) arc (0:360:0.5);
    \node at (5.25+0.2+0.59, 5.6-0.5)[circle,fill,inner sep=1pt]{};
    \node at (4.75+0.2+0.59, 5.6-0.5) {$*$};
    \draw[dotted] (4.75+0.2+0.59, 5.6-0.5) .. controls (4.8+0.2+0.59, 6.35-0.5) .. (5+0.2+0.59, 6.8-0.5);
    
    \draw (5.5+0.2+0.59, 6.8-0.5) arc (0:360:0.5);
    \node at (5+0.2+0.59, 6.8-0.5){$*$};

    \draw (7.4,7.6) arc (0:360:0.6);
\node at (6.35,7.2)[circle,fill,inner sep=1pt]{};
\node at (6.2,7.1) {$0_{1}$};
\node at (7.25,7.2)[circle,fill,inner sep=1pt]{};
\node at (7.6,7.1) {$0_{2}$};

\node at (6.8,8.2) {$*$};
\draw[dotted] (6.8,8.2) .. controls (6.8,8.6) .. (7.25,8.8);

\node at (7.25,8.8){$*$};
\node at (7.4,9.4)[circle,fill,inner sep=1pt]{};
\node at (7.4,9.6) {$0_{3}$};

\draw (8.75,7.6) arc (0:360:0.5);
\node at (8.25,7.6)[circle,fill,inner sep=1pt]{};

\node at (7.75,7.6) {$*$};
\draw[dotted] (7.75,7.6) .. controls (7.5,8.35) .. (7.5,8.8);

\draw (8,8.8) arc (0:360:0.6);
\node at (7.5,8.8){$*$};

\draw (7.3,10.1) .. controls (9.4,8.2) and (9.4,7.5) .. (7.3,5.6);
\draw (7.3,10.1) .. controls (5.2,8.2) and (5.2,7.5) .. (7.3,5.6);

\draw (2.1,10.1) .. controls (4.2,8.2) and (4.2,7.5) .. (2.1,5.6);
\draw (2.1,10.1) .. controls (0.0,8.2) and (0.0,7.5) .. (2.1,5.6);

  \draw (4.4+0.2-2.01, 5.6+1.91) arc (0:360:0.6);
    \node at (3.35+0.2-2.01, 5.2+1.91)[circle,fill,inner sep=1pt]{};
    \node at (3.2+0.2-2.01, 5.1+1.91) {$0_{1}$};
    \node at (4.25+0.2-2.01, 5.2+1.91)[circle,fill,inner sep=1pt]{};
    \node at (4.6+0.2-2.01, 5.1+1.91) {$0_{2}$};

    \node at (3.8+0.2-2.01, 6.2+1.91) {$*$};
    \draw[dotted] (3.8+0.2-2.01, 6.2+1.91) .. controls (3.5+0.2-2.01, 6.6+1.91) .. (3.8-2.01, 7+1.91);
    
    \draw (4.6-2.01, 7+1.91) arc (0:360:0.6);
    \node at (3.8-2.01, 7+1.91){$*$};
    \node at (4-2.01, 7.6+1.91)[circle,fill,inner sep=1pt]{};
    \node at (4-2.01, 7.8+1.91) {$0_{3}$};

    \node at (4.2-2.01, 7+1.91)[circle,fill,inner sep=1pt]{};

        \draw (4.4+0.2+0.22, 5.6+3.61) arc (0:360:0.6);
    \node at (3.35+0.2+0.22, 5.2+3.61)[circle,fill,inner sep=1pt]{};
    \node at (3.2+0.2+0.22, 5.1+3.61) {$0_{1}$};
    \node at (4.25+0.2+0.22, 5.2+3.61)[circle,fill,inner sep=1pt]{};
    \node at (4.6+0.2+0.22, 5.1+3.61) {$0_{2}$};

    \node at (3.8+0.2+0.22, 6.2+3.61) {$*$};
    \draw[dotted] (3.8+0.2+0.22, 6.2+3.61) .. controls (3.5+0.2+0.22, 6.4+3.61) .. (3.5+0.22, 6.7+3.61);
    
    \draw (4.0+0.22, 6.7+3.61) arc (0:360:0.5);
    \node at (3.5+0.22, 6.7+3.61){$*$};
    \node at (3.5+0.22, 7.2+3.61)[circle,fill,inner sep=1pt]{};
    \node at (3.5+0.22, 7.4+3.61) {$0_{3}$};

    \node at (4.5+0.22, 6.7+3.61)[circle,fill,inner sep=1pt]{};

    \draw (5.0+0.22, 6.7+3.61) arc (0:360:0.5);

    \draw (0 +1.03, 3.8) arc (0:360:0.6);
\node at (-1.05 +1.03,3.4)[circle,fill,inner sep=1pt]{};
\node at (-1.2 +1.03,3.3) {$0_{1}$};
\node at (-0.15 +1.03,3.4)[circle,fill,inner sep=1pt]{};
\node at (0.1 +1.03,3.3) {$0_{2}$};

\draw (-0.1 +1.03,4.9) arc (0:360:0.5);
\node at (-0.6 +1.03,4.9)[circle,fill,inner sep=1pt]{};
\node at (-0.6 +1.03,5.4)[circle,fill,inner sep=1pt]{};
\node at (-0.6 +1.03,5.6) {$0_{3}$};

   \draw (4.4+0.2-4.43, 5.6+2.02) arc (0:360:0.6);
    \node at (3.35+0.2-4.43, 5.2+2.02)[circle,fill,inner sep=1pt]{};
    \node at (3.2+0.2-4.43, 5.1+2.02) {$0_{1}$};
    \node at (4.25+0.2-4.43, 5.2+2.02)[circle,fill,inner sep=1pt]{};
    \node at (4.6+0.2-4.43, 5.1+2.02) {$0_{2}$};

    \node at (3.8+0.2-4.43, 6.2+2.02) {$*$};
    \draw[dotted] (3.8+0.2-4.43, 6.2+2.02) .. controls (3.5+0.2-4.43, 6.4+2.02) .. (3.5-4.43, 6.7+2.02);
    
    \draw (4.0-4.43, 6.7+2.02) arc (0:360:0.5);
    \node at (3.5-4.43, 6.7+2.02){$*$};
    \node at (4.5-4.43, 7.2+2.02)[circle,fill,inner sep=1pt]{};
    \node at (4.5-4.43, 7.4+2.02) {$0_{3}$};

    \node at (4.5-4.43, 6.7+2.02)[circle,fill,inner sep=1pt]{};

    \draw (5.0-4.43, 6.7+2.02) arc (0:360:0.5);

    \draw[dotted, <->] (0.7+0.75,2.8) .. controls (1,5) .. (1.4,6.3);

    \draw[dotted, <->] (-0.7+9.2,2.8) .. controls (-1+9.2,5) .. (-1.4+9.2,6.1);

    \draw[dotted, <->] (3.7,7.8) -- (5.7,7.8);

    \draw (3.65+1.05-7.11, 5.05-3.41) .. controls ({3.65 + 0.866*(4.7-3.65) - 0.500*(4.1-5.05)+1.05-7.11}, {5.05 + 0.500*(4.7-3.65) + 0.866*(4.1-5.05)-3.41}) and ({3.65 + 0.866*(4.7-3.65) - 0.500*(3.75-5.05)+1.05-7.11}, {5.05 + 0.500*(4.7-3.65) + 0.866*(3.75-5.05)-3.41}) .. ({3.65 + 0.866*(3.65-3.65) - 0.500*(2.8-5.05)+1.05-7.11}, {5.05 + 0.500*(3.65-3.65) + 0.866*(2.8-5.05)-3.41});
    \draw (3.65+1.05-7.11, 5.05-3.41) .. controls ({3.65 + 0.866*(2.6-3.65) - 0.500*(4.1-5.05)+1.05-7.11}, {5.05 + 0.500*(2.6-3.65) + 0.866*(4.1-5.05)-3.41}) and ({3.65 + 0.866*(2.6-3.65) - 0.500*(3.75-5.05)+1.05-7.11}, {5.05 + 0.500*(2.6-3.65) + 0.866*(3.75-5.05)-3.41}) .. ({3.65 + 0.866*(3.65-3.65) - 0.500*(2.8-5.05)+1.05-7.11}, {5.05 + 0.500*(3.65-3.65) + 0.866*(2.8-5.05)-3.41});

    \draw (3.65+8.84, 5.05-3.41) .. controls ({3.65 + 0.866*(4.7-3.65) + 0.500*(4.1-5.05)+8.84}, {5.05 - 0.500*(4.7-3.65) + 0.866*(4.1-5.05)-3.41}) and ({3.65 + 0.866*(4.7-3.65) + 0.500*(3.75-5.05)+8.84}, {5.05 - 0.500*(4.7-3.65) + 0.866*(3.75-5.05)-3.41}) .. ({3.65 + 0.866*(3.65-3.65) + 0.500*(2.8-5.05)+8.84}, {5.05 - 0.500*(3.65-3.65) + 0.866*(2.8-5.05)-3.41});
    \draw (3.65+8.84, 5.05-3.41) .. controls ({3.65 + 0.866*(2.6-3.65) + 0.500*(4.1-5.05)+8.84}, {5.05 - 0.500*(2.6-3.65) + 0.866*(4.1-5.05)-3.41}) and ({3.65 + 0.866*(2.6-3.65) + 0.500*(3.75-5.05)+8.84}, {5.05 - 0.500*(2.6-3.65) + 0.866*(3.75-5.05)-3.41}) .. ({3.65 + 0.866*(3.65-3.65) + 0.500*(2.8-5.05)+8.84}, {5.05 - 0.500*(3.65-3.65) + 0.866*(2.8-5.05)-3.41});

     \draw (3.65+1.90, 5.05-3.41) .. controls ({3.65 + 0.866*(4.7-3.65) - 0.500*(4.1-5.05)+1.90}, {5.05 + 0.500*(4.7-3.65) + 0.866*(4.1-5.05)-3.41}) and ({3.65 + 0.866*(4.7-3.65) - 0.500*(3.75-5.05)+1.90}, {5.05 + 0.500*(4.7-3.65) + 0.866*(3.75-5.05)-3.41}) .. ({3.65 + 0.866*(3.65-3.65) - 0.500*(2.8-5.05)+1.90}, {5.05 + 0.500*(3.65-3.65) + 0.866*(2.8-5.05)-3.41});
    \draw (3.65+1.90, 5.05-3.41) .. controls ({3.65 + 0.866*(2.6-3.65) - 0.500*(4.1-5.05)+1.90}, {5.05 + 0.500*(2.6-3.65) + 0.866*(4.1-5.05)-3.41}) and ({3.65 + 0.866*(2.6-3.65) - 0.500*(3.75-5.05)+1.90}, {5.05 + 0.500*(2.6-3.65) + 0.866*(3.75-5.05)-3.41}) .. ({3.65 + 0.866*(3.65-3.65) - 0.500*(2.8-5.05)+1.90}, {5.05 + 0.500*(3.65-3.65) + 0.866*(2.8-5.05)-3.41});

    \draw (3.65+1.05-0.16, 5.05-3.41) .. controls ({3.65 + 0.866*(4.7-3.65) + 0.500*(4.1-5.05)+1.05-0.16}, {5.05 - 0.500*(4.7-3.65) + 0.866*(4.1-5.05)-3.41}) and ({3.65 + 0.866*(4.7-3.65) + 0.500*(3.75-5.05)+1.05-0.16}, {5.05 - 0.500*(4.7-3.65) + 0.866*(3.75-5.05)-3.41}) .. ({3.65 + 0.866*(3.65-3.65) + 0.500*(2.8-5.05)+1.05-0.16}, {5.05 - 0.500*(3.65-3.65) + 0.866*(2.8-5.05)-3.41});
    \draw (3.65+1.05-0.16, 5.05-3.41) .. controls ({3.65 + 0.866*(2.6-3.65) + 0.500*(4.1-5.05)+1.05-0.16}, {5.05 - 0.500*(2.6-3.65) + 0.866*(4.1-5.05)-3.41}) and ({3.65 + 0.866*(2.6-3.65) + 0.500*(3.75-5.05)+1.05-0.16}, {5.05 - 0.500*(2.6-3.65) + 0.866*(3.75-5.05)-3.41}) .. ({3.65 + 0.866*(3.65-3.65) + 0.500*(2.8-5.05)+1.05-0.16}, {5.05 - 0.500*(3.65-3.65) + 0.866*(2.8-5.05)-3.41});

    \draw[dotted, <->] (2.5,0.6) -- (3.2,0.6);

    \draw[dotted, <->] (4.7,0.6) -- (5.4,0.6);

    \draw[dotted, <->] (6.9,0.6) -- (7.6,0.6);

     \draw[dotted, <->] (10.45,0.6) -- (11.15,0.6);

    \draw[dotted, <->] (-1.1,0.6) -- (-0.4,0.6);

    \draw[dotted, <->] (2.6,2) .. controls (5,2.5) ..  (7.4,2);

    \draw[dotted, <->] (-2.4,0.1) .. controls (-4.4,-4.5) and (14.5,-4.5) .. (12.5,0.1);

    \draw[dotted, <->] (-0.4, 2.2) .. controls (-4,5) and (14.05,5) .. (10.45,2.2);

\end{tikzpicture} 
        \caption{An example of gluing $2$-dimensional moduli spaces by point insertion. We omit some twists of markings because they are all equal to $0$. %Interestingly, if we glue along identified boundaries the resulting space is a sphere.
        }
        \label{fig:point insertion for r=2 k=3 l=1}
    \end{figure}
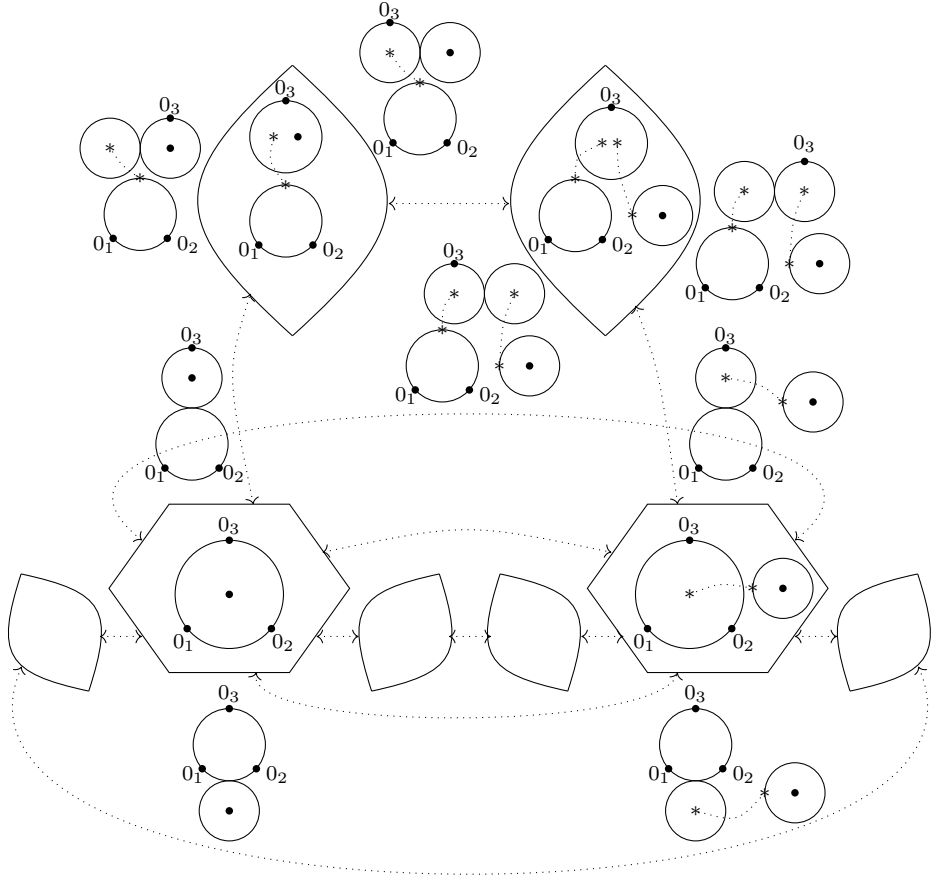

\end{ex} 

\subsection{The open FJRW bundles and relative cotangent lines}
\subsubsection{The Witten bundles and their properties}\label{subsec:WittenBundle}
We will first define the Witten bundle in the closed case, and then use this to define the open Witten bundle. Fix $g,n,W$ and possibly $\mathfrak{h}.$
Define the \emph{dimension jump locus} as the subspace of the corresponding moduli space consisting of points contained in the support of at least one of the sheaves $R^0\pi_*\mathcal{S}_i$.
On the complement to this locus we define the Witten and FJRW bundles by\[\cW^c_{i} = (R^1\pi_*\mathcal{S}_i)^{\vee},~\cW^c=\bigoplus_{i\in[a]}\cW^c_i\] where $\pi: \mathcal{C} \rightarrow \M_{g,n}^{W}$ is the universal family and $\mathcal{S}_i \rightarrow \mathcal{C}$ are the universal twisted spin bundles.
\begin{rmk}\label{rmk:when_bundle}
When $g=0$, a simple degree computation shows that whenever at most one twist equals $-1$ and all other twists are in $\{0,\ldots,r_i-1\}$, then the dimension jump locus is empty and $\cW_i$ is a
complex vector bundle.
In higher genus $\cW_i$ is a vector bundle outside of the dimension jump locus, and in $g=1$ this locus is easily characterized.
In all cases the fiber of the Witten bundle is
\[H^1(C, S_i)^{\vee} \cong H^0(C, J_i),\]
and is of complex rank \begin{equation}\label{eq:graded_closed_rank}\frac{\sum_{j=1}^n \tw_i(z_j) +(g-1) (r_i-2)}{r_i}.\end{equation}
\end{rmk}

Moving to the open case, recall the universal involutions
\[\phi: \mathcal{C} \rightarrow \mathcal{C},~~\widetilde{\phi}_i: \cS_i \rightarrow \cS_i.\]
They induce involutions on $\mathcal{J}_i:= \mathcal{S}^{\vee}_i \otimes \omega_{\pi}$ hence also on $R^1\pi_*\mathcal{S}_i^\vee\simeq R^0\pi_*\mathcal{J}_i$.
Define
\[\mathcal{W}^{o}_i := (R^0\pi_*\mathcal{J}_i)_+ =(R^1\pi_*\mathcal{S}_i)^{\vee}_-\]
to be the real vector bundle of $\widetilde{\phi}_i$-invariant sections of $J_i$. Note here the second equality uses Serre duality, under which invariant sections become anti-invariant.
The \emph{real} rank of $\mathcal{W}^{o}_i$ is seen to be
\begin{equation}\label{eq:graded_open_rank}
\frac{2\sum_{j=1}^{l} \tw_i(z_j)+\sum_{j=1}^{l} \tw_i(x_j)+(g-1)(r_i-2)}{r_i}.
\end{equation}
Indeed, if we define $(R^0\pi_*\mathcal{J}_i)_-$ as the space of anti-invariant sections, then 
\[\dim_\R(R^0\pi_*\mathcal{J}_i)_+=\dim_\R(R^0\pi_*\mathcal{J}_i)_-,\]since multiplication by $i$ takes one space isomorphically onto the other.
And \[\dim_\R(R^0\pi_*\mathcal{J}_i)_++\dim_\R(R^0\pi_*\mathcal{J}_i)_-=\dim_\R(R^0\pi_*\mathcal{J}_i),\]since $R^0\pi_*\mathcal{J}_i$ decomposes as the direct sum of the two eigenspaces for eigenvalues $\pm1$ of the lifted involution.

Write $\cW^o=\bigoplus\cW^o_i$. We refer to $\cW_i^o$ as the \emph{open Witten bundle} and to $\cW^o$ as the \emph{open FJRW bundle}.
We will usually omit the superscripts $c,o$ and understand from context if we are dealing with closed or open surfaces.

We now describe some properties of the Witten bundles.
\begin{itemize}
\item Let $\Gamma$
be a pre-graded $W$-spin graph. If $\Gamma'$ is obtained from $\Gamma$ by forgetting all tails $t$ with $(\tw(t),\alt(t))=(\vec{0},\vec{0}),$
then 
\begin{equation}\label{eq:forgetful_witten_0}\text{For}_{\text{non-alt}}^*((\cW_i)_{\Gamma'})\backsimeq (\cW_i)_{\Gamma}.\end{equation}
canonically for every $i\in[a].$
\item The Witten bundles satisfy decomposition properties along nodal strata. We describe them in an informal way, referring the reader to \cite[Proposition 2.14]{GKT2}, for a more accurate description of the $g=0$ case, as well as the treatment of internal edges, and to \cite{TZ3} for the $g=1$ statement.
Let $\Gamma$ be a $W$-graph with a grading, $e$ an edge of $\Gamma,$ and $i\in[a].$ Write $\Gamma_1,\Gamma_2$ for the two graphs with a lifting obtained by detaching $\Gamma$ along $e.$ Then if $e$ is a boundary NS node for the $i$th spin structure, then the $i$th Witten bundle $\cW_i$ on $\M_\Gamma^{W}$ 
satisfies
\begin{equation}
    \label{eq:NS_bdry_decomp}
    \cW_i|_{\M_\Gamma^{W}}\simeq\cW_i^1\boxplus\cW_i^2
\end{equation}
where $\cW_i^j$ for $j=1,2$ are the corresponding Witten bundles on the moduli spaces of $\Gamma_1,\Gamma_2.$ If $e$ is a Ramond boundary edge then, with the same notations,
\begin{equation}
    \label{eq:Ramond_bdry_decomp}
    0\to\cW_i|_{\M_\Gamma^{W}}\to\cW_i^1\boxplus\cW_i^2\to \mathcal{T}_+ \to 0,
\end{equation}is exact, where $\mathcal{T}_+$ is a trivial real line bundle, the maps to $\cW_i^1\boxplus\cW_i^2$ are projections, and the map to $\mathcal{T}_+$ is the difference of evaluation maps at the Ramond half-edges. Moreover, both ~\eqref{eq:NS_bdry_decomp} and ~\eqref{eq:Ramond_bdry_decomp} hold for $\M_{\Gamma}^{(W,\mathfrak{h})}$.

\item If $\Gamma$ is an $\MU$-connected graph, then the FJRW bundle on the corresponding moduli space is naturally isomorphic to the direct sum of the FJRW bundles on the moduli spaces corresponding to the connected components of $\Gamma$ after detaching all $\MU$-edges. 
\item Fix $i\in[a]$, $C\in\M^{W}_{g,k,l}$ (or $C\in\M^{W,\mathfrak{h}}_{g,k,l}$)  and $p\in C.$ Then there is an \emph{evaluation map} $(\ev_i)_p:\cW_i|_C\to (J_i)_p.$ Moreover, if $p\in C^\phi$ then the image of this map lies in $(J_i)_p^{\widetilde{\phi}}.$ 
\end{itemize}

\subsubsection{The relative cotangent lines}
As in the closed case, we can define $\psi_i$ classes for internal marked points. To do so, for each $i \in [l]$, we consider the cotangent line bundle $\mathbb{L}_i$ that is defined on the moduli space of stable marked surfaces with boundary, whose fiber over $(C, \phi, \Sigma, \{z_i\}, \{x_j\})$ is the cotangent line $T^*_{z_i}\Sigma$ at the interior marked point $z_i$. This definition extends to the FJRW case either by pull-back via the maps forgetting the extra data, or directly with the same definition.

\begin{obs}\label{isomorphism forgetting non-alt for descendents}
Let $\Gamma$ be a $W$ or $(W,\mathfrak{h})$-spin graph with a lifting.
\begin{enumerate}[(i)]
\item Let $i$ be a marking of an internal tail of a connected component $\Lambda$ in the graph  obtained by detaching all edges of $\Gamma$ (and possibly $\MU$-edges of $\Gamma$). Then
\begin{equation}\label{eq: pi pullback for descendent}
    \mathbb{L}_i^{\Gamma} = \Pi^*\mathbb{L}_i^{\Lambda},
\end{equation}
where $\Pi: \oCM^{W}_{\Gamma} \rightarrow \oCM^{W}_{\Lambda}$ is the
composition of the detaching map $\Detach_{E(\Gamma)}:\oCM^{W}_{\Gamma}
\rightarrow \oCM^{W}_{\detach(\Gamma)}$ 
with the projection to the factor $\oCM^{W}_{\Lambda}$. The similar statement holds for \TZ-$(W, \mathfrak{h})$-surfaces with boundary.
\item If $\Gamma' = \text{for}_{\text{non-alt}}(\Gamma)$, then there exists a canonical morphism
\[t_{\Gamma} : \text{For}_{\text{non-alt}}^*\mathbb{L}_i^{\Gamma'} \rightarrow \mathbb{L}_i^{\Gamma}.\]
This morphism vanishes identically on the strata
where the component containing $z_i$ is contracted by the forgetful map. Away
from these strata, $t_\Gamma$ is an isomorphism. 
\item $\mathbb{L}_i^{\Gamma}$ is canonically oriented as a complex orbifold line bundle.
\end{enumerate}
\end{obs}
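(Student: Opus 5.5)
The plan is to treat the three parts in the order (iii), (i), (ii), since (iii) is essentially formal.

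\emph{Part (iii).} The fiber of $\mathbb{L}_i^{\Gamma}$ over a point $(C,\phi,\Sigma,\{z_j\},\{x_j\})$ is $T^*_{z_i}\Sigma$. Because $z_i$ lies in the interior of $\Sigma$, this is the cotangent line of a complex curve at a smooth point. Transition functions for the universal curve near the section $z_i$ are holomorphic in the fiber direction, so the line bundle has a $\C$-linear structure varying continuously over the orbifold with corners. Any complex structure on a real rank $2$ bundle induces an orientation by declaring $(v,iv)$ to be positive. This is canonical because no choices enter, and it is compatible with the orbifold charts, since automorphisms act $\C$-linearly on $T^*_{z_i}\Sigma$.

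\emph{Part (i).} On $\oCM^W_\Gamma$, the universal curve is obtained from the universal curves over the factors of $\oCM^W_{\detach(\Gamma)}$ by gluing half-nodes, and, in the \TZ\ case, by recording the $\MU$-matching.
\begin{itemize}
\item Gluing at half-nodes distinct from $z_i$ does not change a neighbourhood of $z_i$ in the component $\Lambda$ containing it.
\item Identifying $\oCM^W_{\Gamma}$ with (a quotient of) the fiber product via $\Detach_{E(\Gamma)}$ therefore gives a canonical identification of $T^*_{z_i}$ on both sides, fiberwise and in families.
\item Since $\Pi$ is the composition of $\Detach_{E(\Gamma)}$ with the projection to $\oCM^W_\Lambda$, this yields $\mathbb{L}_i^\Gamma=\Pi^*\mathbb{L}_i^\Lambda$.
\item In the \TZ\ case, detaching $\MU$-edges changes no curve at all, so the same argument applies verbatim.
\end{itemize}
I would check that the identification is equivariant for the automorphisms of $\Gamma$ versus those of the detached graph. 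Those automorphisms act compatibly on the tail $i$, so the isomorphism descends.

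\emph{Part (ii).} Let $C\mapsto C'$ be the forgetful/stabilization map, which restricts to a contraction of the universal curves $c:\mathcal{C}_\Gamma\to\text{For}_{\text{non-alt}}^*\mathcal{C}_{\Gamma'}$ sending the section $z_i$ to $z_i'$. Pulling back cotangent vectors along $c$ at $z_i$ defines
\[
t_\Gamma: T^*_{z_i'}C'\to T^*_{z_i}C .
\]
This is holomorphic in families, by the standard argument for $\M_{g,n+1}\to\M_{g,n}$ in the closed case, applied to the doubled real curve and restricted to the $\phi$-invariant locus.
\begin{itemize}
\item If the component of $z_i$ survives, then $c$ is a local isomorphism near $z_i$, and $t_\Gamma$ is an isomorphism there.
\item If that component is contracted, then $c$ is constant on it, so $dc$ vanishes at $z_i$ and $t_\Gamma=0$ there.
\end{itemize}

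The main obstacle is showing that $t_\Gamma$ is a well-defined smooth bundle map across the corner strata and the iterated contractions, where the forgetful process may repeatedly contract components and remove unstable ones. I would handle this by factoring $\text{For}_{\text{non-alt}}$ into single-point forgetful maps. For each of these I would construct $t$ as the real restriction of the closed canonical morphism, then compose the resulting maps. The vanishing locus of the composite is then exactly the union of the loci where some step contracts the component of $z_i$.
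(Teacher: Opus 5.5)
Your proposal is correct and follows the standard argument the paper defers to (\cite[Section 3.5]{PST14}). There, $\mathbb{L}_i$ is the complex cotangent line at $z_i\in\Sigma$ pulled back from the doubled moduli, which gives (iii); detaching leaves a neighbourhood of $z_i$ untouched, which gives (i); and $t_\Gamma$ is the pullback of covectors along the stabilization contraction, which vanishes exactly when the component of $z_i$ is collapsed, which gives (ii). One point needs careful phrasing: forgetting an internal twist-zero marking forgets a conjugate pair in the double. You should therefore compose the two complex single-point morphisms before restricting to the real locus; your factorization already allows this.
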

\noindent See \cite[Section 3.5]{PST14} for further discussion. While $\mathbb{L}_i^{\Gamma}$ is canonically oriented, we are not so lucky for the FJRW bundle and the moduli space. However, we have the following theorem:

\begin{thm}\label{thm:or}
Let $\M\subseteq\M_{g,k,l}^{W}$ (or $\M\subseteq\M_{g,k,l}^{(W,\mathfrak{h})}$)  be the complement of the dimension jump locus inside one of the moduli spaces defined in Proposition~\ref{prop:moduli_orbi_corners}, let $\cW$ be the corresponding open FJRW bundle and let $E$ be a direct sum of $\cW$ and some copies of relative cotangent lines. Then $E\rightarrow \M$ carries a canonical relative orientation.
\end{thm}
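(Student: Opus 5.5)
Since the relative cotangent lines $\mathbb{L}_i$ are canonically oriented complex line bundles (Observation~\ref{isomorphism forgetting non-alt for descendents}(iii)), adding copies of them does not affect relative orientability. The statement therefore reduces to constructing a canonical isomorphism
\[
\det(T\M)\otimes\det(\cW)\;\cong\;\text{(canonically oriented line bundle)}
\]
on the complement of the dimension jump locus, i.e.\ a canonical orientation of $\det T\M\otimes\det\cW$. Since $\cW=\bigoplus_i\cW_i$, it suffices to orient $\det T\M\otimes\bigotimes_i\det\cW_i$. The plan is to construct the orientation on the open dense locus of smooth surfaces and then show it extends across the boundary strata.

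\textbf{Step 1: the smooth locus.} Fix a smooth graded $W$-spin surface $C$, with its boundary $\partial\Sigma$ oriented as the boundary of $\Sigma$, together with the liftings of each $J_i$. Two ingredients are needed.

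First, $T\M$ at $C$ fits in an exact sequence relating it to the complex tangent directions of the interior points $z_j\in\Sigma$ (canonically oriented) and the real tangent directions of the boundary points $x_j\in\partial\Sigma$ (oriented by the boundary orientation). These are taken modulo the action of $PSL_2(\R)$ in genus $0$, and handled analogously via the real Teichm\"uller/doubling picture in higher genus. This orients $\M$ up to the ordering of the boundary points.

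Second, I would orient $\cW_i|_C=H^0(C,J_i)^{\widetilde\phi}$ by degenerating (or evaluating) to a basis adapted to the boundary:
\begin{itemize}
\item For each alternating boundary point, use the evaluation maps $(\ev_i)_{x_j}:\cW_i\to (J_i)^{\widetilde\phi}_{x_j}$, whose targets are oriented by the lifting.
\item The internal twists contribute complex pieces, which are canonically oriented.
\end{itemize}
Concretely, I would use the exact sequences coming from twisting down $J_i$ at marked points, $0\to J_i(-p)\to J_i\to J_i|_p\to 0$ and their real invariant versions, to express $\det\cW_i$ as a tensor product of oriented lines. I would then check, via Proposition~\ref{prop:graded_r_spin_prop}(iii),(iv), that reordering the alternating boundary points changes the $\cW$-orientation by exactly the same sign as reordering them in $T\M$. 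In the \GKT\ case, where non-alternating twist-$0$ points occur, I would use \eqref{eq:forgetful_witten_0} to reduce to the case without such points, since forgetting them changes neither $\cW$ nor the orientation problem beyond canonical real line factors.

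\textbf{Step 2: extension across boundary strata (main obstacle).} A relative orientation on the interior extends to all of $\M$ if and only if $w_1(T\M)+w_1(\cW)=0$, equivalently if the orientation is preserved around loops crossing codimension-one walls. It must therefore be checked along each boundary edge type.
\begin{itemize}
\item At an NS boundary node, use the decomposition \eqref{eq:NS_bdry_decomp}: the orientations on the two sides are products of the orientations of the detached components, and the normal direction of the stratum (the smoothing parameter) is oriented compatibly with the boundary order.
\item At a Ramond boundary node, use \eqref{eq:Ramond_bdry_decomp}. The extra trivial line $\mathcal T_+$ must be oriented by the grading, and it is exactly the grading condition (both half-nodes non-alternating) that makes this sign consistent on both sides of the wall.
\item For contracted boundary nodes (codimension one), I would check via the grading $w$ at $q$ from Definition~\ref{def:lifting_compatible} that the two sides, which correspond to the two orientations of a shrinking boundary circle, give matching orientations.
\end{itemize}
This local sign bookkeeping is the hardest part; I would handle it through a local model of the smoothing, such as the real family $xy=t$.

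\textbf{Step 3: \TZ\ surfaces.} The $(W,\mathfrak h)$ case follows by combining the above with the fact that $\cW$ splits as a direct sum over the $\MU$-connected components. One must also check that the point-insertion equivalence, which glues faces of different components, identifies orientations; this is a local comparison of the same kind, using the twist matching condition of Subsection~\ref{subsec: point insertion}.
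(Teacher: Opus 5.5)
Your proposal has a genuine gap, and it lies in Step~1 rather than Step~2. For $g>0$ you orient $T\M$ from the underlying bordered surface (``the real Teichm\"uller/doubling picture'') and let the spin data enter only through $\cW$. That cannot work, because moduli of bordered surfaces are in general non-orientable in positive genus. For instance, take the annulus $\{R^{-1}<|z|<R\}$ with one interior point $z_0$ on the core circle. The map $z\mapsto z_0^2/z$ is an automorphism exchanging the two boundary circles. It acts on the deformation coordinates $(\log R,\log|z_0|)$ by the reflection $\log|z_0|\mapsto-\log|z_0|$.

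In the \PST\ case with $g>0$ every Witten bundle has rank $0$. So the theorem says exactly that the \emph{graded $2$-spin structure} orients $\M$ itself. The grading therefore has to enter the orientation of $T\M$, and your plan has no mechanism for this. The argument the paper relies on handles it as follows: decompose the moduli space into cells indexed by ribbon graphs, write an orientation on each cell using the graded data, and prove that these glue across the cell walls, which lie in the \emph{interior} of $\M$. The $g=1$ \BCT\ and \TZ\ cases (annuli) face the same problem.

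In genus $0$, orienting the smooth locus is a viable strategy, but the construction you give does not work. At an alternating point the lifting by definition does not extend, so $(J_i)^{\widetilde\phi}_{x_j}$ is \emph{not} oriented by it. Moreover, $\rank\cW_i$ from \eqref{eq:graded_open_rank} is unrelated to the number of alternating or internal points. So the twisting-down sequences give an isomorphism only after you choose a divisor of the right degree, and showing independence of that choice is the whole canonicity question. One workable substitute uses $|J_i|\cong\mathcal O(e)$ on $\P^1$: evaluate at $e+1=\rank\cW_i$ generic non-special boundary points, orienting the targets by the lifting transported from a base point. You would then have to verify, using \eqref{eq:parity}, that these choices cancel against those in the boundary-order orientation of $T\M$. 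The paper instead uses explicit frames of $\cW$ in the $r$-spin case, and for Fermat $W$ a careful comparison with the $r_i$-spin components.

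Step~2 addresses a non-issue. Boundary-node strata are cut open in Step~(B) of the construction. Contracted-boundary strata are also faces, since the real curves beyond them are non-dividing and are excluded. So there are no ``two sides'' to compare, and any relative orientation of the interior extends uniquely to $\M$. What actually has to be controlled is:
\begin{enumerate}[(i)]
\item monodromy and isotropy in the interior: relabelings, unlabeled boundary points in \GKT, and mapping classes when $g>0$;
\item only for \TZ, the matching across faces glued by point insertion.
\end{enumerate}
In the \TZ\ case only the product $\det T\M\otimes\det\cW_1$, not each factor separately, can be expected to glue. This is why an odd number $2m+1$ of summands is required, and your approach of orienting $T\M$ and each summand separately does not see this constraint.
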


The case of $g>0$ \PST-theory was proven in \cite{Tes15}. In this case there is no FJRW-bundle and the claim is that the moduli space of graded $2$-spin surfaces is canonically oriented. The orientation was constructed by decomposing the space into a union of cells indexed by ribbon graphs, writing an orientation expression for each, and showing that the orientations of the different chambers glue.

The proof for $r$-spin theories appeared in \cite{BCT1,TZ1} and was shown by constructing explicit frames for the Witten bundle. The higher Fermat cases studied in \cite{GKT2,GKT3,TZ1} involved careful analysis of the orientations of their $r$-spin components.

All these works also included a detailed study of how these orientations behave under degenerations, which was crucial for the study of relations between intersection numbers.

\section{Open intersection numbers}
As explained above, to define intersection theories one has to impose boundary conditions on sections of the vector bundles. Intuitively, the intersection numbers are the weighted signed zero counts of a transverse extension of the section on the boundary to the interior. By boundary conditions we mean putting restrictions on the behaviour of such sections on topological boundaries of real codimension $1$ and more. If the section on the boundary is nowhere vanishing, then the section defines an intersection number by integrating the \emph{relative Euler class}, see the appendices of \cite{PST14,BCT2}. 

In order for the resulting intersection numbers to be interesting, and computable, these boundary conditions must be geometrically meaningful. Interestingly, the geometry of $W$-spin surfaces with a grading leads us to such \emph{canonical} boundary conditions.
While the precise definitions are somehow technical, describing them for real codimension $1$ boundaries is more intuitive, and captures the main ideas, so we shall restrict to that, and add comments on higher codimension corners.
It will be easiest to start with \PST~theory, which only involves relative cotangent lines, and introduces boundary conditions on them. We next move to the \BCT~theory which adds the Witten bundle and boundary conditions on it and then to \GKT, generalizing \BCT. Finally, we discuss to \TZ~theory, which uses some of the boundary conditions of \BCT, but replaces others.
We will only consider boundary conditions which are invariant under re-labeling marked points. This imposition makes us work with multi-valued sections (multisections), but we will still usually use the word ``section'', unless we want to stress the multi-valuedness.
\subsection{\PST}
In this case, one can compute that the rank of any Witten bundle is zero, thus we only consider $\psi_i$-classes (at internal markings). There are two types of real codimension $1$ boundaries, those which correspond to strata with a single boundary node, and those which correspond to strata with a contracted boundary node.
Let $\Gamma$ be a graded graph which corresponds to a boundary of the former type, and let $e$ be the unique boundary edge.
Let $\Gamma_1,\Gamma_2$ be obtained from $\Gamma$ by \emph{first detaching} the edge $e$ and \emph{then forgetting the resulting non-alternating half-edge}.
Let $i$ be an internal tail which belongs to $\Gamma_j.$ Then a section $s$ of $\CL_i$ is \emph{canonical at $\Gamma$} if there exists a section $s'$ of $\CL_i$ on $\M_{\Gamma_j}$ such that\begin{equation}\label{eq:forgetful_bc_psi}s|_{\M_\Gamma}=\Pi^*s',\end{equation}
where $\Pi$ is the projection from \eqref{eq: pi pullback for descendent} induced from detaching and projecting to $\M_{\Gamma_j}.$ Note that we have used Observation~\ref{isomorphism forgetting non-alt for descendents} and the fact, which can be checked directly, that under the above process $\Gamma_j$ cannot lose stability. We call these boundary conditions \emph{forgetful boundary conditions}.

Suppose now that $\Gamma$ is a graded graph with a single contracted boundary tail $h.$ This may happen, in the \PST~case, only at $g>0.$ The section is \emph{canonical at $\Gamma$} if it is independent of the choice of grading at the contracted boundary which corresponds to $h.$
The boundary conditions are canonical if they are canonical at each codimension $1$ boundary stratum.
\begin{rmk}
A few comments are in order:
\begin{itemize}
    \item Continuity implies that also $s'$ in the above definition must be canonical.
    \item A notion of canonicity holds in higher codimension, that is, in the corners of the moduli space. To define this, we use the notion of a \emph{base}. The base is the moduli space corresponding to the graph obtained by taking any graph $\Gamma$ and detaching all boundary edges, forgetting the non-alternating half-edges,\footnote{We forget only non-alternating half-edges with trivial twists in the \BCT~ and \GKT~ versions of the base.} forgetting gradings at contracted boundaries and removing all connected components but the one which contains the $i$th marking. We impose that the section $s|_{\M_\Gamma}$ is pulled back from a section defined on the base.
    \item One can show (see \cite[Remark 3.5]{PST14}) that usually there are no non-zero sections which are canonical, even at $g=0,$ and one has to work with multi-sections.
\end{itemize}
\end{rmk}
\begin{thm}\label{thm:pst_nums_well_def}
   Assume $d_1,\ldots,d_l\in\Z_{\geq 0}$ satisfy $\sum d_i=2l+k+3g-3$. Then there exists global multisections for $\bigoplus_{i\in[l]}\CL_i^{\oplus d_i}\to\M_{g,k,l}^{x^2}$ that satisfy canonical boundary conditions and are nowhere vanishing on the boundary of $\M_{g,k,l}^{x^2}$.
     Moreover, every choice of such boundary conditions induces the same intersection number, which we denote by $\langle\tau^0_{d_1}\ldots\tau^0_{d_l}\sigma^k\rangle^{(x^2,\mu_2),o}.$
\end{thm}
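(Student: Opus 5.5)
The argument splits into existence and independence, both organised by induction on the dimension $k+2l+3g-3$ of the moduli space (equivalently, on the stratification by graded graphs).

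\emph{Existence.} We construct canonical multisections stratum by stratum, starting from the deepest corners and moving towards codimension $1$. For a corner stratum $\M_\Gamma$, canonicity forces $s|_{\M_\Gamma}$ to be the pullback of a section on the base of $\Gamma$ at each internal marking $i$. We therefore choose, on each possible base moduli space (which is of strictly smaller dimension, or involves fewer points after forgetting non-alternating half-edges), a multisection of $\CL_i$. Observation~\ref{isomorphism forgetting non-alt for descendents}(i),(ii) shows that pulling back via detaching and forgetting is compatible with $\CL_i$ away from contracted components. We then pull these back to every stratum. The compatibility on overlaps of strata holds because the base of a deeper stratum is obtained from the base of a shallower one by the same detach-and-forget operations; this is the reason for defining the base through the composite of these operations. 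Since the resulting section is defined on $\partial\M$ and is continuous there, we extend it to the interior using a collar and a partition of unity, taking multisections so that averaging over relabelings (the orbifold and automorphism groups) is allowed.

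\emph{Non-vanishing on the boundary.} This is the crux. At a boundary stratum $\M_\Gamma$, the section $\bigoplus_i s_i^{\oplus d_i}$ is pulled back from the product of bases $\prod_j \M_{\Gamma_j}$ via $\Pi$. The dimension of that product is strictly less than $\sum_i d_i$ restricted to the relevant components. The count goes as follows. Detaching a boundary node and forgetting the non-alternating half-edge loses one real dimension, while the rank is unchanged. At a contracted boundary we forget the grading and the contracted point, which also loses at least one dimension. Hence a generic multisection on the base has no zeros, since the rank exceeds the dimension. We therefore choose the base sections transverse (generic multisections exist by standard multisection theory on orbifolds with corners), and the pulled-back section is nowhere zero on $\partial\M$. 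Care is needed when a base component carrying $i$ has rank greater than dimension for some markings but not others; since all markings sit on bases, the total rank on the relevant factor still exceeds its dimension. I expect this step, making the rank/dimension count work uniformly at all corners and for all components, including genus $g>0$ contracted boundaries, to be the main obstacle.

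\emph{Independence.} Given two such choices $s_0,s_1$, we interpolate. On the base spaces the space of canonical nowhere vanishing multisections is connected in the relevant sense: the linear homotopy $ts_1+(1-t)s_0$ of pulled-back base sections is again canonical, and by the same dimension count a generic homotopy on the bases (whose dimension plus one is still at most the rank) avoids zeros. This gives a homotopy on $\partial\M\times[0,1]$ that is nowhere vanishing. The relative Euler class, that is, the weighted zero count of a transverse extension, is invariant under boundary homotopies through nowhere vanishing sections. Its difference equals the zero count of a section on $\partial(\M\times[0,1])$, which vanishes. Orientations are provided by the canonical orientation of the moduli space (\cite{Tes15}, Theorem~\ref{thm:or}) and the complex orientation of $\CL_i$. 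The number $\langle\tau^0_{d_1}\ldots\tau^0_{d_l}\sigma^k\rangle^{(x^2,\mu_2),o}$ is therefore well defined.
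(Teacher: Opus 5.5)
For strata with a boundary node your argument is the paper's. Canonical multisections are built inductively by pulling back from bases, non-vanishing comes from rank versus dimension, and independence comes from a canonical homotopy together with the homotopy property of the relative Euler class.

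You should make the count precise, though. Relative to $\M_{g,k,l}^{x^2}$ the base loses \emph{at least two} real dimensions: one for the codimension of the stratum, and one for the forgotten non-alternating boundary half-node. So the base has dimension at most $\rank-2$, and the homotopy is pulled back from a space of dimension at most $\rank-1$. Your criterion ``dimension plus one is still at most the rank'' would not suffice. At equality a generic homotopy has isolated zeros, and those are exactly what can change the number.

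On such a stratum the section is a direct sum of pullbacks from the two factors. Its zero set is therefore a product of the factor zero sets, and for the homotopy it is a fibre product over $[0,1]$. Emptiness comes from the total deficit of at least two, not from a rank excess on every factor.

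The genuine gap is at the contracted boundary strata, which appear for $g>0$ and are covered by the theorem. There the canonical condition only asks that the section be independent of the grading at the contracted node. The base forgets this grading, which is a two-element choice and changes no dimension. It keeps the contracted half-node, which is Ramond and so is not a point the forgetful map may remove. Hence there is no second dimension drop. The section is pulled back from a space of dimension $\rank-1$, which is enough for a generic section to be non-vanishing. A homotopy, however, lives over dimension $\rank$ and generically has isolated zeros on these strata, which no dimension count excludes.

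Your extra step of forgetting the contracted point is not available either. Take the annulus with one interior marking ($g=1$, $k=0$, $l=1$, $\dim=\rank=2$). The contracted stratum is one-dimensional and consists of disks carrying $z_1$ and the contracted half-node. Forgetting the latter leaves an unstable disk, so (compare Observation~\ref{isomorphism forgetting non-alt for descendents}(ii)) the section of $\CL_1$ would be forced to vanish along the whole stratum.

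What is needed instead is a cancellation. Forgetting the grading is $2:1$ on these strata, and the canonical condition makes the section, and any canonical homotopy, agree on the two sheets. So zeros there come in pairs. One must then prove that the two sheets carry opposite induced relative orientations, so that the pairs cancel. Equivalently, the moduli space glues along these strata and they stop being boundary, as in the point-insertion picture. That orientation statement is the missing ingredient for $g>0$.
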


This theorem was proven for $g=0$ in \cite{PST14}, in $g>0$ the proof was found in \cite{ST_unpublished}, and an alternative proof appears in \cite{Tes15}.

The idea behind the proof is as follows. For the existence part, one constructs multisections inductively, based on the recursive nature of the boundary conditions. The intuitive reason for the well-definedness of the intersection numbers is that the forgetful boundary conditions effectively mean the boundary section is pulled back from the base which is dimension at least two less than the original moduli space. One can connect any two such choices of boundary section by a transverse homotopy which also satisfies the same forgetful boundary conditions, and is effectively pulled back from a space of codimension $1$. Lastly, it can be shown that there exists such a transverse homotopy between sections of a rank $m$ bundle over a base space of dimension $\leq m-1$ that is nowhere vanishing. 

It is a general fact that, given a transverse homotopy between two multisections, the difference of the intersection numbers with respect to each multisection can be computed via the intersection numbers of the homotopy on the boundary (see \cite[Lemma 3.55]{PST14}). Hence, from the above argument, the two boundary sections induce the same intersection number.

\subsection{\BCT}
Again we start with $\Gamma$ which has a single boundary edge $e,$ and let $\Gamma_1,\Gamma_2$ be as above. We will only describe the $g=0$ case.
First, if one half-edge of $e$ has $\tw=\alt=0,$ then for $\psi_i$ we use the same forgetful boundary conditions as in \eqref{eq:forgetful_bc_psi}.
A section $s$ of the Witten bundle is canonical at $\Gamma$ if
\begin{equation}\label{eq:forgetful_bc_Witten}s|_{\M_\Gamma}=\Pi_1^*s_1\boxplus
\Pi^*_2s_2,\end{equation}
where $s_j$ is a section of the Witten bundle on $\cW\to\M_{\Gamma_j},$ and $\Pi_j$ is the projection, where we use the decomposability properties of the Witten
bundle mentioned above, and \eqref{eq:forgetful_witten_0}. Again these boundary conditions are called forgetful boundary conditions.

However, there exists boundary edges where neither half-edge has the property that $\tw = \alt = 0$, so we need additional boundary conditions. The new ingredient of \BCT\, theory is the \emph{positivity boundary conditions} which we now informally describe. See \cite[Section 3]{BCT2} for a precise definition. We say such a boundary edge is \emph{positive} with respect to a spin bundle when one half-edge has $\tw>0$ and $\alt = 0$.
Let $\Gamma$ be a graph with either a single positive boundary edge or a contracted boundary tail. 
In the former case, let $\Gamma_1,\Gamma_2$ be the two components of the detaching of $\Gamma$ at $e,$ and $h_1,h_2$ the resulting half-edges. At least one of them is non-alternating, without loss of generality $h_1.$ For any $C\in\M_{\Gamma_1}^{W}$ the grading can be extended to the special point $n_1$ corresponding to $h_1.$ We say that $s$ is positive at $C$ if 
$\ev_{n_1}(s|_C)\in J_{n_1}^{\tilde\phi}$
is \emph{positive with respect to the grading}.

Note that if $e$ is Ramond then the exact sequence \eqref{eq:Ramond_bdry_decomp} and properties of the grading show it does not matter on which side we evaluate.
Similarly, if $e$ is a contracted boundary and $C\in\M_{\Gamma_1}^{W}$ then $s$ is positive at $C$ if $\ev_n(s|_C)$ is positive with respect to the grading, where $n$ is the contracted boundary node.
Note that in these two cases we do not impose boundary conditions on the $\psi_i$ classes.

Finally, if $\Gamma$ has no boundary half-edges, a section $s$ is said to be positive if for any $C\in\M_{\Gamma_1}^{W},~p\in C^\phi,$ $\ev_p(s|_C)$ is positive with respect to the grading.
Sections satisfying all these forgetful and positivity conditions are said to be \emph{canonical}.
\begin{rmk}
    Again certain comments are in place.
    \begin{itemize}
        \item First, in the $g=0$ \BCT~$r$-spin case the forgetful maps we use in order to define boundary conditions may lead to stability loss, and by Observation \ref{isomorphism forgetting non-alt for descendents} it will imply that canonical sections of $\CL_i$ vanish there.
        However, it can be easily shown that in all these cases the section of the Witten bundle is positive, and we will not need to worry about this vanishing.         \item Proving that positive boundary conditions exist for moduli points which belong to higher codimensional corners is the heart of this construction, and follows from a real algebraic geometry argument, see \cite[Proposition 3.20]{BCT2}. Moving from a pointwise existence to a global existence imposes additional difficulties, and in fact requires a relaxation of the definition of positivity, see \cite[Example 3.23]{BCT2}. There are various ways to perform this relaxation, and the road taken in \cite{BCT2,GKT2} involves removing certain boundary strata and imposing the positivity \emph{near} these boundary strata, where the evaluation at the nodes is replaced by evaluations at boundary intervals which converge to the node.
        \item At $g=1$, some of the positivity boundary conditions break. The remedy is obtained by adding new boundary conditions for the sections of $\CL_i$ in addition to the forgetful ones. See \cite{TZ3} for a construction. 
    \end{itemize}
\end{rmk}

\begin{thm}\label{thm:r-spin_nums_well_def}[\cite{BCT2} for $g=0,$ \cite{TZ3} for $g=1$]
   Consider $g\in\{0,1\}$, $a_1,\ldots, a_l,k\in\Z_{\geq 0}$ which satisfy \eqref{eq:graded_open_rank},~\eqref{eq:parity}, and  $d_1,\ldots,d_l\in\Z_{\geq 0}$ for which \[\sum d_i+\frac{2\sum a_i+(k+g-1)(r-2)}{r}=2l+k+3g-3.\]
    Then there exists a family of global multisections for $\cW\oplus\bigoplus_{i\in[l]}\CL_i^{\oplus d_i}\to\M_{g,k,\{a_1,\ldots,a_l\}}^{x^r}$ that satisfy the canonical boundary conditions and are nowhere vanishing on the boundary of $\M_{g,k,\{a_1,\ldots,a_l\}}^{x^r}$. Moreover, every choice of such a family of canonical multisections induces the same intersection number, which we denote by $\langle\tau^{a_1}_{d_1}\ldots\tau^{a_l}_{d_l}\sigma^k\rangle^{(x^r,\mu_r),o}.$
\end{thm}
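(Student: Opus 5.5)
The plan follows the strategy sketched after Theorem~\ref{thm:pst_nums_well_def}, upgraded to include the Witten bundle and the positivity boundary conditions. The proof splits into an existence step, carried out by induction over strata, and an independence step, carried out via canonical homotopies.

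\emph{Existence.} Set $E=\cW\oplus\bigoplus_{i\in[l]}\CL_i^{\oplus d_i}$. I would build the multisection stratum by stratum, starting at the deepest corners of $\partial\M_{g,k,\{a_1,\ldots,a_l\}}^{x^r}$ and moving towards codimension one.
\begin{itemize}
\item For a boundary stratum $\M_\Gamma$ with a half-edge of $\tw=\alt=0$, the section is forced to be a pullback from the base, via \eqref{eq:forgetful_bc_psi} and \eqref{eq:forgetful_bc_Witten}. I use \eqref{eq:forgetful_witten_0}, \eqref{eq:NS_bdry_decomp} and Observation~\ref{isomorphism forgetting non-alt for descendents}. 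Since the base has strictly smaller dimension than $\operatorname{rank}E$, a generic pulled-back multisection is nowhere vanishing there.
\item For positive boundary edges, contracted boundaries, and the case with no boundary half-edges, I first choose the Witten component pointwise positive with respect to the grading at the relevant node or boundary point. Positivity forces $\cW$ to be nonzero, so these strata contribute no zeros regardless of the $\CL_i$ components.
\item For Ramond edges I use \eqref{eq:Ramond_bdry_decomp} to check that positivity is well defined on either side.
\end{itemize}
Compatibility on overlaps of strata is ensured by the inductive structure. The construction of the base commutes with further degeneration, and positive sections pulled back from lower strata remain positive. Partitions of unity, which preserve both pullback structure and positivity because positivity is a convex cone condition, glue the local choices into a global multisection. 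I then extend to the interior generically, transverse to zero.

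\emph{Independence.} Given two canonical families $s_0,s_1$, I would form the homotopy $s_t$ on $\M\times[0,1]$. It should be canonical at every time $t$ and nonvanishing on $\partial\M\times[0,1]$. By \cite[Lemma 3.55]{PST14}, the difference of the zero counts equals the signed zeros of $s_t$ restricted to $\partial\M\times[0,1]$, counted using the orientation of Theorem~\ref{thm:or}.
\begin{itemize}
\item On forgetful strata, $s_t$ is pulled back from a base of dimension at most $\operatorname{rank}E-2$, so $\dim(\text{base}\times[0,1])<\operatorname{rank}E$ and a generic homotopy has no zeros there.
\item On positivity strata, convex combinations of positive sections stay positive, so the linear homotopy already has no zeros there.
\end{itemize}
Hence the boundary contribution vanishes and the two numbers coincide. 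For $g=1$ I would run the same argument with the extra $\CL_i$ boundary conditions of \cite{TZ3} replacing the positivity conditions that fail there.

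\emph{Main obstacle.} The hardest step is establishing positivity at points of higher-codimension corners, where several positive edges meet. There one must show that some element of $\cW|_C$ is simultaneously positive at all the relevant nodes and boundary points. I would attack this by a real-algebraic argument, following \cite[Proposition 3.20]{BCT2}:
\begin{itemize}
\item write invariant sections of $J$ on each genus-zero component explicitly as real polynomials in a coordinate on $\R\mathbb{P}^1$;
\item prescribe their signs at the special points, using the alternation data from Proposition~\ref{prop:graded_r_spin_prop} to match the grading orientation;
\item glue the component pieces across nodes using \eqref{eq:NS_bdry_decomp} and \eqref{eq:Ramond_bdry_decomp}.
\end{itemize}
Globalizing this is where I expect trouble near strata on which pointwise positivity degenerates. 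I would relax the condition there, as in \cite[Example 3.23]{BCT2}: excise small neighbourhoods of the bad strata and impose positivity at boundary intervals converging to the node, rather than at the node itself.
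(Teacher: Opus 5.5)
Your proposal follows essentially the same route as the paper. The existence step is an inductive construction with forgetful (pullback-from-base) and positivity conditions, using the real-algebraic pointwise positivity argument of \cite[Proposition 3.20]{BCT2} and the relaxation near degenerate strata. Independence comes from a canonical homotopy whose boundary zeros are ruled out by rank-versus-dimension on forgetful strata and by positivity on the remaining strata, with the extra $\CL_i$ conditions of \cite{TZ3} at $g=1$.

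You do pass over one point the paper flags explicitly. In the $g=0$ $r$-spin case, forgetting the $\tw=\alt=0$ half-edge can destabilize the component carrying $z_i$. By Observation~\ref{isomorphism forgetting non-alt for descendents}(ii), the canonical $\CL_i$-section then vanishes identically on that stratum, so your genericity count does not by itself give nonvanishing there. The paper instead observes that on all such strata the Witten component of the section is positive.
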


\begin{definition}
The intersection numbers $\langle\tau^{a_1}_{d_1}\ldots\tau^{a_l}_{d_l}\sigma^k\rangle^{(x^r,\mu_r),o}$ from Theorem~\ref{thm:r-spin_nums_well_def} are called \emph{open $r$-spin invariants}.
\end{definition}

As explained above, the existence of canonical sections is non-trivial and is based on a delicate real algebraic geometry argument. For the well-definedness of intersection numbers, one uses the same homotopy argument sketched at the end of the previous subsection. Again one constructs homotopies which satisfy the same conditions as the sections --- forgetfulness and positivity. Boundary strata which are subject to the forgetful boundary conditions do not contribute zeroes to the homotopy by transversality and rank versus dimension arguments as above. Strata which satisfy positivity do not contribute to the zero count thanks to their positivity. 
\subsection{\GKT}
The $g=0$ sector of both of the above constructions is a special case of the more general \GKT ~construction. 
\GKT-theory studies mainly intersection numbers which correspond to \emph{rooted} $W$-spin disks, that is $W$-spin disks which have a single boundary marking, called the \emph{root}, which is alternating w.r.t every $r_i$-spin structure, and whose $i^{th}$ twist is $r_i-2.$ The remaining boundary markings are \emph{(type $i$) singly twisted}, that is, for every such point $x_j,$ for a single $i\in[a]$ the $x_j$ is alternating and its $i^{th}$ twist is $r_i-2.$ For the remaining $l\in[a]$ its $l^{th}$ twist is $0$ and it is non-alternating. These intersection numbers are those that encode the mirror symmetry. 

In \cite{GKT2} the case $a=2,~W=x_1^{r_1}+x_2^{r_2}$ is treated. In this case the boundary conditions are exactly the same as in the \BCT~ case. But there is a crucial difference. Graphs can have boundary edges where no half-edge has $\tw=\alt = (0,0)$ or is positive for either spin bundle. These are precisely the boundaries which correspond to a single boundary edge $e,$ which consists of two half-edges $h_1,h_2$ so that
\[\tw_j(h_i)=\delta_{ij}(r_j-2),\quad \alt_j(h_i)=\delta_{ij}.\]
Here, no half-edge can be forgotten. Moreover, no half-edge allows the $\cW_1$ or $\cW_2$ positivity boundary condition. 
For such boundaries we do impose that the sections of the Witten bundles and relative cotangent lines are direct sums of pullbacks of corresponding sections on the two components of the normalized moduli space, but this requirement is not enough to fix the boundary behaviour.

Because of these problematic boundaries the above homotopy argument fails, and intersection numbers induced by canonical boundary conditions \emph{do depend on the choice of the boundary conditions}; however, the resulting intersection numbers are still constrained, as the theorem below shows. We will see below that this dependence on choices is essential for mirror symmetry.

\begin{nn}\label{not: GKT moduli}
    For $W=x_1^{r_1}+x_2^{r_2}$ denote by  $\M_{0,k_1,k_2,1,\mathbf{A}}^{W,o} $ the moduli space of rooted disks with $k_i$ singly twisted points of type $i,$ and internal twists $\mathbf{A}= \{(a_i,b_i)\}_{i \in [l]}$.
\end{nn}

We denote these rooted intersection numbers by 
\begin{equation}\label{eq:rooted intersection number}
\langle\tau^{a_{1},b_{1}}_{d_1}\ldots\tau^{a_{l},b_{l}}_{d_l}\sigma_1^{k_1}\sigma_2^{k_2}\sigma_{12};s\rangle^{(W,\mu_r\times\mu_s),o},
\end{equation}
where $a_{i}$ (respectively, $b_i$) is the twist with respect to the $1$st (respectively, $2$nd) spin structure on the $i$th internal marked point, $k_i$ is the number of singly twisted points of type $i$, $d_i$ is the multiplicity of summand $\CL_i,$ and the rank of the resulting bundle equals the dimension of the moduli space. As usual,  the twists are subject to constraints given in \eqref{eq:graded_open_rank} and ~\eqref{eq:parity} to obtain a nonzero intersection number. Finally, $s$ is the canonical boundary condition, which now must appear in the notation.

For the rooted intersection numbers in ~\eqref{eq:rooted intersection number} to have interesting structure (e.g., exhibit mirror symmetry) we require the canonical boundary conditions to satisfy a so-called \emph{compatibility} condition. For example, suppose we have two moduli spaces $\M_{\Gamma}$ and $\M_{\Gamma'}$ which each have a boundary stratum, associated to graphs $\Lambda$ and $\Lambda'$ respectively, such that $\Lambda,\Lambda'$ each only a have single boundary edge. As done above, write $\Lambda_1, \Lambda_2$ (respectively, $\Lambda'_1, \Lambda_2'$) for the graphs obtained from detaching at the unique boundary edge. Since the global multisections $s_{\Gamma}$ and $s_{\Gamma'}$ on their respective FJRW bundles must pull back from their respective bases, we have $$s|_{\M_\Lambda} = \Pi^*_1 s_{\Lambda_1} \boxplus \Pi_2^* s_{\Lambda_2} \text{ and }s|_{\M_{\Lambda'}} = \Pi^*_1 s_{\Lambda'_1} \boxplus \Pi_2^* s_{\Lambda_2'},$$ just as in ~\eqref{eq:forgetful_bc_Witten}. Compatibility imposes that if the graphs $\Lambda_2$ and $\Lambda_2'$ are isomorphic, then $s_{\Lambda_2}=s_{\Lambda_2'}$.
 See \cite[Definition 3.27]{GKT2} for a precise formulation, where we absorb this compatibility into the definition of a \emph{canonical family of multisections}. Example~\ref{GKT Example of Wall Crossing} below illustrates how this situation manifests and its consequences. By a slight abuse of notation, if $\mathbf{s}^\bullet$ is a compatible family we may write $\mathbf{s}^\bullet$ instead of $s$ in the notation for intersection number.

The analog in this theory to Theorem \ref{thm:pst_nums_well_def} and Theorem \ref{thm:r-spin_nums_well_def} is

\begin{thm}\label{thm:GKT_invariants}[\cite{GKT2}]
Fix $[l]$ and take a set of twists $\{(a_i, b_i)\}_{i \in [l]}$, $d_i \in \Z_{\ge0}$. There exists global multisections of the open FJRW bundles $\cW_1\oplus\cW_2\oplus\bigoplus_{i\in[l]}\CL_i^{\oplus d_i}$ over any moduli space $\M_{0, k_1, k_2, 1, \mathbf{A}}$ (as defined in Notation~\ref{not: GKT moduli}) where $k_1, k_2\in \Z_{\ge 0}$ and $\mathbf{A} \subseteq \{(a_i, b_i)\}_{i \in [l]}$ that satisfy the forgetful, canonical and compatibility boundary conditions above. That is, these global multisections form a canonical family of multisections. Moreover, they are nowhere vanishing on the boundary of $\M_{0, k_1, k_2, 1, \mathbf{A}}$ and thus define rooted intersection numbers
$$
\langle\tau^{a_{i_1},b_{i_1}}_{d_{i_1}}\ldots\tau^{a_{i_m},b_{i_m}}_{d_{i_m}}\sigma_1^{k_1}\sigma_2^{k_2}\sigma_{12};s\rangle^{(W,\mu_r\times\mu_s),o},
$$
where $\mathbf{A} =\{(a_{i_1},b_{i_1}), \dots, (a_{i_m}, b_{i_m})\}$.

These rooted intersection numbers depend on the choice of canonical family of multisections. However, one can construct explicit polynomials of rooted intersection numbers that are independent of the choice of compatibility boundary conditions (see Notation~\ref{nn:mathcal_A}). Lastly, given any collection of values $\lambda_{k_1, k_2, d_i, \mathbf{A}} \in \Q$ which satisfy all the polynomial constraints given in Theorem~\ref{thm:A_vanishes}, there exists a canonical family of multisections $s_{\lambda}$ so that 
$$
\langle\tau^{a_{i_1},b_{i_1}}_{d_{i_1}}\ldots\tau^{a_{i_m},b_{i_m}}_{d_{i_m}}\sigma_1^{k_1}\sigma_2^{k_2}\sigma_{12};s_{\lambda}\rangle^{(W,\mu_r\times\mu_s),o} = \lambda_{k_1, k_2, d_i, \mathbf{A}}.
$$
\end{thm}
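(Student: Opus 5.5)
The existence part will be proved by induction on the dimension of the moduli space, i.e.\ on $k_1+k_2+2|\mathbf{A}|$ together with the total descendant degree. The family is built simultaneously over all $\M_{0,k_1,k_2,1,\mathbf{A}}$ of smaller dimension. Suppose canonical families have been fixed on all lower-dimensional rooted moduli spaces, compatibly in the sense of \cite[Definition 3.27]{GKT2}.

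Each codimension-one boundary stratum $\M_\Lambda$ of $\M_{0,k_1,k_2,1,\mathbf{A}}$ falls into one of three types. The first type has an edge with a half-edge of $\tw=\alt=(0,0)$, which gets the forgetful condition. The second type has a positive edge or a contracted boundary, which gets the positivity condition. The third type is the problematic edge with $\tw_j(h_i)=\delta_{ij}(r_j-2)$, $\alt_j(h_i)=\delta_{ij}$.

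On strata of the first and third types, I would define the section on $\M_\Lambda$ as $\Pi_1^*s_{\Lambda_1}\boxplus\Pi_2^*s_{\Lambda_2}$, using the already chosen sections on the smaller spaces, the decompositions \eqref{eq:NS_bdry_decomp}, \eqref{eq:Ramond_bdry_decomp}, \eqref{eq:forgetful_witten_0} and Observation~\ref{isomorphism forgetting non-alt for descendents}. Compatibility is automatic, because $s_{\Lambda_2}$ depends only on the isomorphism class of $\Lambda_2$.

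These definitions must agree on corners. I would check this via the base construction: every corner section is pulled back from the product of bases.

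Positivity is the main obstacle. Following \cite[Proposition 3.20]{BCT2}, the plan is to show pointwise that positive values exist at every corner point: for a rank-two Fermat polynomial, each spin factor $\cW_i$ behaves separately like the $r_i$-spin case. I would then patch with a partition of unity, using the relaxed positivity near removed strata as in \cite[Example 3.23]{BCT2}; this works because the positive cone is convex. Extending to the interior as a transverse multisection is standard.

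For nonvanishing on the boundary, the positive strata are nonvanishing by construction. On the product strata, the factor sections are nonvanishing by a dimension count: each factor has rank exceeding the dimension of its base, or else it is nonvanishing by induction.

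Dependence on choices is analyzed through homotopies, following the argument sketched after Theorem~\ref{thm:pst_nums_well_def}. Given two canonical families, I would build a canonical homotopy. Forgetful and positive strata contribute nothing, by the rank-dimension argument and positivity respectively. The only contributions come from the problematic strata. There, by \cite[Lemma 3.55]{PST14}, the change in the invariant is a sum over splittings of products of the form (change of a lower-order invariant on $\Lambda_1$) $\times$ (invariant on $\Lambda_2$). This yields an explicit wall-crossing formula that is triangular in dimension.

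From this formula I would extract the invariant polynomials of Notation~\ref{nn:mathcal_A}: the combinations annihilated by every wall-crossing transformation.

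For realizability, proceed by induction on dimension. At each stage, given target values $\lambda$ satisfying the constraints of Theorem~\ref{thm:A_vanishes}, the next-dimension invariant can be adjusted freely in the directions not fixed by those constraints. This is done by modifying the section near a single problematic boundary stratum of the lowest relevant dimension, which introduces a local zero of prescribed sign and multiplicity while keeping the compatibility requirements intact. Checking that these modifications are independent enough to hit every admissible $\lambda$, and that they are not obstructed by compatibility, is the second delicate point.
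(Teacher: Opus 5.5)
Your existence argument follows the paper's sketch: inductive construction, forgetful and product conditions pulled back from bases, positivity via \cite[Proposition 3.20]{BCT2} with the relaxation near removed strata, and rank-versus-dimension on product strata. One correction is needed. The induction must also run over the \emph{unrooted} moduli spaces that appear as the non-root vertex of the problematic strata. On the critical ones (rank $=\dim+1$) the section is a genuinely free choice, and this is where all dependence on $\mathbf{s}$ lives.

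Your wall-crossing formula is misdescribed accordingly. At a critical boundary the rooted vertex is balanced and the unrooted one is critical. So the jump of a rooted number is (signed zero count of the homotopy on the critical disk, which carries no invariant of its own) times (the invariant of the rooted vertex at the crossing time). Successive changes therefore interact nonlinearly rather than simply adding up.

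More seriously, ``extract the combinations annihilated by every wall-crossing'' is not an argument that the specific Gamma-weighted $\mathcal{A}(\mathbf{A},\mathbf{d},\mathbf{s})$ of Notation~\ref{nn:mathcal_A} is invariant. In the paper this comes out of the direct computation of $\mathcal{A}$ in Theorem~\ref{thm:A_vanishes}, a TRR-type argument with an explicit section and homotopies, which identifies it with choice-independent closed extended invariants. Along your route you would need two things. First, show that every critical-disk change acts on $W^{\mathbf{s}}$ by an element of $G^{r,s}_R$, which preserves $dx\wedge dy$. Second, use that the $\mathcal{A}$'s are the coefficients of $\int_{\Xi_{a,b}}e^{W^{\mathbf{s}}/\hbar}dx\wedge dy$.

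The realizability step fails as described. The signed zero count depends only on the section along the boundary. So modifying the section near one problematic stratum, while keeping its boundary values (and hence compatibility) intact, cannot change any invariant.

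The only boundary data whose change affects the numbers are the sections on critical unrooted disks. By compatibility, such a section is shared by every moduli space having that disk as a boundary component. Hence no modification is local to a single invariant: in Example~\ref{GKT Example of Wall Crossing} one change moves the $\sigma_1^4$ and $\sigma_2^4$ numbers by opposite amounts. Each change also propagates to higher invariants through the rooted factors. The independence check you defer is therefore the whole content of the claim.

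The paper supplies it through the wall-crossing group:
\begin{enumerate}
\item \cite[Lemma 5.16]{GKT2} inserts zeros of prescribed weight on a critical disk.
\item Any two canonical families give potentials related by an element of $G^{r,s}_R$, and every element is realized (parts (1) and (2) of the wall-crossing theorem).
\item The last statement is deduced from this group action together with the oscillatory-integral description of $\mathcal{A}$.
\end{enumerate}
Without this, or an equivalent order-by-order argument in $\mathfrak{m}$ showing that the critical-disk modifications span exactly the directions left free by the constraints, your induction does not reach every admissible $\lambda$.
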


\begin{definition}
    The rooted intersection numbers
$
\langle\tau^{a_{i_1},b_{i_1}}_{d_{i_1}}\ldots\tau^{a_{i_m},b_{i_m}}_{d_{i_m}}\sigma_1^{k_1}\sigma_2^{k_2}\sigma_{12};s\rangle^{(W,\mu_r\times\mu_s),o}
$ 
from Theorem~\ref{thm:GKT_invariants} are called \emph{open FJRW invariants}.
\end{definition}

In the upcoming work \cite{GKT3} this theorem is extended to the most general Fermat polynomial, corresponding to the LG model
$$
(x_1^{r_1}+\cdots+x_a^{r_a}, \mu_{r_1} \times \cdots \times \mu_{r_a}).
$$ 
Unlike the rank $a=2$ case, for $a>2$ additional boundary conditions are required for this theorem to hold and for the intersection theory to satisfy mirror symmetry. We remark that the first barrier that will demonstrate this requirement is quite quick to see: when one takes a boundary edge with $a=2$, then one has three cases: (i) one has a positive boundary half-edge, (ii)  one boundary half-edge has $\tw = \alt =0$ for both spin bundles, or (iii) both boundary half-edges are singly twisted. When $a>2$, this is clearly no longer the case.

\subsubsection{Balanced graphs, critical boundaries, and wall-crossing}
Open FJRW invariants are nonzero only when the rank of the FJRW bundle is equal to the dimension of the underlying moduli space of $W$-orbidisks. The real rank of the open FJRW bundle is given by summing the open Witten bundle components \eqref{eq:graded_open_rank} when there are no descendents. If there are descendents, then one adds two to the real rank for each $\psi$-class, as each $\psi$-class adds one complex dimension. In~\eqref{real dim of open Moduli of discs}, we have the real dimension of the moduli space of genus $g$ orbidisks with $k$ boundary markings and $l$ internal markings. 

We can then see that we only have nonzero intersection numbers when these two numbers are equal. We call a dual graph corresponding to this situation \emph{balanced}. In fact, the twists and descendent vector factor into the computation of the rank of the open FJRW bundle, while only the genus and number of marked points show up in the dimension of the moduli space, so they must ``balance'' in order to find a nonzero intersection number. This condition boils down to particular equations. For example, in the open $r$-spin case, the equation given in Theorem~\ref{thm:r-spin_nums_well_def} determines when an open $r$-spin dual graph is balanced.

In the rank one case, we then can show that the open $r$-spin invariants do not depend on this choice by showing that there exists a homotopy between any two choices of canonical multisection that does not introduce a change in the invariants. In the rank two case, this is often impossible! The culprit for this is the existence of so-called \emph{critical boundaries}. 

Consider a smooth, balanced, rooted $W$-spin disk and allow it to degenerate to a disk with a boundary node. Let the dual graph for this boundary be $\Gamma$. The rank of the FJRW bundle stays the same, but the moduli space $\CM_{\Gamma}$ has codimension 1. We now consider a homotopy between two different systems of canonical multisections defining open intersection numbers. In effect, the homotopy adds an additional dimension to the moduli space, so one may now have nonvanishing intersection numbers. To combat this, each theory has a technical lemma providing structure to the homotopy (see, e.g., \cite[Lemma 4.11]{BCT2}, \cite[Lemma 3.39]{GKT2}) where the homotopy can be found to be nonvanishing on the boundary in many cases. However, we cannot prove it in all cases. This is where wall-crossing arises.

Critical boundaries in rank 2 occur when both the half-nodes are singly twisted and the vertex that has the root is still balanced. Here, from the ``recursiveness'' of the system of canonical sections, the real rank of the FJRW bundle and the dimension of the moduli space corresponding to the rooted vertex are equal, and the real rank of the FJRW bundle on the non-rooted vertex is one greater than its corresponding moduli space's dimension. Such unrooted vertices are called \emph{critical}, and, in rank two, how they correspond with critical boundaries is fairly straightforward \cite[Proposition 2.37 and Notation 2.38]{GKT2}. We remark that one cannot see such a phenomenon occur in rank 1 as both half-nodes cannot be singly twisted, and hence one vertex's moduli space will `lose' an additional dimension when the untwisted half-node is forgotten by passing to the base.

We now give a few examples so one can see wall-crossing occur.

\begin{ex}\label{GKT Example of Wall Crossing}
The simplest example of wall crossing comes from considering $W= x_1^4 + x_2^4$. We look at the $W$-spin disk with two internal markings with twists $(2,2)$, four boundary marked points singly twisted of type $1$, and a root. With no descendents, this  corresponds to the rooted intersection number $\langle \tau_0^{2,2}\tau_0^{2,2}\sigma_1^4\sigma_{12}; s\rangle^{(W, \mu_4 \times \mu_4),o}$. 

We depict a generic $W$-spin disk with these properties in Figure~\ref{x4y4 example}(A). One can see that the dimension of the moduli space and the rank of the FJRW bundle are both 6. There is a critical boundary found by degenerating as seen in Figure~\ref{x4y4 example}(B). After normalizing, one can see that the disk on the right has two internal markings with twist $(2,2)$, one boundary marking singly twisted of type 1, and another of type 2 (which comes from the half-node). One can check that the dimension of the moduli space is 3 and the rank of the FJRW bundle is 4, hence it is a critical disk. On the other hand, the rooted disk has a moduli space of dimension 2 and the rank of the FJRW bundle is 2.

\begin{figure}
\begin{subfigure}{.45\textwidth}
  \centering
\begin{tikzpicture}[scale=0.5]
  \draw (0,0) circle (2cm);
\node at (1,.7) {$\bullet$};
   \node at (-.35,-.4) {$\bullet$};
    
  \node (c) at (-2,0) {$\times$};

  \node (r1) at (1.73,-1) {\tiny $\bullet$};
   \node (r1) at (0.347,-1.9696) {\tiny $\bullet$};
   \node (r1) at (1.36,-1.46) {\tiny $\bullet$};
   \node (r1) at (0.347,1.9696) {\tiny $\bullet$};
\end{tikzpicture}
      \caption{Smooth, balanced open FJRW disk}
\end{subfigure}
\begin{subfigure}{.45\textwidth}
\centering
\begin{tikzpicture}[scale=0.5]
  \draw (0,0) circle (2cm);
 \draw (4,0) circle (2cm);

\node at (5,.7) {$\bullet$};
   \node at (3.35,-.4) {$\bullet$};
   
  \node (c) at (-2,0) {$\times$};
  \node (a) at (2,0) {$\bullet$};

  \node (r1) at (1.73,-1) {\tiny $\bullet$};
   \node (r1) at (0.347,-1.9696) {\tiny $\bullet$};
   \node (r1) at (5.36,-1.46) {\tiny $\bullet$};
   \node (r1) at (0.347,1.9696) {\tiny $\bullet$};
\end{tikzpicture}
      \caption{Critical boundary}
\end{subfigure}
\caption{Example of critical boundary in the case $W= x_1^4 + x_2^4$. Here the $\times$ depicts the root, all interior markings have twist $(2,2)$, and all boundary markings are singly twisted of type $1$. }\label{x4y4 example}
\end{figure}
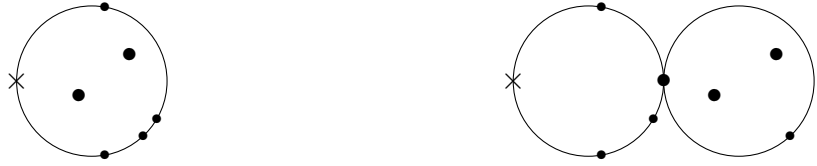

\end{ex}

A corollary to what we show in \cite{GKT2} is that, for any $c \in \Q$, there exists a canonical multisection $s_c$ so that 
$$
\langle \tau_0^{2,2}\tau_0^{2,2}\sigma_1^4\sigma_{12}; s_c\rangle^{(W, \mu_4 \times \mu_4),o} = c.
$$
This is because one can prove a lemma that allows one to build a new canonical multisection on critical disks that adds new zeros \cite[Lemma 5.16]{GKT2}.

This may seem troublesome, but it introduces a beautiful structure.  Since the canonical multisections must be built recursively on boundary strata, we can see that, as we change the canonical multisection for the critical disk, we also change canonical multisections for other balanced disks than the one above. 

In this particular example, one can see by symmetry that changing the multisection associated with this critical disk will also change the intersection number
$$
\langle \tau_0^{2,2}\tau_0^{2,2}\sigma_2^4\sigma_{12}; s_c\rangle^{(W, \mu_4 \times \mu_4),o}.
$$
Unraveling the notation, this intersection number considers the moduli space whose smooth representative is a disk that has two internal markings with twists $(2,2)$, four boundary marked points singly twisted of type $2$, and a root. A similar picture arises to that in Figure~\ref{x4y4 example} but with types of the singly twisted boundary markings flipped. One obtains the same critical disk, but this time the boundary marked point coming from the half-node is of type $1$. This changes the orientation of the gluing of the moduli space and consequently the sign of the change of intersection number coming from changing the multisection. Thus we obtain that the \emph{sum}
\begin{equation}\label{sum quantity for WC}
\langle \tau_0^{2,2}\tau_0^{2,2}\sigma_1^4\sigma_{12}; s\rangle^{(W, \mu_4 \times \mu_4),o} + \langle \tau_0^{2,2}\tau_0^{2,2}\sigma_2^4\sigma_{12}; s\rangle^{(W, \mu_4 \times \mu_4),o}
\end{equation}
is invariant with respect to the choice of system of canonical multisections. In \S\ref{sec:comp ms ih}, the quantities that are invariant will be made precise (see Notation~\ref{nn:mathcal_A}).

\subsection{\TZ}
The obstruction for allowing more general boundary twists in the \BCT-$r$-spin theory is that with twists lower than $r-2$ the positivity requirements cannot be fulfilled. In \cite{TZ2}, Tessler and Zhao overcome this problem by replacing the \emph{forgetful boundary conditions} by the \emph{point insertion boundary conditions}. The \TZ-$r$-spin theories, are defined over the moduli space of $\MU$-connected surfaces, and are labelled by $\mathfrak{h}=0,1,\ldots,\lfloor\tfrac{r}{2}\rfloor-1.$ The $\mathfrak{h}^{th}$ theory allows all internal twists of the form $(a,\ldots,a)$, and all boundary twists of the form $(b,\ldots,b)$ with $b\geq r-2\mathfrak{h}-2,~b=r\pmod 2$. 

We first consider the case with one spin bundle. On codimension $1$ boundary strata which correspond to contracted boundaries, or to a boundary edge whose non-alternating side has twist greater than $2\mathfrak{h}$, we impose the same positivity conditions.
Let $\Gamma$ be a $(W,\mathfrak{h})$ $\MU$-connected graph with a single boundary edge $e$ whose non-alternating side has twist $2a\leq 2\mathfrak{h},$ hence its alternating side has twist $r-2a-2\geq r-2\mathfrak{h}-2.$ Then $\Gamma$ is \emph{equivalent}, in the sense of $\MU$-connected graphs, to a unique graph $\Gamma'$ which has an additional $\MU$-edge between a new boundary tail twisted $(r-2a-2)$ and an internal tail twisted $a,$ see Figure~\ref{fig:rh surface} for the equivalence in the level of surfaces. For any moduli point in $\M_\Gamma$ there is a unique equivalent moduli point in $\M_{\Gamma'},$ and vice versa. Moreover, the fibers of the Witten bundle and relative cotangent lines are canonically identified.

A section $s$ is said to be \emph{canonical} at $\Gamma$ if
\begin{equation}\label{eq:point_insertion_bc}s|_{\M_{\Gamma}}=s|_{\M_{\Gamma'}}\end{equation}under the above identifications of moduli spaces and bundles.
An intuitive way to think about these \emph{point insertion boundary conditions} is that we glue moduli spaces of $\MU$-connected surfaces along boundary strata which represent equivalent objects. From this perspective, the glued boundaries cease to act as true boundaries. Thus, we need not impose boundary conditions as before and instead just need the multisections to allow gluing. 
This is exactly what \eqref{eq:point_insertion_bc} does. 

This construction extends word for word to the case of Fermat polynomials of the form $\sum_{i=1}^{2m+1}x_i^r$ and their minimal symmetry group $\mu_r.$ In this case all twists have the form $(t,\ldots,t)$, and boundary twists must satisfy $t\geq r-2\mathfrak{h}-2,~t= r \pmod{2}.$ The reason we consider only an odd number of summands $2m+1$ here is orientation: The sum of an even number of copies of Witten bundles can always be canonically oriented. A single Witten bundle is relatively canonically oriented, by Theorem \ref{thm:or}. Thus, the sum of $2m+1$ copies is also canonically relatively oriented. There is no analog for this theory, in general, for an even number of copies.

\begin{thm}\label{thm:TZ-r-spin_nums_well_def}[\cite{TZ2}]
    Let $g=0,~m\geq 0$, $a_1,\ldots, a_l,b_1,\ldots,b_k\in\Z_{\geq 0}$ which satisfy \eqref{eq:graded_open_rank},~\eqref{eq:parity}, and each $b_i\in\{r-2,r-4,\ldots,r-2\mathfrak{h}-2\}.$ Let $d_1,\ldots,d_l\in\Z_{\geq 0}$ satisfy \begin{equation}\label{eq:TZ_rk_vs_dim}\sum d_i+(2m+1)\frac{2\sum_{i=1}^l a_i+\sum_{i=1}^k b_i-(r-2)}{r}=2l+k-3,\end{equation}
    or equivalently
    \[\rank(\cW^{\oplus 2m+1}\oplus\bigoplus_{i\in[l]}\CL_i^{\oplus d_i})=\dim(\M_{g,\{b_1,\ldots,b_k\},\{a_1,\ldots,a_l\}}^{(W,\mathfrak{h})}).\]
   Then there exists a family of global multisections  for $$\cW^{\oplus 2m+1}\oplus\bigoplus_{i\in[l]}\CL_i^{\oplus d_i}\to\M_{g,\{b_1,\ldots,b_k\},\{a_1,\ldots,a_l\}}^{(W,\mathfrak{h})},$$
   that satisfy the canonical boundary conditions and are nowhere vanishing on the boundary of $\M_{g,\{b_1,\ldots,b_k\},\{a_1,\ldots,a_l\}}^{(W,\mathfrak{h})}$.
   Moreover, every choice of such family of global multisections induces the same intersection number. We denote this number by $\langle\tau^{a_1}_{d_1}\ldots\tau^{a_l}_{d_l}\sigma_{b_1}\ldots\sigma_{b_k}\rangle^{(W,\mathfrak{h},\mu_r),o}$.
\end{thm}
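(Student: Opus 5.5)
The proof follows the template of Theorems~\ref{thm:pst_nums_well_def} and~\ref{thm:r-spin_nums_well_def}. It has two parts. First we construct canonical multisections by induction. Then we show invariance by a homotopy argument. Throughout we work on the glued space: $\M^{(W,\mathfrak{h})}_{0,\{b_j\},\{a_i\}}$ is assembled from products of ordinary moduli spaces of graded disks, glued along the codimension-one strata identified by the point-insertion equivalence of \S\ref{subsec: point insertion}. The relative orientation comes from Theorem~\ref{thm:or}: it is applied to each of the $2m+1$ copies of $\cW$, and we use that even sums of copies of $\cW$ are canonically oriented. The first check is that this relative orientation is compatible with the gluing maps~\eqref{eq:point_insertion_bc}. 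This should follow from the degeneration analysis of orientations in \cite{TZ1,TZ2}, using the canonical identification of Witten fibres and cotangent lines across equivalent surfaces. After this check, the glued strata are interior hypersurfaces of an orbifold with corners, and the only true boundaries left are of two kinds: contracted boundaries, and boundary edges whose non-alternating side has twist $>2\mathfrak{h}$.

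For existence we induct on $\dim$, i.e.\ on $k+2l$. On each corner stratum $\M_\Gamma$ we prescribe the section via the decomposition properties of $\cW$ (\eqref{eq:NS_bdry_decomp},~\eqref{eq:Ramond_bdry_decomp}). We use the direct-sum decomposition over the components after detaching $\MU$-edges, and on each factor we take the already-constructed canonical sections of the smaller moduli spaces. On true boundaries we take the $\cW$-component to be positive with respect to the grading. The key input is the pointwise real-algebraic positivity statement of \cite[Proposition 3.20]{BCT2}. It must be extended to our setting: for a non-alternating half-node of twist $2a$ with $2a>2\mathfrak{h}$, there must exist invariant sections of $J$ that are positive at the relevant boundary points. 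We then globalize with a partition of unity, using the relaxed ``near-boundary'' positivity of \cite{BCT2,GKT2}; convexity of positive cones makes the convex combination still positive. On the point-insertion walls the condition~\eqref{eq:point_insertion_bc} is automatic if both sides are built from the same lower-dimensional data, because the inductive construction is performed on equivalence classes. We then extend the section to the interior generically, making it transverse, and we can do this with multisections because of automorphisms. The positive components are nowhere zero, so the section is nowhere vanishing on the true boundary.

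For well-definedness, take two canonical families and build a transverse homotopy on $\M\times[0,1]$ that is canonical at every time, by the same induction. By \cite[Lemma 3.55]{PST14}, the difference of the zero counts equals the signed count of zeros of the homotopy on $\partial\M\times[0,1]$. On positivity boundaries there are no zeros, since positivity is preserved along the homotopy. Point-insertion walls contribute nothing, because they are not boundaries: the contributions from the two sides cancel, since the orientations are compatible and reversed across the gluing. It remains to control corners where several walls and boundaries meet. There, the restriction of the homotopy is pulled back from products of lower moduli spaces, and a rank-versus-dimension count should show it has no zeros. The point is that critical vertices, as in \GKT, do not occur, because every boundary edge is either positive or absorbed by point insertion.

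I expect the main obstacle to be the positivity existence at corners where positive edges, contracted boundaries and point-insertion identifications coexist. Proving that all the required evaluations can be simultaneously positive is where the restriction $b\geq r-2\mathfrak{h}-2$ must enter. A secondary difficulty is the orientation compatibility across glued walls; this is also what forces the number of copies of $\cW$ to be odd.
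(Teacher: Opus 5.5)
Your proposal follows essentially the route the survey attributes to \cite{TZ2}. Point-insertion gluing turns every boundary edge whose non-alternating side has twist $\le 2\mathfrak{h}$ into an interior wall. The only true boundaries left are contracted boundaries and edges with non-alternating twist $>2\mathfrak{h}$, and there BCT-type positivity lets the homotopy argument go through, with the relative orientation coming from $\cW^{\oplus 2m}\oplus\cW$. You rightly single out that positivity lemma as the real input; the survey likewise does not prove it and leaves it to \cite{TZ2}.

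Two minor corrections. On glued walls you need only that the multisections agree under the identification $\M_\Gamma\cong\M_{\Gamma'}$. Prescribing them from ``already-constructed smaller canonical sections'' via detaching is not directly available there, nor at positive edges, because some detached pieces carry non-alternating or too-low-twist tails outside the canonical family. Also, your rank-versus-dimension step at corners is unnecessary: after gluing, every remaining boundary point lies in the closure of a positivity stratum.
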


\begin{definition}
    The intersection numbers $\langle\tau^{a_1}_{d_1}\ldots\tau^{a_l}_{d_l}\sigma_{b_1}\ldots\sigma_{b_k}\rangle^{(W,\mathfrak{h},\mu_r),o}$ from Theorem~\ref{thm:TZ-r-spin_nums_well_def} are called open $(r,\mathfrak{h})$-spin invariants.
\end{definition}

\begin{rmk}
 An equivalent point of view is to define section-dependent intersection numbers without doing the point insertion procedure, and then we would have obtained that certain polynomial combinations of these intersection numbers give rise to invariants, as in Theorem \ref{thm:GKT_invariants}. 

Section 8 of \cite{TZ2} sketches how the point insertion idea can also be applied to open Gromov-Witten theories, in which the Lagrangian satisfies some properties. The topological recursion relation we shall see below for the \TZ-$r$-spin theories extend naturally as a universal recursion relation for all point-insertion theories in $g=0.$ 

 The $\mathfrak{h}=0$ $r$-spin theory is equivalent to the \BCT-$r$-spin theory in the sense that there is an invertible polynomial transformation between the intersection numbers of one and that of the other \cite[Theorem 5.6]{TZ2}. The geometric origin of this equivalence is that one can define the \BCT-$r$-spin intersection numbers as intersection numbers on the $\mathfrak{h}=0$ moduli space of $\MU$-connected surfaces, with the same boundary conditions, but a modified version $\widehat\CL_i$ of the relative cotangent lines $\CL_i$. These modified lines are pulled back from the moduli space obtained by forgetting all internal markings matched by $\MU.$ It can be shown that this definition makes sense, and that canonical sections in the \BCT~sense induce canonical sections for the modified intersection theory, with the same intersection numbers. See \cite[Section 5]{TZ2} for more details.
\end{rmk}
\subsubsection{The $g=0$ Fermat quintic $(W,G)=(x_1^5+\cdots+x_5^5,\mu_5)$}
An example of a higher Fermat intersection theory which involves the point insertion technique is the Fermat quintic with minimal admissible symmetry group $\mu_5.$ In this case all twists have the form $(a,a,a,a,a)$, where $a=0,\ldots,4.$ Boundary twists are more restricted: in the $\mathfrak{h}=0$ case, which is equivalent to the na\"ive attempt to construct the quintic theory using \BCT-$5$-spin techniques, all boundary twists must be $(3,3,3,3,3).$ The $\mathfrak{h}=1$ allows also $(1,1,1,1,1)$ boundary twists. Equation 
\eqref{eq:open_rank1_general} implies that primary invariants, those which do not involve descendents, must satisfy
\[5 \ \bigg| \ 2\sum_{i=1}^l{a_i}+\sum_{i=1}^kb_i-3=2l+k-3.\]
Using geometric arguments one can show that all possible non-trivial intersection numbers for $\mathfrak{h}=0$ vanish. Also, most intersection number for $\mathfrak{h}=1$ vanish. However, for
$\mathfrak{h}=1$ the intersection numbers $\langle\sigma_1^{5d+3}\rangle^{(W,\h=1,\mu_5),o}_0$ for $d$ positive and odd were conjectured not to vanish in \cite{TZ2}. This conjecture further suggests these intersection numbers are related to the open GW theory of the Calabi-Yau threefold studied in \cite{PSW} via an open analog of the LG/CY correspondence. See \cite{Melissa,Walcher} for this conjectural correspondence.

\subsubsection{Open potentials}
Once we have a genus $g$ definition of intersection numbers for an open FJRW theory, we can pack them in a generating function called the \emph{potential}, e.g, for \BCT-$r$-spin theory it reads
\begin{equation}\label{eq:open_pot}F_g^{x^r,o}=\sum_{\substack{k,l\geq 0\\2g-2+k+2l>0}}\frac{s^k}{k!l!}\sum_{\substack{a_1,\ldots,a_l\in\{0,\ldots,r-1\}\\d_1,\ldots,d_l\geq 0}}\left(\prod_{i=1}^lt_{d_i}^{a_i}\right)\large\langle\prod_{i=1}^l\tau_{d_i}^{a_i}\sigma^k\large\rangle^{x^r,o}\end{equation}
where $o$ stands for open.
In the \BCT-case, where all intersection numbers are defined, for all $g$ (with only $0$ twists) we also define the full potential \begin{equation}\label{eq:all_g_open_pot}F^{o}=\sum_{g\geq0}u^{g-1}F_{g}^{x^2,o|_{t_d^1=0,~d\geq 0}}.\end{equation}
\section{Computations, Mirror Symmetry and Integrable Hierarchies}
\label{sec:comp ms ih}
\subsection{Open topological recursion relations} Open topological recursion relations are used to compute open $r$-spin invariants or prove structural results like mirror symmetry about combinations of open FJRW invariants. We now will outline these relations in each of the above theories. In each theory, there are two types of open topological recursion relations in genus 0. Roughly speaking, they relate to different distinguished multisections of the cotangent line bundle on the moduli space of Riemann surfaces with boundary, and thus give two different relations that can be played off on one another and applied to computations.
\subsubsection{\PST~and \BCT~ theories}
The \PST~and \BCT~theories satisfy the following topological recursion relations in genus $0,1,$ which allow computing all $g=0,1$ numbers:
\begin{itemize}
\item[(a)] (Boundary marked point $g=0$ TRR) Suppose $l,k\geq 1$.  Then
\begin{align*}
\< \tau_{d_1+1}^{a_1}\prod_{i=2}^l\tau^{a_i}_{d_i}\sigma^k\>^{x^r,o}_0=&
\sum_{a=-1}^{r-2}\sum_{S \sqcup R = \{2,\ldots,l\}}\left\langle \tau_0^{a}\tau_{d_1}^{a_1}\prod_{i \in S}\tau_{d_i}^{a_i}\right\rangle^{x^r,{ext}}_0
\left\langle \tau_0^{r-2-a}\prod_{i\in R}\tau^{a_i}_{d_i}\sigma^k\right\rangle^{x^r,o}_0+\\
&+\sum_{\substack{S \sqcup R = \{2,\ldots,l\} \\ k_1 + k_2 = k-1}} \binom{k-1}{k_1} \left\langle \tau^{a_1}_{d_1}\prod_{i \in S} \tau^{a_i}_{d_i}\sigma^{k_1}\right\rangle^{x^r,o}_0 \left\langle \prod_{i \in R} \tau^{a_i}_{d_i} \sigma^{k_2+2}\right\rangle^{x^r, o}_0.
\end{align*}
\item[(b)] (Internal marked point $g=0$ TRR) Suppose $l\geq 2$.  Then
\begin{align*}
\<\tau_{d_1+1}^{a_1}\prod_{i=2}^l\tau^{a_i}_{d_i}\sigma^k\>^{x^r,o}_0=&
\sum_{a=-1}^{r-2}\sum_{S \sqcup R = \{3,\ldots,l\}}\left\langle \tau_0^{a}\tau_{d_1}^{a_1}\prod_{i \in S}\tau_{d_i}^{a_i}\right\rangle^{x^r,{ext}}_0
\left\langle \tau_0^{r-2-a}\tau^{a_2}_{d_2}\prod_{i\in R}\tau^{a_i}_{d_i}\sigma^k\right\rangle^{x^r,o}_0+\\
&+\sum_{\substack{S \sqcup R = \{3,\ldots,l\} \\ k_1 + k_2 = k}} \binom{k}{k_1} \left\langle \tau^{a_1}_{d_1} \prod_{i \in S} \tau^{a_i}_{d_i}\sigma^{k_1}\right\rangle^{x^r,o}_0 \left\langle \tau^{a_2}_{d_2}\prod_{i \in R} \tau^{a_i}_{d_i} \sigma^{k_2+1}\right\rangle^{x^r, o}_0.
\end{align*}
\item[(c)](Genus $1$ TRR)
\begin{equation}
\begin{split}
\left\langle \tau_{d_{1}+1}^{a_1}\prod_{i\in [l]\setminus \{1\}}\tau^{a_i}_{d_i}\sigma^k\right\rangle^{x^r,o}_1\hspace{-0.2cm}
=&\sum_{\substack{J_1 \sqcup J_2 = [l]\setminus\{1\}\\ -1\le a \le r-2}}\hspace{-0.1cm}\left\langle \tau_0^{a}\tau_{d_{1}}^{a_{1}}\prod_{i \in J_1}\tau_{d_i}^{a_i}\right\rangle^{x^r,\text{ext}}_0
\hspace{-0.1cm}\left\langle \tau_0^{r-2-a}\prod_{i\in J_2}\tau^{a_i}_{d_i}\sigma^k\right\rangle^{x^r,o}_1\\
&+\hspace{-0.1cm}\sum_{\substack{J_1 \sqcup J_2 =  [l]\setminus\{1\} \\ k_1+k_2=k}} \hspace{-0.1cm} \binom{k}{k_1}\left\langle \tau^{a_{1}}_{d_{1}}\prod_{i \in J_1} \tau^{a_i}_{d_i}\sigma^{k_1}\right\rangle^{x^r,o}_0 \hspace{-0.1cm}\left\langle \prod_{i \in J_2} \tau^{a_i}_{d_i} \sigma^{k_2+1}\right\rangle^{x^r, o}_1\\
&+\frac{1}{2}\left\langle \prod_{i\in [l]}\tau^{a_i}_{d_i}\sigma^{k+1}\right\rangle^{x^r,o}_0.
    \end{split}
\end{equation}\label{eq:g=1_trr}
\end{itemize}
The first two items were proven in \cite[Theorem 1.5]{PST14} for $r=2,$ no Ramond insertions, and in \cite[Theorem 4.1]{BCT2} for general $r$ and all insertions.
The third item is in \cite{TZ3}.

The proofs involve studying the zero count of an explicit global section, and comparing it to the non explicit canonical ones via homotopy arguments. 

\subsubsection{Relation with Solomon's Open WDVV}
In \cite{sol_owdvv} Solomon found that some $g=0$ open GW theories satisfy a uniform relation which he termed the \emph{Open WDVV (OWDVV)}. 
This relation is satisfied by a large class of open GW theories, which include  the OGW theory of $(\mathbb{CP}^n,\mathbb{RP}^n)$ for $n$ odd \cite{solomon2023relative}.
This relation also holds for $g=0$~\PST ~and \BCT ~theories, and is a formal consequence of the genus $0$ topological recursion relations presented above. Very roughly speaking, theories which satisfy Solomon's OWDVV are those theories which allow for two (torsionless) boundary states, one corresponds to the \emph{point class} (in the $r$-spin case it is the twist $r-2$) and one to the \emph{unit} (in the $r$-spin case it is the twist $0$). 

In \cite{alexandrov2023construction} Alexandrov, Basalaev and Buryak, following the program of Givental \cite{givental2001gromov,givental2001semisimple} study all genus \emph{F-CohFTs}, which are generalizations of CohFTs designed to axiomatize open intersection theories whose $g=0$ sector is subject to Solomon's OWDVV. They show the associated potentials, including descendents, satisfy certain universal equations: the \emph{open string} and \emph{open dilaton} equations for all genus, and $g=0,1$ \emph{topological recursion relations}. See also \cite{gomez2021open}. In light of their work and the Givental-Teleman reconstruction theorem in the closed setting \cite{Teleman,givental2001gromov,givental2001semisimple}, the following question is natural:
\begin{question}
    Do OGW and OFJRW theories whose $g=0$ part satisfies Solomon's OWDVV (and some semi-simplicity condition) have an all genus extension, which includes descendents, and whose potential is given by Alexandrov-Basalaev-Buryak's recipe?
\end{question}

\PST~theory and \BCT~theory are the only known constructions of open theories which satisfy Solomon's OWDVV in $g=1$. Moreover, it holds in all genera for \PST~theory. The examples of \TZ~and \GKT~
 theories below, which allow richer varieties of boundary states, are subject to different topological recursion relations, and also to wall-crossing phenomena for \GKT~theory, as we shall see below.

\subsubsection{\TZ~theories}
\TZ~theories satisfy very different types of topological recursion relations.
\begin{thm}\label{thm:TZ_TRR}
Let $W=x_1^r+\cdots+x_m^r,~G=\mu_r$ and $\h\in\{1,\ldots,\lfloor\frac{r}{2}\rfloor\}$. Then the following relations hold, whenever \eqref{eq:TZ_rk_vs_dim} holds:
\begin{itemize}
\item[(a)] If $l,k\ge 1$, then
\begin{equation*}
		\begin{split}
		&\left\langle
		\tau^{a_1}_{d_1+1}\tau^{a_2}_{d_2}\dots\tau^{a_l}_{d_l}\sigma^{b_1}\sigma^{b_2}\dots\sigma^{b_k}
		\right\rangle_0^{(W,\mathfrak{h}),o}
		\\=&\sum_{\substack{s\ge 0\\-1\le a \le r-2}}\sum_{\substack{0 \le t_i \le \h\\\coprod_{j=-1}^{s}R_j=\{2,3,\dots,l\}\\\coprod_{j=0}^{s}T_j=\{2,3,\dots,k\}\\ \{(R_j,T_j,t_j)\}_{1\le j \le s}\text{ unordered}}}(-1)^s\left\langle
		\tau^{a}_{0}\tau^{a_1}_{d_1}\prod_{i\in R_{-1}}\tau^{a_i}_{d_i}\prod_{j=1}^{s}\tau^{t_j}_{0}
		\right\rangle_0^{\frac{1}{r},\text{ext},m}\\
		&\cdot\left\langle
		\tau^{r-2-a}_{0}\sigma^{b_1}\prod_{i\in R_0}\tau^{a_i}_{d_i}\prod_{i\in T_0}\sigma^{b_i}
		\right\rangle_0^{(W,\mathfrak{h}),o}
		\prod_{j=1}^{s}\left\langle
		\sigma^{r-2-2t_j}\prod_{i\in R_j}\tau^{a_i}_{d_i}\prod_{i\in T_j}\sigma^{b_i}
		\right\rangle_0^{(W,\mathfrak{h}),o}.
		\end{split}
		\end{equation*}

\item[(b)] If $l\ge 2$, then
\begin{equation*}
		\begin{split}
		&\left\langle
		\tau^{a_1}_{d_1+1}\tau^{a_2}_{d_2}\dots\tau^{a_l}_{d_l}\sigma^{b_1}\sigma^{b_2}\dots\sigma^{b_k}
		\right\rangle_0^{(W,\mathfrak{h}),o}
		\\=&\sum_{\substack{s\ge 0\\-1\le a \le r-2}}\sum_{\substack{0 \le t_i \le \h\\ \coprod_{j=-1}^{s}R_j=\{3,4,\dots,l\}\\\coprod_{j=0}^{s}T_j=\{1,2,\dots,k\}\\ \{(R_j,T_j,t_j)\}_{1\le j \le s}\text{ unordered}}}(-1)^s\left\langle
		\tau^{a}_{0}\tau^{a_1}_{d_1}\prod_{i\in R_{-1}}\tau^{a_i}_{d_i}\prod_{j=1}^{s}\tau^{t_j}_{0}
		\right\rangle_0^{\frac{1}{r},\text{ext},m}\\
		&\cdot\left\langle
		\tau^{r-2-a}_{0}\tau^{a_2}_{d_2}\prod_{i\in R_0}\tau^{a_i}_{d_i}\prod_{i\in T_0}\sigma^{b_i}
		\right\rangle_0^{(W,\mathfrak{h}),o}
		\prod_{j=1}^{s}\left\langle
		\sigma^{r-2-2t_j}\prod_{i\in R_j}\tau^{a_i}_{d_i}\prod_{i\in T_j}\sigma^{b_i}
		\right\rangle_0^{(W,\mathfrak{h}),o}.
		\end{split}
		\end{equation*}
\end{itemize}
\end{thm}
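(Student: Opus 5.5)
We follow the strategy used for the \PST\ and \BCT\ recursions: compare the zero count of the canonical multisection with that of an explicit, non-canonical section of one copy of $\CL_1$, and control the difference by a homotopy argument. Write $E=\cW^{\oplus m}\oplus\CL_1^{\oplus d_1}\oplus\bigoplus_{i\ge2}\CL_i^{\oplus d_i}$ over $\M=\M^{(W,\mathfrak{h})}_{0,\{b_j\},\{a_i\}}$, so that $E\oplus\CL_1$ has rank equal to $\dim\M$. We fix a canonical family of multisections $s$ of $E$. For the extra copy of $\CL_1$ we choose a special section $t$. In (a) we take $t$ pulled back from the moduli of disks with markings $z_1,x_1$ only. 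In (b) we take $t$ pulled back from the moduli with $z_1,z_2$, where $\CL_1$ has a canonical meromorphic section (for instance $dz$ normalized by the position of $x_1$, respectively of $z_2$). The zeros of $t$ then lie on the strata where $z_1$ is separated from $x_1$ (respectively $z_2$). These strata are either internal nodes, giving a closed extended sphere glued to an open component, or boundary nodes separating off a disk that contains $z_1$.

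The count $\int e(E\oplus\CL_1; s\oplus t)$ splits into two pieces. The first is the contribution from $Z(t)$: restricting $s$ to the zero strata and using the decomposition of $\cW$ along internal nodes and the factorization $\mathbb L_i^\Gamma=\Pi^*\mathbb L_i^\Lambda$ of Observation~\ref{isomorphism forgetting non-alt for descendents}, we obtain products of a closed extended $r$-spin number (raised to the $m$-fold direct sum, whence the superscript ${\frac{1}{r},\mathrm{ext},m}$) with open $(W,\mathfrak h)$ numbers. The second is a boundary correction, since $s\oplus t$ is not canonical and $t$ may vanish on $\partial\M$. We relate the two by a homotopy $H$ from $s\oplus t$ to a canonical section of $E\oplus\CL_1$, chosen to be forgetful or positive wherever $t$ is, so that $\int e$ of the canonical section minus $\int e(s\oplus t)$ equals the signed count of zeros of $H$ on $\partial\M\times[0,1]$ (\cite[Lemma 3.55]{PST14}). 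As in the \BCT\ case, positivity and rank-versus-dimension arguments kill most boundary contributions.

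The new feature, and the main obstacle, is the point insertion boundary conditions \eqref{eq:point_insertion_bc}. On a boundary stratum with a separating node whose non-alternating side has twist $2t\le 2\mathfrak h$, the $\MU$-equivalence identifies that stratum with one in which the bubbled disk is replaced by an internal tail of twist $t$ matched to a boundary tail of twist $r-2-2t$. Consequently the zeros of $t$ on such bubbled disks are not cancelled; instead they reappear in the closed factor as extra internal insertions $\tau_0^{t_j}$, with the matched boundary insertion $\sigma^{r-2-2t_j}$ in a separate open factor. Iterating over $s$ such insertions yields the sum over unordered triples $(R_j,T_j,t_j)$. We expect the sign $(-1)^s$ to come from the orientation comparison of Theorem~\ref{thm:or} across each $\MU$-gluing. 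Here $m$ odd is essential, since for odd $m$ the orientations of the $m$ Witten summands combine to a relative orientation whose sign flips under each gluing. Verifying these signs and the unordered combinatorics, using the orientation analysis of \cite{TZ1,TZ2}, is where we expect the real work to lie. We would first check the identity in the lowest-dimensional cases ($s\le1$, small $k,l$) before doing the general induction on $\dim\M$. Finally, case (b) differs from (a) only in which marking anchors $t$: in (b) the marking is $z_2$, so all boundary points, including $x_1$, are distributed freely among the open factors.
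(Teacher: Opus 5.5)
\paragraph{The gap.} Your plan transplants the \PST/\BCT\ mechanism, an explicit but non-canonical section $t$ of $\CL_1$ corrected by a homotopy supported on $\partial\M$. That mechanism does not survive the point insertion construction, for three reasons.

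(i) The section ``pulled back from the moduli of disks with $z_1,x_1$ only'' is not defined on the space you integrate over. $\M^{(W,\h)}_{0,\{b_j\},\{a_i\}}$ parametrizes $\MU$-connected surfaces, and on many of its components $z_1$ and $x_1$ (resp.\ $z_2$) lie on different disks $C_\alpha$.

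(ii) Even where it is defined, $t$ is not compatible with \eqref{eq:point_insertion_bc}. A stratum on which a point-insertion node separates $z_1$ from $x_1$ is identified with a stratum on which $x_1$ lies on another disk. After gluing it is an interior wall, not part of $\partial\M$. A section that is undefined or discontinuous across such walls has no Euler number, and the boundary formula of \cite[Lemma 3.55]{PST14} does not see these walls. Your ``forgetful or positive'' homotopy is also unavailable, since \TZ\ theory has no forgetful conditions left.

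(iii) The decisive step, that disk-bubble zeros ``reappear in the closed factor as $\tau_0^{t_j}$'', has no mechanism behind it. Codimension-one corrections of this kind produce open$\times$open products; that is exactly the origin of the second term in the \PST/\BCT\ relations. The \TZ\ relation contains no such term, and such corrections cannot become codimension-two sphere-bubble terms. Incidentally, the pulled-back section does not vanish on disk bubbles at all, because a disk carrying $z_1$ and one boundary half-node is stable. In \PST\ the open$\times$open term comes from non-canonicity there, not from zeros.

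\paragraph{What the paper does instead.} The proof in \cite{TZ2} constructs the explicit section of $\CL_1$ on the whole glued moduli of $\MU$-connected disks so that it is itself canonical. This is possible because point insertion has replaced the forgetful conditions, and positivity boundaries impose nothing on $\CL_1$; the only requirement is compatibility with \eqref{eq:point_insertion_bc}. Roughly, one normalizes at $z_1$ using the special point of $z_1$'s disk through which the $\MU$-connected surface leads to $x_1$ (resp.\ $z_2$): $x_1$ itself, a half-node, or a $\MU$-matched marking. This choice is unchanged by the identifications.

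Then $t$, together with a canonical family for the remaining summands, is a canonical multisection of the full bundle. Its zero count \emph{is} the invariant, and no homotopy comparison is needed. Its zeros are only the interior codimension-two strata where $z_1$ lies on a sphere bubble joined by an internal node in the direction of $x_1$ (resp.\ $z_2$).

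The insertions $\tau_0^{t_j}$ are the $\MU$-matched internal markings of twist $t_j\le\h$ lying on that sphere. The factors $\langle\sigma^{r-2-2t_j}\cdots\rangle$ are the $\MU$-connected pieces hanging off them through their partners of twist $r-2-2t_j$. They come from components of the moduli whose generic member is disconnected, not from transformed disk bubbles. Restricting the canonical sections to these strata and splitting the bundles gives the relation directly. The sign $(-1)^s$ is then an orientation computation along those $s$ $\MU$-edges.
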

This form of this TRR, proven in \cite{TZ2}, is very different than TRRs that appeared before in OGW theory, notably Solomon's Open WDVV \cite{sol_owdvv,solomon2023relative} and the TRRs of \cite{PST14,BCT2}. But it is argued in \cite[Section 7]{TZ2} that this TRR is the \emph{universal form} of TRRs for theories based on the point insertion technique.
The proof uses a construction of an explicit section of $\CL_1,$ this time canonical over the whole moduli of $\MU$-connected disks, and studies its zero count.
These recursions allow the calculation of all intersection numbers in the $r$-spin case $m=1,$ see \cite[Section 6]{TZ2}.

 \TZ-numbers are also subject to several vanishing theorems, also expected to be universal, see \cite[Propositions 1.3, 1.4]{TZ2}.
\subsubsection{\GKT~theories}
As mentioned above, in the general \GKT~theory intersection numbers do depend on choices, but there are invariant quantities. One might have thought that this is a bug of the theory, and that a better definition would yield fully invariant numbers. As we shall see in the discussion in mirror symmetry below, this is in fact a feature, which is necessary in order for mirror symmetry to hold.

In order to avoid making the notation too heavy, 
we will state the result in the case $(W,G)=(x^r+y^s,\mu_r\times\mu_s)$.
\begin{nn}\label{nn:mathcal_A}
We define $\mathcal{A}(\mathbf{A},\mathbf{d},\mathbf{s}^\bullet)$, where $\mathbf{A}=\{(a_1,b_1),\ldots,(a_l,b_l)\}$ is a multiset of twists for internal markings, $\mathbf{d}=\{d_1,\ldots,d_l\}$ is a vector of descendents, and $\mathbf{s}=\mathbf{s}^\bullet$ is a compatible canonical family boundary conditions for the open FJRW theory $(W,G)=(x^r+y^s,\mu_r\times\mu_s)$, to be
\begin{align*}
\mathcal{A}(\mathbf{A},\mathbf{d},\mathbf{s}^{\bullet}):=\sum_{h=1}^{l}\frac{1}{h!}\sum_{J_1\sqcup\cdots\sqcup J_h=\mathbf{A}}&\sum_{\substack{\{k_1(i)\}_{i=1}^{h},~k_1(i)=r(J_i)~(mod~r),\\
\{k_2(i)\}_{i=1}^{h},~k_2(i)=s(J_i)~(mod~s),\\sk_1(i)+rk_2(i)=m(J_i,\vecd)}}
\frac{\Gamma(\frac{1+\sum_{i=1}^hk_1(i)}{r}) \Gamma(\frac{1+\sum_{i=1}^h k_2(i)}{s})}{\Gamma(\frac{1+r(J)}{r}) \Gamma(\frac{1+s(J)}{s})}\cdot\\
&\cdot\prod_{i=1}^h 
\langle\prod_{j\in J_i}\tau_{d_j}^{(a_j,b_j)}\sigma_1^{k_1(i)}\sigma_2^{k_2(i)}\sigma_{12};{s}^{(J_i,k_1(i),k_2(i))}\rangle^{W,o}
\end{align*}
where the numbers $r(I)\in\{0,\ldots,r-1\},s(I)\in\{0,\ldots,s-1\}$, $m(I,\mathbf{d})$, $d(I, \mathbf{d})$, for $I \subseteq [l]$ are given by
\begin{align}\label{eq:r(I),s(I),m(I)}
\begin{split}
r(I) \equiv {} & \sum_{j\in I}a_j~(\text{mod}~r),\\
s(I)\equiv {} & \sum_{j\in I}b_j~(\text{mod}~s), \\
m(I,\vecd):= {} &rs + \sum_{j\in I} \left(sa_j+rb_j+rs(d_j-1)\right), \text{ and } \\
d(I,\vecd):= {} & {sr(I)+rs(I)-m(I,\vecd)\over rs}-1.
\end{split}
\end{align}
Here $\Gamma(\cdot)$ denotes the Gamma function.
For a multiset of twists $$\mathbf{B} = \{(a_i, b_i) \ | \ i \in I\subseteq[l]\}\subseteq \mathbf{A},$$ we similarly denote $r(\mathbf{B}):=r(I), s(\mathbf{B}):= s(I), d(\mathbf{B}, \mathbf{d})=d(I, \mathbf{d}),\text{ and } m(\mathbf{B}, \mathbf{d})=m(I,\mathbf{d})$.
Finally, ${s}^{(I, k_1, k_2)}$ is the canonical boundary condition for the moduli $\M_{0,k_1,k_2,1,\mathbf{A}_{I}}^{W,o}$ coming from the compatible family $\mathbf{s}^\bullet.$
\end{nn}
The main geometric result of \cite{GKT2} is
\begin{thm}\label{thm:A_vanishes}
The polynomial combination $\mathcal{A}(\mathbf{A},\mathbf{d},\mathbf{s})$ is independent of the canonical family $\mathbf{s}$, hence will be denoted $\mathcal{A}(\mathbf{A},\mathbf{d})$. In addition:
\begin{enumerate}
\item
If $\mathbf{A}= \{(a_1,b_1)\}$ is a singleton, then $\mathcal{A}(\mathbf{A},\mathbf{d}) = (-1)^{d_1}$.
\item Suppose $l\ge 2$.
 Then
\[
\mathcal{A}(\mathbf{A},\mathbf{d})=
\begin{cases}
0 & d(\mathbf{A},\vecd)<0,\\
\left\langle \tau_{d(\mathbf{A},\mathbf{d})}^{(r-r(I)-2,s-s(I)-2)}\prod_{i\in I}\tau_{d_i}^{(a_i,b_i)}
\right\rangle^{\textup{ext}}&d(\mathbf{A},\mathbf{d})\ge 0.
\end{cases}
\]
\end{enumerate}
Moreover, for every rational solution of the equations of (1) and (2), ranging over all $\mathbf{A}$ and $\mathbf{d}$, there exists a compatible family of canonical boundary conditions such that the resulting intersection numbers
 agree with this rational solution.\end{thm}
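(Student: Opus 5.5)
Independence of $\mathcal{A}$ from $\mathbf{s}$ would come from controlling how each rooted number changes under a homotopy of canonical families, and the explicit values from an explicit degenerate family. For a fixed $\mathbf{A}$, I take two compatible canonical families $\mathbf{s}^0,\mathbf{s}^1$ and a homotopy $\mathbf{H}$ between them, built recursively over strata and satisfying forgetful, positivity and compatibility conditions. The structural lemma for homotopies (cf.\ \cite[Lemma 3.39]{GKT2}) shows that $\mathbf{H}$ can vanish on the boundary of $\M\times[0,1]$ only over critical boundaries. These are the strata where a rooted balanced vertex is glued along a node whose two half-nodes are singly twisted of types $1$ and $2$ to a critical unrooted vertex, i.e. one whose FJRW rank exceeds its dimension by one. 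The change in a rooted number is then a sum over such strata of (rooted number of the rooted vertex)$\times$(signed zero count of the homotopy on the critical vertex). The latter I will call a wall-crossing number $\delta$, attached to the critical moduli space and shared by all rooted spaces degenerating to it, by compatibility.

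Next I plug this change formula into $\mathcal{A}$. Varying $\langle\cdots;\mathbf{s}^{(J_i,k_1(i),k_2(i))}\rangle$ produces terms indexed by a further splitting of $J_i$ into a rooted and a critical part. The critical disk's extra boundary marking is of type $2$ when it comes from the type-$1$ degeneration and of type $1$ otherwise, which flips the orientation as in Example~\ref{GKT Example of Wall Crossing}. I must then check that the Gamma-factor weights make these contributions cancel in pairs. Shifting $k_1(i)\mapsto k_1(i)\pm1$ and $k_2(i)\mapsto k_2(i)\mp1$ appropriately, and using the constraint $sk_1+rk_2=m$ and $\Gamma(x+1)=x\Gamma(x)$, the sum over $h$ and partitions should telescope. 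This combinatorial cancellation, together with getting the orientation signs of the gluing right from the analysis behind Theorem~\ref{thm:or}, is the main obstacle. I would first test it on Example~\ref{GKT Example of Wall Crossing}, where \eqref{sum quantity for WC} is exactly the $h=1$ case.

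For the values I would choose a convenient canonical family. In the singleton case $\mathbf{A}=\{(a_1,b_1)\}$, I would use the string/forgetful structure and the TRR-type argument, with an explicit section of $\CL_1$, to reduce to the unique non-zero configuration, which gives $(-1)^{d_1}$. For $l\ge2$, I would construct an explicit section of $\CL_1$ (or use a degenerate family) whose zeros localize on strata where all internal markings bubble onto a single closed sphere, attached to the disk at an internal node of twist $(r-r(I)-2,s-s(I)-2)$. The disk side then contributes the singleton value, while the sphere side contributes the extended closed number $\langle\tau_{d(\mathbf{A},\mathbf{d})}^{(\cdot)}\prod\tau_{d_i}^{(a_i,b_i)}\rangle^{\mathrm{ext}}$. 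This vanishes for degree reasons when $d(\mathbf{A},\mathbf{d})<0$. Terms with several disk components should recombine into the products in $\mathcal{A}$, by the same telescoping identity as before.

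For realizability, I run an induction on $(l,k_1+k_2)$. The invariants $\mathcal{A}$ fix only one combination per $\mathbf{A}$, and the remaining freedom is exactly the choice of the $\delta$'s on critical disks. By \cite[Lemma 5.16]{GKT2} the zero count on any critical disk can be modified by an arbitrary rational amount while keeping canonicity. So I solve the linear system recursively: at each step the new critical disks appear with non-zero coefficient (a ratio of Gamma values), which lets me hit any prescribed rational values $\lambda$ consistent with (1) and (2).
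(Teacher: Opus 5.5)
\textbf{Independence and realizability.} Your architecture (control the homotopy at critical boundaries, then an explicit-section TRR, then \cite[Lemma~5.16]{GKT2} for realizability) is consistent with the survey's outline. However, you leave open the step that carries the first claim, and the mechanism you guess for it cannot work.

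Within a block $J_i$ the admissible pairs satisfy $k_1(i)\equiv r(J_i)$ mod $r$, $k_2(i)\equiv s(J_i)$ mod $s$ and $sk_1(i)+rk_2(i)=m(J_i,\mathbf{d})$. So they move only in steps of $(r,-s)$, and a shift by $(\pm1,\mp1)$ leaves the balanced range.

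Your change formula also drops the weights that make any cancellation possible. Take a critical disk with internal set $J'$ and $K_1,K_2$ singly twisted points. It can be glued to a rooted configuration with $T_1,T_2$ such points in two ways: through a type-$1$ half-node on the rooted side, or through a type-$2$ one. Both gluings land on the exponent $(T_1+K_1-1,T_2+K_2-1)$, with opposite orientations and multiplicities $T_1K_2$ and $T_2K_1$. This is exactly the action on $x^{T_1}y^{T_2}$ of the generator $v=x^{k_1}y^{k_2}\big((k_2+1)x\partial_x-(k_1+1)y\partial_y\big)$ of $\mathfrak{g}^{r,s}_R$, with $(k_1,k_2)=(K_1-1,K_2-1)$. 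So, to first order, changing the zero count on that critical disk replaces $W^{\mathbf{s}}$ by $W^{\mathbf{s}}+\epsilon\prod_{j\in J'}t_{a_j,b_j}\,v(W^{\mathbf{s}})$, and $v$ is divergence-free.

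Up to $\hbar$-bookkeeping, the Gamma weights in $\mathcal{A}$ are integration against $e^{W/\hbar}dx\wedge dy$ over $\Xi$. The identity you are after is therefore Stokes' theorem, $\int_\Xi \mathcal{L}_v\big(e^{W^{\mathbf{s}}/\hbar}dx\wedge dy\big)=\int_\Xi\hbar^{-1}v(W^{\mathbf{s}})e^{W^{\mathbf{s}}/\hbar}dx\wedge dy=0$. The $1/h!$-products are accounted for by $\mathcal{L}_ve^F=v(F)e^F$. The terms that cancel against each other are $x^{k_1+r}y^{k_2}$ and $x^{k_1}y^{k_2+s}$, both produced by $v(x^r+y^s)$. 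Hence the cancellation rests on the $l=0$ values $\langle\sigma_1^r\sigma_{12}\rangle=\langle\sigma_2^s\sigma_{12}\rangle=-1$ and on the vanishing of all other $l=0$ rooted numbers, neither of which you establish. In Example~\ref{GKT Example of Wall Crossing} the invariant sum indeed pairs $(k_1,k_2)=(4,0)$ with $(0,4)$. Equivalently, independence is part (1) of the wall-crossing theorem plus invariance of the oscillatory integral under the volume-preserving group $G^{r,s}_R$.

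Realizability needs the same computation, not just nonzero coefficients; those coefficients are $r(k_2+1)$ and $-s(k_1+1)$ times the $l=0$ values, not Gamma ratios. Fix $J$ with $N:=-d(J,\mathbf{d})-1\ge 0$. There are $N+1$ rooted numbers with internal set $J$ and $N$ critical moduli. You must show that the resulting bidiagonal system has rank $N$ and that its image is exactly the kernel of the Gamma-weighted functional defining $\mathcal{A}$. This is the transitivity statement of the wall-crossing theorem, which is what the paper invokes.

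\textbf{The values.} The computation does not go through as described either.
\begin{itemize}
\item A section of $\CL_1$ exists only when $d_1\ge1$ and replaces a single copy of $\CL_1$. Its zeros lie on every stratum separating $z_1$ from the reference point: sphere bubbles through $z_1$ carrying an arbitrary subset of the other markings, and boundary nodes separating the disk of $z_1$ from the root. Nothing concentrates them on one sphere carrying all markings.
\item On such a bubble the node carries $\tau_0$, so $\tau_{d(\mathbf{A},\mathbf{d})}$ cannot come out of one localization.
\item For $\mathbf{d}=\mathbf{0}$, the case feeding Theorem~\ref{thm:main mirror thm}, and for the vanishing when $d(\mathbf{A},\mathbf{d})<0$, you give no argument.
\item The singleton case is not a unique configuration once $d_1>0$. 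Then $\mathcal{A}(\{(a,b)\},d_1)$ is a Gamma-weighted sum over the $d_1+1$ pairs $(a+rn_1,b+sn_2)$ with $n_1+n_2=d_1$. Only for $d_1=0$ does it reduce to the single number $\langle\tau_0^{(a,b)}\sigma_1^a\sigma_2^b\sigma_{12}\rangle=1$, and that number must itself be computed.
\end{itemize}

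What is missing is a recursion for the combination $\mathcal{A}$ itself. After comparing the explicit section with canonical ones by a homotopy controlled at critical boundaries, the sphere-bubble terms regroup into closed extended numbers times $\mathcal{A}$'s with fewer markings. The boundary-node terms are harder: they are products of two rooted numbers with combinatorial multiplicities. You must show that, through the Gamma weights, they reassemble into the $h$-fold product structure of $\mathcal{A}$ and into lower-descendant $\mathcal{A}$'s. This is the ``highly non-trivial generalization'' of the TRR proofs that the survey refers to. It then has to be matched with genus-zero relations on the closed extended side, for instance $\psi_1+\psi_{\mathrm{ext}}$ equals the sum of boundary divisors separating $z_1$ from the extra marking, which is consistent with the sign $(-1)^{d_1}$ in (1). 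As written, (1) and (2) are asserted rather than derived.
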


This theorem generalizes to the most general Fermat, as in \eqref{eq:Fermat W}, in the upcoming \cite{GKT3}.
The unfixed boundary conditions described in the previous section are responsible for the dependence of the intersection numbers on choices. Yet, the intricate form of $\mathcal{A}$ and the properties of canonical multisections guarantee that while different intersection numbers may change, $\mathcal{A}$ itself does not. Computing $\mathcal{A}$ involves a highly non-trivial generalization of the proofs of the other TRRs described in this section.

\begin{ex}
We now return to Example~\ref{GKT Example of Wall Crossing}. In this case, $\mathbf{A} = \{(2,2),(2,2)\}$ and $\vecd = \mathbf{0}$. One can compute that 
\begin{equation}\begin{aligned}
    \mathcal{A}(\mathbf{A},\mathbf{d}) &= \frac{\Gamma(\frac{5}{4}) \Gamma(\frac{1}{4})}{\Gamma(\frac{1}{4}) \Gamma(\frac{1}{4})} \cdot \langle\tau_{0}^{(2,2)} \tau_{0}^{(2,2)}\sigma_1^{4}\sigma_{12}; s\rangle^{W,o} + \frac{\Gamma(\frac{1}{4}) \Gamma(\frac{1}{4})}{\Gamma(\frac{5}{4}) \Gamma(\frac{1}{4})} \cdot \langle\tau_{0}^{(2,2)} \tau_{0}^{(2,2)}\sigma_2^{4}\sigma_{12}; s\rangle^{W,o} \\
        & \qquad + \frac{1}{2}\cdot \frac{\Gamma(\frac{5}{4}) \Gamma(\frac{5}{4})}{\Gamma(\frac{1}{4}) \Gamma(\frac{1}{4})} \cdot \left(\langle\tau_{0}^{(2,2)}\sigma_1^{2}\sigma_2^2\sigma_{12};s\rangle^{W,o}\right)^2 \\
        &= \frac{1}{4} (\langle\tau_{0}^{(2,2)} \tau_{0}^{(2,2)}\sigma_1^{4}\sigma_{12}; s\rangle^{W,o} + \langle\tau_{0}^{(2,2)} \tau_{0}^{(2,2)}\sigma_2^{4}\sigma_{12}; s\rangle^{W,o}) + \frac{1}{32} \left(\langle\tau_{0}^{(2,2)}\sigma_1^{2}\sigma_2^2\sigma_{12}; s\rangle^{W,o}\right)^2.
\end{aligned}\end{equation}
After computing $d(\mathbf{A}, \vecd)$ and applying Theorem~\ref{thm:A_vanishes}, we see $\mathcal{A}(\mathbf{A},\mathbf{d}) = 0$ and $\mathcal{A}(\{(2,2)\}, 0) = \langle\tau_{0}^{(2,2)}\sigma_1^{2}\sigma_2^2\sigma_{12}\rangle^{W,o} = 1$. Thus we obtain the relation
$$
\langle\tau_{0}^{(2,2)} \tau_{0}^{(2,2)}\sigma_1^{4}\sigma_{12}; s\rangle^{W,o} + \langle\tau_{0}^{(2,2)} \tau_{0}^{(2,2)}\sigma_2^{4}\sigma_{12}; s\rangle^{W,o} = - \frac{1}{8},
$$
yielding the quantity in Equation~\eqref{sum quantity for WC}.
\end{ex}

\subsection{Open FJRW theory with maximal symmetry group and mirror symmetry}
\label{marks mirror symmetry section}
We return to the mirror symmetry discussion of 
\S\ref{subsubsec:mirror symmetry}, and in particular continue with
the mirror correspondence discussed there with $a=1,2$. 
It is in practice very difficult to construct the
isomorphism of Frobenius manifolds described in \cite{HeLiShenWebb}.
However, the lesson learned in the case of mirror symmetry for toric
varieties as studied by Gross (in the case of $\mathbb{P}^2$) in \cite{GrossP2}
and
Fukaya et al in \cite{FOOO1} is that life becomes much simpler if one uses
open invariants to produce the potential. Instead of using an arbitrary 
universal unfolding of the potential $W$, the correct thing to do is
to use a perturbation of the potential which is a generating function for
open invariants. In the papers just cited, these are open Gromov-Witten
invariants, but in \cite{GKT,GKT2}, in the ranks $1$ and $2$ cases, we
use the open FJRW invariants whose construction we have described above.

We focus here on the statements in the rank $2$ case, the rank $1$
case being rather simpler and treated in \cite{GKT}. We also, for the simplicity of exposition, focus here on the generating function built from primary open invariants. We write as usual
$W=x^r+y^s$.
Choose a family of canonical multi-sections 
${\bf s}$.\footnote{To be more technically accurate, what we say
below requires what we call a family of 
\emph{symmetric} canonical multi-sections;
we send the reader to \cite[\S3.5]{GKT} for these details. They can be
for the most part ignored in this survey.}
Define the ring
\[
R:=\Q[t_{a,b}\,|\,0\le a\le r-2, 0\le b \le s-2].
\]
We may then define a perturbed potential
\[
W^{\bf{s}} =
\sum_{k_1,k_2\ge 0,l\ge 0}
\sum_{\mathbf{A}=\{(a_i,b_i)\}\in\mathcal{A}_l}
(-1)^{l-1}{\langle \prod_{i=1}^l \tau_0^{(a_j,b_j)}
\sigma_1^{k_1}\sigma_2^{k_2}\sigma_{12}\rangle^{W,{\bf s},o}
\over |\Aut(\mathbf{A})|}
x^{k_1}y^{k_2}\prod_{i=1}^l t_{a_i,b_i} \in R[[x,y]].
\]
Here $\mathcal{A}_l$ is the set of size $l$ multi-sets whose entries
are pairs $(a,b)\in \{0,\ldots,r-2\}\times \{0,\ldots,s-2\}$,
while if $\mathbf{A}=\{(a_i,b_i)\}$ is such a multi-set, then
the automorphism group $\Aut(\mathbf{A})$ is the set of permutations $\sigma$ of
$[l]$ such that $(a_i,b_i)=(a_{\sigma(i)},b_{\sigma(i)})$
for all $1\le i \le l$. We remark that this perturbed potential is different from the open potential in Equation~\eqref{eq:open_pot}.

One can show that modulo the ideal $\langle t_{a,b}\rangle^2$,
$W^{\bf s}$ agrees with
\[
x^r+y^s+\sum_{0\le a\le r-2, 0\le b\le s-2} t_{a,b}
x^{a}y^{b}.
\]
Hence $W^{\bf s}$, to first order, agrees with the standard universal 
unfolding of $W$ with parameters $t_{a,b}$. However, the higher
order terms in the $t$ variables are crucial for giving a simple form
for the Saito-Givental theory.

Here we express this in terms of oscillatory integrals. Following
\cite{HeLiShenWebb}, a \emph{good basis} for the relative homology
group $H^a(\C^a, \re(W/\hbar)\ll 0;\C)$ is a basis 
\[
\big\{\Xi_{\mu}\,|\,(\mu_1,\ldots,\mu_a)\in \prod_{i=1}^a \{0,\ldots,r_i-2\}
\big\}
\]
with the property that
\[
\int_{\Xi_{\mu}} x^{\mu'}e^{W/\hbar}dx_1\wedge\cdots dx_a
=
\delta_{\mu\mu'},
\]
where $x^{\mu'}:=\prod_i x_i^{\mu'_i}$, and the $\delta$ is the Kronecker
delta. The reason for what may be viewed as a quite puzzling definition
for the quantity $\mathcal{A}(\mathbf{A},\mathbf{d},\mathbf{s})$ is that
a direct calculation shows that, in the rank $2$ case,
\[
\int_{\Xi_{(a,b)}} e^{W^{\bf s}/\hbar} dx\wedge dy
=
\delta_{a,0}\delta_{b,0} +
\sum_{l\ge 1} \sum_{\mathbf{A}=\{(a_i,b_i)\}\in \mathcal{A}_l}
(-1)^l(-\hbar)^{-d(\mathbf{A})-2}\mathcal{A}(\mathbf{A},\mathbf{s})\delta_{r(\mathbf{A}),a}\delta_{s(\mathbf{A}),b}
\left({\prod_{j=1}^l t_{a_j,b_j}\over |\Aut(\mathbf{A})|}\right).
\]
From this formula and Theorem~\ref{thm:A_vanishes}, we obtain the main mirror
statement of \cite{GKT2}:

\begin{thm}[Open Mirror Symmetry]
\label{thm:main mirror thm}
\begin{align*}
\int_{\Xi_{a,b}} &e^{W^{\mathbf{s}}/\hbar}dx\wedge dy 
= {}
\delta_{a,0}\delta_{b,0} + t_{a,b}\hbar^{-1}
\\
{} & +\sum_{l\ge 2}
\sum_{\substack{\mathbf{A}\in \mathcal{A}_l\\
d(\mathbf{A})\ge 0}} (-1)^{l}
(-\hbar)^{-d(\mathbf{A})-2}{\delta_{r(\mathbf{A}),a}\delta_{s(\mathbf{A}),b}
\over |\Aut(\mathbf{A})|}
 \left\langle \tau_{d(\mathbf{A})}^{(r-r(\mathbf{A})-2,s-s(\mathbf{A})-2)}
\prod_{(a_i,b_i)
\in \mathbf{A}}\tau_0^{(a_i,b_i)}
\right\rangle^{\text{ext}}
\prod_{j=1}^l t_{a_j,b_j}.
\end{align*}
\end{thm}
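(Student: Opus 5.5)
The plan is to obtain Theorem~\ref{thm:main mirror thm} by substituting Theorem~\ref{thm:A_vanishes} into the displayed expansion of $\int_{\Xi_{(a,b)}} e^{W^{\bf s}/\hbar}\,dx\wedge dy$ in terms of the quantities $\mathcal{A}(\mathbf{A},\mathbf{s})$. So the first, and really main, step is to establish that expansion. I would write $W^{\bf s}=x^r+y^s+P$, where $P\in R[[x,y]]$ collects all terms with $l\ge1$. Then I would expand $e^{P/\hbar}=\sum_h \frac{1}{h!}(P/\hbar)^h$. Each term of $P^h$ is a product of $h$ open invariants $\langle\prod_{j\in J_i}\tau_0^{(a_j,b_j)}\sigma_1^{k_1(i)}\sigma_2^{k_2(i)}\sigma_{12}\rangle$, times the monomial $x^{\sum k_1(i)}y^{\sum k_2(i)}\prod t$. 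Collecting the $t$-monomial $\prod_{j}t_{a_j,b_j}/|\Aut(\mathbf{A})|$ turns the sum over ordered $h$-tuples of multisets into a sum over set partitions $J_1\sqcup\cdots\sqcup J_h=\mathbf{A}$ with the factor $1/h!$. The signs $(-1)^{l_i-1}$ multiply to $(-1)^{l-h}$.

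Next I compute each monomial integral. Since $e^{W/\hbar}$ factors, $\int_{\Xi_{(a,b)}} x^{K_1}y^{K_2}e^{W/\hbar}dx\wedge dy$ is computed from the one-variable integrals $\int x^{K}e^{x^r/\hbar}dx$. By integration by parts (equivalently, substituting $u=x^r$), these reduce $x^{K}$ to $x^{K\bmod r}$. The reduction produces the factor $(-\hbar)^{\lfloor K/r\rfloor}\Gamma(\frac{K+1}{r})/\Gamma(\frac{(K\bmod r)+1}{r})$, and similarly in $y$. Using the good basis property, this picks out $K_1\equiv a$ and $K_2\equiv b$. The dimension constraint for nonzero open invariants, namely rank equals dimension, gives $sk_1(i)+rk_2(i)=m(J_i,\mathbf{0})$. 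Combining this with $K_1\equiv r(\mathbf{A})$ and $K_2\equiv s(\mathbf{A})$ (mod $r$, $s$), it converts the powers of $\hbar$ coming from the $h$ factors $\hbar^{-1}$ and the reductions into the single power $(-\hbar)^{-d(\mathbf{A})-2}$. The remaining sign then gives $(-1)^l$. This is exactly the Gamma-weighted sum defining $\mathcal{A}(\mathbf{A},\mathbf{0},\mathbf{s})$ in Notation~\ref{nn:mathcal_A}. Tracking these exponents and signs is the delicate bookkeeping, and I expect it to be the main obstacle. I would first check it against the example computed above for $\mathbf{A}=\{(2,2),(2,2)\}$.

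Finally, I apply Theorem~\ref{thm:A_vanishes}. The term $l=0$ gives $\delta_{a,0}\delta_{b,0}$. For $l=1$ with $\mathbf{d}=0$, part (1) gives $\mathcal{A}=1$, and $d(\{(a_1,b_1)\})=-1$. The resulting term is therefore $(-1)(-\hbar)^{-1}t_{a,b}=t_{a,b}\hbar^{-1}$, with $\delta$'s forcing $(a_1,b_1)=(a,b)$. For $l\ge2$, part (2) kills the terms with $d(\mathbf{A})<0$ and replaces the rest by the closed extended invariant $\langle\tau^{(r-r(\mathbf{A})-2,s-s(\mathbf{A})-2)}_{d(\mathbf{A})}\prod\tau_0^{(a_i,b_i)}\rangle^{\text{ext}}$. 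This yields the stated formula. Independence of $\mathbf{s}$ on the right-hand side is automatic from Theorem~\ref{thm:A_vanishes}.
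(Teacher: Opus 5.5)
Your proposal is correct and follows the paper's route: the paper likewise first obtains, by direct calculation, the expansion of $\int_{\Xi_{(a,b)}}e^{W^{\mathbf{s}}/\hbar}\,dx\wedge dy$ as $\delta_{a,0}\delta_{b,0}+\sum_{l\ge1}\sum_{\mathbf{A}}(-1)^l(-\hbar)^{-d(\mathbf{A})-2}\mathcal{A}(\mathbf{A},\mathbf{s})\delta_{r(\mathbf{A}),a}\delta_{s(\mathbf{A}),b}\prod_j t_{a_j,b_j}/|\Aut(\mathbf{A})|$, and then substitutes Theorem~\ref{thm:A_vanishes}. Your bookkeeping checks out: the reductions contribute $(-\hbar)^{\lfloor K_1/r\rfloor+\lfloor K_2/s\rfloor}$ with exponent $h-d(\mathbf{A})-2$, which together with $\hbar^{-h}$ and $(-1)^{l-h}$ gives $(-1)^l(-\hbar)^{-d(\mathbf{A})-2}$. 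Two small points should be made explicit: the congruences $K_1\equiv r(\mathbf{A})$ and $K_2\equiv s(\mathbf{A})$ come from the twist and parity constraints that are built into the sum defining $\mathcal{A}$, and the reduction of $x^{K}$ should be carried out by Stokes on the $2$-form, e.g.\ using $d(x^{K-r+1}y^{K_2}e^{W/\hbar}dy)$, since $\Xi_{(a,b)}$ need not be a product cycle.
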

The generating function for closed extended FJRW invariants appearing
on the right-hand side is the Givental $J$-function associated to the
theory. This can be viewed as giving an effective version of the genus zero results 
of \cite{HeLiShenWebb} in the case that $W$ is a rank $2$ Fermat polynomial. It is important to emphasize that the right-hand side of the equality in Theorem~\ref{thm:main mirror thm} does not depend on the family of multisections $\mathbf{s}$, while the potential $W^{\mathbf{s}}$ does. In \cite[Corollary 5.11]{GKT2}, we in fact give a stronger version of Theorem~\ref{thm:main mirror thm} which includes
descendents on all insertions, following \cite{Overholser} for the case
of mirror symmetry for $\P^2$. 

\subsubsection{The LG wall crossing group}\label{subsub:LG_WC_group}
In the case of $r$-spin invariants, where the analogous result to
Theorem~\ref{thm:main mirror thm} was proved, the potential $W^{\mathbf{s}}$
is well-defined as open $r$-spin invariants are well-defined independently
of choices of canonical multisections \cite{BCT2}. However, in rank $2$, many
open invariants are  not well-defined, which raises the obvious
question of the relationship between invariants defined by different families
of canonical multisections $\mathbf{s}$ and $\mathbf{s}'$. This is a
standard phenomenon across many types of geometric situations where a
moduli or enumerative problem depends on parameters. Here, analogously
to the Landau-Ginzburg models mirror to $\mathbb{P}^2$
studied in \cite{GrossP2}, we find that the relationship
between different sets of invariants is controlled by a \emph{wall-crossing
group}.

For a potential $W=x^r+y^s$, we define a subgroup $G\subseteq\Aut_R(R[[x,y]])$,
the latter being the group of continuous $R$-algebra automorphisms
of $R[[x,y]]$. This group is determined by specifying its Lie algebra, which
is done as follows. Recall that $S:=R[[x,y]]$ is an inverse limit of rings
\[
S_k:=R[x,y]/(x,y)^{k+1}.
\]
We have the module $\Theta_k$ of derviations of $S_k$ over $R$:
\[
\Theta_k:=S_k\partial_x \oplus S_k \partial_y.
\]
This module comes equipped with the standard Lie bracket, i.e.,
\[
[x^{k_1}y^{k_2}\partial_x, x^{\ell_1}y^{\ell_2}\partial_y]
=\ell_1 x^{k_1+\ell_1-1}y^{k_2+\ell_2}\partial_y- k_2
x^{k_1+\ell_1}y^{k_2+\ell_2-1}\partial_x.
\]
We set
\begin{align*}
\mathfrak{g}_{R,k}:= {} & 
\bigoplus_{\substack{(k_1,k_2)\in\Z_{\ge 0}^2\setminus \{0\}\\ k_1+k_2\le k}}
\mathfrak{g}_{R,(k_1,k_2)}\\
:= {} &
\bigoplus_{\substack{(k_1,k_2)\in\Z_{\ge 0}^2\setminus \{0\}\\ k_1+k_2\le k}}
\mathfrak{m}\cdot\big(x^{k_1}y^{k_2}((k_2+1)x\partial_x-(k_1+1)y\partial_y)\big).
\end{align*}
Here $\mathfrak{m}$ is the ideal of $R$ generated by the $t_{a,b}$'s.
As $\mathfrak{g}_k$ is a nilpotent Lie algebra, we may define a group
$G_{R,k}:=\exp(\mathfrak{g}_{R,k})$. This is a group whose underlying
set is $\mathfrak{g}_{R,k}$ but where multiplication is given by the 
Baker-Campbell-Hausdorff formula. We may then define $G_R$ as the
pro-nilpotent Lie group given as the inverse limit of the $G_{R,k}$,
identified as a set with $\mathfrak{g}_R$, the inverse limit of the
$\mathfrak{g}_{R,k}$.

Note that $G_R$ acts on $R$ via automorphisms: given $v\in \mathfrak{g}_R$,
$\exp(v)$ acts on elements $f$ of $R[[x,y]]$ via
\[
f\mapsto \sum_{n=0}^{\infty} {v^n(f)\over n!},
\]
where $v^n$ denotes differentiating with respect to the vector field
$n$ times.

One checks that these automorphisms preserve the holomorphic $2$-form
$dx\wedge dy$ and are the identity modulo $\mathfrak{m}$.
One can also check (see \cite[Lem.~4.20]{GKT2}) that if
$v=gx^{k_1}y^{k_2}((k_2+1)x\partial_x-(k_1+1)y\partial_y)$ for some 
$g\in\mathfrak{m}$,
then we have the following possibilities describing the action of
$\exp(v)$. If $k_1\not=k_2$, then $\exp(v)$ acts by 
\[
\begin{aligned}
x &\longmapsto x(1+(k_2-k_1) g x^{k_1}y^{k_2})^{(k_2+1)/(k_2-k_1)}\\
y &\longmapsto y(1+(k_2-k_1) g x^{k_1}y^{k_2})^{(k_1+1)/(k_1-k_2)},
\end{aligned}
\] 
while if $k_1=k_2$, $\exp(v)$ acts by 
\[
\begin{aligned}
x &\longmapsto x\exp((k_1+1) g x^{k_1}y^{k_2})\\
y &\longmapsto y\exp(-(k_1+1) g x^{k_1}y^{k_2}).
\end{aligned}
\]
For the given potential $W=x^r+y^s$, we identify a group $G^{r,s}_R$,
a subgroup of $G_R$, which serves as our wall-crossing group. This is
done by defining a subspace $\mathfrak{g}_{R,(k_1,k_2)}^{r,s}
\subseteq \mathfrak{g}_{R,(k_1,k_2)}$ via
\[
\mathfrak{g}_{R,(k_1,k_2)}^{r,s}
=
\bigoplus_{\substack{l>0, \mathbf{A}\in \mathcal{A}_l\\
k_1=r(\mathbf{A})\bmod r, \quad k_2=s(\mathbf{A})\bmod s\\
sk_1+rk_2=m(\mathbf{A})-rs}}
\left(\prod_{(a_i,b_i)\in \mathbf{A}} t_{a_i,b_i}\right)
x^{k_1}y^{k_2}\big((k_2+1)x\partial_x-(k_1+1)y\partial_y\big)\Q.
\]
We then define
\[
\mathfrak{g}^{r,s}_{R,k}=\bigoplus_{\substack{k_1,k_2\ge 0\\ 0<k_1+k_2\le k}}
\mathfrak{g}_{R,(k_1,k_2)}^{r,s}.
\]
As before, this then yields groups $G_{R,k}^{r,s}$ and $G_{R}^{r,s}$.

We then obtain:

\begin{thm} 
Let $W=x^r+y^s$.
\begin{enumerate}
\item
Let $\mathbf{s},\mathbf{s}'$ be two families of canonical multisections
for $W$. Then there exists an element $g\in G_R^{r,s}$ such that
\[
W^{\mathbf{s}'}=g(W^{\mathbf{s}}).
\]
\item
Let $\mathbf{s}$ be a family of canonical multisections for $W$ and
$g\in G^{r,s}_R$. Then there exists a family of canonical multisections
$\mathbf{s}'$ for $W$ with
\[
W^{\mathbf{s}'}=g(W^{\mathbf{s}}).
\]
\item Let $\mathbf{s}$ be a family of canonical multisections for $W$ and $g\in G^{r,s}_R$.    If $W^{\mathbf{s}}=g(W^{\mathbf{s}})$, then $g$ is the identity.
\end{enumerate}
\end{thm}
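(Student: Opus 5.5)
We prove the three parts order by order in the $\mathfrak{m}$-adic filtration of $R$, that is, by induction on the number $l$ of internal insertions (equivalently on $\prod t_{a_i,b_i}$-degree). The inputs are Theorem~\ref{thm:A_vanishes}, the oscillatory-integral identity preceding Theorem~\ref{thm:main mirror thm}, and the explicit formulas for $\exp(v)$.

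\textbf{Key computation.} Take $v=c\,T\,x^{k_1}y^{k_2}\big((k_2+1)x\partial_x-(k_1+1)y\partial_y\big)$ with $T=\prod_{i\in\mathbf{A}}t_{a_i,b_i}$ and $(k_1,k_2)$ in the range allowed by $\mathfrak{g}^{r,s}_{R,(k_1,k_2)}$. To leading order in $\mathfrak{m}$,
\[
\exp(v)(W^{\mathbf{s}})\equiv W^{\mathbf{s}}+v(x^r+y^s)=W^{\mathbf{s}}+cT\big(r(k_2+1)x^{r+k_1}y^{k_2}-s(k_1+1)x^{k_1}y^{s+k_2}\big)
\]
modulo $\mathfrak{m}\cdot T$. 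So $g$ shifts exactly two coefficients, namely those of $\sigma_1^{k_1+r}\sigma_2^{k_2}$ and $\sigma_1^{k_1}\sigma_2^{k_2+s}$ with internal set $\mathbf{A}$. These two coefficients are the two sides of a critical wall, as in Example~\ref{GKT Example of Wall Crossing}, where $\langle\ldots\sigma_1^4\rangle$ and $\langle\ldots\sigma_2^4\rangle$ move oppositely. The condition $sk_1+rk_2=m(\mathbf{A})-rs$ is precisely the balancing condition for these moduli spaces. Since $\exp(v)$ preserves $dx\wedge dy$ and, by Stokes, acts on $\int_{\Xi}e^{W/\hbar}dx\wedge dy$ trivially, the shifts must lie in the kernel of the linear part of $\mathcal{A}$. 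One checks this via the Gamma-factor identity $\Gamma(z+1)=z\Gamma(z)$: the ratio of Gamma factors for $(k_1+r,k_2)$ versus $(k_1,k_2+s)$ is $\frac{(k_1+1)/r}{(k_2+1)/s}$, which cancels the coefficients $r(k_2+1)$ and $s(k_1+1)$.

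\textbf{Part (1).} Given $\mathbf{s},\mathbf{s}'$, assume inductively that we have $g_{<l}\in G^{r,s}_R$ with $W^{\mathbf{s}'}\equiv g_{<l}(W^{\mathbf{s}})$ modulo terms with at least $l$ $t$'s. Then $W''=g_{<l}(W^{\mathbf{s}})$ has the same oscillatory integrals as $W^{\mathbf{s}}$, which agree with those of $W^{\mathbf{s}'}$ by Theorem~\ref{thm:main mirror thm}. The degree-$l$ difference $\Delta=W^{\mathbf{s}'}-W''$ is therefore a combination of monomials $T x^{k_1}y^{k_2}$ annihilated by the linear functionals $\mathcal{A}$. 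Here we use that the higher-order terms in $\mathcal{A}$ involve only lower $l$, where the two potentials already agree. Note that $W''$ need not come from a multisection, but this is harmless because only its coefficients are used.

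The main obstacle is to show that the kernel is spanned by the images $v(x^r+y^s)$ for $v\in\mathfrak{g}^{r,s}_{R}$ of degree $T$. For fixed $\mathbf{A}$, the monomials with $sk_1+rk_2=m(\mathbf{A})$ and the prescribed residues form a chain in which $(k_1,k_2)\to(k_1+r,k_2-s)$. The integrals $\int_{\Xi_{a,b}}$ give exactly one linear functional per chain, with nonzero Gamma weights. Consecutive differences in the chain are exactly the images of the generators, so a dimension count closes the argument. Subtracting the corresponding $v$ and composing produces $g_{\le l}$, and the limit gives $g$.

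\textbf{Parts (2) and (3).} For (2), by the same chain description any $g(W^{\mathbf{s}})$ has coefficients satisfying the relations of Theorem~\ref{thm:A_vanishes}. The realization clause of Theorem~\ref{thm:A_vanishes} then supplies $\mathbf{s}'$; the symmetric/compatibility refinements need checking but should follow from that clause. For (3), suppose $g=\exp(v)\neq\mathrm{id}$ and let $v_l$ be its lowest-order component. Then $g(W^{\mathbf{s}})-W^{\mathbf{s}}\equiv v_l(x^r+y^s)$ at order $l$. This is nonzero because $u\mapsto u(x^r+y^s)$ is injective on each $\mathfrak{g}_{(k_1,k_2)}$: the $x^{r+k_1}y^{k_2}$ coefficient $r(k_2+1)$ is nonzero. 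This is a contradiction.
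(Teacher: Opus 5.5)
\textbf{Part (2) is circular, and this is a genuine gap.} The survey gives no proof of this theorem, but it does fix the logical order: it says the theorem ``underlies the proof of the last statement of Theorem~\ref{thm:A_vanishes}''. Your Part (2) rests on exactly that realization clause, so it cannot serve as input here. Part (2) is precisely where new geometric input has to enter, and your proposal supplies none.

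What is missing is a construction that realizes each one-parameter subgroup $\exp(cv)$, with $v=T\,x^{k_1}y^{k_2}\big((k_2+1)x\partial_x-(k_1+1)y\partial_y\big)$, by an actual change of the canonical family. The natural place to make this change is the critical moduli space attached to $(\mathbf{A},k_1,k_2)$. This consists of unrooted disks with internal twists $\mathbf{A}$, together with $k_1+1$ singly twisted boundary points of type $1$ and $k_2+1$ of type $2$. By \eqref{real dim of open Moduli of discs} and \eqref{eq:graded_open_rank}, its bundle rank exceeds its dimension by exactly one precisely when $sk_1+rk_2=m(\mathbf{A})-rs$. In Example~\ref{GKT Example of Wall Crossing} this is the critical disk with $(k_1,k_2)=(0,0)$. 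The construction then needs three steps:
\begin{enumerate}
\item Modify the multisection on this critical space so as to add $c$ weighted zeros, as in \cite[Lemma 5.16]{GKT2}.
\item Propagate the change through the recursive and compatibility structure of the family, keeping it symmetric.
\item Compute, with the gluing orientations, that every rooted invariant changes by exactly the corresponding coefficient of $\exp(cv)(W^{\mathbf{s}})-W^{\mathbf{s}}$.
\end{enumerate}
In step (3), the linear part moves the $x^{k_1+r}y^{k_2}$ and $x^{k_1}y^{k_2+s}$ coefficients in opposite directions, as for $\sigma_1^4$ versus $\sigma_2^4$ in that example. The higher powers of $c$ must come from strata carrying several critical components.

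\textbf{Part (1) has a hole in the residue classes $r-1$ and $s-1$.} Your algebraic core is the right one: the leading-order action, Stokes invariance, and the chain structure. However, your dimension count needs one nonzero functional per chain, taken from some $\int_{\Xi_{a,b}}$, and the good basis only has $a\le r-2$, $b\le s-2$. On a chain with $k_1\equiv r-1\pmod r$ or $k_2\equiv s-1\pmod s$, every form $x^{K_1}y^{K_2}e^{(x^r+y^s)/\hbar}dx\wedge dy$ is exact. So Theorem~\ref{thm:main mirror thm} imposes nothing on such a chain.

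Such chains do carry generators. Take $(r,s)=(5,5)$, $T=t_{3,3}^8$ and $(k_1,k_2)=(4,4)$: these satisfy all the conditions defining $\mathfrak{g}^{5,5}_{R,(4,4)}$. The chain is $\{x^9y^4,x^4y^9\}$, and your argument cannot conclude that $\Delta$ lies in the one-dimensional image of this generator.

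The repair has two parts. First, use the $\mathbf{s}$-independence of $\mathcal{A}(\mathbf{A},\mathbf{0})$ from Theorem~\ref{thm:A_vanishes}, which covers every $\mathbf{A}$. Second, prove that the full nonlinear expression $\mathcal{A}$, evaluated on the coefficients of any potential of this shape, is $G^{r,s}_R$-invariant in every residue class. One way is to run your Stokes argument over the quadrant $\R_{\ge 0}^2$ (with $\hbar<0$) instead of over $\Xi_{a,b}$. This integral sees every Gamma weight. It produces no boundary terms, because each generator is the Hamiltonian vector field of $x^{k_1+1}y^{k_2+1}$ and is therefore tangent to both axes. Your $\Gamma(z+1)=z\Gamma(z)$ check is the linearization of this argument.

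Two smaller points:
\begin{itemize}
\item In Part (3), injectivity on each $\mathfrak{g}_{R,(k_1,k_2)}$ separately is not enough. Neighbouring generators in a chain share the monomial $x^{k_1+r}y^{k_2}$, so their images overlap. Use instead that, in the monomial basis of a chain, the images are bidiagonal with nonzero entries, hence linearly independent.
\item The chain $\{x^r,y^s\}$ occurs when $m(\mathbf{A})=rs$, as in Example~\ref{GKT Example of Wall Crossing}. It needs the $(k_1,k_2)=(0,0)$ generator $T(x\partial_x-y\partial_y)$, which the displayed definition of $\mathfrak{g}_R$ literally excludes. Without it, Part (1) would contradict that example, so the definition has to be read as including it.
\end{itemize}
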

\noindent Morally, this tells us the wall-crossing group $G^{r,s}_A$ acts faithfully
and transitively on the set of all possible systems of open FJRW invariants.
This theorem underlies the proof of the last statement of Theorem~\ref{thm:A_vanishes}.
This theorem should not be viewed as saying the invariants can be chosen to be
anything
one wants: it is extremely hard to find, a priori, a set of open invariants
for which Theorem \ref{thm:main mirror thm} holds!

\begin{rmk}
While it is extremely hard to find these set of open invariants, for $a=2$ for low degree it has been classically computed as flat coordinates for the Frobenius manifold. We refer the reader to \cite{Noumi, NoumiYamada} for simple and elliptic singularities and \cite{Maher} for a modern treatment using the open enumerative geometry viewpoint. 
\end{rmk}

\subsection{Open $r$-spin and $r$-KdV hierarchy for $r\geq2$}
With $L$ as in Subsection \ref{sub:int_hierarchies_closed}, let~$\Phi(T_*,u)$ be the solution of the system of equations
\begin{gather}\label{eq:wave_func_eqs}
\frac{\partial\Phi}{\partial T_n}=u^{n-1}(L^{n/r})_+\Phi,\quad n\geq 1,
\end{gather}
which satisfies the initial condition
$$
\left.\Phi\right|_{T_{\geq 2}=0}=1.
$$
The system of differential equations~\eqref{eq:wave_func_eqs} for the function~$\Phi$ coincides with the system of differential equations for the wave function of the KP hierarchy.
This function $\Phi$, for $r=2$ was first studied in \cite{Bur16}. For general $r$ it was first studied in~\cite{BY15}, where the authors found an explicit formula for $\Phi$ in terms of the wave function.

Let $\phi:=\log\Phi$ and expand it as a power series in $u:$
$$
\phi=\sum_{g\in\mathbb{Z}}u^{g-1}\phi_g,\quad \phi_g\in\C[[T_*]].
$$
While $F^{x^r,c}_0$ of \eqref{eq:open_pot} depends only on the variables $t^0_d,\ldots,t^{r-2}_d$, the function~$F^{x^r,o}_0$ depends also on $t^{r-1}_d$ and $s$. We relate $T_{mr}$ and $t^{r-1}_{m-1}$ via
\begin{gather}\label{eq:r-1_coord_change}
T_{mr}=\frac{1}{(-r)^{\frac{m(r-2)}{2(r+1)}}m!r^m}t^{r-1}_{m-1},\quad m\ge 1.
\end{gather}

In the \PST~case $r=2,$ with all twists $0,$ it holds that
\begin{thm}\label{thm:openKdV_thm}
Let $F^o$ be the all genus open potential for $r=2$ and with no Ramond insertions, given in \eqref{eq:all_g_open_pot}.
Then $F^o=\left(\frac{1}{\sqrt{-2}}\right)^{g-1}\phi|_{t^1_d\mapsto{-\sqrt{-2}}\delta_{d,0}s}$, where the notation $|_{t^1_d\mapsto{-\sqrt{-2}}\delta_{d0}s}$ means substituting $-\sqrt{-2}\delta_{d,0}s$ for $t_d^1$.
\end{thm}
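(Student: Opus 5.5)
The plan is to show that both sides of Theorem~\ref{thm:openKdV_thm} satisfy the same system of differential equations with the same initial data. Such a system determines its solution uniquely, so the two functions must coincide.

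\emph{Step 1: the open potential satisfies the recursions.} On the geometric side, I would use the three topological recursion relations (a), (b), (c) of the \PST~theory together with their all-genus extensions. In particular, the internal-marked-point relation (b) expresses any descendent $\tau_{d_1+1}$ through lower-order terms. Next I would record the open string and dilaton equations. These follow from the forgetful boundary conditions, since canonical sections of $\CL_i$ are pulled back along $\text{For}_{\text{non-alt}}$. I would also record the equation for $\partial_s F^o$ obtained by adding a boundary point. Packaged as generating functions, these give a system of the form
\[
\frac{\partial F^o}{\partial t_{n}}=\text{(explicit quadratic expression in }F^o,\ F^{x^2,c}\text{ and their first derivatives)},
\]
including an equation for the $s$-derivative. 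Together with the string equation, this system determines $F^o$ from its initial value at $t_{\geq1}=0$. That initial value is a small explicit list of numbers, such as $\langle\tau_0\sigma\rangle_0=1$ and the genus-$g$ numbers with only boundary markings, which are computed directly on low-dimensional moduli spaces.

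\emph{Step 2: the wave function satisfies the same recursions.} On the integrable side, set $\phi=\log\Phi$. Equation \eqref{eq:wave_func_eqs} with $r=2$ gives $\partial_{T_n}\phi=\Phi^{-1}u^{n-1}(L^{n/2})_+\Phi$. I would express the right-hand side through $\phi_x$ and the closed KdV tau-function $F^{x^2,c}$ by Kontsevich's theorem, using the Faber--Shadrin--Zvonkine change of variables with $r=2$. The odd flows $T_{2m}$ are identified with $t^1_{m-1}$ via \eqref{eq:r-1_coord_change}, and restricting $t^1_d\mapsto-\sqrt{-2}\delta_{d,0}s$ turns the $T_2$-flow into the $s$-derivative. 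I would then check that, after the genus rescaling $(1/\sqrt{-2})^{g-1}$, the resulting equations coincide term by term with those of Step 1, and that the initial condition $\Phi|_{T_{\geq2}=0}=1$ matches the initial data found there.

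\emph{Step 3: conclude by uniqueness.} Both sides satisfy the same system with the same initial data, so they are equal.

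\emph{Main obstacle.} The hardest part is Step 1 in higher genus. The genus-$0$ and genus-$1$ relations are available from \cite{PST14,TZ3}, but for all genera one needs either an all-genus open TRR or Virasoro-type constraints. These must come from the higher-genus construction of \cite{ST_unpublished,Tes15}, using the orientation via ribbon-graph cells. I would first try to obtain them through a combinatorial formula: express $F^o$ as a matrix-model or Kontsevich-type sum over ribbon graphs with boundary, and match it to the $s$-extended Kontsevich integral whose logarithm is known to equal $\phi$ \cite{Bur16,BT17}. The delicate points are keeping track of the contributions of contracted boundary strata and of the signs, since these produce the factor $\sqrt{-2}$.
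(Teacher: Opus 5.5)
\textbf{Gap in Step 1.} Your Steps 2 and 3 are fine. They amount to Buryak's results \cite{Bur15,Bur16}, and uniqueness for the open KdV system with initial data at $t_{\geq 1}=0$ is standard. The genuine gap is Step 1 beyond genus $1$.

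The recursions (a)--(c) exist only for $g=0,1$. There are no all-genus extensions to invoke, and string, dilaton and the genus-$0,1$ TRRs do not determine higher-genus numbers. Already in the closed theory they say nothing about $\langle\tau_4\rangle_2$, which is why higher-genus KdV is the real content of Kontsevich's theorem. The system $\partial F^o/\partial t_n=\cdots$ that you want to \emph{package} from geometry is exactly the open KdV system conjectured in \cite{PST14}. For $g\geq 2$ it is the theorem itself, not a geometric input.

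The $s$-equation also has no \emph{add a boundary point} origin. \PST~boundary markings are alternating with twist $0$, so $\text{For}_{\text{non-alt}}$ cannot forget them. Moreover, \eqref{eq:parity} forces $k\equiv g+1 \pmod 2$, so forgetting a single boundary marking is impossible in any case. In genus $0$ the $s$-equation is an output of the TRRs. As written, your argument is the genus-$0$ proof of \cite{PST14}, extendable at best to $g\le 1$ using (c).

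\textbf{What the paper uses.} The paper follows the route you list only as a fallback in your obstacle paragraph. It replaces Step 1 rather than supplying its missing recursions, in three steps:
\begin{enumerate}
\item \cite{Tes15} computes the all-genus open intersection numbers by a Kontsevich-type Feynman sum over ribbon graphs. This rests on the ribbon-graph cell decomposition of the graded moduli and on its glued orientation.
\item \cite{BT17} rewrites this sum as a matrix integral and proves that it satisfies the open Virasoro constraints.
\item \cite{Bur15} shows that open Virasoro is equivalent to open KdV and to the wave-function formula.
\end{enumerate}
So the comparison is made through Virasoro constraints on an explicitly computed partition function, never through geometric all-genus recursions. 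The hard steps are the combinatorial formula and the matrix-model analysis, not a term-by-term matching of TRRs with the wave-function equations.
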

In $g=0$ an equivalent formulation in terms of the open KdV and Virasoro equations is proven in \cite{PST14}. The all genus case is a combination of results from (i) \cite{Tes15} which calculates a Feynman-sum formula for open intersection numbers, generalizing Kontsevich's formula \cite{Kontsevich} to the open setting, (ii) \cite{Bur15} which proved that the open KdV and open Virasoro equations are equivalent to the wave function statement above, and (iii) \cite{BT17} which proved that the numbers calculated in \cite{Tes15} satisfy the open Virasoro relations.

Safnuk in \cite{safnuk2016topological} studies Eynard-Orantin topological recursion for 
the above theory. A physics perspective on the subject can be found in \cite{dijkgraaf2018developments}.
In \cite{alexandrov2017refined,wang2025identification}, the authors study refined intersection numbers, which are filtered both by the genus $g$ and the number $h$ of boundary components of the topological surface, and relate them to the Kontsevich-Penner matrix model. Lastly, the open $r=2$ theory is related to higher Airy structures in \cite{borot2024higher}, and the authors further conjecture a generalization for all $r$. 

In \cite{BCT2,TZ2}, the open $r$-spin version of Witten's conjecture is proven in genus zero and one:
\begin{thm}\label{thm:main_BCT2TZ2}
It holds that
\begin{gather}\label{eq:main_res_BCT}
F^{x^r,o}_0=\frac{1}{\sqrt{-r}}\phi_0\big|_{t^{r-1}_d\mapsto \frac{1}{\sqrt{-r}}(t^{r-1}_d-r\delta_{d,0}s)}-\frac{1}{\sqrt{-r}}\phi_0\big|_{t^{r-1}_d\mapsto\frac{1}{\sqrt{-r}}t^{r-1}_d}.
\end{gather}
\begin{equation}\label{eq:main_res_TZ}
F^{x^r,o}_1=\left.\phi_1\right|_{t^{r-1}_d\mapsto\frac{1}{\sqrt{-r}}(t^{r-1}_d-\delta_{d,0}rs)}.
\end{equation}
\end{thm}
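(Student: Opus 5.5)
The approach is a uniqueness argument. The genus $0$ and genus $1$ open topological recursion relations (a), (b), (c) of the previous subsection determine all genus $0$ and $1$ open $r$-spin invariants recursively, starting from a finite set of initial values. The closed extended genus $0$ numbers that feed into these recursions are known from the closed theory; for the primary extended numbers one can use the description of the closed potential through $L$ and its extension to $t^{r-1}$ directions. The plan is therefore as follows:
\begin{enumerate}
\item Show that the right-hand sides of \eqref{eq:main_res_BCT} and \eqref{eq:main_res_TZ}, defined through the wave function $\Phi$ of \eqref{eq:wave_func_eqs} after the change of variables \eqref{eq:r-1_coord_change}, satisfy the same recursions.
\item Check that they have the same initial values.
\item Conclude by induction on $\sum d_i$ and the number of markings.
\end{enumerate}

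The first step is to translate the TRRs into differential equations for the generating functions. Relation (a) becomes, after multiplying by monomials and summing,
\[\frac{\partial F^{x^r,o}_0}{\partial t^{a_1}_{d_1+1}}\]
expressed through $\partial_{t^{a_1}_{d_1}}\partial_{t^a_0}F^{\mathrm{ext}}_0$, $\partial_{t^{r-2-a}_0}F^{x^r,o}_0$ and $\partial_{t^{a_1}_{d_1}}F^o_0\,\partial_sF^o_0$. Relation (b) becomes a similar equation, and (c) a linear equation for $F_1^o$ whose last term is $\tfrac12\partial_s^{\,}F_0^o$-type data. In the same way, one records the low-point initial conditions directly from the geometry: the disk with one internal and one boundary point, the disk with three boundary points, and the small genus $1$ cases. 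These are computed from the explicit zero counts in \cite{BCT2,TZ3}.

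The second step handles the integrable side. From \eqref{eq:wave_func_eqs}, $\phi=\log\Phi$ satisfies
\[\partial_{T_n}\phi=u^{n-1}\Phi^{-1}(L^{n/r})_+\Phi.\]
Expanding in $u$ gives the genus-$0$ and genus-$1$ parts $\phi_0,\phi_1$. I would derive the open string equation and the ``open TRR'' for $\phi_0,\phi_1$ using the dispersionless limit. In genus $0$, $\partial_{T_n}\phi_0$ is a polynomial in $\partial_x\phi_0$ whose coefficients are residues of powers of $L$, hence second derivatives of the closed potential. The difference structure in \eqref{eq:main_res_BCT}, which comes from shifting $t^{r-1}_0$ by $-rs$ and subtracting the $s=0$ value, is what turns derivatives in $t^{r-1}_0$ into $s$-derivatives and kills the purely closed part. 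Accordingly, one verifies that the right-hand side vanishes at $s=0$, consistent with $k\ge1$ or $l$ conditions, and satisfies the translated TRR (a). For genus $1$, expand to next order in $u$ and match with (c); the $\tfrac12\langle\cdots\sigma^{k+1}\rangle_0$ term should arise from the subleading correction in $\Phi^{-1}(L^{n/r})_+\Phi$.

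The main obstacle is this second step: showing that the wave-function equations, restricted to genus $0$ and genus $1$ and with the Ramond variables $t^{r-1}_d$ treated via \eqref{eq:r-1_coord_change}, reproduce exactly the open TRR. This includes the sum over $a=-1,\dots,r-2$, where the $a=-1$ term involves Ramond extended closed invariants, and also the constants $\sqrt{-r}$ and the combinatorial factors. I would first do $r=2$, where the result should reduce to Theorem~\ref{thm:openKdV_thm} and \cite{Bur15}, and then use the explicit formula of \cite{BY15} for $\Phi$ in terms of the KP wave function to handle general $r$. Finally, uniqueness follows because every invariant with some $d_i>0$ reduces by (a)/(b)/(c), and primary invariants reduce by the string-type equations to the initial values.
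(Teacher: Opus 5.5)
\textbf{Main gap: the closed extended input.} Your skeleton (both sides satisfy (a), (b), (c) and agree on initial data) is the right one. Step~2, however, rests on a false premise, and that is where the proof breaks: the closed extended genus-zero numbers entering the TRRs are \emph{not} known from the closed theory.

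The $a=-1$ term of (a), (b) and (c) has coefficient $\langle\tau^{-1}_0\tau^{a_1}_{d_1}\prod_{i\in S}\tau^{a_i}_{d_i}\rangle^{x^r,\mathrm{ext}}_0$. This number carries a $-1$ twist and, in general, arbitrarily many Ramond insertions $\tau^{r-1}$. In generating-function form, with the $-1$ insertion implicit, it is $\partial_{t^{a_1}_{d_1}}F^{\mathrm{ext}}_0$, and it multiplies $\partial_{t^{r-1}_0}$ of the open potential.

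Nothing built from $L$ or $F^{x^r,c}_0$ sees these numbers. Indeed $\partial_{T_{mr}}L=u^{mr-1}[L^m,L]=0$, and $F^{x^r,c}_0$ is independent of the $T_{mr}$ (Ramond vanishing). So there is no ``extension of $L$ to the $t^{r-1}$ directions'' from which to read them off. They live only in the $T_{mr}$-dependence of $\Phi$ itself. The fact that they do, namely that the genus-zero closed extended potential coincides up to normalization with $\phi_0$, is a separate theorem with its own geometric proof \cite{BCT_Closed_Extended}. That theorem is the missing ingredient.

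Without it, the Ramond-node coefficient on your integrable side is $\partial_{t^{a_1}_{d_1}}G$ with $G:=\frac{1}{\sqrt{-r}}\,\phi_0\big|_{t^{r-1}_d\mapsto t^{r-1}_d/\sqrt{-r}}$. You have no way to match it with the geometric $F^{\mathrm{ext}}_0$, and the same term obstructs the genus-one comparison with (c).

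With it, $G$ is a constant multiple of $F^{\mathrm{ext}}_0$, and the structure of \eqref{eq:main_res_BCT} becomes transparent. Its right-hand side is $\tilde G-G$ with $\tilde G:=G|_{t^{r-1}_0\mapsto t^{r-1}_0-rs}$, i.e.\ a shift of the extended potential minus itself. Step~2 then takes a few lines:
\begin{enumerate}
\item Write the closed extended TRR for $G$ and for $\tilde G$, and subtract. The linear terms pass through, since $F^{x^r,c}_0$ does not depend on $t^{r-1}_0$ or $s$.
\item Use $\partial_s\tilde G=-r\,\partial_{t^{r-1}_0}\tilde G$ and $\partial_sG=0$.
\item With $F=\tilde G-G$, the quadratic cross term $\partial_{t^\alpha_p}\tilde G\,\partial_{t^{r-1}_0}\partial\tilde G-\partial_{t^\alpha_p}G\,\partial_{t^{r-1}_0}\partial G$ splits as $\partial_{t^\alpha_p}G\,\partial_{t^{r-1}_0}\partial F+\partial_{t^\alpha_p}F\,\partial_{t^{r-1}_0}\partial\tilde G$. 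Here $\partial$ is $\partial_{t^\beta_q}$ for (b) and $\partial_s$ for (a).
\item The first piece is the $a=-1$ term. The second piece is a multiple of the boundary term $\partial_{t^\alpha_p}F\,\partial_s\partial F$.
\end{enumerate}
The dispersionless analysis and the detour through $r=2$ and \cite{BY15} are then unnecessary.

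\textbf{Base cases.} Your treatment of these also needs repair.
\begin{itemize}
\item The string equation only deletes $\tau^0_0$ insertions. So it does not reduce a primary such as $\langle\tau^{a_1}_0\cdots\tau^{a_l}_0\sigma^k\rangle$ with all $a_i\ge1$. You need the dilaton equation run backwards, or a comparison of (a) with (b) on the same descendant invariant, and string and dilaton must then be established on both sides.
\item The right-hand side of \eqref{eq:main_res_BCT} vanishes identically at $s=0$. So the theorem asserts that every genus-zero invariant with $k=0$ vanishes. This must be proved on the geometric side, not recorded as a consistency check. Some of these are genuine base cases: for $k=0$, $l=2$ and twists $a,b\ge1$, inserting a dilaton and applying (b) (or string plus (b)) only returns $\langle\tau^a_0\tau^b_0\rangle=\langle\tau^b_0\tau^a_0\rangle$.
\item Your list of initial values is $r=2$-specific. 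For general $r$ the purely boundary base case is $\langle\sigma^{r+1}\rangle$, not $\langle\sigma^3\rangle$.
\item Since (a) requires $k\ge1$, its generating-function form is $\partial_s\partial_{t^{a_1}_{d_1+1}}F^o_0=\cdots+\partial_{t^{a_1}_{d_1}}F^o_0\,\partial_s^2F^o_0$, not the form you wrote.
\item In genus one there are no primary invariants: a primary would need $\sum a_i=k+rl$, which is impossible since $\sum a_i\le l(r-1)$. So (c) determines $F^o_1$ from genus-zero data, and no genus-one initial values are needed.
\end{itemize}
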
 
\begin{conj}[\cite{BCT3}] Also for $g\geq 2$ there exists a geometric definition of the open $r$-spin potential $F_g^{x^r,o}$such that
\begin{equation}\label{eq:bct_conj}
F^{x^r,o}_g=\left.(-r)^{\frac{g-1}{2}}\phi_g\right|_{t^{r-1}_d\mapsto\frac{1}{\sqrt{-r}}(t^{r-1}_d-\delta_{d,0}rs)}.
\end{equation}
\end{conj}
We should point out that at the moment no analogous results are known or conjectured for the $\mathfrak{h}=1,2,\ldots,\lfloor r/2\rfloor-1$ open $r$-spin theories constructed in \cite{TZ1,TZ2}, even though all $g=0$ intersection numbers of these theories are fully calculated in \cite{TZ2}.
\begin{rmk}
In open Gromov--Witten theory, unlike the closed theory, not much is known or even conjectured about higher-genus invariants. The conjecture above is one of the few conjectures that describes a full, all-genus open Gromov--Witten theory, and also one of the few conjectures that relate the potential to an integrable hierarchy.\footnote{As mentioned, an analogous conjecture was made in \cite{PST14} for the $r=2$ case, and it was proven in~\cite{BT17}. In~\cite{BPTZ}, the full stationary open Gromov--Witten theory for maps to $(\mathbb{CP}^1,\mathbb{RP}^1)$ is conjectured.}
\end{rmk}
Finally, \PST~and \BCT~theories satisfy open string and dilaton equations.
\begin{thm}[\cite{PST14,BT17,BCT2,BCT3,TZ3}]
The following string and dilaton equations hold for the \PST~theory in all $g,$ and the \BCT~ theory in $g=0,1$ for which the theory is constructed:
\begin{itemize}
    \item (Open String)
Assume $2g-2+k+2l>0$. Then
\begin{equation}\label{eq:open_string}
\left\langle\tau^0_0\prod_{i=1}^l\tau^{a_i}_{d_i}\sigma^k\right\rangle^{x^r,o}_g=\sum_{j=1}^l\left\langle\tau^{a_j}_{d_j-1}\prod_{i=1,i\neq j}^l\tau^{a_i}_{d_i}\sigma^k\right\rangle^{x^r,o}_g,
\end{equation}
where an intersection number which includes $\tau^*_{-1}$ is defined to be $0.$
\item (Open Dilaton)
Assume $2g-2+k+2l>0$. Then
\begin{equation}\label{eq:open_dilaton}
\left\langle\tau^1_0\prod_{i=1}^l\tau^{a_i}_{d_i}\sigma^k\right\rangle^{x^r,o}_g=(g+l+k-1)\left\langle\prod_{i=1}^l\tau^{a_i}_{d_i}\sigma^k\right\rangle^{x^r,o}_g.
\end{equation}
\end{itemize}
\end{thm}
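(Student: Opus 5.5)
The natural approach is the one used in the closed setting: compare the section-level data on the moduli space with one extra marking to that on the moduli space without it, via the forgetful map $\text{For}_{\text{non-alt}}$. Let $z_0$ be the extra internal marking with twist $0$ (string) or twist $1$ in the PST case $r=2$, where it is the dilaton-type insertion with $\tau^1_0$ the pulled-back $\psi$. Let $\pi:\M_{g,k,l+1}\to\M_{g,k,l}$ forget $z_0$. The equation \eqref{eq:forgetful_witten_0} gives $\cW_{l+1}\simeq\pi^*\cW_l$ canonically. Observation~\ref{isomorphism forgetting non-alt for descendents}(ii) gives the comparison maps $t:\pi^*\CL_i\to\CL_i$, which vanish exactly on the locus $D_i$ where $z_i,z_0$ bubble off on a sphere. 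The plan is to start from a canonical family of multisections $s$ on the base and build a multisection on the bigger space of the form
\[\pi^*s_{\cW}\oplus\bigoplus_i t(\pi^*s_i)^{\oplus (d_i-1)}\oplus t(\pi^*s_i')\ \text{or similar}.\]
We then check that it is canonical. Forgetful conditions are preserved because forgetting commutes with detaching. Positivity conditions are preserved because the Witten section is literally pulled back. Since the numbers are independent of choices (Theorems~\ref{thm:pst_nums_well_def} and \ref{thm:r-spin_nums_well_def}, and the $g=1$ version of \cite{TZ3}), we may compute with this special choice.

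For the string equation we take, for each $j$, the modified section in which one copy of $\pi^*s_j$ is composed with $t$. Its extra zeros then lie on the divisor $D_j\cong\M_{g,k,l}$, which is a closed (internal) codimension-two stratum isomorphic to the base. Along it, the remaining data is $\pi^*$ of the section for $\tau_{d_j-1}$. A local computation of the order of vanishing, exactly as in the closed case, shows each zero contributes with multiplicity one and the correct sign. We use that $\CL_i$ is complex oriented, Observation~\ref{isomorphism forgetting non-alt for descendents}(iii), and that the relative orientation of Theorem~\ref{thm:or} is compatible with $\pi$. Away from $\bigcup D_j$ the section is pulled back from a space of dimension one less than the rank of the bundle, so a generic perturbation there has no zeros. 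Summing over $j$ gives \eqref{eq:open_string}.

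For the dilaton equation, we instead compute the zero count of a section of $\CL_0$ on the fibers of $\pi$. The fiber over a smooth point is the surface $\Sigma$ itself, with $z_0$ ranging over its interior and boundary. We count zeros of a generic section of $\omega_\Sigma(\sum z_i)$ relative to the boundary, with boundary conditions pulled back from \PST/\BCT~conventions. By doubling, this count is half the degree on the double $C$, giving $\tfrac12(2g_C-2+2l+k)$ with $g_C$ the doubled genus. This is the coefficient $g+l+k-1$ with the paper's genus convention. The boundary markings contribute $\tfrac12$ each on each side, which is why $k$ appears with coefficient one.

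The main obstacle I expect is boundary behaviour of the constructed sections. One problem is strata where $z_0$ collides with the boundary (contracted-boundary or boundary-node strata created by forgetting). The other is the instability loss noted in the remark after the \BCT~boundary conditions, where canonical $\CL_i$-sections vanish. I would handle both as in that remark: on such strata the Witten component is positive, so there are no zeros there. In genus $1$ I would additionally invoke the extra $\CL_i$ boundary conditions of \cite{TZ3}, checking that they are also stable under $\pi^*$.
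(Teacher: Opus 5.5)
Your overall strategy is the geometric route the paper points to: forget an untwisted internal marking, as in \cite[Sections 4.1, 4.2]{PST14}. The string part is essentially right, modulo one detail. As written, every copy $t(\pi^*s_j)$ vanishes identically on $D_j$, so your section is not transverse there. You must first modify the copies near $D_j$, using that $\CL_j|_{D_j}$ is trivial, before the zeros on $D_j$ can be read off as those of a $\tau_{d_j-1}$-section on the base.

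The genuine gap is in the dilaton part, where the key number is wrong. Half the degree of $\omega_C(\sum z_i+\sum\bar z_i+\sum x_j)$ on the double is $g-1+l+\tfrac{k}{2}$, not $g+l+k-1$. The paper's $g$ already is the genus of the double; that is what $\dim_\R=k+2l+3g-3$ encodes. A boundary marking is a single conjugation-fixed point of $C$, so it contributes $1$ to the degree and $\tfrac12$ after halving. No genus convention turns $\tfrac{k}{2}$ into $k$, and your sentence reconciling the two is false. The discrepancy is real. For $\langle\tau^0_1\tau^0_0\sigma\rangle_0$, the string equation gives $\langle\tau^0_0\sigma\rangle_0$, which is nonzero, so the dilaton coefficient must be $k+l-1=1$. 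Your count gives $\tfrac12$.

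The missing idea is that the relative Euler number of $\CL_0$ on the fibre of $\pi$ is determined by the canonical boundary section. Halving the degree is valid only if that section is the boundary-real one along all of $\partial\Sigma$.

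\emph{Where it does hold.} On the fibre-boundary arcs where $z_0$ reaches $\partial\Sigma$ away from markings, $z_0$ lies on a rigid bubble with a single alternating half-node. The untwisted non-alternating partner half-node is forgotten. The section is then pulled back from the point $\M_{0,1,1}$, i.e.\ it is a fixed rotation of the boundary-real covector.

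\emph{Where it fails.} In the fibre, each $x_j$ is blown up into an arc of limits in which $z_0$ lies on a bubble with $x_j$ and an untwisted non-alternating half-node. The forgetful condition makes the section the pull-back from the point $\M_{0,1,1}$ formed by $(z_0,x_j)$. Follow the normalizing M\"obius map as the angle of approach runs across $x_j$. You will see that the section turns once, positively, relative to the boundary-real trivialization.

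So each boundary marking contributes a full $+1$, and the fibre count is $(g-1+l)+k$. Only the summand $g-1+l$ comes from doubling. You also need that $\CL_0|_{D_i}$ is trivial, so the $\CL_0$-section can be chosen nonvanishing on the loci $D_i$ where the pulled-back $\CL_i$-sections vanish.

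Two smaller points:
\begin{enumerate}
\item The dilaton insertion must be an untwisted marking carrying one $\psi$-class, i.e.\ $\tau^0_1$, since only twist-$0$ markings can be forgotten. Drop the ``twist $1$'' reading from your setup.
\item In the \PST~case there is no Witten bundle. So ``the Witten component is positive'' cannot be used to dispose of the strata where $z_0$ meets the boundary. Those strata are controlled precisely by the forgetful conditions described above.
\end{enumerate}
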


These relations are algebraic consequences of the relations between the open potentials and the corresponding wave functions of the $r$-KdV hierarchy, and the fact that these equations hold for the coefficients of the wave functions, see \cite[Section 3.3]{BCT3}. Alternatively they can be proven geometrically by using the behaviour of the relative cotangent lines and Witten bundles under the forgetful maps which forget an untwisted internal marking which does not carry descendents. This path was taken in \cite[Sections 4.1, 4.2]{PST14}, and the proof generalizes easily to all cases for which the intersection numbers are defined.

\section{Open questions and problems}
We finish this survey with several open questions and problems:

\begin{itemize}
\item {\bf Virtual techniques.} A key technical difficulty in open theories is the absence of the virtual fundamental class machinery that revolutionized the study of closed Gromov-Witten and FJRW theories. Since open theories are defined at the chain, rather than cycle level, a very important open problem is to construct a virtual fundamental chain formalism that could somehow incorporate boundary conditions. Such machinery seems to be necessary for constructing open $r$-spin theories in $g>1,$ as well as open FJRW theories beyond the concave setting.
\item {\bf Higher genus open $r$-spin and FJRW intersection numbers.}
With virtual techniques at hand, one can go on and try to compute the resulting numbers, and to test relations with mirror symmetry and integrable hierarchies. A question of particular importance is whether the all genus open $r$-spin potential yields the $r$-KdV wave function, as conjectured in \cite{BCT3}. One can also ask if, for higher-dimensional Landau-Ginzburg models, the algebraic construction in \cite{alexandrov2023construction} be extended to the open FJRW setting to produce an all genus conjecture, and if such conjecture can be realized geometrically. 
\item {\bf OFJRW theories with non-maximal symmetry group and non-Fermat potentials.}  If the obstacle of creating appropriate virtual fundamental class machinery can be handled, then one could look towards constructing open theories for more general Landau-Ginzburg models. While \cite{TZ2} handles this for minimal symmetry group for certain Fermat polynomials, this does not cover all Landau-Ginzburg models. As seen in \cite{FJR}, one may want to consider a loop or chain polynomials which arise in the Kreuzer-Skarke classification of invertible polynomials, or consider other non-maximal symmetry groups on the $A$-model. Even more ambitiously, one could then move towards more complex gauged linear sigma models, as done in \cite{FJRGLSM, FaveroKim}.
\item {\bf The open LG/CY correspondence and an open Chiodo class.} 
In \cite{PSW}, Pandharipande, Solomon, and Walcher constructed an OGW theory for the Fermat quintic. In turn, Aleshkin, Liu, and Walcher  conjectured an extension of the LG/CY correspondence to the open setting \cite{Walcher,Melissa}. The recent construction of \cite{TZ2} defines an open FJRW theory of the Fermat quintic with minimal symmetry group, which is a candidate LG dual side for the OGW construction of \cite{PSW}. However, it is not known at the moment how can one calculate the intersection numbers in the OFJRW side. In the closed setting, this question was first settled using the construction of the \emph{Chiodo class} \cite{ChiodoWitten}. Can such a construction be found in the open case? Can the open LG/CY be proven in this case, and in other cases for which the closed analogue has been proven?
\item {\bf A tropical version of OFJRW. } Recently, there has been interest in studying FJRW theory and $r$-spin theory through the lens of tropical geometry. For example, Abreu, Pacini and Secca investigated constructing a tropical counterpart to Jarvis's moduli space construction for $r$-roots \cite{APS}. Also, Cavalieri, Kelly, and Silversmith provides a tropical perspective on the closed $g=0$ $r$-spin theory on the way towards determining a closed formula for closed $g=0$ primary $r$-spin invariants \cite{cavalieri2024genus}. It is natural to look for an analogous tropical perspective on more general closed and open FJRW theories. It would also be interesting to provide a tropical geometry \emph{construction} of these theories.

\item {\bf Universal relations from open moduli.}
Pandharipande, Pixton and Zvonkine in \cite{PPZ} use Witten's $3$-spin theory to prove that Pixton's relations hold for the tautological ring of $\M_{g,n}$. We ask if open FJRW theory can be used as a guiding principle to understand relations in the moduli space of Riemann surfaces with boundary. While the moduli space of Riemann surfaces with boundary does not admit an algebraic intersection theory or tautological ring, this structure could present itself as relations that open Gromov-Witten invariants and open FJRW invariants universally satisfy.

\item {\bf Topological recursion and higher Airy structures.} In \cite{safnuk2016topological}, Safnuk  following Alexandrov \cite{Alexandrov} establishes topological recursion for $r=2$-spin open intersection numbers.  In \cite{borot2024higher}, the authors further underpin this structure using higher Airy structures. Can such theory be extended to higher $r$-spin theory for $r\geq 2$? Moreover, does there exist a theory of topological recursion that underpins open FJRW theory for Landau-Ginzburg models such as $(x_1^{r_1} + x_2^{r_2}, \mu_{r_1} \times \mu_{r_2})$ where open WDVV fails? 
\end{itemize}

\bibliography{OpenBiblio}

@article {LiLiSaitoShen,
    AUTHOR = {Li, Changzheng and Li, Si and Saito, Kyoji and Shen, Yefeng},
     TITLE = {Mirror symmetry for exceptional unimodular singularities},
   JOURNAL = {J. Eur. Math. Soc. (JEMS)},
  FJOURNAL = {Journal of the European Mathematical Society (JEMS)},
    VOLUME = {19},
      YEAR = {2017},
    NUMBER = {4},
     PAGES = {1189--1229},
      ISSN = {1435-9855,1435-9863},
   MRCLASS = {14J33},
  MRNUMBER = {3626554},
MRREVIEWER = {Eduardo\ A.\ Gonzalez},
       DOI = {10.4171/JEMS/691},
       URL = {https://doi.org/10.4171/JEMS/691},
}

@article {Overholser,
    AUTHOR = {Overholser, Peter},
     TITLE = {A descendent tropical {L}andau-{G}inzburg potential for {$\Bbb
              P^2$}},
   JOURNAL = {Commun. Number Theory Phys.},
  FJOURNAL = {Communications in Number Theory and Physics},
    VOLUME = {10},
      YEAR = {2016},
    NUMBER = {4},
     PAGES = {739--803},
      ISSN = {1931-4523},
   MRCLASS = {14N35 (14J33 14T05)},
  MRNUMBER = {3636674},
MRREVIEWER = {Amin Gholampour},
       DOI = {10.4310/CNTP.2016.v10.n4.a3},
       URL = {https://doi.org/10.4310/CNTP.2016.v10.n4.a3},
}

@article {GrossP2,
    AUTHOR = {Gross, Mark},
     TITLE = {Mirror symmetry for {$\Bbb P^2$} and tropical geometry},
   JOURNAL = {Adv. Math.},
  FJOURNAL = {Advances in Mathematics},
    VOLUME = {224},
      YEAR = {2010},
    NUMBER = {1},
     PAGES = {169--245},
      ISSN = {0001-8708}, 
   MRCLASS = {14J33 (14N35 14T05)},
  MRNUMBER = {2600995},
MRREVIEWER = {Jake Philip Solomon},
       DOI = {10.1016/j.aim.2009.11.007},
       URL = {https://doi.org/10.1016/j.aim.2009.11.007},
}

@article {FOOO1,
    AUTHOR = {Fukaya, Kenji and Oh, Yong-Geun and Ohta, Hiroshi and Ono,
              Kaoru},
     TITLE = {Lagrangian {F}loer theory on compact toric manifolds. {I}},
   JOURNAL = {Duke Math. J.},
  FJOURNAL = {Duke Mathematical Journal},
    VOLUME = {151},
      YEAR = {2010},
    NUMBER = {1},
     PAGES = {23--174},
      ISSN = {0012-7094},
     CODEN = {DUMJAO},
   MRCLASS = {53D40 (53D12)}, 
  MRNUMBER = {MR2573826},
       DOI = {10.1215/00127094-2009-062},
       URL = {http://dx.doi.org/10.1215/00127094-2009-062},

}

@article{sol_owdvv,
    author = {J.P. Solomon},
    title = {A differential equation for the open {G}romov-{W}itten potential},
    journal = {preprint},
    year = {October 2007}
}

@article {alexandrov2023construction,
    AUTHOR = {Alexandrov, Alexander and Basalaev, Alexey and Buryak,
              Alexandr},
     TITLE = {A construction of open descendant potentials in all genera},
   JOURNAL = {Int. Math. Res. Not. IMRN},
  FJOURNAL = {International Mathematics Research Notices. IMRN},
      YEAR = {2023},
    NUMBER = {17},
     PAGES = {14840--14889},
      ISSN = {1073-7928,1687-0247},
   MRCLASS = {14N35 (16S32 22E67 35Q76)},
  MRNUMBER = {4637453},
MRREVIEWER = {Miguel\ Moreira},
       DOI = {10.1093/imrn/rnac240},
       URL = {https://doi.org/10.1093/imrn/rnac240},
}

@article{fjrw_survey,
  title={A brief survey of {F}{J}{R}{W} theory},
  author={Francis, A and Jarvis, T and Priddis, Nathan},
  journal={Primitive Forms and Related Subjects (Kavli IPMU, 2014), Advanced Studies in Pure Mathematics},
  volume={83},
  pages={19--53},
  year={2014},
MRNUMBER = {4384380},
}

@article {chiodo2011lg,
    AUTHOR = {Chiodo, Alessandro and Ruan, Yongbin},
     TITLE = {L{G}/{CY} correspondence: the state space isomorphism},
   JOURNAL = {Adv. Math.},
  FJOURNAL = {Advances in Mathematics},
    VOLUME = {227},
      YEAR = {2011},
    NUMBER = {6},
     PAGES = {2157--2188},
      ISSN = {0001-8708,1090-2082},
   MRCLASS = {14J33 (14J32 55N32)},
  MRNUMBER = {2807086},
MRREVIEWER = {Yasuhiro\ Goto},
       DOI = {10.1016/j.aim.2011.04.011},
       URL = {https://doi.org/10.1016/j.aim.2011.04.011},
}

@article {fjrw_and_drinfeld_sokolov,
    AUTHOR = {Liu, Si-Qi and Ruan, Yongbin and Zhang, Youjin},
     TITLE = {B{CFG} {D}rinfeld-{S}okolov hierarchies and {FJRW}-theory},
   JOURNAL = {Invent. Math.},
  FJOURNAL = {Inventiones Mathematicae},
    VOLUME = {201},
      YEAR = {2015},
    NUMBER = {2},
     PAGES = {711--772},
      ISSN = {0020-9910,1432-1297},
   MRCLASS = {81T45 (14N35 37K10 53D45)},
  MRNUMBER = {3370624},
MRREVIEWER = {Xiaobin\ Li},
       DOI = {10.1007/s00222-014-0559-3},
       URL = {https://doi.org/10.1007/s00222-014-0559-3},
}

@article {BerglundHubsch,
    AUTHOR = {Berglund, Per and H\"{u}bsch, Tristan},
     TITLE = {A generalized construction of mirror manifolds},
   JOURNAL = {Nuclear Phys. B},
  FJOURNAL = {Nuclear Physics. B. Theoretical, Phenomenological, and
              Experimental High Energy Physics. Quantum Field Theory and
              Statistical Systems},
    VOLUME = {393},
      YEAR = {1993},
    NUMBER = {1-2},
     PAGES = {377--391},
      ISSN = {0550-3213,1873-1562},
   MRCLASS = {14J30 (32G81 32J17 81T40)},
  MRNUMBER = {1214325},
       DOI = {10.1016/0550-3213(93)90250-S},
       URL = {https://doi.org/10.1016/0550-3213(93)90250-S},
}

@book {Krawitz,
    AUTHOR = {Krawitz, Marc},
     TITLE = {F{JRW} rings and {L}andau-{G}inzburg mirror symmetry},
      NOTE = {Thesis (Ph.D.)--University of Michigan},
 PUBLISHER = {ProQuest LLC, Ann Arbor, MI},
      YEAR = {2010},
     PAGES = {67},
      ISBN = {978-1124-28080-6},
   MRCLASS = {Thesis},
  MRNUMBER = {2801653},
       URL =
              {http://gateway.proquest.com/openurl?url_ver=Z39.88-2004&rft_val_fmt=info:ofi/fmt:kev:mtx:dissertation&res_dat=xri:pqdiss&rft_dat=xri:pqdiss:3429350},
}

@book {Maher,
    AUTHOR = {Maher, Rob},
     TITLE = {Predictions in Open {F}an-{J}arvis-{R}uan-{W}itten theory via mirror symmetry, modularity, and wall-crossing},
      NOTE = {Thesis (Ph.D.)--University of Birmingham},
      YEAR = {2025},
}

@article {FJRSpin,
    AUTHOR = {Fan, Huijun and Jarvis, Tyler and Ruan, Yongbin},
     TITLE = {Quantum singularity theory for {$A_{(r-1)}$} and {$r$}-spin
              theory},
   JOURNAL = {Ann. Inst. Fourier (Grenoble)},
  FJOURNAL = {Universit\'{e} de Grenoble. Annales de l'Institut Fourier},
    VOLUME = {61},
      YEAR = {2011},
    NUMBER = {7},
     PAGES = {2781--2802},
      ISSN = {0373-0956,1777-5310},
   MRCLASS = {14D23 (14B05 53D45 57R56)},
  MRNUMBER = {3112508},
MRREVIEWER = {Hsian-Hua\ Tseng},
       DOI = {10.5802/aif.2794},
       URL = {https://doi.org/10.5802/aif.2794},
}

@article {Noumi,
    AUTHOR = {Noumi, Masatoshi},
     TITLE = {Expansion of the solutions of a {G}auss-{M}anin system at a
              point of infinity},
   JOURNAL = {Tokyo J. Math.},
  FJOURNAL = {Tokyo Journal of Mathematics},
    VOLUME = {7},
      YEAR = {1984},
    NUMBER = {1},
     PAGES = {1--60},
      ISSN = {0387-3870},
   MRCLASS = {32C40},
  MRNUMBER = {752110},
MRREVIEWER = {G.-M.\ Greuel},
       DOI = {10.3836/tjm/1270152991},
       URL = {https://doi.org/10.3836/tjm/1270152991},
}

@incollection {NoumiYamada,
    AUTHOR = {Noumi, Masatoshi and Yamada, Yasuhiko},
     TITLE = {Notes on the flat structures associated with simple and simply
              elliptic singularities},
 BOOKTITLE = {Integrable systems and algebraic geometry ({K}obe/{K}yoto,
              1997)},
     PAGES = {373--383},
 PUBLISHER = {World Sci. Publ., River Edge, NJ},
      YEAR = {1998},
      ISBN = {981-02-3266-7},
   MRCLASS = {32S25 (32S30 81T40)},
  MRNUMBER = {1672069},
MRREVIEWER = {Aleksandr\ G.\ Aleksandrov},
}

@article{solomon2006intersection,
  title={Intersection theory on the moduli space of holomorphic curves with Lagrangian boundary conditions},
  author={Solomon, Jake P},
  journal={arXiv preprint math/0606429},
  year={2006}
}

@article {solomon2023relative,
    AUTHOR = {Solomon, Jake P. and Tukachinsky, Sara B.},
     TITLE = {Relative quantum cohomology},
   JOURNAL = {J. Eur. Math. Soc. (JEMS)},
  FJOURNAL = {Journal of the European Mathematical Society (JEMS)},
    VOLUME = {26},
      YEAR = {2024},
    NUMBER = {9},
     PAGES = {3497--3573},
      ISSN = {1435-9855,1435-9863},
   MRCLASS = {53D37 (14N10 14N35 53D12 53D45)},
  MRNUMBER = {4767498},
MRREVIEWER = {Hsian-Hua\ Tseng},
       DOI = {10.4171/jems/1337},
       URL = {https://doi.org/10.4171/jems/1337},
}

@article{katz2006enumerative,
  title={Enumerative geometry of stable maps with Lagrangian boundary conditions and multiple covers of the disc},
  author={Katz, Sheldon and Liu, Chiu-Chu Melissa},
  journal={Geometry \& Topology Monographs},
  volume={8},
  pages={1--47},
  year={2006}
}

@article{BY15,
	AUTHOR = {Bertola, Marco and Yang, Di},
     TITLE = {The partition function of the extended {$r$}-reduced
              {K}adomtsev-{P}etviashvili hierarchy},
   JOURNAL = {J. Phys. A},
  FJOURNAL = {Journal of Physics. A. Mathematical and Theoretical},
    VOLUME = {48},
      YEAR = {2015},
    NUMBER = {19},
     PAGES = {195205, 20},
      ISSN = {1751-8113,1751-8121},
   MRCLASS = {35Q53},
  MRNUMBER = {3342766},
       DOI = {10.1088/1751-8113/48/19/195205},
       URL = {https://doi.org/10.1088/1751-8113/48/19/195205},
}

@article {polishchuk2011matrix,
    AUTHOR = {Polishchuk, Alexander and Vaintrob, Arkady},
     TITLE = {Matrix factorizations and cohomological field theories},
   JOURNAL = {J. Reine Angew. Math.},
  FJOURNAL = {Journal f\"{u}r die Reine und Angewandte Mathematik. [Crelle's
              Journal]},
    VOLUME = {714},
      YEAR = {2016},
     PAGES = {1--122},
      ISSN = {0075-4102,1435-5345},
   MRCLASS = {14N35 (14H10)},
  MRNUMBER = {3491884},
MRREVIEWER = {J\'{e}r\'{e}my\ C\'{e}dric\ Gu\'{e}r\'{e}},
       DOI = {10.1515/crelle-2014-0024},
       URL = {https://doi.org/10.1515/crelle-2014-0024},
}

@article{Bur15,
	AUTHOR = {Buryak, Alexandr},
     TITLE = {Equivalence of the open {K}d{V} and the open {V}irasoro
              equations for the moduli space of {R}iemann surfaces with
              boundary},
   JOURNAL = {Lett. Math. Phys.},
  FJOURNAL = {Letters in Mathematical Physics},
    VOLUME = {105},
      YEAR = {2015},
    NUMBER = {10},
     PAGES = {1427--1448},
      ISSN = {0377-9017,1573-0530},
   MRCLASS = {14H10 (35Q53)},
  MRNUMBER = {3395226},
MRREVIEWER = {Milagros\ Izquierdo},
       DOI = {10.1007/s11005-015-0789-3},
       URL = {https://doi.org/10.1007/s11005-015-0789-3},
}

@article{Bur16,
	AUTHOR = {Buryak, A.},
     TITLE = {Open intersection numbers and the wave function of the {K}d{V}
              hierarchy},
   JOURNAL = {Mosc. Math. J.},
  FJOURNAL = {Moscow Mathematical Journal},
    VOLUME = {16},
      YEAR = {2016},
    NUMBER = {1},
     PAGES = {27--44},
      ISSN = {1609-3321,1609-4514},
   MRCLASS = {35Q53 (14H10)},
  MRNUMBER = {3470575},
MRREVIEWER = {Wen-Xiu\ Ma},
       DOI = {10.17323/1609-4514-2016-16-1-27-44},
       URL = {https://doi.org/10.17323/1609-4514-2016-16-1-27-44},
}

@article{Moc06,
	AUTHOR = {Mochizuki, Takuro},
     TITLE = {The virtual class of the moduli stack of stable {$r$}-spin
              curves},
   JOURNAL = {Comm. Math. Phys.},
  FJOURNAL = {Communications in Mathematical Physics},
    VOLUME = {264},
      YEAR = {2006},
    NUMBER = {1},
     PAGES = {1--40},
      ISSN = {0010-3616,1432-0916},
   MRCLASS = {14H10 (14N35)},
  MRNUMBER = {2211733},
MRREVIEWER = {Dan\ Abramovich},
       DOI = {10.1007/s00220-006-1538-3},
       URL = {https://doi.org/10.1007/s00220-006-1538-3},
}

@article{BT17,
	AUTHOR = {Buryak, Alexandr and Tessler, Ran J.},
     TITLE = {Matrix models and a proof of the open analog of {W}itten's
              conjecture},
   JOURNAL = {Comm. Math. Phys.},
  FJOURNAL = {Communications in Mathematical Physics},
    VOLUME = {353},
      YEAR = {2017},
    NUMBER = {3},
     PAGES = {1299--1328},
      ISSN = {0010-3616,1432-0916},
   MRCLASS = {14H10 (14H81 14N35)},
  MRNUMBER = {3652492},
MRREVIEWER = {Guangbo\ Xu},
       DOI = {10.1007/s00220-017-2899-5},
       URL = {https://doi.org/10.1007/s00220-017-2899-5},
}

@article{FSZ10,
	AUTHOR = {Faber, Carel and Shadrin, Sergey and Zvonkine, Dimitri},
     TITLE = {Tautological relations and the {$r$}-spin {W}itten conjecture},
   JOURNAL = {Ann. Sci. \'{E}c. Norm. Sup\'{e}r. (4)},
  FJOURNAL = {Annales Scientifiques de l'\'{E}cole Normale Sup\'{e}rieure.
              Quatri\`eme S\'{e}rie},
    VOLUME = {43},
      YEAR = {2010},
    NUMBER = {4},
     PAGES = {621--658},
      ISSN = {0012-9593,1873-2151},
   MRCLASS = {14N35 (14H10 53D45)},
  MRNUMBER = {2722511},
MRREVIEWER = {Hsian-Hua\ Tseng},
       DOI = {10.24033/asens.2130},
       URL = {https://doi.org/10.24033/asens.2130},
}

@article{pixton2012conjectural,
  title={Conjectural relations in the tautological ring of 
$\overline{M}_{g, n}$},
  author={Pixton, Aaron},
  journal={arXiv preprint arXiv:1207.1918},
  year={2012}
}

@article{FaveroKim,
  title={General {G}{L}{S}{M} Invariants and Their Cohomological Field Theories},
  author={Favero, David and Kim, Bumsig},
  journal={arXiv preprint arXiv:2006.12182},
  year={2020}
}

@article {CFFGKS,
    AUTHOR = {Ciocan-Fontanine, Ionut and Favero, David and Gu\'{e}r\'{e},
              J\'{e}r\'{e}my and Kim, Bumsig and Shoemaker, Mark},
     TITLE = {Fundamental factorization of a {GLSM} {P}art {I}:
              {C}onstruction},
   JOURNAL = {Mem. Amer. Math. Soc.},
  FJOURNAL = {Memoirs of the American Mathematical Society},
    VOLUME = {289},
      YEAR = {2023},
    NUMBER = {1435},
     PAGES = {iv+96},
      ISSN = {0065-9266,1947-6221},
      ISBN = {978-1-4704-6543-8; 978-1-4704-7590-1},
   MRCLASS = {14N35 (14F08 53D45)},
  MRNUMBER = {4632308},
MRREVIEWER = {Sergiy\ Koshkin},
       DOI = {10.1090/memo/1435},
       URL = {https://doi.org/10.1090/memo/1435},
}

@article{givental2001gromov,
  title={Gromov-Witten invariants and quantization of quadratic Hamiltonians},
  author={Givental, Alexander B},
  journal={arXiv preprint math/0108100},
  year={2001}
}

@article {givental2001semisimple,
    AUTHOR = {Givental, Alexander B.},
     TITLE = {Semisimple {F}robenius structures at higher genus},
   JOURNAL = {Internat. Math. Res. Notices},
  FJOURNAL = {International Mathematics Research Notices},
      YEAR = {2001},
    NUMBER = {23},
     PAGES = {1265--1286},
      ISSN = {1073-7928,1687-0247},
   MRCLASS = {53D45 (14N35)},
  MRNUMBER = {1866444},
MRREVIEWER = {Gilberto\ Bini},
       DOI = {10.1155/S1073792801000605},
       URL = {https://doi.org/10.1155/S1073792801000605},
}

@article {behrend1997intrinsic,
    AUTHOR = {Behrend, K. and Fantechi, B.},
     TITLE = {The intrinsic normal cone},
   JOURNAL = {Invent. Math.},
  FJOURNAL = {Inventiones Mathematicae},
    VOLUME = {128},
      YEAR = {1997},
    NUMBER = {1},
     PAGES = {45--88},
      ISSN = {0020-9910,1432-1297},
   MRCLASS = {14F99 (14C15 14D20)},
  MRNUMBER = {1437495},
MRREVIEWER = {Tohru\ Nakashima},
       DOI = {10.1007/s002220050136},
       URL = {https://doi.org/10.1007/s002220050136},
}

@article{FJR,
	AUTHOR = {Fan, Huijun and Jarvis, Tyler and Ruan, Yongbin},
     TITLE = {The {W}itten equation, mirror symmetry, and quantum
              singularity theory},
   JOURNAL = {Ann. of Math. (2)},
  FJOURNAL = {Annals of Mathematics. Second Series},
    VOLUME = {178},
      YEAR = {2013},
    NUMBER = {1},
     PAGES = {1--106},
      ISSN = {0003-486X,1939-8980},
   MRCLASS = {81T70 (14B05 14N35 35Q53)},
  MRNUMBER = {3043578},
MRREVIEWER = {Johannes\ Walcher},
       DOI = {10.4007/annals.2013.178.1.1},
       URL = {https://doi.org/10.4007/annals.2013.178.1.1},
}

@article{FJR2,
	AUTHOR = {Fan, Huijun and Jarvis, Tyler J. and Ruan, Yongbin},
     TITLE = {Geometry and analysis of spin equations},
   JOURNAL = {Comm. Pure Appl. Math.},
  FJOURNAL = {Communications on Pure and Applied Mathematics},
    VOLUME = {61},
      YEAR = {2008},
    NUMBER = {6},
     PAGES = {745--788},
      ISSN = {0010-3640,1097-0312},
   MRCLASS = {32G13 (14D21 32W50)},
  MRNUMBER = {2400605},
MRREVIEWER = {Fabio\ Perroni},
       DOI = {10.1002/cpa.20246},
       URL = {https://doi.org/10.1002/cpa.20246},
}

@unpublished{Nill,
Author ={S. Nill}, 
Title={Extended {F}{J}{R}{W} theory of the quintic threefold in genus zero},
Note={talk in S\'eminaire de G\'eom\'etrie Enum\'erative, 2 December 2021}}

@article {alexandrov2017refined,
    AUTHOR = {Alexandrov, Alexander and Buryak, Alexandr and Tessler, Ran
              J.},
     TITLE = {Refined open intersection numbers and the
              {K}ontsevich-{P}enner matrix model},
   JOURNAL = {J. High Energy Phys.},
  FJOURNAL = {Journal of High Energy Physics},
      YEAR = {2017},
    NUMBER = {3},
     PAGES = {123, front matter+40},
      ISSN = {1126-6708,1029-8479},
   MRCLASS = {81T30 (30F20 32G15)},
  MRNUMBER = {3650694},
       DOI = {10.1007/JHEP03(2017)123},
       URL = {https://doi.org/10.1007/JHEP03(2017)123},
}

@unpublished{FJR3,
	Author = {Fan, H. and Jarvis, T. J. and Ruan, Y.},
	Note = {arXiv preprint arXiv:0712.4025},
	Title = {The {W}itten equation and its virtual fundamental cycle},
	Year = {2007}}

@article{FJRGLSM,
	AUTHOR = {Fan, Huijun and Jarvis, Tyler J. and Ruan, Yongbin},
     TITLE = {A mathematical theory of the gauged linear sigma model},
   JOURNAL = {Geom. Topol.},
  FJOURNAL = {Geometry \& Topology},
    VOLUME = {22},
      YEAR = {2018},
    NUMBER = {1},
     PAGES = {235--303},
      ISSN = {1465-3060,1364-0380},
   MRCLASS = {14N35 (14D23 14J32 14L24 53D45 81T40 81T60)},
  MRNUMBER = {3720344},
MRREVIEWER = {Hsian-Hua\ Tseng},
       DOI = {10.2140/gt.2018.22.235},
       URL = {https://doi.org/10.2140/gt.2018.22.235},
}

@article{ChiodoStable,
	AUTHOR = {Chiodo, Alessandro},
     TITLE = {Stable twisted curves and their {$r$}-spin structures},
   JOURNAL = {Ann. Inst. Fourier (Grenoble)},
  FJOURNAL = {Universit\'{e} de Grenoble. Annales de l'Institut Fourier},
    VOLUME = {58},
      YEAR = {2008},
    NUMBER = {5},
     PAGES = {1635--1689},
      ISSN = {0373-0956,1777-5310},
   MRCLASS = {14H10 (14H60)},
  MRNUMBER = {2445829},
MRREVIEWER = {Arvid\ Siqveland},
       URL = {http://aif.cedram.org/item?id=AIF_2008__58_5_1635_0},
}

@article{ChiodoWitten,
	 AUTHOR = {Chiodo, Alessandro},
     TITLE = {The {W}itten top {C}hern class via {$K$}-theory},
   JOURNAL = {J. Algebraic Geom.},
  FJOURNAL = {Journal of Algebraic Geometry},
    VOLUME = {15},
      YEAR = {2006},
    NUMBER = {4},
     PAGES = {681--707},
      ISSN = {1056-3911,1534-7486},
   MRCLASS = {14H10 (14C35 14N35)},
  MRNUMBER = {2237266},
MRREVIEWER = {Dan\ Abramovich},
       DOI = {10.1090/S1056-3911-06-00444-9},
       URL = {https://doi.org/10.1090/S1056-3911-06-00444-9},
}

@incollection{PV,
	AUTHOR = {Polishchuk, Alexander and Vaintrob, Arkady},
     TITLE = {Algebraic construction of {W}itten's top {C}hern class},
 BOOKTITLE = {Advances in algebraic geometry motivated by physics ({L}owell,
              {MA}, 2000)},
    SERIES = {Contemp. Math.},
    VOLUME = {276},
     PAGES = {229--249},
 PUBLISHER = {Amer. Math. Soc., Providence, RI},
      YEAR = {2001},
      ISBN = {0-8218-2810-X},
   MRCLASS = {14C17 (14H10 14N35)},
  MRNUMBER = {1837120},
MRREVIEWER = {Tyler\ J.\ Jarvis},
       DOI = {10.1090/conm/276/04523},
       URL = {https://doi.org/10.1090/conm/276/04523},
}

@article{CLL,
	AUTHOR = {Chang, Huai-Liang and Li, Jun and Li, Wei-Ping},
     TITLE = {Witten's top {C}hern class via cosection localization},
   JOURNAL = {Invent. Math.},
  FJOURNAL = {Inventiones Mathematicae},
    VOLUME = {200},
      YEAR = {2015},
    NUMBER = {3},
     PAGES = {1015--1063},
      ISSN = {0020-9910,1432-1297},
   MRCLASS = {14D23 (14H99)},
  MRNUMBER = {3348143},
MRREVIEWER = {Stefan\ Schr\"{o}er},
       DOI = {10.1007/s00222-014-0549-5},
       URL = {https://doi.org/10.1007/s00222-014-0549-5},
}

@unpublished{Melissa,
  title={{Open/closed Correspondence and Extended {LG/CY} Correspondence for Quintic Threefolds}},
  author={Aleshkin, Konstantin and Liu, Chiu-Chu Melissa},
  note={arXiv preprint arXiv:2309.14628},
  year={2023}
}

@article {Walcher,
    AUTHOR = {Walcher, Johannes},
     TITLE = {Evidence for tadpole cancellation in the topological string},
   JOURNAL = {Commun. Number Theory Phys.},
  FJOURNAL = {Communications in Number Theory and Physics},
    VOLUME = {3},
      YEAR = {2009},
    NUMBER = {1},
     PAGES = {111--172},
      ISSN = {1931-4523,1931-4531},
   MRCLASS = {14J33 (14J32 14N35 81T45 81T50)},
  MRNUMBER = {2504755},
MRREVIEWER = {Jake\ Philip\ Solomon},
       DOI = {10.4310/CNTP.2009.v3.n1.a3},
       URL = {https://doi.org/10.4310/CNTP.2009.v3.n1.a3},
}

@unpublished{ST_unpublished,
    author = {Solomon, Jake P. and Tessler, Ran J.},
    note = {Unpublished} 
}

@book{FOOO_ii,
  title={Lagrangian intersection Floer theory: anomaly and obstruction, Part II},
  author={Fukaya, Kenji and Oh, Yong-Geun and Ohta, Hiroshi and Ono, Kaoru},
  volume={2},
  year={2010},
  publisher={American Mathematical Soc.}
}

@article{Teleman,
	AUTHOR = {Teleman, Constantin},
     TITLE = {The structure of 2{D} semi-simple field theories},
   JOURNAL = {Invent. Math.},
  FJOURNAL = {Inventiones Mathematicae},
    VOLUME = {188},
      YEAR = {2012},
    NUMBER = {3},
     PAGES = {525--588},
      ISSN = {0020-9910,1432-1297},
   MRCLASS = {57R56 (18D10 53D45)},
  MRNUMBER = {2917177},
MRREVIEWER = {Julia\ Bergner},
       DOI = {10.1007/s00222-011-0352-5},
       URL = {https://doi.org/10.1007/s00222-011-0352-5},
}

@article {lee2014mirror,
    AUTHOR = {Lee, Yuan-Pin and Shoemaker, Mark},
     TITLE = {A mirror theorem for the mirror quintic},
   JOURNAL = {Geom. Topol.},
  FJOURNAL = {Geometry \& Topology},
    VOLUME = {18},
      YEAR = {2014},
    NUMBER = {3},
     PAGES = {1437--1483},
      ISSN = {1465-3060,1364-0380},
   MRCLASS = {14N35 (14J33 53D45)},
  MRNUMBER = {3228456},
MRREVIEWER = {Jake\ Philip\ Solomon},
       DOI = {10.2140/gt.2014.18.1437},
       URL = {https://doi.org/10.2140/gt.2014.18.1437},
}

@article {priddis2016landau,
    AUTHOR = {Priddis, Nathan and Shoemaker, Mark},
     TITLE = {A {L}andau-{G}inzburg/{C}alabi-{Y}au correspondence for the
              mirror quintic},
   JOURNAL = {Ann. Inst. Fourier (Grenoble)},
  FJOURNAL = {Universit\'{e} de Grenoble. Annales de l'Institut Fourier},
    VOLUME = {66},
      YEAR = {2016},
    NUMBER = {3},
     PAGES = {1045--1091},
      ISSN = {0373-0956,1777-5310},
   MRCLASS = {14N35 (14J33 32G20 53D45)},
  MRNUMBER = {3494166},
MRREVIEWER = {Emily\ Clader},
       DOI = {10.5802/aif.3031},
       URL = {https://doi.org/10.5802/aif.3031},
}

@article {priddis2014proof,
    AUTHOR = {Lee, Yuan-Pin and Priddis, Nathan and Shoemaker, Mark},
     TITLE = {A proof of the {L}andau-{G}inzburg/{C}alabi-{Y}au
              correspondence via the crepant transformation conjecture},
   JOURNAL = {Ann. Sci. \'{E}c. Norm. Sup\'{e}r. (4)},
  FJOURNAL = {Annales Scientifiques de l'\'{E}cole Normale Sup\'{e}rieure.
              Quatri\`eme S\'{e}rie},
    VOLUME = {49},
      YEAR = {2016},
    NUMBER = {6},
     PAGES = {1403--1443},
      ISSN = {0012-9593,1873-2151},
   MRCLASS = {14N35 (14J33 53D45)},
  MRNUMBER = {3592361},
MRREVIEWER = {Felix\ Janda},
       DOI = {10.24033/asens.2312},
       URL = {https://doi.org/10.24033/asens.2312},
}

@article{krawitz2011landau,
  title={Landau-{G}inzburg/{C}alabi-{Y}au Correspondence of all Genera for Elliptic Orbifold $\mathbb{P}^1$},
  author={Krawitz, Marc and Shen, Yefeng},
  journal={arXiv preprint arXiv:1106.6270},
  year={2011}
}

@article {Alexandrov,
    AUTHOR = {Alexandrov, Alexander},
     TITLE = {Open intersection numbers, {K}ontsevich-{P}enner model and
              cut-and-join operators},
   JOURNAL = {J. High Energy Phys.},
  FJOURNAL = {Journal of High Energy Physics},
      YEAR = {2015},
    NUMBER = {8},
     PAGES = {028, front matter+24},
      ISSN = {1126-6708,1029-8479},
   MRCLASS = {81R05},
  MRNUMBER = {3402137},
       DOI = {10.1007/JHEP08(2015)028},
       URL = {https://doi.org/10.1007/JHEP08(2015)028},
}

@article {APS,
    AUTHOR = {Abreu, Alex and Pacini, Marco and Secco, Matheus},
     TITLE = {On moduli spaces of roots in algebraic and tropical geometry},
   JOURNAL = {J. Algebra},
  FJOURNAL = {Journal of Algebra},
    VOLUME = {634},
      YEAR = {2023},
     PAGES = {832--872},
      ISSN = {0021-8693,1090-266X},
   MRCLASS = {14T20 (14D20 14H10 14H40)},
  MRNUMBER = {4633697},
       DOI = {10.1016/j.jalgebra.2023.06.015},
       URL = {https://doi.org/10.1016/j.jalgebra.2023.06.015},
}

@article{milanov2011gromov,
  title={Gromov-Witten theory of elliptic orbifold $\mathbb{P}^1$ and quasi-modular forms},
  author={Milanov, Todor and Ruan, Yongbin},
  journal={arXiv preprint arXiv:1106.2321},
  year={2011}
}

@article {milanov2016global,
    AUTHOR = {Milanov, Todor and Shen, Yefeng},
     TITLE = {Global mirror symmetry for invertible simple elliptic
              singularities},
   JOURNAL = {Ann. Inst. Fourier (Grenoble)},
  FJOURNAL = {Universit\'{e} de Grenoble. Annales de l'Institut Fourier},
    VOLUME = {66},
      YEAR = {2016},
    NUMBER = {1},
     PAGES = {271--330},
      ISSN = {0373-0956,1777-5310},
   MRCLASS = {14N35 (14B05 14J33)},
  MRNUMBER = {3477877},
MRREVIEWER = {Hsian-Hua\ Tseng},
       DOI = {10.5802/aif.3012},
       URL = {https://doi.org/10.5802/aif.3012},
}

@article {clader2017landau,
    AUTHOR = {Clader, Emily},
     TITLE = {Landau-{G}inzburg/{C}alabi-{Y}au correspondence for the
              complete intersections {$X_{3,3}$} and {$X_{2,2,2,2}$}},
   JOURNAL = {Adv. Math.},
  FJOURNAL = {Advances in Mathematics},
    VOLUME = {307},
      YEAR = {2017},
     PAGES = {1--52},
      ISSN = {0001-8708,1090-2082},
   MRCLASS = {14J33 (14N35)},
  MRNUMBER = {3590512},
MRREVIEWER = {Tyler\ L.\ Kelly},
       DOI = {10.1016/j.aim.2016.11.010},
       URL = {https://doi.org/10.1016/j.aim.2016.11.010},
}

@article {clader2018sigma,
    AUTHOR = {Clader, Emily and Ross, Dustin},
     TITLE = {Sigma models and phase transitions for complete intersections},
   JOURNAL = {Int. Math. Res. Not. IMRN},
  FJOURNAL = {International Mathematics Research Notices. IMRN},
      YEAR = {2018},
    NUMBER = {15},
     PAGES = {4799--4851},
      ISSN = {1073-7928,1687-0247},
   MRCLASS = {14N35},
  MRNUMBER = {3842378},
MRREVIEWER = {Xiaobin\ Li},
       DOI = {10.1093/imrn/rnx029},
       URL = {https://doi.org/10.1093/imrn/rnx029},
}

@article {cavalieri2024genus,
    AUTHOR = {Cavalieri, Renzo and Kelly, Tyler L. and Silversmith, Rob},
     TITLE = {Genus-zero {$r$}-spin theory},
   JOURNAL = {Moduli},
  FJOURNAL = {Moduli},
    VOLUME = {1},
      YEAR = {2024},
     PAGES = {Paper No. e7, 35},
      ISSN = {2949-7647,2977-1382},
      ISBN = {},
   MRCLASS = {14N10 (14C17 14H10 14T15 14T90)},
  MRNUMBER = {4892593},
       DOI = {10.1112/mod.2024.2},
       URL = {https://doi.org/10.1112/mod.2024.2},
}

@article {zhao2022landau,
    AUTHOR = {Zhao, Yizhen},
     TITLE = {Landau-{G}inzburg/{C}alabi-{Y}au correspondence for a complete
              intersection via matrix factorizations},
   JOURNAL = {Int. Math. Res. Not. IMRN},
  FJOURNAL = {International Mathematics Research Notices. IMRN},
      YEAR = {2022},
    NUMBER = {15},
     PAGES = {11796--11863},
      ISSN = {1073-7928,1687-0247},
   MRCLASS = {14J33 (14N35)},
  MRNUMBER = {4458566},
       DOI = {10.1093/imrn/rnab044},
       URL = {https://doi.org/10.1093/imrn/rnab044},
}

@article{acosta2014asymptotic,
  title={Asymptotic expansion and the {LG}/({F}ano, general type) correspondence},
  author={Acosta, Pedro},
  journal={arXiv preprint arXiv:1411.4162},
  year={2014}
}

@incollection{Witten2DGravity,
	AUTHOR = {Witten, Edward},
     TITLE = {Two-dimensional gravity and intersection theory on moduli
              space},
 BOOKTITLE = {Surveys in differential geometry ({C}ambridge, {MA}, 1990)},
     PAGES = {243--310},
 PUBLISHER = {Lehigh Univ., Bethlehem, PA},
      YEAR = {1991},
      ISBN = {0-8218-0168-6},
   MRCLASS = {32G15 (14C17 14H15 32G81 58F07 81T40)},
  MRNUMBER = {1144529},
MRREVIEWER = {Steven\ Rosenberg},
}

@incollection{Witten93,
	AUTHOR = {Witten, Edward},
     TITLE = {Algebraic geometry associated with matrix models of
              two-dimensional gravity},
 BOOKTITLE = {Topological methods in modern mathematics ({S}tony {B}rook,
              {NY}, 1991)},
     PAGES = {235--269},
 PUBLISHER = {Publish or Perish, Houston, TX},
      YEAR = {1993},
   MRCLASS = {32G15 (14H15 81T40)},
  MRNUMBER = {1215968},
MRREVIEWER = {Claude\ Itzykson},
}

@article{Jarvis,
	AUTHOR = {Jarvis, Tyler J.},
     TITLE = {Geometry of the moduli of higher spin curves},
   JOURNAL = {Internat. J. Math.},
  FJOURNAL = {International Journal of Mathematics},
    VOLUME = {11},
      YEAR = {2000},
    NUMBER = {5},
     PAGES = {637--663},
      ISSN = {0129-167X,1793-6519},
   MRCLASS = {14H10},
  MRNUMBER = {1780734},
MRREVIEWER = {Jos\'{e}\ M.\ Mu\~{n}oz Porras},
       DOI = {10.1142/S0129167X00000325},
       URL = {https://doi.org/10.1142/S0129167X00000325},
}

@article {PST14,
    AUTHOR = {Pandharipande, Rahul and Solomon, Jake P. and Tessler, Ran J.},
     TITLE = {Intersection theory on moduli of disks, open {K}d{V} and
              {V}irasoro},
   JOURNAL = {Geom. Topol.},
  FJOURNAL = {Geometry \& Topology},
    VOLUME = {28},
      YEAR = {2024},
    NUMBER = {6},
     PAGES = {2483--2567},
      ISSN = {1465-3060,1364-0380},
   MRCLASS = {14H15 (14N35 32G15 37K20 53D45)},
  MRNUMBER = {4817469},
       DOI = {10.2140/gt.2024.28.2483},
       URL = {https://doi.org/10.2140/gt.2024.28.2483},
}

@article{fukaya2009lagrangian,
  title={Lagrangian intersection Floer theory—anomaly and obstruction, Parts I \& II},
  author={Fukaya, Kenji and Oh, Yong-Geun and Ohta, Hiroshi and Ono, Kaoru},
  journal={AMS/IP Studies in Advanced Mathematics},
  volume={46},
  number={46.2},
  year={2009}
}

@article{BCT3,
  AUTHOR = {Buryak, Alexandr and Clader, Emily and Tessler, Ran J.},
     TITLE = {Open {$r$}-spin theory {III}: {A} prediction for higher genus},
   JOURNAL = {J. Geom. Phys.},
  FJOURNAL = {Journal of Geometry and Physics},
    VOLUME = {192},
      YEAR = {2023},
     PAGES = {Paper No. 104960, 12},
      ISSN = {0393-0440,1879-1662},
   MRCLASS = {14H10},
  MRNUMBER = {4629749},
       DOI = {10.1016/j.geomphys.2023.104960},
       URL = {https://doi.org/10.1016/j.geomphys.2023.104960},
}

@article{KontsManin,
  AUTHOR = {Kontsevich, M. and Manin, Yu.},
     TITLE = {Gromov-{W}itten classes, quantum cohomology, and enumerative
              geometry},
   JOURNAL = {Comm. Math. Phys.},
  FJOURNAL = {Communications in Mathematical Physics},
    VOLUME = {164},
      YEAR = {1994},
    NUMBER = {3},
     PAGES = {525--562},
      ISSN = {0010-3616,1432-0916},
   MRCLASS = {14N10 (53C15 58D10 58F05)},
  MRNUMBER = {1291244},
MRREVIEWER = {Dietmar\ A.\ Salamon},
       URL = {http://projecteuclid.org/euclid.cmp/1104270948},
}

@unpublished{Zernik,
	Author = {{Netser Zernik}, A.},
	Note = {arXiv preprint arXiv:1709.07402},
	Title = {{M}oduli of {O}pen {S}table {M}aps to a {H}omogeneous {S}pace},
	Year = {2017}}

@article{JKV,
	AUTHOR = {Jarvis, Tyler J. and Kimura, Takashi and Vaintrob, Arkady},
     TITLE = {Moduli spaces of higher spin curves and integrable
              hierarchies},
   JOURNAL = {Compositio Math.},
  FJOURNAL = {Compositio Mathematica},
    VOLUME = {126},
      YEAR = {2001},
    NUMBER = {2},
     PAGES = {157--212},
      ISSN = {0010-437X,1570-5846},
   MRCLASS = {14H70 (14H10 14H81 37K20)},
  MRNUMBER = {1827643},
MRREVIEWER = {Gilberto\ Bini},
       DOI = {10.1023/A:1017528003622},
       URL = {https://doi.org/10.1023/A:1017528003622},
}

@incollection{JKV2,
	AUTHOR = {Jarvis, Tyler J. and Kimura, Takashi and Vaintrob, Arkady},
     TITLE = {Gravitational descendants and the moduli space of higher spin
              curves},
 BOOKTITLE = {Advances in algebraic geometry motivated by physics ({L}owell,
              {MA}, 2000)},
    SERIES = {Contemp. Math.},
    VOLUME = {276},
     PAGES = {167--177},
 PUBLISHER = {Amer. Math. Soc., Providence, RI},
      YEAR = {2001},
      ISBN = {0-8218-2810-X},
   MRCLASS = {14H10 (14H70 14H81 14N35)},
  MRNUMBER = {1837117},
MRREVIEWER = {Dan\ Abramovich},
       DOI = {10.1090/conm/276/04520},
       URL = {https://doi.org/10.1090/conm/276/04520},
}

@article{BPTZ,
	AUTHOR = {Buryak, Alexandr and Zernik, Amitai Netser and Pandharipande,
              Rahul and Tessler, Ran J.},
     TITLE = {Open {$\mathbb{CP}^1$} descendent theory {I}: {T}he stationary
              sector},
   JOURNAL = {Adv. Math.},
  FJOURNAL = {Advances in Mathematics},
    VOLUME = {401},
      YEAR = {2022},
     PAGES = {Paper No. 108249, 93},
      ISSN = {0001-8708,1090-2082},
   MRCLASS = {14D21 (14N35 53D45)},
  MRNUMBER = {4387849},
MRREVIEWER = {Ritwik\ Mukherjee},
       DOI = {10.1016/j.aim.2022.108249},
       URL = {https://doi.org/10.1016/j.aim.2022.108249},
}

@unpublished{Liu,
	Author = {Liu, Chiu-Chu Melissa},
	Note = {arXiv preprint arXiv:math/0210257},
	Title = {Moduli of {J}-Holomorphic {C}urves with {L}agrangian {B}oundary {C}onditions and {O}pen {G}romov-{W}itten {I}nvariants for an
{$S^1$}-{E}quivariant {P}air}}

@article {gomez2021open,
    AUTHOR = {Brauer Gomez, Oscar and Buryak, Alexandr},
     TITLE = {Open topological recursion relations in genus 1 and integrable
              systems},
   JOURNAL = {J. High Energy Phys.},
  FJOURNAL = {Journal of High Energy Physics},
      YEAR = {2021},
    NUMBER = {1},
     PAGES = {Paper No. 048, 14},
      ISSN = {1126-6708,1029-8479},
   MRCLASS = {81R12 (14H81 14N35 53D45 81T45)},
  MRNUMBER = {4258256},
MRREVIEWER = {Hsian-Hua\ Tseng},
       DOI = {10.1007/jhep01(2021)048},
       URL = {https://doi.org/10.1007/jhep01(2021)048},
}

@article {borot2024higher,
    AUTHOR = {Borot, Ga\"{e}tan and Bouchard, Vincent and Chidambaram, Nitin
              K. and Creutzig, Thomas and Noshchenko, Dmitry},
     TITLE = {Higher {A}iry structures, {$\mathcal W$} algebras and topological
              recursion},
   JOURNAL = {Mem. Amer. Math. Soc.},
  FJOURNAL = {Memoirs of the American Mathematical Society},
    VOLUME = {296},
      YEAR = {2024},
    NUMBER = {1476},
     PAGES = {v+108},
      ISSN = {0065-9266,1947-6221},
      ISBN = {978-1-4704-6906-1; 978-1-4704-7813-1},
   MRCLASS = {81R10 (14H81 14N35)},
  MRNUMBER = {4744795},
MRREVIEWER = {Reinier\ Kramer},
       DOI = {10.1090/memo/1476},
       URL = {https://doi.org/10.1090/memo/1476},
}

@article {safnuk2016topological,
    AUTHOR = {Safnuk, Brad},
     TITLE = {Topological recursion for open intersection numbers},
   JOURNAL = {Commun. Number Theory Phys.},
  FJOURNAL = {Communications in Number Theory and Physics},
    VOLUME = {10},
      YEAR = {2016},
    NUMBER = {4},
     PAGES = {833--857},
      ISSN = {1931-4523,1931-4531},
   MRCLASS = {14H10 (14H81 14N35)},
  MRNUMBER = {3636676},
MRREVIEWER = {Felix\ Janda},
       DOI = {10.4310/CNTP.2016.v10.n4.a5},
       URL = {https://doi.org/10.4310/CNTP.2016.v10.n4.a5},
}

@article{wang2025identification,
  title={The identification of the extended refined open partition function and the {K}ontsevich-{P}enner matrix model},
  author={Wang, Gehao},
  journal={arXiv preprint arXiv:2511.16919},
  year={2025}
}

@article {dijkgraaf2018developments,
    AUTHOR = {Dijkgraaf, Robbert and Witten, Edward},
     TITLE = {Developments in topological gravity},
   JOURNAL = {Internat. J. Modern Phys. A},
  FJOURNAL = {International Journal of Modern Physics A. Particles and
              Fields. Gravitation. Cosmology. Astrophysics. Accelerator
              Physics},
    VOLUME = {33},
      YEAR = {2018},
    NUMBER = {30},
     PAGES = {1830029, 63},
      ISSN = {0217-751X,1793-656X},
   MRCLASS = {32G15 (14H15 14H81 81T45 83C45)},
  MRNUMBER = {3875934},
MRREVIEWER = {Peter\ R.\ Law},
       DOI = {10.1142/S0217751X18300296},
       URL = {https://doi.org/10.1142/S0217751X18300296},
}

@article {Tes15,
    AUTHOR = {Tessler, Ran J.},
     TITLE = {The combinatorial formula for open gravitational descendents},
   JOURNAL = {Geom. Topol.},
  FJOURNAL = {Geometry \& Topology},
    VOLUME = {27},
      YEAR = {2023},
    NUMBER = {7},
     PAGES = {2497--2648},
      ISSN = {1465-3060,1364-0380},
   MRCLASS = {53D45},
  MRNUMBER = {4645483},
MRREVIEWER = {Alex\ Massarenti},
       DOI = {10.2140/gt.2023.27.2497},
       URL = {https://doi.org/10.2140/gt.2023.27.2497},
}

@article{BCT_Closed_Extended,
	AUTHOR = {Buryak, Alexandr and Clader, Emily and Tessler, Ran J.},
     TITLE = {Closed extended {$r$}-spin theory and the {G}elfand-{D}ickey
              wave function},
   JOURNAL = {J. Geom. Phys.},
  FJOURNAL = {Journal of Geometry and Physics},
    VOLUME = {137},
      YEAR = {2019},
     PAGES = {132--153},
      ISSN = {0393-0440,1879-1662},
   MRCLASS = {14H10 (35Q53 37K10)},
  MRNUMBER = {3894288},
MRREVIEWER = {Dmitry\ Zakharov},
       DOI = {10.1016/j.geomphys.2018.11.007},
       URL = {https://doi.org/10.1016/j.geomphys.2018.11.007},
}

@article{BCT1,
	AUTHOR = {Buryak, Alexandr and Clader, Emily and Tessler, Ran J.},
     TITLE = {Open {$r$}-spin theory {I}: {F}oundations},
   JOURNAL = {Int. Math. Res. Not. IMRN},
  FJOURNAL = {International Mathematics Research Notices. IMRN},
      YEAR = {2022},
    NUMBER = {14},
     PAGES = {10458--10532},
      ISSN = {1073-7928,1687-0247},
   MRCLASS = {14H10},
  MRNUMBER = {4458550},
MRREVIEWER = {Alex\ Massarenti},
       DOI = {10.1093/imrn/rnaa345},
       URL = {https://doi.org/10.1093/imrn/rnaa345},
}

@article{PPZ,
	 AUTHOR = {Pandharipande, Rahul and Pixton, Aaron and Zvonkine, Dimitri},
     TITLE = {Relations on {$\overline{\mathcal M}_{g,n}$} via {$3$}-spin
              structures},
   JOURNAL = {J. Amer. Math. Soc.},
  FJOURNAL = {Journal of the American Mathematical Society},
    VOLUME = {28},
      YEAR = {2015},
    NUMBER = {1},
     PAGES = {279--309},
      ISSN = {0894-0347,1088-6834},
   MRCLASS = {14H10 (14N35)},
  MRNUMBER = {3264769},
MRREVIEWER = {Shengmao\ Zhu},
       DOI = {10.1090/S0894-0347-2014-00808-0},
       URL = {https://doi.org/10.1090/S0894-0347-2014-00808-0},
}

@article{Kontsevich,
	AUTHOR = {Kontsevich, Maxim},
     TITLE = {Intersection theory on the moduli space of curves and the
              matrix {A}iry function},
   JOURNAL = {Comm. Math. Phys.},
  FJOURNAL = {Communications in Mathematical Physics},
    VOLUME = {147},
      YEAR = {1992},
    NUMBER = {1},
     PAGES = {1--23},
      ISSN = {0010-3616,1432-0916},
   MRCLASS = {32G15 (14H15 58F07 81T40)},
  MRNUMBER = {1171758},
MRREVIEWER = {Claude\ Itzykson},
       URL = {http://projecteuclid.org/euclid.cmp/1104250524},
}

@article {GKT,
    AUTHOR = {Gross, Mark and Kelly, Tyler L. and Tessler, Ran J.},
     TITLE = {Mirror symmetry for open {$r$}-spin invariants},
   JOURNAL = {Pure Appl. Math. Q.},
  FJOURNAL = {Pure and Applied Mathematics Quarterly},
    VOLUME = {20},
      YEAR = {2024},
    NUMBER = {2},
     PAGES = {1005--1024},
      ISSN = {1558-8599,1558-8602},
   MRCLASS = {14J33 (53D37)},
  MRNUMBER = {4734888},
MRREVIEWER = {Sergiy\ Koshkin},
       DOI = {10.4310/pamq.2024.v20.n2.a9},
       URL = {https://doi.org/10.4310/pamq.2024.v20.n2.a9},
}

@article {BCT2,
    AUTHOR = {Buryak, Alexandr and Clader, Emily and Tessler, Ran J.},
     TITLE = {Open {$r$}-spin theory {II}: {T}he analogue of {W}itten's
              conjecture for {$r$}-spin disks},
   JOURNAL = {J. Differential Geom.},
  FJOURNAL = {Journal of Differential Geometry},
    VOLUME = {128},
      YEAR = {2024},
    NUMBER = {1},
     PAGES = {1--75},
      ISSN = {0022-040X,1945-743X},
   MRCLASS = {14N35},
  MRNUMBER = {4773181},
MRREVIEWER = {Hsian-Hua\ Tseng},
       DOI = {10.4310/jdg/1721075259},
       URL = {https://doi.org/10.4310/jdg/1721075259},
}

@unpublished{GKT2,
  title={Open {FJRW} {T}heory and {M}irror {S}ymmetry},
  author={Gross, Mark and Kelly, Tyler L and Tessler, Ran J},
  note={arXiv preprint arXiv:2203.02435},
  year={2022}
}

@article{ChiodoRuan,
 AUTHOR = {Chiodo, Alessandro and Ruan, Yongbin},
     TITLE = {Landau-{G}inzburg/{C}alabi-{Y}au correspondence for quintic
              three-folds via symplectic transformations},
   JOURNAL = {Invent. Math.},
  FJOURNAL = {Inventiones Mathematicae},
    VOLUME = {182},
      YEAR = {2010},
    NUMBER = {1},
     PAGES = {117--165},
      ISSN = {0020-9910,1432-1297},
   MRCLASS = {14N35 (14J17 14J33 32G20 53D45)},
  MRNUMBER = {2672282},
MRREVIEWER = {Jake\ Philip\ Solomon},
       DOI = {10.1007/s00222-010-0260-0},
       URL = {https://doi.org/10.1007/s00222-010-0260-0},
}

@article{PSW,
 AUTHOR = {Pandharipande, R. and Solomon, J. and Walcher, J.},
     TITLE = {Disk enumeration on the quintic 3-fold},
   JOURNAL = {J. Amer. Math. Soc.},
  FJOURNAL = {Journal of the American Mathematical Society},
    VOLUME = {21},
      YEAR = {2008},
    NUMBER = {4},
     PAGES = {1169--1209},
      ISSN = {0894-0347,1088-6834},
   MRCLASS = {14N35 (14J32 14N10 53D45)},
  MRNUMBER = {2425184},
MRREVIEWER = {Hans\ U.\ Boden},
       DOI = {10.1090/S0894-0347-08-00597-3},
       URL = {https://doi.org/10.1090/S0894-0347-08-00597-3},
}

@unpublished{TZ1,
  title={The point insertion technique and open $r$-spin theories {I}: moduli and orientations},
  author={Tessler, Ran J. and Zhao, Yizhen},
  note={arXiv preprint arXiv:2310.13185},
  year={2023}
}

@unpublished{TZ2,
	Author = {Tessler, Ran J. and Zhao, Yizhen},
	note = {arXiv preprint arXiv:2311.11779},
	Title = {The point insertion technique and open $r$-spin theories {II}: intersection theories in genus-zero},
	Year = {2023}}

@article {HeLiShenWebb,
    AUTHOR = {He, Weiqiang and Li, Si and Shen, Yefeng and Webb, Rachel},
     TITLE = {Landau-{G}inzburg mirror symmetry conjecture},
   JOURNAL = {J. Eur. Math. Soc. (JEMS)},
  FJOURNAL = {Journal of the European Mathematical Society (JEMS)},
    VOLUME = {24},
      YEAR = {2022},
    NUMBER = {8},
     PAGES = {2915--2978},
      ISSN = {1435-9855,1435-9863},
   MRCLASS = {14J33 (14B05 14N35)},
  MRNUMBER = {4416593},
MRREVIEWER = {Helge\ Ruddat},
       DOI = {10.4171/jems/1155},
       URL = {https://doi.org/10.4171/jems/1155},
}

@incollection {Hori,
    AUTHOR = {Hori, Kentaro},
     TITLE = {Boundary {RG} flows of {$N=2$} minimal models},
 BOOKTITLE = {Mirror symmetry. {V}},
    SERIES = {AMS/IP Stud. Adv. Math.},
    VOLUME = {38},
     PAGES = {381--404},
 PUBLISHER = {Amer. Math. Soc., Providence, RI},
      YEAR = {2006},
      ISBN = {978-0-8218-4251-5; 0-8218-4251-X},
   MRCLASS = {81T40 (32S30 81T17)},
  MRNUMBER = {2282968},
MRREVIEWER = {Johannes\ Walcher},
       DOI = {10.1090/amsip/038/17},
       URL = {https://doi.org/10.1090/amsip/038/17},
}

@unpublished{Horiprivate,
	Author = {Hori, Kentaro},
	Note = {private communication},
	Title = {},
	Year = {}}

@unpublished{TZ3,
  title={Open $r$-spin  theory in genus one, and the {G}elfand-{D}ikii wave function},
  note = {arXiv preprint arXiv:2601.04114},
  author={Tessler, Ran J. and Zhao, Yizhen},
	Year = {2026}}

@article {BerglundHenningson,
    AUTHOR = {Berglund, Per and Henningson, M\aa ns},
     TITLE = {Landau-{G}inzburg orbifolds, mirror symmetry and the elliptic
              genus},
   JOURNAL = {Nuclear Phys. B},
  FJOURNAL = {Nuclear Physics. B. Theoretical, Phenomenological, and
              Experimental High Energy Physics. Quantum Field Theory and
              Statistical Systems},
    VOLUME = {433},
      YEAR = {1995},
    NUMBER = {2},
     PAGES = {311--332},
      ISSN = {0550-3213,1873-1562},
   MRCLASS = {58G10 (11F11 14J32 58G40 81T40)},
  MRNUMBER = {1310310},
MRREVIEWER = {Bruce\ Hunt},
       DOI = {10.1016/0550-3213(94)00389-V},
       URL = {https://doi.org/10.1016/0550-3213(94)00389-V},
}

@unpublished{polyvector,
  title={Primitive forms via polyvector fields},
  note = {arXiv preprint arXiv:1311.1659},
  author={Li, Changzheng and Li, Si and Saito, Kyoji},
	Year = {2026}}

@unpublished{GKT3,
  title={Open {F}ermat theory and mirror symmetry},
  author={Kelly, Tyler L. and Tessler, Ran J.},
  Note = {In preparation},
}
\bibliographystyle{amsalpha}

\end{document}